\newcommand{\calO}{{\mathcal{O}}}
\newcommand{\calA}{{\mathcal{A}}}
\newcommand{\calB}{{\mathcal{B}}}
\newcommand{\calC}{\mathcal{C}}
\newcommand{\calD}{\mathcal{D}}
\newcommand{\calF}{\mathcal{F}}
\newcommand{\calE}{\mathcal{E}}
\newcommand{\calG}{\mathcal{G}}
\newcommand{\calL}{\mathcal{L}}
\newcommand{\calM}{\mathcal{M}}
\newcommand{\calQ}{\mathcal{Q}}
\newcommand{\calP}{\mathcal{P}}
\newcommand{\calS}{\mathcal{S}}
\newcommand{\calX}{\mathcal{X}}
\newcommand{\Z}{\mathbf{Z}}
\newcommand{\G}{\mathbf{G}}
\newcommand{\N}{\mathbf{N}}
\newcommand{\C}{\mathbf{C}}
\newcommand{\F}{\mathbf{F}}
\newcommand{\Q}{\mathbf{Q}}
\newcommand{\A}{\mathbf{A}}
\newcommand{\E}{\mathbf{E}}
\renewcommand{\P}{\mathbf{P}}
\newcommand{\fp}{\mathrm{fp}}
\newcommand{\fppf}{\mathrm{fppf}}
\newcommand{\Spec}{{\mathrm{Spec}}}
\newcommand{\CAlg}{\mathrm{CAlg}}
\newcommand{\Ab}{\mathrm{Ab}}
\newcommand{\Hom}{\mathrm{Hom}}
\newcommand{\Isom}{\mathrm{Isom}}
\newcommand{\Fil}{\mathrm{Fil}}
\newcommand{\Quot}{\mathrm{Quot}}
\newcommand{\Sym}{\mathrm{Sym}}
\newcommand{\gr}{\mathrm{gr}}
\newcommand{\Tot}{\mathrm{Tot}}
\newcommand{\calEnd}{\mathcal{E}\mathrm{nd}}
\newcommand{\Aut}{\mathrm{Aut}}
\newcommand{\Ext}{\mathrm{Ext}}
\newcommand{\Tor}{\mathrm{Tor}}
\newcommand{\perf}{\mathrm{perf}}
\newcommand{\Vect}{\mathrm{Vect}}
\newcommand{\gVect}{\mathcal{V}\mathrm{ect}}
\newcommand{\Frob}{\mathrm{Frob}}
\newcommand{\Mod}{\mathrm{Mod}}
\renewcommand{\L}{\mathrm{L}}
\newcommand{\Pic}{\mathrm{Pic}}
\newcommand{\gPic}{\mathcal{P}\mathrm{ic}}
\newcommand{\et}{\mathrm{\acute{e}t}}
\newcommand{\ad}{\mathrm{ad}}
\newcommand{\id}{\mathrm{id}}
\newcommand{\coker}{\mathrm{coker}}
\renewcommand{\ker}{\mathrm{ker}}
\newcommand{\opp}{\mathrm{opp}}
\newcommand{\Sch}{\mathrm{Sch}}
\newcommand{\Perf}{\mathrm{Perf}}
\newcommand{\Gr}{\mathrm{Gr}}
\newcommand{\GL}{\mathrm{GL}}
\newcommand{\rk}{\mathrm{rk}}
\newcommand{\Spa}{{\mathrm{Spa}}}
\newcommand{\Fin}{\mathrm{Fin}}
\newcommand{\Mon}{\mathrm{Mon}}
\newcommand{\gp}{\mathrm{gp}}
\newcommand{\Sp}{\mathrm{Sp}}
\newcommand{\tors}{\mathrm{tors}}
\newcommand{\Frac}{\mathrm{Frac}}
\renewcommand{\det}{\mathrm{det}}
\newcommand{\Dem}{\mathrm{Dem}}
\newcommand{\on}{\, \mathrm{on}\, }
\newcommand{\twoinjlim}{2\text{-}\varinjlim}
\newcommand{\op}{\mathrm{op}}
\newcommand{\aff}{\mathrm{aff}}
\newcommand{\Gal}{\mathrm{Gal}}
\newcommand{\SL}{\mathrm{SL}}
\newcommand{\Map}{\mathrm{Map}}
\newcommand{\red}{\mathrm{red}}
\newcommand{\fram}{\mathfrak{m}}
\newcommand{\cosimp}[3]{\xymatrix@1{#1 \ar@<.4ex>[r] \ar@<-.4ex>[r] & {\ }#2 \ar@<0.8ex>[r] \ar[r] \ar@<-.8ex>[r] & {\ } #3 \ar@<1.2ex>[r] \ar@<.4ex>[r] \ar@<-.4ex>[r] \ar@<-1.2ex>[r] & \cdots }}
\newcommand{\colim}{\mathop{\mathrm{colim}}}
\newcommand{\adjunction}[4]{\xymatrix@1{#1{\ } \ar@<0.3ex>[r]^{ {\scriptstyle #2}} & {\ } #3 \ar@<0.3ex>[l]^{ {\scriptstyle #4}}}}
\begin{document}

\bibliographystyle{alpha}

\newtheorem{theorem}{Theorem}[section]
\newtheorem*{theorem*}{Theorem}
\newtheorem*{definition*}{Definition}
\newtheorem{proposition}[theorem]{Proposition}
\newtheorem{lemma}[theorem]{Lemma}
\newtheorem{corollary}[theorem]{Corollary}

\theoremstyle{definition}
\newtheorem{definition}[theorem]{Definition}
\newtheorem{question}[theorem]{Question}
\newtheorem{remark}[theorem]{Remark}
\newtheorem{example}[theorem]{Example}
\newtheorem{notation}[theorem]{Notation}
\newtheorem{convention}[theorem]{Convention}
\newtheorem{construction}[theorem]{Construction}
\newtheorem{claim}[theorem]{Claim}

\title{Projectivity of the Witt vector affine Grassmannian}
\author{Bhargav Bhatt and Peter Scholze}

\begin{abstract} We prove that the Witt vector affine Grassmannian, which parametrizes $W(k)$-lattices in $W(k)[\frac{1}{p}]^n$ for a perfect field $k$ of charactristic $p$, is representable by an ind-(perfect scheme) over $k$. This improves on previous results of Zhu by constructing a natural ample line bundle. Along the way, we establish various foundational results on perfect schemes, notably $h$-descent results for vector bundles.
\end{abstract}

\maketitle


\section{Introduction}

\subsection{Motivation and goals}
Fix a perfect field $k$ of characteristic $p$, and let $W(k)$ be the ring of Witt vectors of $k$. This paper deals with the question of putting an algebro-geometric structure (over $k$) on the set 
\[ \GL_n\big(W(k)[\frac{1}{p}]\big)/\GL_n(W(k)),\] 
or, equivalently, on the set of $W(k)$-lattices in $W(k)[\frac{1}{p}]^n$. This is a mixed-characteristic version of the affine Grassmannian, which (for the group $\GL_n$) parametrizes $k[[t]]$-lattices in $k((t))^n$. For the introduction, let us call the usual affine Grassmannian $\Gr^\aff$, and the Witt vector affine Grassmannian $\Gr^{W\aff}$.

Some of the interest in $\Gr^{W\aff}$ comes from its relation with the special fibers of Rapoport-Zink spaces. These are, via Dieudonn\'e theory, naturally described by affine Deligne-Lusztig varieties in the Witt vector affine Grassmannian\footnote{For example, if $\beta:G \to G_0$ is an isogeny of $p$-divisible groups over $k$, and a trivialization $W(k)[\frac{1}{p}]^n \simeq D(G_0)[\frac{1}{p}]$ of the Dieudonn\'e module of $G_0$ has been fixed, then the induced map $D(\beta)$ on Dieudonn\'e modules defines a point of $\Gr^{W\aff}$. In the equal characteristic case, the relation has been obtained by Hartl and Viehmann, \cite[Theorem 6.3]{HartlViehmannNewton}.}. However, as the Witt vector affine Grassmannian was only known as a set, the affine Deligne-Lusztig varieties were also only known as sets; this was a hindrance to talking systematically about notions like connected components or dimensions (though ad hoc definitions could be given in such cases, cf. e.g. \cite{ChenKisinViehmann}, \cite[\S 10]{Hamacher}). Another motivation is that in the context of forthcoming work of the second author, $\Gr^{W\aff}$ is supposed to appear as the special fibre of an object over $\Z_p$ whose generic fibre is an affine Grassmannian related to Fontaine's ring $B_{\mathrm{dR}}^+$, which is only defined on perfectoid algebras. In fact, the results in this paper show that some strange features of perfectoid spaces have more elementary analogues for perfect schemes. Notably, the $v$-topology on perfect schemes introduced in this paper is an analogue of a similar topology in the context of perfectoid spaces.

\subsection{Results}
We first recall the relevant structure on $\Gr^\aff$ that we intend to transport to $\Gr^{W\aff}$. The usual affine Grassmannian $\Gr^\aff$ is known to be represented by an ind-projective ind-scheme, and was first considered in an algebraic-geometric context by Beauville-Laszlo, \cite{BeauvilleLaszlo}; earlier work, motivated by the Korteweg-de-Vries equation, includes \cite{SatoSato} and \cite{SegalWilson}. More precisely, for integers $a \leq b$, one has closed subfunctors $\Gr^{\aff,[a,b]}\subset \Gr^\aff$ parametrizing lattices $M\subset k((t))^n$ lying between $t^a k[[t]]^n$ and $t^b k[[t]]^n$; varying the parameters gives a filtering system that exhausts $\Gr^\aff$, i.e., 
\[ \Gr^\aff \simeq \varinjlim \Gr^{\aff,[a,b]}. \]
Now each $\Gr^{\aff,[a,b]}$ admits a closed embedding into a (finite disjoint union of) classical Grassmannian(s) $\Gr(d,t^a k[[t]]^n / t^b k[[t]]^n)$ as those $k$-subvectorspaces $V\subset t^a k[[t]]^n / t^b k[[t]]^n$ which are stable under multiplication by $t$. As such, they are projective $k$-schemes, and all transition maps are closed embeddings; this proves that $\Gr^\aff$ is an ind-(projective scheme). In fact, there is a natural line bundle $\mathcal{L}$ on $\Gr^\aff$, given by
\[
\mathcal{L} = \det_k (t^a k[[t]]^n / M)
\]
on $\Gr^{\aff,[a,b]}$. As this line bundle is already ample on the classical Grassmannian $\Gr(d, t^a k[[t]]^n / t^b k[[t]]^n)$, it stays so on each $\Gr^{\aff,[a,b]}$.

Our aim here is to establish similar results in the Witt vector case, $\Gr^{W\aff}$. This question has been considered previously, notably by Haboush, \cite{Haboush}, Kreidl, \cite{Kreidl}, and Zhu, \cite{ZhuMixedCharGeomSatake}. The primary issue is that for a general $\F_p$-algebra $R$, its ring of Witt vectors $W(R)$ is pathological: it may contain $p$-torsion, and the natural map $W(R)/p\to R$ may not be an isomorphism. However, if $R$ is perfect, i.e. the Frobenius map $\Phi: R\to R$ is an isomorphism, then
\[
W(R) = \{\sum_{n\geq 0} [a_n] p^n\mid a_n\in R\}
\]
is well-behaved. In fact, $W(R)$ may be characterized as the unique (up to unique isomorphism) $p$-adically complete flat $\Z_p$-algebra lifting $R$. As already observed by Kreidl, \cite[Theorem 5]{Kreidl}, if one restricts to perfect rings $R$, then the set of $W(R)$-lattices $M$ in $W(R)[\frac{1}{p}]^n$ is a reasonable object, e.g. one has faithfully flat descent. Kreidl's work also shows that it is difficult to get representability results if one does not restrict to perfect rings.

Zhu, \cite{ZhuMixedCharGeomSatake}, then proved a representability result for $\Gr^{W\aff}$ restricted to perfect rings. There is still a similar presentation of $\Gr^{W\aff}$ as an increasing union of $\Gr^{W\aff,[a,b]}$ parametrizing lattices $M$ between $p^a W(R)^n$ and $p^b W(R)^n$. Zhu's result is that each $\Gr^{W\aff,[a,b]}$ can be represented by the perfection of a proper \emph{algebraic space} over $\F_p$. Our main theorem improves on this result of Zhu.

\begin{theorem} 
	\label{thm:mainthmIntro}
	The functor $\Gr^{W\aff,[a,b]}$ on perfect rings $R$, parametrizing $W(R)$-lattices $M\subset W(R)[\frac{1}{p}]^n$ lying between $p^a W(R)^n$ and $p^b W(R)^n$, is representable by the perfection of a projective algebraic variety over $\F_p$. Consequently, $\Gr^{W\aff}$ is representable by an inductive limit of perfections of projective varieties.
\end{theorem}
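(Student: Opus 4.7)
The plan is to mimic the equal characteristic argument: construct a determinant line bundle $\mathcal{L}$ on $\Gr^{W\aff,[a,b]}$ and prove it is ample, thereby upgrading Zhu's representability result (as the perfection of a proper algebraic space $Y$ over $\F_p$) to the projective setting. Let $M \subset W(R)[\frac{1}{p}]^n$ denote the universal lattice. The inclusion $M \hookrightarrow p^a W(R)^n$ furnishes a $2$-term resolution of $Q := p^a W(R)^n / M$ by finite projective $W(R)$-modules of rank $n$, so $Q$ is a perfect $W(R)$-complex, and its derived base change $Q \otimes^L_{W(R)} R$ is a perfect $R$-complex with underlying $2$-term representative $[M/pM \to R^n]$. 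I define
\[ \mathcal{L} := \det\nolimits_R \bigl( Q \otimes^L_{W(R)} R \bigr), \]
a line bundle on $\Gr^{W\aff,[a,b]}$, as the natural analog of the equal-characteristic determinant $\det_k(t^a k[[t]]^n / M)$.

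The crux is to show that $\mathcal{L}$ is ample on $Y_{\perf}$. My approach would be to construct a closed embedding of $\Gr^{W\aff,[a,b]}$ into the perfection of a classical Grassmannian over $\F_p$, under which $\mathcal{L}$ becomes the pullback of the Plücker line bundle. The quotient $p^a W(R)^n / p^b W(R)^n$ admits a natural filtration by powers of $p$ with graded pieces canonically isomorphic to $R^n$ (using perfectness of $R$), endowing it with an underlying $R$-module structure of total rank $n(b-a)$; a $W(R)$-submodule is in particular an $R$-submodule satisfying closed (Frobenius-polynomial) conditions coming from stability under Witt addition and multiplication by $p$ and by Teichm\"uller lifts. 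This should realize $\Gr^{W\aff,[a,b]}$ as a closed subfunctor of the perfection of $\bigsqcup_d \Gr\bigl(d, n(b-a)\bigr)_{\F_p}$, onto which $\mathcal{L}$ restricts from the ample Pl\"ucker bundle.

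Finally, once the ampleness of $\mathcal{L}$ on $Y_{\perf}$ is in hand, the $h$-descent for vector bundles on perfect schemes (developed earlier in this paper) implies that a suitable power $\mathcal{L}^{p^k}$ descends to a line bundle on $Y$ itself; this descent is ample on $Y$ because ampleness is preserved along the universal homeomorphism $Y_{\perf} \to Y$, and together with properness of $Y$ this forces $Y$ to be a projective scheme rather than merely an algebraic space. Hence $\Gr^{W\aff,[a,b]} = Y_{\perf}$ is the perfection of a projective variety, and the full $\Gr^{W\aff} = \varinjlim_{a \le b} \Gr^{W\aff,[a,b]}$ is a filtered colimit of such along closed embeddings, proving the theorem. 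The main obstacle I foresee is the ampleness argument: one must translate $W(R)$-stability into closed conditions on a classical Grassmannian (handling the Frobenius twists inherent in Witt arithmetic, where Teichm\"uller is not additive) and verify that the determinant construction matches the Pl\"ucker line bundle under this embedding.
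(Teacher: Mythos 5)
Your proposed line bundle $\det_R(Q\otimes^L_{W(R)}R)$ is trivial, so the whole strategy collapses. This is clearest when $Q$ is killed by $p$ (the minuscule strata, where $\Gr^{W\aff,[a,b]}$ restricts to a perfected classical Grassmannian): then $Q$ is a finite projective $R$-module and $Q\otimes^L_{W(R)}R\simeq\mathrm{cone}(Q\xrightarrow{p=0}Q)\simeq Q\oplus Q[1]$, whose determinant is $\det(Q)\otimes\det(Q)^{-1}\cong(\calO,0)\in\gPic^\Z(R)$, not the ample Pl\"ucker bundle $\det(Q)$. In general your two-term representative gives $\det(Q\otimes^L_{W(R)}R)\cong\det_R(M/pM)^{-1}$, and $\det_R(M/pM)\simeq\det_{W(R)}(M)\otimes_{W(R)}R$ is always trivial (one sees this by descending along the Demazure resolution, or by noting that $\det_{W(R)}(M)$ classifies a map to the discrete scheme $\Gr_{\GL_1}$). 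In $K$-theoretic language, your $\calL$ is the composite $K(W(R)\on R)\xrightarrow{-\otimes^L_{W(R)}R}K(R)\xrightarrow{\det}\gPic^\Z(R)$, while what the theorem requires is the \emph{other} extension $\widetilde{\det}$ of $\det$ along the forgetful map $\alpha\colon K(R)\to K(W(R)\on R)$; these genuinely disagree, since $(-\otimes^L_{W(R)}R)\circ\alpha$ sends a finite projective $R$-module $P$ to $P\oplus P[1]$ and therefore kills all determinants. This is exactly the subtlety the introduction flags: $Q$ carries no $R$-module structure, and naive derived base change to $R$ forgets precisely the information the determinant is supposed to see.

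The proposed closed embedding into a perfected Grassmannian also does not exist as described. Witt coordinates identify $p^aW(R)^n/p^bW(R)^n$ with $R^{n(b-a)}$ only \emph{as a set}: the Witt addition is not componentwise, so a $W(R)$-submodule is a subgroup for a nonstandard group law on $R^{n(b-a)}$ and is \emph{not} an $R$-linear subspace of it. The assertion that ``a $W(R)$-submodule is in particular an $R$-submodule'' is false, and this is precisely the obstruction (going back to Kreidl and Zhu) that made representability hard in the first place. The paper's actual argument is different on both counts: $\calL$ is constructed either $K$-theoretically by extending $\det$ along $\alpha$ using Quillen d\'evissage on regular perfections, de Jong's alterations and $h$-descent of line bundles (Theorem~\ref{thm:DetGeneral}), or geometrically by descending $\widetilde{\calL}=\bigotimes_i\det(\calQ_i)$ from the Demazure resolution via the fibral descent criterion (Theorem~\ref{ThmVectTrivial}); ampleness is then proved not by exhibiting a projective embedding but by verifying Keel's semiampleness theorem \cite{KeelSemiAmple} inductively on the stratification, using that $\widetilde{\calL}$ is strictly nef (Lemma~\ref{lem:StrictlyNef}) and big with exceptional locus contained in the boundary (Lemma~\ref{lem:BigonDemazure}).
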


Our proof is independent of the work of Zhu. The crucial step is the construction of a natural line bundle $\mathcal{L}$ on $\Gr^{W\aff,[a,b]}$, analogous to the line bundle
\[
\mathcal{L} = \det_R (t^a R[[t]]^n/M)
\]
on $\Gr^{\aff,[a,b]}$.\footnote{The existence of this line bundle was already conjectured by Zhu.} However, as $p^a W(R)^n / M$ does not carry the structure of an $R$-module, it is not clear how to make sense of $\det_R ( p^a W(R)^n / M )$.

We give two solutions to this problem. Both make use of strong \emph{non-flat} descent properties for line bundles on perfect schemes. In fact, it turns out that Voevodsky's $h$-topology is subcanonical on the category of perfect schemes, and supports the ``correct'' bundle theory (see \S \ref{VDescent} for precise statements):

\begin{theorem}
	\label{thm:hdescentIntro}
Any vector bundle $\calE$ on a perfect $\F_p$-scheme $X$ gives a sheaf for the $h$-topology on perfect schemes over $X$ via pullback, and one has $H^i_h(X,\calE) \simeq H^i(X,\calE)$ for all $i$. Moreover, one has effective descent for vector bundles along $h$-covers of perfect schemes.
\end{theorem}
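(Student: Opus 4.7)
The plan is to reduce the statement to two basic classes of $h$-covers --- fppf covers and abstract blowups --- and handle each separately. All three assertions ($h$-sheaf axiom, cohomology comparison, and effective descent) are Zariski local on $X$, so we may assume $X = \Spec A$ with $A$ a perfect $\F_p$-algebra; on such an affine $X$, $H^{>0}(X, \calE) = 0$ for any vector bundle $\calE$, so the cohomology comparison reduces to showing that the \v{C}ech complex of $\calE$ against any $h$-cover of $X$ is acyclic in positive degrees and has $\calE(X)$ in degree zero. The $h$-sheaf axiom is then the degree zero part of this statement, and effective descent will be proved alongside.

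The key structural input is that any $h$-cover in our setting is refined by a finite composition of fppf covers and abstract blowups, where an abstract blowup is a proper surjection $\pi: X' \to X$ that is an isomorphism over a dense open $U \subset X$; this follows from work of Voevodsky, refined by Rydh, via Raynaud--Gruson flattening and Chow's lemma. For the fppf part, Grothendieck's fpqc descent for quasi-coherent sheaves immediately furnishes all three assertions. The problem therefore reduces to an abstract blowup square
\[
\begin{array}{ccc}
Z' & \longrightarrow & X' \\
\downarrow & & \downarrow \\
Z & \longrightarrow & X,
\end{array}
\]
where $Z \hookrightarrow X$ is the reduced complement of $U$ (note that a quotient of a perfect ring by a radical ideal remains perfect, so this is harmless) and $Z' = X' \times_X Z$, suitably perfected.

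The heart of the argument is to show that, after perfection, this square becomes a \emph{Milnor square}: the induced diagram of global sections realizes $A$ as the pullback of perfect rings $A' \times_{B'} B$. On a general base, this pullback property can fail because the scheme-theoretic preimage of $Z$ in $X'$ may carry nilpotents not detected by $Z'$; but on a perfect base nilpotents vanish outright, and more strongly, perfecting collapses any residual infinitesimal discrepancy into a pro-nilpotent ideal killed by the perfection functor. Given such a Milnor square, classical Milnor patching for projective modules over pullback rings produces an equivalence between vector bundles on $X$ and the category of descent data --- a vector bundle on $X'$, a vector bundle on $Z$, and a gluing isomorphism over $Z'$; this equivalence encodes simultaneously effective descent and the vanishing of higher \v{C}ech cohomology for the cover $X' \sqcup Z \to X$.

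The main obstacle is precisely the verification of the Milnor square property after perfection --- every other step is formal or classical. I expect to handle it by spreading out to a finitely generated model of the square over an $\F_p$-subalgebra of finite type, where the obstruction to the pullback identity is a finitely generated nilpotent ideal that is killed by perfection, or equivalently by verifying by hand that the perfection functor (a left adjoint to the inclusion of perfect $\F_p$-algebras) carries the evident Zariski pushout in ordinary $\F_p$-algebras to a genuine pushout of perfect rings. With this in hand, iterating over successive layers of an arbitrary $h$-cover via a standard \v{C}ech-to-derived spectral sequence argument establishes all three assertions of the theorem.
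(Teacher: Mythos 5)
Your reduction to fppf covers and abstract blowup squares is the right skeleton, and it is the same structure as the paper's (Theorem \ref{CritHSheaf} / Corollary \ref{SheafUnivSubtr}). The gap is in the treatment of the abstract blowup square. Milnor patching is a statement about finitely generated projective modules over a Cartesian square of \emph{rings} in which one leg is surjective; you apply it to a square in which $X'$ is a blowup and therefore not affine. Vector bundles on $X'$ are not recoverable from the ring $H^0(X',\calO_{X'})$. In fact, after perfection $H^0(X',\calO_{X'})\cong H^0(X,\calO_X)=A$ (the map $X'\to X$ is proper surjective with connected geometric fibres, cf.\ Lemma \ref{UnivHomeomIsomPerf} and Lemma \ref{CriterionO}), so the ``Milnor square of global sections'' degenerates to $A\simeq A\times_{B'} B$ and carries essentially no information about $\gVect(X')$. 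Passing to an affine cover of $X'$ does not rescue this: it reintroduces exactly the Čech data that the Milnor formalism was meant to dissolve. So the conclusion ``classical Milnor patching $\ldots$ produces an equivalence between vector bundles on $X$ and descent data on $(X',Z,Z')$'' does not follow from what you have set up.

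The equivalence you need, $\gVect(X_\perf)\simeq\gVect(X'_\perf)\times_{\gVect(Z'_\perf)}\gVect(Z_\perf)$ for a blowup square over an affine base, is precisely Lemma \ref{lem:VectcdhStack}(ii), and its proof is not formal. Full faithfulness does follow from Gabber's cohomological input (part (i) of that lemma), but essential surjectivity requires real geometry: one reduces to the case where $X'$ is the actual blowup of $X$ along $Z$ with $A$ assumed $I$-adically complete, produces a candidate descent $\calG$ on $X$ by deformation theory (affineness of $X$ makes $\gVect(X)\to\gVect(Z)$ bijective on isomorphism classes), and then extends the gluing isomorphism on $Z'$ to all infinitesimal neighbourhoods $mZ'$ using Serre vanishing for $H^1(X',I^n/I^{n+1})$, finishing with the formal existence theorem $\gVect(X')\simeq\lim\gVect(mZ')$. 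Your observation that perfection kills nilpotent discrepancies (so $Z$ may be fattened freely) is correct and indeed necessary — the statement is false without perfecting — but it is only a preliminary normalization and does not by itself give effective descent across the non-affine morphism $X'\to X$.
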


The first part of this theorem is due to Gabber, cf. \cite[\S 3]{BhattSchwedeTakagi}; the second part, in fact, extends to the full derived category (see \S \ref{sec:hDescentDerived}). Using this descent result, we can informally describe our first construction of $\mathcal{L}$, which is $K$-theoretic. The idea here is simply that one can often (e.g., when $R$ is a field, or {\em always} after an $h$-cover) filter $p^a W(R)^n / M$ in such a way that all gradeds $Q_i$ are finite projective $R$-modules, and then define
\[
\mathcal{L} = \bigotimes_i \det_R (Q_i)\ .
\]
The problem is then showing that this construction is independent of the choice of filtration; once $\mathcal{L}$ is canonically independent of the choices, one can use $h$-descent (i.e., Theorem \ref{thm:hdescentIntro}) to define it in general. In $K$-theoretic language, this amounts to constructing a natural map
\[
\widetilde{\det}: K(W(R)\on R)\to \gPic^\Z(R)
\]
from a $K$-theory spectrum\footnote{We are ultimately only interested in the statement at the level of $\pi_0$. However, the language of spectra, or at least Picard groupoids, is critical to carry out the descent.} to the groupoid of graded line bundles $\gPic^\Z(R)$; here the $K$-theory spectrum parametrizes perfect complexes on $W(R)$ that are acyclic after inverting $p$ (the relevant example is $p^a W(R)^n/M$ in the above notation), while the $\Z$-grading on the target keeps track of the fibral rank (and can be largely ignored at first pass). As we recall in the Appendix, taking determinants of projective modules defines a natural map
\[
\det: K(R)\to \gPic^\Z(R)\ ,
\]
so our problem can be reformulated as that of extending the map $\det$ along the forgetful map $\alpha: K(R)\to K(W(R)\on R)$. When $R$ is the perfection of a regular $\F_p$-algebra, this problem is easy to solve: $\alpha$ is an equivalence, thanks essentially to Quillen's d\'evissage theorem. In general, we use de Jong's alterations and $h$-descent for line bundles to reduce the problem to the previous case. 

Our second construction is more geometric. Here, we observe that on a suitable Demazure resolution 
\[ \widetilde{\Gr}^{W\aff,[a,b]}\to \Gr^{W\aff,[a,b]}\]
parametrizing filtrations of $p^a W(R)^n / M$ with gradeds being finite projective $R$-modules, there is a line bundle $\widetilde{\mathcal{L}}$ as above by definition. The problem becomes that of descending $\widetilde{\mathcal{L}}$. To handle such descent questions, we give the following criterion (see Theorem \ref{ThmVectTrivial}, Remark~\ref{RemVectTrivialnew}):

\begin{theorem} 
	\label{thm:VectTrivialIntro}
		Let $f: X\to Y$ be the perfection of a proper surjective map of $\F_p$-schemes. Assume that all geometric fibres of $f$ are connected. Then a vector bundle $\calE$ on $X$ descends (necessarily uniquely) to a vector bundle on $Y$ if and only if $\calE$ is trivial on all geometric fibres of $f$.
\end{theorem}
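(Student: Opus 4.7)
The ``only if'' direction is immediate: for $\calE = f^*\calF$, the restriction $\calE|_{X_{\bar y}}$ is pulled back from the finite-dimensional vector space $\calF|_{\bar y}$ over the algebraically closed residue field $k(\bar y)$, hence trivial. The uniqueness of the descent also follows easily: the projection formula together with the identity $f_*\calO_X = \calO_Y$ (valid because $f$ is proper with geometrically connected fibres and all schemes involved are reduced, being perfect) implies $\Hom(\calF_1, \calF_2) \cong \Hom(f^*\calF_1, f^*\calF_2)$ for any pair of vector bundles $\calF_1, \calF_2$ on $Y$, so any two descents of $\calE$ are canonically isomorphic.

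For the ``if'' direction, after reducing to the case where $Y$ is connected (so that $\rk\calE$ is a single integer $n$), the plan is to show that $\calF := f_*\calE$ is a vector bundle on $Y$ of rank $n$ and that the counit $f^*\calF \to \calE$ is an isomorphism. The crucial fibrewise input is that $H^0(X_{\bar y}, \calE|_{X_{\bar y}}) \cong k(\bar y)^n$ for every geometric point $\bar y \in Y$, which follows from the fibrewise triviality of $\calE$ together with $H^0(X_{\bar y}, \calO_{X_{\bar y}}) = k(\bar y)$ (using that $X_{\bar y}$ is proper, reduced and connected). A cohomology-and-base-change argument then forces $\calF$ to be locally free of rank $n$ on $Y$ with formation commuting with base change. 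On each geometric fibre, the counit becomes the evaluation map $H^0(X_{\bar y}, \calE|_{X_{\bar y}}) \otimes_{k(\bar y)} \calO_{X_{\bar y}} \to \calE|_{X_{\bar y}}$, which is an isomorphism whenever the target is trivial; hence the counit is a fibrewise, and so global, isomorphism by Nakayama.

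The principal obstacle is that the cohomology-and-base-change step does not formally hold in the non-noetherian, non-flat setting of perfect schemes, so one cannot directly cite Grauert. The natural fix is to spread out: write $f$ as the perfection of a proper surjection $f_0 : X_0 \to Y_0$ of finite-type $\F_p$-schemes, and observe that $\calE$ descends, after sufficient Frobenius pullback, to a vector bundle $\calE_0$ on some Frobenius twist of $X_0$. On this noetherian level one may invoke the classical semicontinuity and base-change theorems (using the constancy of fibrewise $h^0$, which follows by tracking the fibrewise triviality through the spreading-out) to conclude the desired structure for $(f_0)_*\calE_0$, and transfer the conclusion back to $f_*\calE$ using that the formation of pushforwards of coherent sheaves commutes with cofiltered affine limits of schemes. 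As an alternative route, the $h$-descent of Theorem~\ref{thm:hdescentIntro} reduces the problem to producing canonical descent data $p_1^*\calE \cong p_2^*\calE$ on $X \times_Y X$, for which the same pushforward analysis can be applied to the projection $p_1$, whose geometric fibres are again geometrically connected and on which both bundles become fibrewise trivial.
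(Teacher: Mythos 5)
The "only if" direction and the uniqueness of the descent are fine, but the core of your "if" direction has a genuine gap precisely at the step you flag as "the principal obstacle." Your proposed fix—spread out to a finite-type model $f_0\colon X_0\to Y_0$, descend $\calE$ to some $\calE_0$ on a Frobenius twist, and then invoke classical cohomology-and-base-change on the noetherian level—fails for two independent reasons.

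First, fibrewise triviality does not spread out. The hypothesis is that $\calE$ is trivial on the fibres $X_{\bar y} = (X_{0,\bar y_0})_\perf$, but this only says that $\calE_0|_{X_{0,\bar y_0}}$ becomes trivial after some Frobenius pullback; it does \emph{not} force $\calE_0|_{X_{0,\bar y_0}}$ to be trivial, since Frobenius can kill non-trivial bundles (e.g. any $p$-power-torsion line bundle on a fibre). Hence your "tracking of fibrewise triviality through the spreading-out" does not give constancy of $h^0(X_{0,\bar y_0}, \calE_0|_{X_{0,\bar y_0}})$, and there is in general no reason a "good" model $\calE_0$ exists. Second, even if that $h^0$ were constant, the classical semicontinuity and base-change theorems (Grauert, EGA~III) require the morphism to be \emph{flat}. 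But $f_0$ is merely a finite-type proper surjection—blowups are the generic example—and no flatness is available. Without flatness, constancy of $h^0$ on fibres does not imply that $(f_0)_*\calE_0$ is locally free, nor that its formation commutes with base change. These two failures are exactly why the paper does not attempt to leave the perfect world: the "miracle" that makes the argument work—that $Rf_*$ commutes with arbitrary base change without any flatness hypothesis (Lemma~\ref{BaseChange}, a consequence of the $\Tor$-vanishing for perfect rings in Lemma~\ref{NoTorPerf})—is specific to perfect schemes and vanishes the moment one passes to finite-type models.

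The paper's actual route (see Theorem~\ref{ThmVectTrivial}, Lemma~\ref{VectTrivialValRing}, and for the hypotheses as stated here, Remark~\ref{RemVectTrivialnew}) is quite different from yours: it exploits $v$-descent for vector bundles on perfect schemes to reduce to the case $Y=\Spec(V)$ with $V$ a perfect valuation ring (Lemma~\ref{wLoc}), reduces further to rank one (Lemma~\ref{BreakValRing}), produces a section of $f$ from the algebraically closed generic fibre, and then checks that the induced map on cohomology is an isomorphism by separately testing against $K$ and against $V/g$ for a suitable $g$—all of this resting on the perfect-world base change of Lemma~\ref{BaseChange}. The version with only geometric connectedness replaces the section argument with a reduction to spherically complete valuation rings and a direct analysis of bounded submodules. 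Your alternative route via $h$-descent and the projection $p_1\colon X\times_Y X\to X$ inherits exactly the same base-change gap, so it does not rescue the argument either.
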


\begin{remark} In a previous version this theorem was stated under the stronger hypothesis $Rf_\ast \calO_X = \calO_Y$.
\end{remark}

In finite type situations, such a theorem implies descent of vector bundles after some finite purely inseparable map, which may have interesting applications.

Having constructed the line bundle $\calL$, we prove that it is ample by using a fundamental result of Keel, \cite{KeelSemiAmple}, on semiample line bundles in positive characteristic. Unfortunately, contrary to the situation in equal characteristic, we are not able to give a direct construction of enough sections of $\calL$ which would give a projective embedding.

We remark that it is a very interesting question whether there is a natural ``finite-type" structure on the Witt vector affine Grassmannian. For example, all minuscule Schubert cells, which parametrize lattices $M \subset W(k)[\frac{1}{p}]^n$ that differ from the standard lattice by (strict) $p$-torsion, are canonically the perfections of classical Grassmannians; it is natural to wonder if such a story extends deeper into the stratification. Nevertheless, for all questions of a ``topological'' nature (such as connected components, dimensions, or \'etale cohomology), it suffices understand the perfection: the functor $X\mapsto X_\perf$ preserves this information.

\subsection{Outline} 
We begin in \S \ref{sec:hsheaves} by collecting some basic notions surrounding Voevodsky's $h$-topology on the category of schemes (and its non-noetherian analogue, the $v$-topology, which relies on results of Rydh \cite{Rydh}); the main result is a criterion for an fppf sheaf to be an $h$-sheaf in terms of ``abstract blowup squares,'' see Theorem \ref{CritHSheaf}. Next, in \S \ref{sec:perfect}, we study perfect schemes and the perfection functor, and record some surprising vanishing and base change results that will be useful later. All this material is put to use in \S \ref{VDescent} to prove Theorem \ref{thm:hdescentIntro}. Using the results of \S \ref{sec:perfect} and \S \ref{VDescent}, in \S \ref{sec:DetConsKtheory} and \S \ref{sec:DetConsGeom}, we give the two promised constructions of line bundles; we stress that the results in \S \ref{sec:DetConsGeom}, which include Theorem \ref{thm:VectTrivialIntro} above, go beyond the construction of line bundles, and will be used later. Specializing to the problem at hand, in \S \ref{sec:torsionfamilies}, we collect some geometric observations about certain universal families parametrizing finite torsion $\Z_p$-modules filtered in a certain way; these are exploited to prove Theorem \ref{thm:mainthmIntro} in \S \ref{sec:WittAffGrGLn}. We briefly discuss the extension of Theorem \ref{thm:mainthmIntro} to general groups $G$ in \S \ref{sec:WittAffGrG}. The special case of $G = \SL_n$ is then studied in \S \ref{sec:WittAffGrSL}: we identify a central extension of $\SL_n(\Q_p)$ resulting from our construction of the line bundle in classical terms, and raise several related questions, mostly motivated by the corresponding equicharacteristic story.

In \S \ref{sec:hDescentDerived}, which is not used in the rest of the paper, we extend Theorem \ref{thm:hdescentIntro} to the derived category of quasi-coherent sheaves; here we use the language of $\infty$-categories, and rely on a notion recently introduced by Mathew in \cite{MathewGaloisGroup}. This section also contains some other results of independent interest: $h$-descent for Witt vector cohomology after inverting Frobenius (in \S \ref{ss:WittVectorCoh}) extending \cite[\S 3]{BBE}, a conceptual proof of Kunz's theorem on regular noetherian rings (in Corollary \ref{cor:Kunz}), a characterization of $h$-covers of noetherian schemes in terms of the derived category (in Theorem \ref{thm:hCoverClassical}), and derived $h$-descent for the full quasi-coherent derived category of noetherian schemes (in Theorem \ref{thm:hCechDescentDerivedQCoh}) extending \cite{HalpernLeistnerPreygel}.

Finally, in the Appendix \S \ref{Appendix}, we briefly review the construction of the determinant map $\det$ mentioned above in the language of $\infty$-categories and spectra; our goal here is to give an intuitive picture of the construction, and detailed proofs are not given.

\subsection*{Acknowledgments} This work started after the authors listened to a talk of Xinwen Zhu on his work at the MSRI, and the authors would like to thank him for asking the question on the existence of $\calL$. They would also like to thank Akhil Mathew for enlightening conversations related to \S \ref{ss:DescendableMaps}. Moreover, they wish to thank all the participants of the ARGOS seminar in Bonn in the summer term 2015 for their careful reading of the manuscript, and the many suggestions for improvements and additions. The first version of this preprint contained an error in the proof of Lemma~\ref{lem:VectcdhStack}, tracing back to an error in \cite[Corollary 3.3.2]{EGAIII}; we thank Christopher Hacon, and Linquan Ma (via Karl Schwede) and the anonymous referee for pointing this out. The authors are also indebted to the referee for providing numerous other comments that improved the readability of this paper. Finally, they would like to thank the Clay Mathematics Institute, the University of California (Berkeley), and the MSRI for their support and hospitality.  This work was done while B. Bhatt was partially supported by NSF grant DMS 1340424 and P. Scholze was a Clay Research Fellow.

\section{$h$-sheaves}
\label{sec:hsheaves}

In this section, we recall some general facts about sheaves on the $h$-topology defined by Voevodsky, \cite[\S 3]{VoevodskyHTop}. We use results of Rydh, \cite{Rydh}, in the non-noetherian case. In the following, all schemes are assumed to be qcqs for simplicity. Let us start by recalling the notion of universally subtrusive morphisms.

\begin{definition} A morphism $f: X\to Y$ of qcqs schemes is called \emph{universally subtrusive} or a {\em $v$-cover} if for any map $\Spec(V)\to Y$, with $V$ a valuation ring, there is an extension $V\hookrightarrow W$ of valuation rings and a commutative diagram
\[\xymatrix{
\Spec(W)\ar@{->>}[d]\ar[r]&X\ar[d]^f\\
\Spec(V)\ar[r]&Y.
}\]
We say that $f^\prime: X^\prime\to Y$ is a refinement of $f$ if $f^\prime$ is still universally subtrusive, and factors through $f$.
\end{definition}

By \cite[Corollary 2.9]{Rydh}, this agrees with \cite[Definition 2.2]{Rydh}. We also remark that any $v$-cover is submersive (and thus universally submersive), meaning that the map $|X|\to |Y|$ is a quotient map. If $Y$ is noetherian, universally submersive maps are $v$-covers, cf. \cite[Theorem 2.8]{Rydh}.

\begin{remark} The name $v$-cover, besides the similarity with the existing notion of $h$-covers, is meant to suggest surjectivity at the level of valuations. In fact, this can be made precise as follows. Recall that there is a fully faithful functor $X\mapsto X^\ad$ from schemes to adic spaces sending $\Spec(R)$ to $\Spa(R,R)$, the space of (equivalence classes of) valuations on $R$ which only take values $\leq 1$, cf. \cite{HuberGeneralization}. Then a map $f: X\to Y$ is a $v$-cover if and only if $|f^\ad|: |X^\ad|\to |Y^\ad|$ is surjective.
\end{remark}

Before going on, let us discuss several examples.

\begin{example}[{\cite[Remark 2.5]{Rydh}}] Let $f: X\to Y$ be a map of qcqs schemes. Then $f$ is a $v$-cover in any of the following cases.
\begin{enumerate}
\item[{\rm (i)}] The map $f$ is faithfully flat.
\item[{\rm (ii)}] The map $f$ is proper and surjective.
\item[{\rm (iii)}] The map $f$ is an $h$-cover in the sense of Voevodsky.
\end{enumerate}
Indeed, for (i), one can first lift the special point of the valuation ring, and then lift generalizations. For (ii), one can first lift the generic point of the valuation ring, and then use the valuative criterion of properness. Finally, (iii) follows from (i) and (ii) (as $h$-covers are generated by proper surjective maps and fppf maps). As an example of a surjective map $f: X\to Y$ that is not a $v$-cover, consider the following example, also given by Voevodsky: Let $Y=\mathbb{A}^2_k$ over some field $k$, let $\tilde{X}\to Y$ be the blow-up of $Y$ at the origin $(0,0)$, and let $X\subset \tilde{X}$ be the complement of a point in the exceptional locus. Any valuation on $Y$ that specializes from the generic point to $(0,0)$ in the direction corresponding to the missing point of $X$ does not admit a lift to $\tilde{X}$.
\end{example}

We need the following structural result about $v$-covers.

\begin{theorem}[{\cite[Theorem 3.12]{Rydh}}]\label{StructureUnivSubtr} Let $f: X\to Y$ be a finitely presented $v$-cover, where $Y$ is affine. Then there is a refinement $f^\prime: X^\prime\to Y$ of $f$ which factors as a quasi-compact open covering $X^\prime\to Y^\prime$ and a proper surjective map $Y^\prime\to Y$ of finite presentation.
\end{theorem}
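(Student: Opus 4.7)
The plan is to combine three classical ingredients: Noetherian approximation, Raynaud--Gruson flattening by blow-up, and a structure theorem for fppf covers (refinement by a Zariski cover composed with a finite locally free surjection). The output will be a chain $X'' \to Y'' \to Y$ in which $Y'' \to Y$ is proper surjective of finite presentation and $X'' \to Y''$ is a quasi-compact open covering.

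\textbf{Step 1 (Noetherian reduction).} Write $Y = \Spec A$ as a cofiltered limit $Y = \varprojlim Y_\alpha$ of finite type $\Z$-schemes. Since $f$ is finitely presented, standard limit formalism descends it to a morphism $f_\alpha \colon X_\alpha \to Y_\alpha$ for some $\alpha$. The $v$-cover property descends as well: a valuation $\Spec V \to Y_\alpha$ pulls back to $Y$, the $v$-cover property of $f$ provides an extension $V \subset W$ and a lift to $X$, and this lift is defined over some $Y_\beta$ after increasing the index. Since any refinement of $f_\alpha$ pulls back to a refinement of $f$, we may assume $Y$ is Noetherian.

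\textbf{Step 2 (Flattening).} Apply Raynaud--Gruson's flatification theorem to produce a blow-up $\pi \colon Y' \to Y$ (proper, surjective, of finite presentation) such that the strict transform $X' \subset X \times_Y Y'$ is flat over $Y'$. One must verify that $X' \to Y'$ is still a $v$-cover. Given a valuation $\Spec V \to Y'$, extend $V$ to a valuation $W$ whose generic point lies over the locus where $\pi$ is an isomorphism; the composite $\Spec W \to Y' \to Y$ lifts along $f$ by the $v$-cover property of $f$, and over the $\pi$-isomorphism locus strict transform agrees with pullback, so the generic point of the lift lands in $X'$, whereupon the special point follows by the valuative criterion applied to $X' \to Y'$. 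This is the main technical obstacle: controlling that the strict transform (rather than the full base-change) still sees enough valuations requires carefully exploiting that the center of the Raynaud--Gruson blow-up is disjoint from the generic flat locus of $f$, and may require iterating the construction to eliminate residual problematic loci.

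\textbf{Step 3 (Refining fppf covers).} With $X' \to Y'$ now flat, finitely presented, and surjective, i.e.\ an fppf cover, invoke the classical refinement theorem (also due to Raynaud--Gruson) that every fppf cover of a qcqs scheme admits a refinement of the form $X'' \to Y'' \to Y'$ in which $X'' \to Y''$ is a quasi-compact Zariski open covering and $Y'' \to Y'$ is finite locally free and surjective. The composition $Y'' \to Y' \to Y$ is then proper, surjective, and of finite presentation (a finite cover followed by a blow-up), while $X'' \to Y''$ is the desired open covering. Composing the structure map $X'' \to X' \hookrightarrow X \times_Y Y' \to X$ exhibits $X''$ as a refinement of $f$ of the required form, completing the argument.
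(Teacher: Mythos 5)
The statement in the paper is simply cited from Rydh's article; the paper contains no proof for you to match, so I will assess your argument on its own terms and against Rydh's actual line of reasoning.

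Your Step 2 contains a genuine gap, and it is precisely where you flag the ``main technical obstacle.'' After the Raynaud--Gruson blow-up $\pi\colon Y'\to Y$, the strict transform $X'\to Y'$ is flat and finitely presented, but it is \emph{not} proper (and may fail to be surjective). Your verification that $X'\to Y'$ is still a $v$-cover ends with ``the special point follows by the valuative criterion applied to $X'\to Y'$'' --- but the valuative criterion of properness gives nothing for a flat non-proper map: a standard counterexample is the open immersion $\mathbf{A}^1\setminus\{0\}\hookrightarrow\mathbf{A}^1$, which is flat of finite presentation and yet a valuation centered at $0$ with a lift of its generic point does not lift. More fundamentally, the strict transform can genuinely fail to dominate all of $Y'$; the flattening theorem gives no control over the locus lying over the center of the blow-up. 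The honest version of what you are gesturing at --- ``iterating the construction to eliminate residual problematic loci'' --- is a Noetherian induction on $\dim Y$ in which the strata over the blow-up center are handled by the inductive hypothesis and the refinements over the open and closed strata are then carefully glued; this is essentially how Rydh's argument proceeds, and it is a materially different (and substantially harder) argument than a single flattening followed by a valuative-criterion invocation.

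Two smaller points. In Step 1 the directionality of the limit is reversed: there are maps $Y\to Y_\alpha$ in the cofiltered system, not $Y_\alpha\to Y$, so a valuation $\Spec V\to Y_\alpha$ does not ``pull back to $Y$''; the descent of the $v$-cover condition to a finite stage is true but requires a genuine approximation/constructibility argument (Rydh proves such statements separately), not the one you sketch. In Step 3 the factorization order coming from the classical refinement theorem for fppf covers is \emph{finite locally free over the pieces of a Zariski cover}, i.e.\ $(\text{finite})\circ(\text{open cover})$; what your chain needs is $(\text{open cover})\circ(\text{finite})$, i.e.\ a Zariski cover of a globally finite $Y$-scheme. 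These are not the same factorization and one cannot simply commute them; extending the finite covers from the Zariski pieces to a single finite cover of the whole base is itself a nontrivial step (and another place where a Noetherian induction, rather than a one-shot argument, is what makes the proof go through).
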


It follows from the flattening techniques of Raynaud-Gruson, \cite{RaynaudGruson}, that, up to refinements, one can break up general proper surjective maps into blow-ups, and finitely presented flat maps. This is formalized in the following definition.

\begin{definition} Let $f: X\to Y$ be a proper surjective map of finite presentation between qcqs schemes. We say that $f$ is of inductive level $0$ if it can be refined into a composition of a proper fppf map $f^\prime: X^\prime\to Y^\prime$ and a finitely presented nilimmersion $Y^\prime\hookrightarrow Y$. For $n>0$, we say that $f$ is of inductive level $\leq n$ if $f$ admits a refinement $f^\prime: X^\prime\to Y$ which has a factorization $X^\prime\to X_0\to Y_0\to Y$, where
\begin{enumerate}
\item[{\rm (o)}] the map $Y_0\to Y$ is a finitely presented nilimmersion,
\item[{\rm (i)}] the map $X_0\to Y_0$ is proper surjective of finite presentation, and an isomorphism outside a finitely presented closed subset $Z\subset Y_0$ such that $X_0\times_{Y_0} Z\to Z$ is of inductive level $\leq n-1$, and
\item[{\rm (ii)}] the map $X^\prime\to X_0$ is a proper fppf cover.
\end{enumerate}
\end{definition}

These notions are preserved under base change. The following lemma ensures that every relevant map is of inductive level $\leq n$ for some $n$.

\begin{lemma}\label{ProperGood} Let $f: X\to Y$ be a proper surjective map of finite presentation between qcqs schemes. Then $f$ is of inductive level $\leq n$, for some $n\geq 0$.
\end{lemma}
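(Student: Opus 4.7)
The plan is to reduce to the case where $Y$ is noetherian via standard limit techniques (writing $Y$ as a cofiltered limit of noetherian schemes with affine transition maps, descending $f$ to the limit, and using the fact that inductive level is preserved under base change), and then to induct on $d := \dim Y$, with Raynaud--Gruson flattening providing the key geometric input.

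The base case $d = 0$ is immediate: $Y$ is Artinian, so $Y_\red \hookrightarrow Y$ is a finitely presented nilimmersion onto a finite disjoint union of spectra of fields, and the base change $X \times_Y Y_\red \to Y_\red$ is proper of finite presentation, hence automatically flat, hence proper fppf. This shows $f$ is of inductive level $0$.

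For the inductive step at $d > 0$, let $Y_0 := Y_\red \hookrightarrow Y$ (a finitely presented nilimmersion) and let $f_0 : X_{Y_0} \to Y_0$ be the base change, still proper surjective of finite presentation over a reduced noetherian base. Generic flatness produces a dense open $U \subset Y_0$ over which $f_0$ is flat, so $\dim(Y_0 \setminus U) < d$. Applying Raynaud--Gruson flattening with respect to $U$ produces a finitely presented $U$-admissible blowup $X_0 \to Y_0$ centered in a finitely presented closed subscheme $Z \subset Y_0 \setminus U$, such that the strict transform $X' \subset X_{Y_0} \times_{Y_0} X_0$ is flat over $X_0$. Then $X' \to X_0$ is proper (being closed in the proper base change) and finitely presented, and is surjective because $Z$ is nowhere dense in $Y_0$, so that $X'$ contains the pullback of $X_{Y_0}$ over the dense open $X_0 \setminus (X_0 \times_{Y_0} Z)$; hence $X' \to X_0$ is a proper fppf cover. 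The natural map $X' \to X_{Y_0} \to X$ exhibits $X' \to Y$ as a refinement of $f$. The exceptional fiber $X_0 \times_{Y_0} Z \to Z$ is proper surjective of finite presentation over a base of dimension $< d$, so by the inductive hypothesis has inductive level $\leq m$ for some $m$, whence the factorization $X' \to X_0 \to Y_0 \to Y$ exhibits $f$ as being of inductive level $\leq m + 1$.

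The main technical obstacle I expect is citing the correct form of Raynaud--Gruson flattening: specifically, the version producing a finitely presented $U$-admissible blowup with center inside the non-flat locus that renders the strict transform flat and finitely presented in the qcqs setting, together with the surjectivity of the strict transform onto the blowup. Once these ingredients are in place, the induction on $\dim Y$ proceeds without further complication.
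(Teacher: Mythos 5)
Your proposal is correct and follows essentially the same route as the paper: approximate the base by noetherian schemes, pass to the reduction, and induct on $\dim Y$ using generic flatness plus Raynaud--Gruson flattening to split $f$ (after refinement) into a proper fppf cover of a blowup whose exceptional center has strictly smaller dimension. Two small remarks: your base case is cleaner than the paper's (the paper passes to a point over a finite \'etale extension, whereas you simply observe that anything proper of finite presentation over a reduced Artinian base is automatically flat), and in the noetherian reduction you should make explicit that the approximating schemes are of \emph{finite type over} $\Z$ (as in Thomason--Trobaugh), since a general noetherian scheme may have infinite Krull dimension and your induction is on $\dim Y$.
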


\begin{proof} By noetherian approximation, we can assume that $Y$ is of finite type over $\Z$, cf. \cite[App. C, Theorem C.9]{ThomasonKtheory}. We now prove the more precise result that $f: X\to Y$ is of inductive level $\leq \dim Y$, by induction on $\dim Y$. We can assume that $Y$ is reduced, as we allowed base-changes by nilimmersions. If $\dim Y=0$, then $Y$ is a disjoint union of spectra of finite fields; then $X\to Y$ admits a point over a finite \'etale extension of $Y$, which shows that $X\to Y$ is of inductive level $0$.

Now assume that $\dim Y>0$. By generic flatness and \cite[Th\'eor\`eme 5.2.2]{RaynaudGruson} (cf. \cite[Proposition 3.6]{Rydh}), one can refine $X\to Y$ by $X^\prime\to X_0\to Y$, where $X^\prime\to X_0$ is a proper fppf cover, and $X_0\to Y$ is a blow-up which is an isomorphism outside a closed subset $Z\subset Y$ of $\dim Z\leq n-1$. By induction, $X_0\times_Y Z\to Z$ is of inductive level $\leq n-1$, as desired.
\end{proof}

Now fix a qcqs base scheme $S$, and consider the category $\Sch^\fp_{/S}$ of finitely presented $S$-schemes.

\begin{definition} The $h$-topology on $\Sch^\fp_{/S}$ is generated by finitely presented $v$-covers.
\end{definition}

We will refer to finitely presented $v$-covers as $h$-covers in the sequel; in cases of overlap, this agrees with Voevodsky's definition. Our next goal is to characterize $h$-sheaves amongst all presheaves in terms of some easily geometrically testable properties. 

\begin{proposition} Let $F$ be a presheaf (of sets) on $\Sch^\fp_{/S}$. Then $F$ is an $h$-sheaf if and only if the following conditions are satisfied.
\begin{enumerate}
\item[{\rm (i)}] The presheaf $F$ is a sheaf for the fppf topology.
\item[{\rm (ii)}] Let $Y=\Spec(A)\in \Sch^\fp_{/S}$ be an affine scheme, and $X\to Y$ a proper surjective map of finite presentation, which is an isomorphism outside a finitely presented closed subset $Z\subset Y$ with preimage $E\subset X$. Then the diagram
\[\xymatrix{
F(Y)\ar[r]\ar[d]&F(X)\ar[d]\\
F(Z)\ar[r]&F(E)
}\]
is a pullback square.
\end{enumerate}
\end{proposition}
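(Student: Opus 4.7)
The plan is to prove both directions separately, with the ``only if'' being a short computation of fiber products and the ``if'' direction being the real content, where we reduce descent along an arbitrary $h$-cover to the two explicit descent conditions (i) and (ii) using the structural theorems recalled in this section.

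For the ``only if'' direction, I would observe that fppf covers are $h$-covers, which gives (i) immediately. For (ii), the map $X \sqcup Z \to Y$ is a finitely presented $v$-cover since it is proper, surjective, and finitely presented. Writing out the $h$-sheaf equalizer condition for this cover, and using the decomposition
\[
(X \sqcup Z) \times_Y (X \sqcup Z) \;=\; (X \times_Y X) \;\sqcup\; E \;\sqcup\; E \;\sqcup\; Z,
\]
the equalizer is the set of pairs $(a,b) \in F(X) \times F(Z)$ with $a|_E = b|_E$ and $p_1^*a = p_2^* a$ in $F(X \times_Y X)$. Since $X \to Y$ is an isomorphism outside $Z$, the map $X \times_Y X \to X$ is an isomorphism outside $E$ with preimage $E \times_Z E$, so a further application of $h$-descent to $X \times_Y X \to X$ shows the second condition is automatic on sections coming from $Y$. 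This produces the pullback square in (ii).

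For the ``if'' direction, assume (i) and (ii). I would first reduce to the affine case via (i), and then use Theorem~\ref{StructureUnivSubtr} to refine any $h$-cover $X \to Y$ as $X' \to Y' \to Y$ with $X' \to Y'$ a quasi-compact open cover (handled by (i)) and $Y' \to Y$ proper surjective of finite presentation. By Lemma~\ref{ProperGood}, every such map has some finite inductive level $n$, and I would induct on $n$. Each level decomposes into three building blocks: a finitely presented nilimmersion $Y_0 \hookrightarrow Y$, a proper fppf cover (handled by (i)), and a blowup-like map $X_0 \to Y_0$ that is an isomorphism outside a closed subset $Z$ with $X_0 \times_{Y_0} Z \to Z$ of strictly smaller inductive level. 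For the nilimmersion, apply (ii) with the given closed subscheme $Y_0 \subset Y$ (whose open complement is empty) and with the proper surjective map being $Y_0 \to Y$ itself; then $E = Y_0 \times_Y Y_0 = Y_0$ and the square in (ii) collapses to $F(Y) = F(Y_0)$. For the blowup-like map, (ii) supplies the pullback square, and one upgrades it to the full equalizer condition $F(Y_0) = \mathrm{eq}(F(X_0) \rightrightarrows F(X_0 \times_{Y_0} X_0))$ by applying (ii) again to the map $X_0 \times_{Y_0} X_0 \to X_0$ together with the inductive hypothesis for the map $E \to Z$.

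The main obstacle will be the bookkeeping needed to glue these pieces together cleanly. Concretely, one needs a small ``two-out-of-three'' style lemma for descent: if a composition $X' \to X \to Y$ is such that $F$ satisfies descent along $X' \to X$ and along the composite $X' \to Y$, then $F$ satisfies descent along $X \to Y$. One also needs to check carefully that descent along a refinement $f'$ of an $h$-cover $f$ transfers back to $f$ itself; this is where the refinement clause in the definition of inductive level is used crucially, together with the fact that pullbacks of $h$-covers along $h$-covers are $h$-covers, so the Čech nerves of $f$ and $f'$ can be compared. Once this descent formalism is in place, the inductive argument produces descent along every finitely presented proper surjective map, and combined with the reduction from Rydh's theorem this yields the full $h$-sheaf property.
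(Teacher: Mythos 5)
Your overall strategy for the ``if'' direction matches the paper's: reduce to affine via (i), use Theorem~\ref{StructureUnivSubtr} to split into a quasi-compact open cover and a proper surjection, then induct on inductive level via Lemma~\ref{ProperGood}, handling nilimmersions and fppf covers via (ii) and (i) and the genuinely new strata via (ii) combined with the inductive hypothesis for $E\to Z$. The paper carries this out by first establishing that $F$ is \emph{separated} (injectivity of restriction along any $h$-cover) and only then proving the full sheaf condition; this two-step structure is precisely what licenses the passage from an $h$-cover to a refinement of it, and is the concrete answer to the ``bookkeeping'' worry you flag at the end. You would do well to make that split explicit rather than gesturing at a devissage lemma, since in the set-valued case the whole refinement issue reduces to separatedness, with no extra machinery.

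There is, however, a genuine misstep in your ``only if'' direction. After identifying the equalizer for the cover $X\sqcup Z\to Y$ as the set of pairs $(a,b)\in F(X)\times F(Z)$ with $a|_E=b|_E$ and $p_1^*a=p_2^*a$, you claim that ``a further application of $h$-descent to $X\times_Y X\to X$'' shows the second condition is automatic. But $X\times_Y X\to X$ does not cover $X\times_Y X$, so separatedness of $F$ along that map tells you nothing about equality of two sections of $F$ on $X\times_Y X$, and the phrase ``on sections coming from $Y$'' is circular since descent to $Y$ is exactly what you are trying to establish. The correct cover to use is $X\sqcup(E\times_Z E)\to X\times_Y X$, which is a disjoint union of closed immersions (the diagonal of $X$, which hits everything over $Y\setminus Z$, and the inclusion of $E\times_Z E=E\times_Y E$), hence proper, surjective and finitely presented, i.e.\ an $h$-cover. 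Both $p_1^*a$ and $p_2^*a$ restrict to $a$ along the diagonal, and both restrict to the pullback of $a|_E=b|_E$ from $Z$ on $E\times_Z E$; separatedness of $F$ along this cover then gives $p_1^*a=p_2^*a$. This is in fact exactly the refinement of the \v Cech nerve that the paper uses (it works directly with the cover $X\to Y$ rather than $X\sqcup Z\to Y$, which avoids having to first write out the extra $p_1^*a=p_2^*a$ condition and then argue it away).
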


Note in particular that if $X\hookrightarrow Y$ is a finitely presented nilimmersion of affine schemes of finite presentation over $S$, then taking $Z=X$ in (ii) (with preimage $E=X$), one sees that $F(Y)=F(X)$.

\begin{proof} Clearly, if $F$ is an $h$-sheaf, then (i) is satisfied. To verify (ii), note that $X\to Y$ is an $h$-cover. The sheaf axiom implies that $F(Y)$ is the equalizer of $F(X)\rightrightarrows F(X\times_Y X)$. But $X\sqcup E\times_Z E\to X\times_Y X$ is a further $h$-cover, so that $F(Y)$ is the equalizer of $F(X)\rightrightarrows F(X\sqcup E\times_Z E) = F(X) \times F(E\times_Z E)$. They clearly agree in the first component, so $F(Y)$ is the equalizer of $F(X)\rightrightarrows F(E\times_Z E)$. Now, given elements $a_X\in F(X)$ and $a_Z\in F(Z)$ which agree on $E$, the two pullbacks of $a_X$ to $E\times_Z E$ are both given by the pullback of $a_Z$ along $E\times_Z E\to Z$, giving an element $a_Y\in F(Y)$ restricting to $a_X$ on $X$. As $F(Z)\hookrightarrow F(E)$, $E\to Z$ being an $h$-cover, it follows that $a_Y$ also restricts to $a_Z$, as desired.

For the converse, we first check that $F$ is separated. Thus, take an $h$-cover $f: X\to Y$ in $\Sch^\fp_{/S}$ and two sections $a,b\in F(Y)$ that become equal on $X$. As $F$ is an fppf sheaf (and in particular, a Zariski sheaf), we may assume that $Y = \Spec(A)$ is affine by (i). Applying Theorem \ref{StructureUnivSubtr}, we may then assume that $X$ is a quasicompact open cover of a proper surjective map $Y^\prime\to Y$. Using (i) again, we can reduce to the case that $X\to Y$ is proper surjective. By Lemma \ref{ProperGood}, $X\to Y$ is of inductive level $\leq n$ for some $n$. We argue by induction on $n$. If $n=0$, then, after further refinement, $X\to Y$ is a composition of an fppf cover and a nilimmersion; using (i) and (ii), we see $a=b$. If $n>0$, then after further refinement, $X\to Y$ factors as a composition of nilimmersions, fppf covers, and a map $X^\prime\to Y^\prime$ which is an isomorphism outside some finitely presented $Z\subset Y^\prime$, for which $X^\prime \times_{Y^\prime} Z\to Z$ is of inductive level $\leq n-1$. Using (ii) and induction, one again finds that $a=b$, as desired.

Now we check that $F$ is a sheaf. Thus, take an $h$-cover $f: X\to Y$ in $\Sch^\fp_{/S}$ and a section $a\in F(X)$ whose two pullbacks to $F(X\times_Y X)$ agree. By (i), we may assume that $Y$ is affine. As we already proved that $F$ is separated, we are also free to replace $f$ by a refinement. Thus, by Theorem \ref{StructureUnivSubtr}, we may assume that $X$ factors as a composite of an fppf cover, and a proper surjective map; using (i) again, we may then assume that $X\to Y$ is a proper surjective map. By Lemma \ref{ProperGood}, $X\to Y$ is of inductive level $\leq n$ for some $n$. Using induction on $n$ and properties (i) and (ii) once more, we can assume that $X\to Y$ is an isomorphism outside some finitely presented $Z\subset Y$ with preimage $E\subset X$, and that $a\in F(X)$ induces a section of $F(E)$ that comes via pullback from $a_Z\in F(Z)$. Property (ii) gives us an element $a_Y\in F(Y)$ whose pullback to $X$ is $a$, as desired.
\end{proof}

In fact, the same result holds true for sheaves of spaces, and in particular for stacks. In the following, all limits and colimits are taken in the $\infty$-categorical sense (i.e., they are homotopy limits and homotopy colimits). The reader unfamiliar with the language of sheaves of spaces as developed by Lurie in \cite{LurieHTT} may assume that $F$ is a prestack in the following theorem; this is the most important case for the sequel. For ease of reference, we prefer to state the theorem in its natural generality.

\begin{theorem}\label{CritHSheaf} Let $F$ be a presheaf of spaces on $\Sch^\fp_{/S}$. Then $F$ is an $h$-sheaf if and only if it satisfies the following properties.
\begin{enumerate}
\item[{\rm (i)}] The presheaf $F$ is a sheaf for the fppf topology.
\item[{\rm (ii)}] Let $Y=\Spec(A)\in \Sch^\fp_{/S}$ be an affine scheme of finite presentation over $S$, and $X\to Y$ a proper surjective map of finite presentation, which is an isomorphism outside a finitely presented closed subset $Z\subset Y$ with preimage $E\subset X$. Then the diagram
\[\xymatrix{
F(Y)\ar[r]\ar[d]&F(X)\ar[d]\\
F(Z)\ar[r]&F(E)
}\]
is a (homotopy) pullback square.
\end{enumerate}
\end{theorem}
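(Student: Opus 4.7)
The plan is to mirror the proof of the set-valued case, replacing equalizer conditions by full Čech-descent conditions for sheaves of spaces. First I would dispense with necessity: (i) is tautological, while (ii) is obtained by applying Čech descent to the $h$-cover $U = X \sqcup Z \to Y$. Since $Z \hookrightarrow Y$ is a monomorphism, $Z \times_Y Z = Z$, and since $X \to Y$ is an isomorphism away from $Z$, the fiber products $X^a \times_Y Z^b$ (with $b \geq 1$) all collapse to $E^a/Z$. The resulting cosimplicial limit then simplifies to the pullback square in (ii); equivalently, the pushout $X \sqcup_E Z$ coincides with $Y$ in the $h$-topos, and any $h$-sheaf of spaces sends this pushout to a pullback.

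For sufficiency, let $f : X \to Y$ be an $h$-cover. I aim to show $F(Y) \simeq \lim_{[n] \in \Delta} F(X^{n+1}/Y)$. Using (i) (which contains Zariski descent) I reduce to $Y$ affine; applying Theorem~\ref{StructureUnivSubtr} and (i) again, I reduce to $f$ proper surjective of finite presentation. By Lemma~\ref{ProperGood}, $f$ has some inductive level $n$, and I induct on $n$. For $n = 0$, $f$ refines to a composite of a proper fppf cover (handled by (i)) and a finitely presented nilimmersion $Y_0 \hookrightarrow Y$; for the latter the Čech nerve is constantly $Y_0$, so descent reduces to $F(Y) \simeq F(Y_0)$, which follows from (ii) applied with $X = Z = E = Y_0$. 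For $n \geq 1$, the factorization $X' \to X_0 \to Y_0 \to Y$ combined with the $n = 0$ case reduces to Čech descent for the middle map $X_0 \to Y_0$, which is an isomorphism outside $Z \subset Y_0$ with preimage $E \to Z$ of inductive level $\leq n - 1$.

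The key observation for this final step is that every Čech term $X_0^{m+1}/Y_0 \to Y_0$ is again an isomorphism outside $Z$, now with preimage $E^{m+1}/Z$. Applying (ii) term-by-term produces a natural family of equivalences
\[
F(Y_0) \xrightarrow{\sim} F(X_0^{m+1}/Y_0) \times_{F(E^{m+1}/Z)} F(Z).
\]
Taking $\lim_m$ and commuting the limit with pullbacks of spaces yields $\lim_m F(X_0^{m+1}/Y_0) \times_{\lim_m F(E^{m+1}/Z)} F(Z)$ on the right; the inductive hypothesis applied to the cover $E \to Z$ gives $\lim_m F(E^{m+1}/Z) \simeq F(Z)$, collapsing the pullback to $\lim_m F(X_0^{m+1}/Y_0)$, while the left side computes to $F(Y_0)$. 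The main technical obstacle is precisely this last passage from a single pullback square to full Čech descent: one must check that the pullback identifications assemble coherently into a map of cosimplicial diagrams, which is automatic by functoriality but is the place where the $\infty$-categorical setting demands more care than the set-valued one.
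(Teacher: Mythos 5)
Your sufficiency argument is essentially identical to the paper's: the same reduction to affine proper surjective maps, the same induction on inductive level with the nilimmersion handled by (ii) with $X=Z=E$, and the same termwise application of (ii) followed by passing to the limit and using that limits commute with pullbacks. That direction is fine.

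The necessity direction has a gap. You correctly observe that $Z \times_Y Z = Z$ and that the mixed fiber products collapse, and you correctly identify that what is needed is that $X \sqcup_E Z \to Y$ is an equivalence of $h$-sheaves of \emph{spaces}. But you then treat the passage from ``the cosimplicial limit of $F(U^{\bullet/Y})$'' to ``the pullback $F(X) \times_{F(E)} F(Z)$'' as an immediate consequence of those fiber-product simplifications, and it is not: the $n$-th term of that cosimplicial diagram is $F(X^{(n+1)/Y}) \times F(Z) \times \prod_{a=1}^{n} F(E^{a/Z})^{\binom{n+1}{a}}$, and its totalization does not reduce to a pullback of the two bottom terms by inspection. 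Equivalently, asserting ``the pushout $X \sqcup_E Z$ coincides with $Y$ in the $h$-topos'' as the end of the argument is asserting the theorem in a different form. The nontrivial point, which the paper supplies, is that the set-theoretic identity $\mathcal{F}_Y = \mathcal{F}_X \sqcup_{\mathcal{F}_E} \mathcal{F}_Z$ (from the preceding set-valued proposition) is already a \emph{homotopy} pushout, because $\mathcal{F}_E(U) \hookrightarrow \mathcal{F}_X(U)$ is injective for every $U$, hence a cofibration, so the strict pushout of presheaves of sets is discrete and agrees with the homotopy pushout; discreteness then survives sheafification. Without this cofibrancy step, the homotopy pushout of discrete sheaves along $\mathcal{F}_E \to \mathcal{F}_X$ could acquire higher homotopy, and the desired statement would fail. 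So you need to invoke the set-valued proposition plus the monomorphism $\mathcal{F}_E \hookrightarrow \mathcal{F}_X$ explicitly; it is not automatic from the fiber-product bookkeeping.
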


\begin{proof} Assume first that $F$ is a sheaf. Clearly, (i) is then satisfied. For (ii), we follow the proof of \cite[Lemma 3.6]{VoevodskyCDStructures}. Let $\mathcal{F}_X$ be the sheaf associated with $X$ for any $X\in \Sch^\fp_{/S}$; this is a sheaf of sets.\footnote{The $h$-topology is not subcanonical (as e.g. nilimmersions are covers), so one needs to sheafify.} As $F(X) = \Hom(\mathcal{F}_X,F)$ is the space of maps in the $\infty$-category of sheaves of spaces in the $h$-topology, it is enough to prove that $\mathcal{F}_Y$ is the pushout of $\mathcal{F}_X$ and $\mathcal{F}_Z$ along $\mathcal{F}_E$ in the $\infty$-category of sheaves of spaces in the $h$-topology. From the previous proposition, we know that this is true as sheaves of sets. But $\mathcal{F}_E(U)\hookrightarrow \mathcal{F}_X(U)$ is injective for all $U\in \Sch^\fp_{/S}$ (i.e., cofibrant as a map of simplicial sets), so the set-theoretic pushout
\[
\mathcal{F}_Z(U)\bigsqcup_{\mathcal{F}_E(U)} \mathcal{F}_X(U)
\]
is also a homotopy pushout. This shows that the pushout of $\mathcal{F}_X$ and $\mathcal{F}_Z$ along $\mathcal{F}_E$ in the $\infty$-category of presheaves of spaces on $\Sch^\fp_{/S}$ is still discrete; thus, its sheafification is also discrete, and then agrees with the pushout $\mathcal{F}_Y$ in the category of sheaves of sets.

For the converse, we have to show that for any $h$-cover $f: X\to Y$ in $\Sch^\fp_{/S}$ with Cech nerve $X^{\bullet/Y}\to Y$ given by the $n$-fold fibre products $X^{n/Y}$ of $X$ over $Y$, the map
\[
F(Y)\to \lim F(X^{\bullet/Y})
\]
is a weak equivalence. If this condition is satisfied, we say that $f$ is of $F$-descent; if this condition is also satisfied for any base change of $f$, we say that $f$ is of universal $F$-descent. We need the following very general d\'evissage lemma.

\begin{lemma}[{\cite[Lemma 3.1.2]{LiuZhengSixOp}}]\label{DevissageLemma} Let $f: X\to Y$ and $g: Y\to Z$ be morphisms in $\Sch^\fp_{/S}$ with composition $h=g\circ f: X\to Z$.
\begin{enumerate}
\item[{\rm (a)}] If $h$ is of universal $F$-descent, then $g$ is of universal $F$-descent.
\item[{\rm (b)}] If $f$ is of universal $F$-descent and $g$ is of $F$-descent, then $h$ is of $F$-descent. In particular, if $f$ and $g$ are of universal $F$-descent, then $h$ is of universal $F$-descent.
\end{enumerate}
\end{lemma}

First, we prove that all proper surjective maps $f: X\to Y$ in $\Sch^\fp_{/S}$ with $Y$ affine are of universal $F$-descent, by induction on the inductive level. If $f: X\to Y$ is of inductive level $0$, then it can be refined by a composition of a finitely presented nilimmersion and a proper fppf cover; using assumptions (i) and (ii) along with Lemma \ref{DevissageLemma} (a) shows that $f$ is of universal $F$-descent. If $f: X\to Y$ is of inductive level $\leq n$, then after refinement it is of the form $X\to X_0\to Y_0\to Y$ as in the definition. Here, $X\to X_0$ is proper fppf, and thus of universal $F$-descent by assumption (i), and $Y_0\to Y$ is a finitely presented nilimmersion, and thus of universal $F$-descent by (ii). Using Lemma \ref{DevissageLemma} (b), it is enough to prove that $X_0\to Y_0$ is of universal $F$-descent. Recall that $X_0\to Y_0$ is an isomorphism outside a finitely presented closed subset $Z\subset Y_0$ such that $X_0\times_{Y_0} Z\to Z$ is of inductive level $\leq n-1$.

For convenience, let us rename $X_0\to Y_0$ as $X\to Y$. Let $X^{n/Y}$ be the $n$-fold fibre product of $X$ over $Y$. Applying (ii) to $X^{n/Y}\to Y$, we get a pullback square
\[\xymatrix{
F(Y)\ar[r]\ar[d]&F(X^{n/Y})\ar[d]\\
F(Z)\ar[r]&F(E^{n/Z}).
}\]
By induction, $E\to Z$ is of (universal) $F$-descent, and so $F(Z)\to \lim F(E^{\bullet/Z})$ is a weak equivalence. Taking the limit of the pullback squares gives a pullback square
\[\xymatrix{
F(Y)\ar[r]\ar[d]&\lim F(X^{\bullet/Y})\ar[d]\\
F(Z)\ar[r]&\lim F(E^{\bullet/Z}).
}\]
As the lower map is a weak equivalence, it follows that the upper map is a weak equivalence, showing that $X\to Y$ is of $F$-descent. By the same argument, it is of universal $F$-descent, as desired.

Now, given a general $h$-cover $f: X\to Y$ in $\Sch^\fp_{/S}$, we want to prove that $f$ is of universal $F$-descent. Take an open affine cover $Y^\prime\to Y$, and let $X^\prime = X\times_Y Y^\prime$. Using Lemma \ref{DevissageLemma}, it is enough to prove that $X^\prime\to Y^\prime$ is of universal $F$-descent, in other words, we may assume that $Y$ is affine. Then, using Theorem \ref{StructureUnivSubtr}, $f: X\to Y$ can be refined by a composition of an fppf cover and a proper surjective map. Both of these are of universal $F$-descent (by (i), resp. the above), finally proving that $f$ is of universal $F$-descent, using Lemma \ref{DevissageLemma} once more.
\end{proof}

This yields the following criterion for detecting $h$-cohomological descent of abelian fppf sheaves:

\begin{corollary}\label{HCohom} Let $F$ be a sheaf of abelian groups on $\Sch^\fp_{/S}$. Assume that for any affine scheme $Y=\Spec(A)\in \Sch^\fp_{/S}$ of finite presentation over $S$, and $X\to Y$ a proper surjective map of finite presentation, which is an isomorphism outside a finitely presented closed subset $Z\subset Y$ with preimage $E\subset X$, the triangle
\[
R\Gamma_{\fppf}(Y,F)\to R\Gamma_{\fppf}(X,F)\oplus R\Gamma_{\fppf}(Z,F)\to R\Gamma_{\fppf}(E,F)
\]
in the derived category of abelian groups is distinguished. Then, for all $X\in \Sch^\fp_{/S}$, $R\Gamma_h(X,F) = R\Gamma_{\fppf}(X,F)$.
\end{corollary}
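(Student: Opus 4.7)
The plan is to lift Theorem \ref{CritHSheaf} from presheaves of spaces to the presheaf of complexes $\mathcal{F}(X) := R\Gamma_\fppf(X, F)$, and then to conclude via a hypercohomology spectral sequence. First, I would observe that taking $H^0$ of the assumed distinguished triangle gives a Cartesian square of abelian groups
\[ F(Y) \to F(X), \quad F(Z) \to F(E), \]
so the set-valued criterion in the Proposition preceding Theorem \ref{CritHSheaf}, applied to the fppf sheaf $F$, shows that $F$ itself is already an $h$-sheaf.

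Next, I would view $\mathcal{F}$ as a presheaf of (coconnective) $H\Z$-module spectra on $\Sch^\fp_{/S}$. For each $n\geq 0$, the presheaf of pointed spaces $\mathcal{G}_n := \Omega^\infty(\mathcal{F}[n])$ has $\pi_i\mathcal{G}_n(X) = H^{n-i}_\fppf(X,F)$ for $0\le i\le n$. I claim that $\mathcal{G}_n$ satisfies condition (i) of Theorem \ref{CritHSheaf} because $\mathcal{F}$ is an fppf hypersheaf of spectra by the very definition of derived sheaf cohomology, and both shifting and $\Omega^\infty$ preserve limits. It satisfies condition (ii) because the hypothesis that $\mathcal{F}(Y)\to\mathcal{F}(X)\oplus\mathcal{F}(Z)\to\mathcal{F}(E)$ is distinguished is equivalent, in the stable $\infty$-category of spectra, to the corresponding blowup square being Cartesian; this Cartesian property survives shifting by $[n]$ and then applying the limit-preserving functor $\Omega^\infty$. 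Hence $\mathcal{G}_n$ is an $h$-sheaf for each $n$, and since the $h$-sheaf condition on a presheaf of spectra can be tested after applying each $\Omega^\infty\circ[n]$, this shows that $\mathcal{F}$ is an $h$-hypersheaf of complexes of abelian groups.

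Finally, I would deduce the statement from the hypercohomology spectral sequence for $\mathcal{F}$ in the $h$-topos,
\[ E_2^{p,q} = H^p_h\bigl(X, \mathcal{H}^q(\mathcal{F})\bigr) \Rightarrow \mathbb{H}^{p+q}_h(X, \mathcal{F}). \]
The abutment is $H^{p+q}(\mathcal{F}(X)) = H^{p+q}_\fppf(X, F)$ because $\mathcal{F}$ is an $h$-hypersheaf. The $h$-sheaf $\mathcal{H}^q(\mathcal{F})$ is the $h$-sheafification of the presheaf $X\mapsto H^q_\fppf(X,F)$: for $q=0$ this is $F$ by the first step, while for $q>0$ it vanishes, because higher fppf cohomology classes are already locally trivial for the fppf topology, and hence \emph{a fortiori} for the finer $h$-topology. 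The spectral sequence therefore degenerates to yield $H^p_h(X,F) \cong H^p_\fppf(X,F)$ for all $p$, which is the required statement. The main obstacle I anticipate is the spectra-level lift of Theorem \ref{CritHSheaf} in the middle paragraph, which requires careful bookkeeping in translating the distinguished-triangle hypothesis into a Cartesian-square condition compatible with truncation and $\Omega^\infty$; the spectral sequence step itself is then essentially formal.
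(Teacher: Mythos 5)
Your proof takes essentially the same approach as the paper: the key step, applying Theorem \ref{CritHSheaf} to the presheaf of spaces $\Omega^\infty(\mathcal{F}[n]) = \tau_{\geq 0} R\Gamma_{\fppf}(-,F)[n]$ for each $n$, is exactly the paper's argument, and your verification of conditions (i) and (ii) matches. The paper treats the final deduction of $R\Gamma_h = R\Gamma_{\fppf}$ as immediate, whereas you have spelled it out with the descent spectral sequence (and separately noted the $n=0$ case showing $F$ is itself an $h$-sheaf), which is correct but just fills in what the paper leaves implicit.
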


\begin{proof} For any $n\geq 0$, apply the previous theorem to the presheaf of spaces sending $X$ to $\tau_{\geq 0} R\Gamma_{\fppf}(X,F)[n]$ (or rather the simplicial set corresponding to it under the Dold-Kan correspondence). Then (i) is satisfied by definition, and (ii) by assumption.
\end{proof}

Moreover, if the sheaves are finitely presented, one can remove finite presentation constraints on the schemes. For this, we record the following simple observation.

\begin{lemma}\label{ApproxUnivSubtr} Let $f: X=\Spec(A)\to Y=\Spec(B)$ be a $v$-cover in $\Sch_{/S}$. Then one can write $f$ as a cofiltered limit of $h$-covers $f_i: X_i=\Spec(A_i)\to Y_i=\Spec(B_i)$ of finitely presented $X_i,Y_i\in \Sch^\fp_{/S}$.
\end{lemma}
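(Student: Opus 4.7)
The plan is a direct application of standard finite-presentation approximation, exploiting the elementary observation that a composition factor of a $v$-cover is again a $v$-cover: if $f = h \circ g$ with $f$ a $v$-cover, then $h$ is also a $v$-cover (given a valuation on the target of $h$, lift through $f$ by the $v$-cover hypothesis and compose with $g$).

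First, I would write $B = \colim_i B_i$ as a filtered colimit of finitely presented $S$-subalgebras (possible since $S$ is qcqs), and simultaneously write $A = \colim_\alpha A_\alpha$ as a filtered colimit over the directed system of finitely generated, hence finitely presented, $B$-subalgebras $A_\alpha$. By the factorization observation, each $\Spec A_\alpha \to \Spec B$ is a finitely presented $v$-cover, i.e.\ an $h$-cover over $B$. Using standard EGA-style spreading out (cf.\ \cite[Appendix C]{ThomasonKtheory}), each $A_\alpha$ descends to a finitely presented $B_i$-algebra $A_\alpha^{(i)}$ for $i$ sufficiently large, with $A_\alpha^{(i)} \otimes_{B_i} B = A_\alpha$. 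Assembling pairs $(i,\alpha)$ yields a cofiltered system of finitely presented morphisms $f_{(i,\alpha)}: \Spec A_\alpha^{(i)} \to \Spec B_i$ satisfying $f = \lim f_{(i,\alpha)}$.

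The remaining issue, and the main obstacle, is that $f_{(i,\alpha)}$ need not itself be a $v$-cover: naive descent might produce a map whose image in $\Spec B_i$ misses a constructible subset $Z_i$ lying outside the image of $\Spec B \to \Spec B_i$. To remedy this, I would enlarge the descent: choose a finitely generated ideal $I_i \subseteq B_i$ with $V(I_i) \supseteq Z_i$ and $I_i \cdot B = B$ (such an $I_i$ exists because $Z_i$ is disjoint from the image of $\Spec B$), and replace $A_\alpha^{(i)}$ by the product $A_\alpha^{(i)} \times B_i/I_i$. The induced morphism $\Spec A_\alpha^{(i)} \sqcup V(I_i) \to \Spec B_i$ is now surjective and hence an $h$-cover (apply the valuative criterion: valuations on $\Spec B_i$ either factor through the image of $\Spec B$, in which case the $v$-cover property of $f$ produces a lift to $\Spec A_\alpha^{(i)}$, or they land in $V(I_i)$, where they lift tautologically). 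Because $B_i/I_i \otimes_{B_i} B = 0$, the extra factor vanishes after tensoring with $B$, so the modified system still satisfies $A = \colim_{(i,\alpha)} A_\alpha^{(i)}$. Making these modifications compatibly across the cofiltered system is a routine, if somewhat lengthy, EGA-style bookkeeping exercise.
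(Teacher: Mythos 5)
Your first two reductions — writing $B = \colim B_i$ as finitely presented $S$-algebras, writing $A = \colim_\alpha A_\alpha$ as finitely presented $B$-algebras with each $\Spec A_\alpha \to \Spec B$ still a $v$-cover via the factorization observation, and then spreading out to $f_{(i,\alpha)}$ — match the paper's proof exactly. You have also correctly identified the one nontrivial obstacle: the spread-out map need not remain a $v$-cover. But the remedy you propose does not work.

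The step ``surjective and hence an $h$-cover'' is false, and this is not a technicality: surjective finitely presented morphisms are not $v$-covers in general. The paper itself records Voevodsky's counterexample right after defining $v$-covers (the complement of a point in the exceptional divisor of the blowup of $\A^2$ at the origin, mapping to $\A^2$). The valuative dichotomy you invoke to justify it also breaks down: (a) a map $\Spec V \to \Spec B_i$ whose topological image lies inside the image of $\Spec B$ does \emph{not} factor through $\Spec B$ as a map of schemes — containment of images is a set-theoretic condition and does not give you a ring map to lift; and (b) a valuation can have its generic point in the image of $\Spec B$ and its closed point outside it (or vice versa), so it falls into neither of your two cases, and appending $V(I_i)$ gives you no lift for it. There is also a smaller issue: you need the constructible set $Z_i$ to sit inside a closed subscheme $V(I_i)$ disjoint from the image of $\Spec B$, which requires an argument about the topology of $\Spec B_i$ that is not supplied.

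The paper closes this gap using Rydh's structure theorem (Theorem \ref{StructureUnivSubtr}): a finitely presented $v$-cover of an affine base admits a refinement factoring as a quasi-compact open cover followed by a finitely presented proper surjection. Both of these properties spread out in filtered limits, and each of them is automatically a $v$-cover; so for $i$ large the spread-out map $\Spec A_\alpha^{(i)} \to \Spec B_i$ admits a refinement that is a $v$-cover, hence is itself a $v$-cover. That structural input is exactly what replaces your (unavailable) ``surjective $\Rightarrow$ $v$-cover'' implication, and it is the essential content of the lemma.
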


\begin{proof} One can write $A$ as a filtered colimit of finitely presented $B$-algebras $A_i$. As $\Spec(A)\to \Spec(B)$ factors over $\Spec(A_i)$, it follows that $\Spec(A_i)\to \Spec(B)$ is still a $v$-cover. This reduces us to the case that $X\to Y$ is finitely presented. In that case, it comes as a base-change of some finitely presented map $X^\prime=\Spec(A^\prime)\to Y^\prime=\Spec(B^\prime)$ of finitely presented $X^\prime,Y^\prime\in \Sch^\fp_{/S}$. From Theorem \ref{StructureUnivSubtr}, it follows that $X^\prime\to Y^\prime$ is also a $v$-cover, possibly after changing $Y^\prime$.
\end{proof}

In order to work with perfect schemes, we need the following variant of the $h$-topology, which doesn't impose any finiteness constraints: 

\begin{definition} For a qcqs base scheme $S$ as above, the $v$-topology on the category $\Sch_{/S}$ of qcqs schemes over $S$ is the topology generated by $v$-covers.
\end{definition}

Then we get the following version of Theorem \ref{CritHSheaf}.

\begin{corollary}\label{SheafUnivSubtr} Let $F$ be a presheaf of $n$-truncated spaces on $\Sch_{/S}$ for some $n\geq 0$. Assume that the following conditions are satisfied.
\begin{enumerate}
\item[{\rm (i)}] The presheaf $F$ is a sheaf for the fppf topology.
\item[{\rm (ii)}] Let $Y=\Spec(A)\in \Sch_{/S}$ be an affine scheme of finite presentation over $S$, and $X\to Y$ a proper surjective map of finite presentation, which is an isomorphism outside a finitely presented closed subset $Z\subset Y$ with preimage $E\subset X$. Then the diagram
\[\xymatrix{
F(Y)\ar[r]\ar[d]&F(X)\ar[d]\\
F(Z)\ar[r]&F(E)
}\]
is a homotopy pullback square.
\item[{\rm (iii)}] If $Y=\Spec(A)\in \Sch_{/S}$ is a cofiltered limit of finitely presented $Y_i = \Spec(A_i)\in \Sch^\fp_{/S}$, then
\[
\colim F(Y_i)\to F(Y)
\]
is a weak equivalence.
\end{enumerate}
Then $F$ is a sheaf for the $v$-topology.
\end{corollary}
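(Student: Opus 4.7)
The approach is to approximate a general $v$-cover by finitely presented $h$-covers using Lemma \ref{ApproxUnivSubtr}, apply Theorem \ref{CritHSheaf} to the restriction $F|_{\Sch^\fp_{/S}}$, and then pass to the limit, leveraging the $n$-truncation hypothesis to interchange the cosimplicial totalization with the filtered colimit.

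Concretely, I need to show that for every $v$-cover $f\colon X\to Y$ in $\Sch_{/S}$ the natural map $F(Y)\to \lim F(X^{\bullet/Y})$ is a weak equivalence. By (i), $F$ is a Zariski sheaf, so I may assume $Y=\Spec(B)$ is affine; refining $X$ through a finite affine open cover (whose composite with $f$ is again a $v$-cover refining $f$, and whose descent implies descent for $f$ by the evident analogue of Lemma \ref{DevissageLemma} together with (i)), I may further assume $X=\Spec(A)$ is affine. By Lemma \ref{ApproxUnivSubtr}, I can write $f$ as a cofiltered limit of finitely presented $h$-covers $f_i\colon X_i=\Spec(A_i)\to Y_i=\Spec(B_i)$ in $\Sch^\fp_{/S}$. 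Since fiber products of affines correspond to tensor products of rings (which commute with filtered colimits), the $n$-fold fiber product $X^{n/Y}$ is the cofiltered limit of the $X_i^{n/Y_i}$ for each $n\geq 0$.

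The restriction of $F$ to $\Sch^\fp_{/S}$ inherits hypotheses (i) and (ii) of Theorem \ref{CritHSheaf}, hence is an $h$-sheaf; in particular, each $f_i$ is of $F$-descent, giving an equivalence $F(Y_i)\xrightarrow{\sim}\lim F(X_i^{\bullet/Y_i})$. Hypothesis (iii) identifies $F(Y)=\colim_i F(Y_i)$ and $F(X^{n/Y})=\colim_i F(X_i^{n/Y_i})$ termwise in $n$, so passing to the colimit over $i$ of the above equivalences yields
\[ \colim_i F(Y_i) \xrightarrow{\sim} \colim_i \lim_{\Delta} F(X_i^{\bullet/Y_i}). \]
It remains to identify the right-hand side with $\lim_{\Delta} F(X^{\bullet/Y}) = \lim_{\Delta}\colim_i F(X_i^{\bullet/Y_i})$.

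This is the key technical step, and the source of the $n$-truncation hypothesis. For any cosimplicial diagram $G^\bullet$ of $n$-truncated spaces, the canonical map $\lim_\Delta G^\bullet\to \lim_{\Delta_{\leq n+1}} G^\bullet$ is a weak equivalence (i.e., the Bousfield--Kan $\Tot$-tower stabilizes after finitely many stages in the $n$-truncated world), and $\Delta_{\leq n+1}$ is a finite category. Finite limits commute with filtered colimits of spaces, so
\[ \lim_{\Delta} F(X^{\bullet/Y}) \;\simeq\; \lim_{\Delta_{\leq n+1}} \colim_i F(X_i^{\bullet/Y_i}) \;\simeq\; \colim_i \lim_{\Delta_{\leq n+1}} F(X_i^{\bullet/Y_i}) \;\simeq\; \colim_i \lim_{\Delta} F(X_i^{\bullet/Y_i}), \]
which combines with the equivalences above to yield $F(Y)\simeq \lim_\Delta F(X^{\bullet/Y})$. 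The principal obstacle is precisely this interchange of limits and colimits, which fails for unbounded targets and dictates the $n$-truncation hypothesis in the statement.
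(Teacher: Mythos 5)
Your proposal follows the paper's proof exactly: reduce to affine $v$-covers via hypothesis (i) and d\'evissage, write such a cover as a cofiltered limit of $h$-covers in $\Sch^\fp_{/S}$ via Lemma \ref{ApproxUnivSubtr}, apply Theorem \ref{CritHSheaf} to deduce descent for each $f_i$, pass to the colimit using (iii), and then interchange $\lim_\Delta$ with $\colim_i$ using $n$-truncation. The one difference is cosmetic: where the paper cites \cite[Corollary 4.3.7]{LurieDAGVIII} for the interchange, you give the elementary argument that the $\Tot$-tower of an $n$-truncated cosimplicial space stabilizes after finitely many stages (since $\Omega^m$ of the $n$-truncated total fiber is contractible for $m > n$), reducing the totalization to a finite limit over $\Delta_{\leq n+1}$, which commutes with filtered colimits; this is correct and is precisely the content of the cited result in the setting at hand.
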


\begin{proof} By (i) and (ii), the restriction of $F$ to $\Sch^\fp_{/S}$ is an $h$-sheaf by Theorem \ref{CritHSheaf}. In general, by fppf descent, it is enough to prove that if $f: X=\Spec(A)\to Y=\Spec(B)$ is a $v$-cover, then $f$ is of $F$-descent. By Lemma \ref{ApproxUnivSubtr}, we can write $f$ as a cofiltered limit of $h$-covers $f_i: X_i=\Spec(A_i)\to Y_i=\Spec(B_i)$ between finitely presented $S$-schemes. Then, for each $i$,
\[
F(Y_i)\to \lim F(X_i^{\bullet/Y_i})
\]
is a weak equivalence. Passing to the filtered colimit over $i$ gives a weak equivalence
\[
F(Y)\simeq \colim_i F(Y_i)\simeq \colim_i (\lim F(X_i^{\bullet/Y_i}))\ .
\]
As $F$ is $n$-truncated for some $n$, one can commute the filtered colimit with the limit (see \cite[Corollary 4.3.7]{LurieDAGVIII} for a proof in the context of spectra, which is the only relevant case for the sequel). This gives 
\[
F(Y)\simeq \lim (\colim_i F(X_i^{\bullet/Y_i}) )\ ,
\]
where each
\[
\colim_i F(X_i^{n/Y_i})\simeq F(X^{n/Y})\ ,
\]
proving the claim.
\end{proof}

In particular, this applies to usual sheaves (of sets) and stacks. Similarly, we also have an analogue of Corollary \ref{HCohom}.

\begin{corollary}\label{VCohom} Let $F$ be a sheaf of abelian groups on $\Sch_{/S}$. Assume that for any affine scheme $Y=\Spec(A)\in \Sch^\fp_{/S}$ of finite presentation over $S$, and $X\to Y$ a proper surjective map of finite presentation, which is an isomorphism outside a finitely presented closed subset $Z\subset Y$ with preimage $E\subset X$, the triangle
\[
R\Gamma_{\fppf}(Y,F)\to R\Gamma_{\fppf}(X,F)\oplus R\Gamma_{\fppf}(Z,F)\to R\Gamma_{\fppf}(E,F)
\]
in the derived category of abelian groups is distinguished. Moreover, assume that if $Y=\Spec(A)\in \Sch_{/S}$ is a cofiltered limit of finitely presented $Y_i = \Spec(A_i)\in \Sch^\fp_{/S}$, then $F(Y) = \colim F(Y_i)$.

Then, for all $X\in \Sch_{/S}$, $R\Gamma_v(X,F) = R\Gamma_{\fppf}(X,F)$.$\hfill \Box$
\end{corollary}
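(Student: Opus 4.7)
The plan is to adapt the proof of Corollary \ref{HCohom} verbatim, using Corollary \ref{SheafUnivSubtr} in place of Theorem \ref{CritHSheaf}. For each fixed $n \geq 0$, I would consider the presheaf of $n$-truncated spaces $G_n$ on $\Sch_{/S}$ defined by
\[
G_n(X) := \tau_{\geq 0} R\Gamma_\fppf(X, F)[n],
\]
interpreted as a Kan complex via the Dold--Kan correspondence applied to the truncated complex of abelian groups. The goal is then to verify the three hypotheses of Corollary \ref{SheafUnivSubtr} for $G_n$. Once this is done, that corollary shows that $G_n$ is a $v$-sheaf, which means precisely that the natural map $R\Gamma_\fppf(X, F) \to R\Gamma_v(X, F)$ is an isomorphism in cohomological degrees $\leq n$; letting $n$ vary then yields the conclusion for all $X \in \Sch_{/S}$.

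Condition (i) of Corollary \ref{SheafUnivSubtr} --- that $G_n$ is an fppf sheaf --- is immediate, since $R\Gamma_\fppf(-, F)$ satisfies fppf hyperdescent by construction. Condition (ii) --- the homotopy pullback for an abstract blowup square --- is a direct translation of the distinguished-triangle hypothesis of the corollary via Dold--Kan, using the standard equivalence between distinguished triangles of truncated complexes and homotopy pullback squares of the associated Eilenberg--MacLane objects.

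The only non-formal point is condition (iii): for a cofiltered limit $Y = \lim Y_i$ of finitely presented affine $S$-schemes, one needs
\[
\colim_i R\Gamma_\fppf(Y_i, F) \xrightarrow{\sim} R\Gamma_\fppf(Y, F).
\]
The hypothesis of the corollary supplies this quasi-isomorphism at the level of $H^0$. To extend it to higher cohomology one invokes the standard continuity statement for fppf cohomology along cofiltered limits of qcqs schemes with affine transition maps, applied to a sheaf whose sections already commute with the corresponding filtered colimit of rings (cf. SGA 4, Expos\'e VII, \S 5; alternatively, one can argue directly by approximating a hypercover of $Y$ by fppf covers pulled back from some $Y_i$, then using the hypothesis on each level). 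Granting this, all three hypotheses of Corollary \ref{SheafUnivSubtr} are verified for every $G_n$, and the result follows. The main (and really the only) obstacle is the continuity statement in (iii); everything else is a formal unwinding of the proof of Corollary \ref{HCohom}.
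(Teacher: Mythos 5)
Your proposal is correct and is essentially the expansion of what the paper's ``$\Box$'' stands for: apply Corollary~\ref{SheafUnivSubtr} to the truncated presheaves $\tau_{\geq 0} R\Gamma_{\fppf}(-,F)[n]$ exactly as Corollary~\ref{HCohom} applies Theorem~\ref{CritHSheaf}, with conditions (i) and (ii) handled as there and (iii) supplied by the continuity hypothesis. You correctly pinpoint the one non-formal point, namely upgrading $F(Y)=\colim F(Y_i)$ to $R\Gamma_{\fppf}(Y,F)=\colim R\Gamma_{\fppf}(Y_i,F)$; this follows from the standard approximation argument you sketch (any fppf cover of $Y$, being finitely presented, descends to some $Y_i$, and then $F$ commuting with filtered colimits of rings propagates through the Čech/hypercover computation), and is what the authors silently invoked.
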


\section{Perfect schemes}
\label{sec:perfect}

In this section, we collect some results on perfect schemes and the perfection functor. Notably, we prove that base change for quasi-coherent complexes holds true in the perfect setting without any flatness assumptions (see Lemma \ref{BaseChange}).

\begin{definition} A scheme $X$ over $\F_p$ is \emph{perfect} if the Frobenius map $\Frob_X: X\to X$ is an isomorphism.
\end{definition}

For any scheme $X/\F_p$,  write $X_\perf$ for its perfection, i.e., $X_\perf := \lim X$, with transition maps being Frobenius.

\begin{definition} Let $\Perf$ be the category of perfect qcqs schemes over $\F_p$ endowed with the \emph{$v$-topology}, generated by $v$-covers.
\end{definition}

\begin{remark} To avoid set-theoretic issues in the following (in particular, in defining $v$-cohomology), choose an uncountable strong limit cardinal $\kappa$, i.e. for all cardinals $\lambda<\kappa$, also $2^\lambda<\kappa$. Then replace $\Perf$ by the category of perfect qcqs schemes covered by $\Spec(A)$ with $|A|<\kappa$. Then $\Perf$ is essentially small, and all arguments will go through in this truncated version of $\Perf$.\footnote{In Lemma \ref{wLoc} below, we do a ``big'' construction, which however works in this truncated version of $\Perf$ as $\kappa$ is a strong limit cardinal. One can also do a more careful construction in Lemma \ref{wLoc} to keep the rings smaller, and allow more general $\kappa$.}
\end{remark}

In order to pass between the usual and perfect world, observe the following.

\begin{lemma}\label{SchVsPerfProp} Let $f: X\to Y$ be a morphism of (not necessarily qcqs) schemes over $\F_p$. The following properties hold for $f$ if and only if they hold for $f_\perf$.
\begin{enumerate}
\item[{\rm (i)}] quasicompact,
\item[{\rm (ii)}] quasiseparated,
\item[{\rm (iii)}] affine,
\item[{\rm (iv)}] separated,
\item[{\rm (v)}] integral,
\item[{\rm (vi)}] universally closed,
\item[{\rm (vii)}] a universal homeomorphism.
\end{enumerate}
Moreover, if one of the following properties holds for $f$, then it also holds for $f_\perf$.
\begin{enumerate}
\item[{\rm (viii)}] a closed immersion,
\item[{\rm (ix)}] an open immersion,
\item[{\rm (x)}] an immersion,
\item[{\rm (xi)}] \'etale,
\item[{\rm (xii)}] (faithfully) flat.
\end{enumerate}
\end{lemma}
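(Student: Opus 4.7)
The entire proof revolves around the natural commutative square
\[
\xymatrix{
X_\perf \ar[r]^{\pi_X} \ar[d]_{f_\perf} & X \ar[d]^f \\
Y_\perf \ar[r]^{\pi_Y} & Y
}
\]
in which $\pi_X$ and $\pi_Y$ are the perfection maps, realized as cofiltered inverse limits along the Frobenius towers on $X$ and $Y$ respectively. The key preliminary remark is that absolute Frobenius on any $\F_p$-scheme is an integral (in particular, affine) universal homeomorphism; consequently, $\pi_X$ and $\pi_Y$ are themselves affine universal homeomorphisms, being cofiltered limits of such.

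For the ``iff'' properties (i)--(vii), I would argue each case by exploiting that the property in question is either purely topological (hence insensitive to the universal homeomorphisms $\pi_X, \pi_Y$) or affine-local and preserved/reflected along affine surjective universal homeomorphisms. For the forward direction ($f$ has property $\Rightarrow f_\perf$ has it), one realizes $f_\perf$ as the cofiltered limit of base changes of $f$ along the affine maps $\Frob^n_Y : Y \to Y$ and invokes the standard stability of these properties under limits of affine morphisms. For the reverse direction, the square yields $\pi_Y \circ f_\perf = f \circ \pi_X$, and one descends the property from this composition back to $f$ using surjectivity and affineness of $\pi_X, \pi_Y$; for instance, if $f_\perf$ is universally closed then so is $\pi_Y \circ f_\perf$, hence $f \circ \pi_X$, and finally $f$ itself because $\pi_X$ is continuous and surjective.

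For the forward-only properties (viii)--(xii) I would handle each case individually. For (viii), the defining ideal sheaf of a closed immersion perfects to an ideal sheaf in $\calO_{Y_\perf}$ cutting out $X_\perf$. For (ix), and hence (x), the crucial identification $X_\perf = X \times_Y Y_\perf$ (which follows from the universal property of perfection together with the fact that open subschemes of perfect schemes are perfect) exhibits $X_\perf$ as open in $Y_\perf$. For (xi), the classical fact that relative Frobenius is an isomorphism for an étale morphism makes every square in the Frobenius tower Cartesian, so again $X_\perf = X \times_Y Y_\perf$, which is étale over $Y_\perf$ by base change. For (xii), one reduces via $X_\perf = (X \times_Y Y_\perf)_\perf$ to showing that perfection preserves flatness over a perfect base, and this follows because the perfection of a flat algebra $C$ over a perfect base $A$ may be rewritten as a filtered colimit of its Frobenius twists $C \otimes_{A, \Frob_A^n} A$ (each flat over $A$ by base change), with flatness stable under filtered colimits along $A$-linear maps. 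Faithful flatness then drops out by combining flatness with preservation of surjectivity.

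The main potential obstacle is the bookkeeping in (xii), where the $A$-module structure on $C_\perf$ along the Frobenius colimit must be carefully re-expressed using that $\Frob_A$ is an isomorphism on the perfect base. The other cases should reduce to standard facts about limits of affine morphisms and descent along integral surjective universal homeomorphisms.
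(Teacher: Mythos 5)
Your overall strategy — exploiting that $\pi_X,\pi_Y$ are integral surjective universal homeomorphisms, realizing $f_\perf$ as a limit in a Frobenius tower, and then descending properties back along $\pi_X,\pi_Y$ — is in the same spirit as the paper, which shortcuts most of this by simply observing that $|X|=|X_\perf|$ (and similarly after any base change). Your treatments of (viii)--(xii) essentially match the paper's: in particular (xi) via the relative Frobenius being an isomorphism for \'etale maps, and (xii) via rewriting $B_\perf$ as a filtered colimit of $A_\perf$-flat modules, are exactly the paper's arguments.

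However, case (iii) is glossed over in a way that hides the hardest step of the whole lemma. You present ``affine-local and preserved/reflected along affine surjective universal homeomorphisms'' as if it were a formal consequence of the shape of the square, but descent of affineness along an integral surjection is a genuine theorem, not a bookkeeping fact. Concretely: after reducing to $Y$ affine, you must show that $X_\perf$ affine implies $X$ affine, where $X_\perf\to X$ is an integral (in fact pro-finite) surjective universal homeomorphism that is typically \emph{not} of finite type, so Chevalley's finite-morphism criterion does not directly apply. The paper handles this by invoking \cite[App.~C, Prop.~C.6]{ThomasonKtheory} (a scheme that is a cofiltered limit of qcqs schemes along affine transition maps and is itself affine forces some term in the tower to be affine; here all terms are $X$). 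Your write-up needs to either cite such a result or reprove it. A secondary, smaller gap: your forward-direction phrase ``cofiltered limit of base changes of $f$ along $\Frob^n_Y$'' is not quite literally how $f_\perf$ arises (base-changing $f$ along $\Frob^n_Y$ does not return $f$); the cleaner statement is that $X_\perf \simeq (X\times_Y Y_\perf)_\perf$, with the outer perfection a colimit of Frobenius twists over the perfect base $Y_\perf$, as in the paper's proof of (xii). Finally, you never address (iv) and (v) explicitly; they can be obtained from (vi), (viii) and (iii)$+$(vi) respectively, but this should be said (the paper gives short direct arguments for each).
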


\begin{proof} Note that $|X| = |X_\perf|$. In particular, all topological conditions are invariant, dealing with (i), (ii), (vi), (vii) (using that base-changes are also compatible), and the faithfully flat case in (xii) follows from the flat case. Moreover, (viii), (ix) and (thus) (x) are clear. For (xii), say $A \to B$ is a flat map of $\F_p$-algebras. Write $A_n := A$, and form the inductive system $\{A_n\}$ with transition maps given by Frobenius, so $\colim A_n = A_\perf$, and similarly on $B$. Then we can identify $B_\perf \simeq \colim A_\perf \otimes_{A_n} B_n$ as a filtered colimit of flat $A_\perf$-modules, which shows that $B_\perf$ is flat over $A_\perf$.

For (iii), we can assume that $Y$ is affine, and we need to prove that $X\in \Sch_{/\F_p}$ is affine if and only if $X_\perf$ is affine. Clearly, if $X$ is affine, then so is $X_\perf$. The converse follows from \cite[App. C, Proposition C.6]{ThomasonKtheory}.

If $f$ is separated, then $f_\perf$ is separated by (viii). Conversely, if $f_\perf$ is separated, then the diagonal morphism $\Delta_f$ is universally closed by (vi). But a morphism of schemes is separated if and only if $\Delta_f(X)\subset X\times_Y X$ is a closed subset.

Now, for (v), we can assume $f: X=\Spec(A)\to Y=\Spec(B)$ is affine. It is clear that if $B\to A$ is integral, then so is $B_\perf\to A_\perf$. Conversely, if $x\in A$ satisfies a monic polynomial equation over $B_\perf$ inside $A_\perf$, then some $p^n$-th power of $x$ satisfies a monic polynomial equation over $B$ inside $A$.

Finally, for (xi), it is enough to prove that if $f: X\to Y$ is \'etale, then the natural map $g: X_\perf\to X\times_Y Y_\perf$ is an isomorphism. But for this, it is enough to observe that the relative Frobenius map $X\to X\times_{Y,\Frob_Y} Y$ is an isomorphism, as it is a universal homeomorphism between \'etale $Y$-schemes.
\end{proof}

Next, we relate line bundles on $X$ and $X_\perf$. First, we record that perfection is not too lossy:

\begin{lemma}
	For any qcqs $\F_p$-scheme $X$, pullback along $X_\perf \to X$ induces $\Pic(X)[\frac{1}{p}] \simeq \Pic(X_\perf)$.
\end{lemma}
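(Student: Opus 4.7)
The plan is to exploit the fact that $X_\perf$ is a cofiltered inverse limit of copies of $X$ with transition maps given by Frobenius, together with the elementary computation that Frobenius pullback is multiplication by $p$ on Picard groups.

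First, I would express $X_\perf$ as the cofiltered limit $\lim_n X_n$, where $X_n = X$ for every $n$ and the transition map $X_{n+1}\to X_n$ is the absolute Frobenius $\Frob_X$. By Lemma \ref{SchVsPerfProp}, $\Frob_X$ is integral (in particular affine), so this is a cofiltered limit of qcqs schemes along affine transition maps. Moreover, the natural map $X_\perf\to X$ is the projection onto $X_0$, so our map of interest is induced by pullback along this projection.

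Next, I would invoke the standard limit formalism for quasi-coherent sheaves on cofiltered limits of qcqs schemes along affine transition maps (see, e.g., \cite[IV$_3$, \S 8]{EGAIII} or the Stacks Project): every finitely presented quasi-coherent sheaf on such a limit descends to a finite stage, and morphisms between descended sheaves can be compared at a finite stage. Since line bundles are finitely presented and the property of being an invertible sheaf is detected at finite stages, this yields the identification
\[
\Pic(X_\perf) \;=\; \colim_n \Pic(X_n),
\]
where the colimit transition maps are $\Frob_X^*$.

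Finally, I would observe that for any line bundle $L$ on $X$, the Frobenius pullback satisfies $\Frob_X^\ast L \cong L^{\otimes p}$, since Frobenius raises the local transition functions to the $p$-th power. Thus $\Frob_X^\ast$ acts on $\Pic(X)$ as multiplication by $p$, and the colimit of the constant diagram $\Pic(X)\xrightarrow{\cdot p}\Pic(X)\xrightarrow{\cdot p}\cdots$ is exactly $\Pic(X)\otimes_{\Z}\Z[\tfrac1p] = \Pic(X)[\tfrac1p]$, with the structure map from $\Pic(X)$ being the natural localization. Composing with the identification of the colimit with $\Pic(X_\perf)$ gives the claim.

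The only substantive step is verifying the limit statement $\Pic(X_\perf) = \colim_n \Pic(X_n)$, which is the main obstacle if one wants a self-contained argument; however, this is a standard consequence of the cited descent results for finitely presented quasi-coherent modules along affine cofiltered limits of qcqs schemes, applied to line bundles and their isomorphisms. No genuinely new idea is required beyond the identification $\Frob_X^\ast L \cong L^{\otimes p}$.
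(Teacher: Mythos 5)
Your proposal is correct and matches the paper's own argument step for step: write $X_\perf$ as the cofiltered limit of $X$ along Frobenius, identify $\Pic(X_\perf)$ with the colimit of $\Pic(X)$ under Frobenius pullback via the standard limit formalism for finitely presented quasi-coherent sheaves, and then use $\Frob_X^\ast L \cong L^{\otimes p}$ to identify that colimit with $\Pic(X)[\frac{1}{p}]$. You have simply spelled out what the paper states in one sentence.
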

\begin{proof}
Since $X_\perf := \lim X$ is a limit of copies of $X$ along Frobenius, we have $\colim \Pic(X) \simeq \Pic(X_\perf)$, where the colimit is indexed by pullback along Frobenius on $X$; the latter raises a line bundle to its $p$-th power, giving the claim.
\end{proof}

Recall that if $X$ is a qcqs scheme, and $\calL$ a line bundle on $X$, then $\calL$ is called ample if for any $x\in X$, there exists a section $s\in \Gamma(X,\calL^{\otimes n})$ for some $n$, such that $X_s = \{y\in X\mid s(y)\neq 0\}$ is an affine neighborhood of $x$. (Cf. e.g. \cite[Tag 01PR]{StacksProject}.)

\begin{lemma}\label{AmplePerf} If $X\in \Sch_{/\F_p}$ and $\calL$ is a line bundle on $X$, then $\calL$ is ample if and only if the pullback of $\calL$ to $X_\perf$ is ample.
\end{lemma}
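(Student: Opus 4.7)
The plan is to reduce the statement to Lemma~\ref{SchVsPerfProp}(iii) (affineness descends and ascends along perfection) together with a careful bookkeeping of how sections and non-vanishing loci behave under $X_\perf \to X$.

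First I would set up notation. Write $X_\perf = \lim_n X$ with transition maps given by Frobenius $\Phi$, and let $\pi_n : X_\perf \to X$ denote the projection to the $n$th factor, so $\pi_n = \Phi \circ \pi_{n+1}$ and hence $\pi_0 = \Phi^n \circ \pi_n$. Using the canonical isomorphism $\Phi^* \mathcal{L} \cong \mathcal{L}^{\otimes p}$, this gives $\mathcal{L}_\perf := \pi_0^* \mathcal{L} \cong \pi_n^* \mathcal{L}^{\otimes p^n}$. Standard limit formalism for quasi-coherent sheaves under the affine transition maps then yields
\[
\Gamma(X_\perf, \mathcal{L}_\perf^{\otimes m}) = \colim_n \Gamma(X, \mathcal{L}^{\otimes m p^n}).
\]
I also record the basic identity: for any section $s \in \Gamma(X, \mathcal{L}^{\otimes N})$ with pullback $t \in \Gamma(X_\perf, \mathcal{L}_\perf^{\otimes m})$ (where $N = m p^n$), one has $(X_\perf)_t = \pi_n^{-1}(X_s) = (X_s)_\perf$, since perfection commutes with passing to the open $X_s \subset X$.

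For the forward direction, assume $\mathcal{L}$ is ample on $X$ and fix $x' \in X_\perf$ with image $x \in X$ under $\pi_0$ (recall $\pi_0$ is a universal homeomorphism). Pick $s \in \Gamma(X, \mathcal{L}^{\otimes n})$ with $X_s$ an affine open neighborhood of $x$, and let $t = \pi_0^* s$. Then $(X_\perf)_t = (X_s)_\perf$ is affine by Lemma~\ref{SchVsPerfProp}(iii), and it contains $x'$ because the universal homeomorphism $(X_s)_\perf \to X_s$ hits $x$.

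For the reverse direction, assume $\mathcal{L}_\perf$ is ample on $X_\perf$ and fix $x \in X$ with unique preimage $x' \in X_\perf$. Choose $t \in \Gamma(X_\perf, \mathcal{L}_\perf^{\otimes m})$ with $(X_\perf)_t$ an affine open neighborhood of $x'$. By the colimit description above, $t$ is the pullback of some $s \in \Gamma(X, \mathcal{L}^{\otimes m p^n})$ for some $n$. By the basic identity, $(X_\perf)_t = (X_s)_\perf$, so $(X_s)_\perf$ is affine; hence $X_s$ is affine by Lemma~\ref{SchVsPerfProp}(iii). Finally $x \in X_s$ because $x' \in (X_s)_\perf$ maps to $x$.

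There is no real obstacle: the entire argument is formal once one has the identity $(X_\perf)_t = (X_s)_\perf$ and the descent of affineness along perfection. The only minor point requiring care is the compatibility of sections with the perfection limit, which amounts to the observation that $\mathcal{L}_\perf^{\otimes m}$ is the pullback (via $\pi_n$) of $\mathcal{L}^{\otimes m p^n}$ for every $n$, so every section on $X_\perf$ lives at some finite stage.
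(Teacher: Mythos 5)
Your proof is correct and follows exactly the same strategy as the paper's: invoke the colimit identification $\Gamma(X_\perf,\calL^{\otimes m}) = \varinjlim \Gamma(X,\calL^{\otimes m p^n})$, use the homeomorphism $|X_\perf| = |X|$, and apply the affineness equivalence from Lemma~\ref{SchVsPerfProp}(iii). You have simply spelled out the bookkeeping that the paper leaves to the reader.
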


\begin{proof} Using the identification
\[
\Gamma(X_\perf,\calL^{\otimes n}) = \varinjlim_{f\mapsto f^p} \Gamma(X,\calL^{\otimes n p^m})\ ,
\]
this follows from $|X|=|X_\perf|$ and the affine part of Lemma \ref{SchVsPerfProp}.
\end{proof}

Topological invariance of the \'etale site (cf. e.g. \cite[Tag 04DY]{StacksProject}) implies that $X$ and $X_\perf$ have the same \'etale site.

\begin{theorem} Let $X$ be any $\F_p$-scheme with perfection $X_\perf$. Then $Y\in X_\et\mapsto Y_\perf\in (X_\perf)_\et$ is an equivalence of sites.$\hfill \Box$
\end{theorem}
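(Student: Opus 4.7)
The plan is to reduce the theorem to the classical topological invariance of the small \'etale site, which was cited in the sentence immediately preceding the theorem. The functor $Y\mapsto Y_\perf$ on $X$-schemes can be identified with pullback along the perfection morphism $\pi: X_\perf\to X$, at least on the full subcategory of \'etale $X$-schemes: indeed, for $Y\to X$ \'etale, the relative Frobenius $Y\to Y\times_{X,\Frob_X} X$ is an isomorphism (being a universal homeomorphism between \'etale $X$-schemes, as in the proof of Lemma~\ref{SchVsPerfProp}(xi)), and iterating this along the tower $X_\perf = \lim X$ identifies $Y_\perf$ with $Y\times_X X_\perf$. So it suffices to show that the base change functor along $\pi$ induces an equivalence between the small \'etale sites of $X$ and of $X_\perf$.

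The first substantive step is to verify that $\pi: X_\perf\to X$ is an integral universal homeomorphism. Locally on $X = \Spec(A)$, we have $X_\perf = \Spec(A_\perf)$ where $A_\perf = \colim_{a\mapsto a^p} A$, so any element $b\in A_\perf$ satisfies $b^{p^n}\in \im(A\to A_\perf)$ for some $n$, hence satisfies a monic polynomial over $A$; this gives integrality. For the universal homeomorphism property, one may use that $X_\perf$ is the inverse limit of copies of $X$ along Frobenius with affine transition maps, so the underlying topological space $|X_\perf|$ is the inverse limit of the $|X|$, but each Frobenius $|X|\to |X|$ is the identity, so $|\pi|$ is a homeomorphism; moreover this is preserved under base change since integrality and being a universal homeomorphism are both stable under base change.

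The second step is to invoke topological invariance of the small \'etale site for integral universal homeomorphisms (the same result cited before the theorem) to conclude that pullback along $\pi$ gives an equivalence of categories between \'etale $X$-schemes and \'etale $X_\perf$-schemes. Combined with the identification of pullback with perfection on this subcategory, this gives the desired equivalence of underlying categories. Finally, the equivalence of sites (not merely categories) follows because covers in the small \'etale site are jointly surjective families of \'etale morphisms, and since $|Y| = |Y_\perf|$ for every $\F_p$-scheme $Y$ (by the same argument as in the previous step), joint surjectivity is preserved and reflected by perfection.

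The only potential obstacle is to be sure that the ``topological invariance'' result the authors have in mind applies not just to thickenings but to the (non-finite-type, but integral) universal homeomorphism $X_\perf\to X$; however, the standard form of this theorem (e.g.\ SGA 4, Exp.\ VIII) is precisely for integral, surjective, radicial morphisms, and $\pi$ is of this type, so no issue arises.
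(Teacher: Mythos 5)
Your proposal is correct and follows exactly the route the paper intends: the paper's ``proof'' is just the one-line citation to topological invariance of the small \'etale site (\cite[Tag 04DY]{StacksProject}), and you fill in the implicit reductions -- identifying perfection with base change along $\pi: X_\perf \to X$ via the relative-Frobenius argument from Lemma~\ref{SchVsPerfProp}(xi), checking that $\pi$ is an integral universal homeomorphism, and matching covers. The only stylistic wrinkle is the sentence justifying that $\pi$ is a \emph{universal} homeomorphism (``being a universal homeomorphism is stable under base change'' is slightly circular as phrased); it is cleaner to note directly that $\pi$ is integral, surjective, and radicial (the extensions $A \to A_\perf$ are injective with purely inseparable residue extensions), which by the standard criterion gives a universal homeomorphism.
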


In characteristic $p$, the perfection is a canonical representative in its universal homeomorphism class.

\begin{lemma}\label{UnivHomeomIsomPerf} If $f:X\to Y$ is a universal homeomorphism of perfect schemes, then $f$ is an isomorphism. In particular, if $f: X\to Y$ is a morphism of $\F_p$-schemes, then $f$ is a universal homeomorphism if and only if $f_\perf$ is an isomorphism.
\end{lemma}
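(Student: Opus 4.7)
My plan is to reduce the main statement to the affine setting: showing that a universal homeomorphism $B \to A$ of perfect $\F_p$-algebras must be an isomorphism. Injectivity of $B \to A$ is automatic, since $B$ is reduced (being perfect) and $\Spec A \to \Spec B$ is surjective.

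The key step will be to observe that $A \otimes_B A$ is itself perfect. Its absolute Frobenius sends $a \otimes a'$ to $a^p \otimes a'^p$, and the rule $a \otimes a' \mapsto a^{1/p} \otimes a'^{1/p}$ defines a two-sided inverse; well-definedness amounts to $B$-bilinearity, and here the perfectness of $B$ is crucial, since $b^{1/p} \in B$ and $(ba)^{1/p} = b^{1/p} a^{1/p}$ for $b \in B$ lets $b^{1/p}$ move freely across the tensor. Being perfect, $A \otimes_B A$ is reduced, so the kernel of the multiplication $\mu \colon A \otimes_B A \to A$---a priori only nilpotent by universal injectivity of $B \to A$---must vanish. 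Equivalently, $B \to A$ is a ring epimorphism.

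The main obstacle is then to upgrade this, combined with integrality, into an actual isomorphism. I plan to invoke the standard fact that an integral ring epimorphism is surjective: geometrically, $\Spec A \to \Spec B$ is a monomorphism of schemes (from the epimorphism property) which is also universally closed (from integrality), hence a closed immersion, corresponding to a surjective ring map. Combined with the injectivity from the first step, $B \to A$ is both injective and surjective, hence an isomorphism.

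For the ``in particular'' clause: Lemma~\ref{SchVsPerfProp}(vii) already says that $f$ is a universal homeomorphism if and only if $f_\perf$ is. Since $f_\perf$ is a morphism of perfect schemes, the main statement identifies this condition with $f_\perf$ being an isomorphism, giving the stated equivalence.
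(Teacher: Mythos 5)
Your proof is correct and takes a genuinely different route from the paper. The paper's proof reduces to the affine case and then invokes Yanagihara's theorem on weak normality (checking the criterion that $b^2, b^3 \in A$ or $b^p, pb \in A$ forces $b^p \in A$, hence $b \in A$ by perfectness); the paper also gives a second proof, in the remark following, via $v$-descent and reduction to valuation rings. Your argument is a third route, and in my view the most structurally transparent one: after the standard reductions (affine, $B \hookrightarrow A$ integral and injective), you observe that $A \otimes_B A$ is again perfect --- because $a \otimes a' \mapsto a^{1/p} \otimes a'^{1/p}$ is $\Z$-bilinear and $B$-balanced precisely since $B$ is perfect, so it inverts Frobenius --- hence reduced, so the nilpotent kernel of the multiplication map vanishes and $B \to A$ is a ring epimorphism. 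This is exactly the mechanism underlying Lemma~\ref{lem:PerfectColimits} later in the paper (pushouts of perfect rings are perfect, with no derived correction), so your proof fits naturally into the circle of ideas even though the paper doesn't use it here.

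One caveat on the final step. Your geometric gloss --- a universally closed monomorphism of schemes is a closed immersion --- is a slight overreach as stated: the standard result (e.g.\ \cite[Tag 04XV]{StacksProject}) is that a \emph{proper} monomorphism is a closed immersion, and properness includes finite type, which an integral morphism need not satisfy. The underlying algebraic fact you actually need, that an integral ring epimorphism is surjective, is nonetheless true; it is a classical theorem of Ferrand/Lazard/Isbell/Storrer from the late 1960s (see e.g.\ Ferrand, \emph{\'Epimorphismes d'anneaux et alg\`ebres s\'eparables}, C.~R.~Acad.~Sci.~Paris 265 (1967), or the S\'eminaire Samuel on ring epimorphisms), but it is not an immediate consequence of the proper-monomorphism result and deserves either a direct citation or a reduction to the finite case. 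With that reference in place, your argument is complete. The ``in particular'' clause is handled correctly via Lemma~\ref{SchVsPerfProp}~(vii), matching the paper.
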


\begin{proof} This is an easy consequence of \cite[Theorem 1]{Yanagihara}. Note that necessarily $f$ is integral (cf. \cite[Tag 04DC]{StacksProject}), so we may assume $Y=\Spec(A)$, $X=\Spec(B)$, where $B$ is integral over $A$. As $A$ is reduced, necessarily $A\to B$ is injective. Then the condition that $X\to Y$ is a universal homeomorphism is the condition that $B$ is weakly subintegral over $A$ in the language of \cite{Yanagihara}. On the other hand, \cite[Theorem 1]{Yanagihara} implies that $A$ is weakly normal in $B$: To check this, it is enough to see that if $b\in B$ is an element such that either both $b^2,b^3\in A$ or $b^p,pb\in A$, then $b\in A$. But in characteristic $p$, either condition implies $b^p\in A$, which by perfectness implies that $b\in A$. Thus, the weakly subintegral closure of $A$ in $B$ is equal to both $A$ and $B$, i.e. $A=B$.
\end{proof}

\begin{remark} In the context of this paper, a different proof of Lemma \ref{UnivHomeomIsomPerf} can be given as follows. Say $A\to B$ is a map of perfect rings inducing a universal homeomorphism on spectra. By $v$-descent (Theorem \ref{thm:VectWhStack}), it is enough to prove that $A\to B$ is an isomorphism after a $v$-localization. Thus, by Lemma \ref{wLoc}, we can reduce to the case that $A$ is w-local, with all local rings valuation rings. As one can check whether a map is an isomorphism on local rings, this reduces us to a valuation ring. But then there is a (unique) section over the generic point, which by integrality extends to the valuation ring. But the section $B\to A$ is surjective, and still a universal homeomorphism on spectra, and therefore (as $B$ is reduced) injective, i.e. bijective. Therefore, $A=B$, as desired.
\end{remark}

We will use the following terminology in the world of perfect schemes.

\begin{definition} Let $g: B\to A$ a map of perfect rings. The map $g$ is called perfectly finitely presented if $A = (A_0)_\perf$ for some finitely presented $B$-algebra $A_0$.
\end{definition}

Any such $B$-algebra $A_0$ is called a model for the $B$-algebra $A$. Note that any two models differ by finite purely inseparable morphisms.

\begin{proposition}\label{PropFP} Let $f: X\to Y$ be a morphism in $\Perf$.\footnote{Recall that this includes the assumption that $X$ and $Y$ are qcqs.} The following conditions are equivalent.
\begin{enumerate}
\item[{\rm (i)}] There is a covering of $X$ by open affine $\Spec(A_i)\subset X$ mapping into open affine $\Spec(B_i)\subset Y$ such that $B_i\to A_i$ is perfectly finitely presented.
\item[{\rm (ii)}] For any open affine $\Spec(A)\subset X$ mapping into an open affine $\Spec(B)\to Y$, the map $B\to A$ is perfectly finitely presented.
\item[{\rm (iii)}] For any cofiltered system $\{Z_i\}\in \Perf_{/Y}$ with affine transition maps and limit $Z=\lim Z_i\in \Perf_{/Y}$, the natural map
\[
\colim \Hom_{/Y}(Z_i,X)\to \Hom_{/Y}(Z,X)
\]
is a bijection.
\end{enumerate}

If these equivalent conditions are satisfied, then $f$ is called perfectly finitely presented.
\end{proposition}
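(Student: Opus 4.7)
The plan is to establish the three implications via standard approximation techniques, essentially adapting EGA IV, Prop.~8.14.2 to the perfect setting.

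I will first prove (i) $\Leftrightarrow$ (ii). This is a Zariski-local matter, for which it suffices to verify two facts: (a) if $B\to A$ is perfectly finitely presented and $f\in A$, then writing $f=f_0^{1/p^k}$ for some $f_0\in A_0$ (where $A=(A_0)_\perf$), the localization $A[f^{-1}]=(A_0[f_0^{-1}])_\perf$ is still perfectly finitely presented; (b) perfect finite presentation can be glued along a finite Zariski cover, because finitely generated sub-$B$-algebra models of $A$ can be upgraded to finitely presented models using classical finite presentation gluing.

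For (ii) $\Rightarrow$ (iii), I first reduce to $X,Y$ affine using quasi-compactness (a $Y$-map from a qcqs scheme into $X$ is controlled by affine charts). In the affine case $X=\Spec A$, $Y=\Spec B$, with $A=(A_0)_\perf$, $A_0$ finitely presented over $B$, I use the universal property of perfection: any $B$-algebra map from $A_0$ to a perfect $C$ factors uniquely through $A=(A_0)_\perf$. Combined with the classical fact that $\Hom_B(A_0,-)$ commutes with filtered colimits of $B$-algebras (since $A_0$ is finitely presented), this yields
\[
\Hom_{/Y}(Z,X)=\Hom_B(A,C)=\Hom_B(A_0,C)=\colim\Hom_B(A_0,C_i)=\colim\Hom_{/Y}(Z_i,X),
\]
where $Z=\Spec C=\lim\Spec C_i$.

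The main direction is (iii) $\Rightarrow$ (ii). After reducing to $X=\Spec A$, $Y=\Spec B$ affine, the argument proceeds in two stages. In the first stage, to see that $A$ is perfectly finitely \emph{generated}, I write $A=\colim_\alpha (A_\alpha)_\perf$ as a filtered colimit of perfections of finitely generated sub-$B$-algebras $A_\alpha\subset A$, so the transition maps $(A_\alpha)_\perf\hookrightarrow A$ are all injective. Applying (iii) to $\id_{\Spec A}$ yields a factorization $A\to (A_{\alpha_0})_\perf\to A$ whose composite is the identity. Since $(A_{\alpha_0})_\perf\hookrightarrow A$ is injective and admits a section, it is an isomorphism; hence $A=(A_0)_\perf$ for some finitely generated $A_0$. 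In the second stage, to upgrade this to perfect finite presentation, I write $A_0=B[T_1,\dots,T_n]/I$ and express $I$ as the filtered union of its finitely generated sub-ideals $I_\gamma$, setting $R_\gamma=B[T]/I_\gamma$ (finitely presented over $B$). Then $A=\colim (R_\gamma)_\perf$ with surjective transitions, and a second application of (iii) yields an idempotent $B$-algebra endomorphism $\sigma=s\circ\pi$ on some $(R_{\gamma_0})_\perf$ with image $s(A)$. Since each $\sigma(T_i)$ lies at some finite Frobenius level, I can take $k\gg 0$ so that $\sigma(T_i)=y_i^{1/p^k}$ with $y_i\in R_{\gamma_0}$; idempotency of $\sigma$ forces it to fix each $y_i$, and a direct computation identifies $s(A)$ with the perfection of the sub-$B$-algebra $R'=B[y_1,\dots,y_n]\subset R_{\gamma_0}$, which is the image of the $B$-algebra endomorphism $\psi=\Frob^k\circ\sigma|_{R_{\gamma_0}}:R_{\gamma_0}\to R_{\gamma_0}$.

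The main obstacle is this final upgrade: controlling $\ker(\psi)\subset R_{\gamma_0}$ so as to realize $R'=R_{\gamma_0}/\ker(\psi)$ itself as a finitely presented $B$-algebra, without assuming $B$ is Noetherian. I expect this to be handled by a further appeal to the idempotency of $\sigma$ together with one more invocation of (iii) applied to an approximating system of $R'$ by finitely presented $B$-algebras, so as to absorb the potentially large relation ideal into a finitely generated model.
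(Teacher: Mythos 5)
Your treatment of (i)$\Leftrightarrow$(ii) and (ii)$\Rightarrow$(iii) is standard and matches the paper's route (the paper invokes EGA~IV~\S 8.8 for the latter), and stage~1 of (iii)$\Rightarrow$(ii) --- producing an isomorphism $A\cong(A_{\alpha_0})_\perf$ with $A_{\alpha_0}$ finitely generated, by splitting an injection --- is also correct. So I focus on stage~2 of (iii)$\Rightarrow$(ii), where there is a genuine gap that you flag but whose proposed remedy would not close it.

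Once you have the split surjection $\pi:(R_{\gamma_0})_\perf\twoheadrightarrow A$ with section $s$, analysing the idempotent $\sigma=s\circ\pi$ only identifies $s(A)$ with $(R')_\perf$ for a \emph{finitely generated} $B$-subalgebra $R'\subset(R_{\gamma_0})_\perf$ --- which is the conclusion of stage~1 again, not an improvement. The ideal $\ker\psi$ (the relations defining $R'$) is not controlled by the idempotency of $\sigma$, and re-invoking (iii) on an approximating system of $R'$ by finitely presented algebras just reproduces a split surjection from a finitely presented model, which is exactly where you started. The iteration does not terminate.

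The paper's proof makes a different move at precisely this juncture. Having obtained a perfectly finitely presented $B$-algebra $C:=(R_{\gamma_0})_\perf$ with a split surjection $C\twoheadrightarrow A$, one \emph{replaces the base $B$ by $C$}. This is legitimate: perfect finite presentation of $A$ over $C$ would imply it over $B$ (composites of perfectly finitely presented maps are perfectly finitely presented), and condition~(iii) for $X\to\Spec(C)$ is inherited from condition~(iii) for $X\to Y$ together with (i)$\Rightarrow$(iii) applied to $\Spec(C)\to Y$, since $\Hom_{/\Spec C}(Z,X)$ is the equalizer of $\Hom_{/Y}(Z,X)\rightrightarrows\Hom_{/Y}(Z,\Spec C)$ and filtered colimits commute with equalizers. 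After the base change, $B\to A$ is surjective, say $A=B/J$. Write $A=\colim_j(B/J_j)_\perf$ over finitely generated ideals $J_j\subset J$; since $B$ is perfect, $(B/J_j)_\perf=B/J'_j$ where $J'_j=\{b\in B: b^{p^n}\in J_j\ \text{for some}\ n\}\subset J$. A last application of (iii) yields a $B$-algebra map $g: A\to B/J'_{j_0}$ splitting the quotient map --- and here is the constraint your idempotent does not supply: since source and target are both quotients of $B$, the mere existence of a $B$-algebra map $B/J\to B/J'_{j_0}$ forces $J\subset J'_{j_0}$, hence $J=J'_{j_0}$ and $A=(B/J_{j_0})_\perf$ is perfectly finitely presented. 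The rigidity of $B$-algebra maps between quotients of the (perfect) base is the mechanism that closes the gap, and it only becomes available after passing to the surjective situation.
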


\begin{proof} Clearly, (ii) implies (i). It is a standard exercise to deduce (iii) from (i), as in \cite[\S 8.8]{EGAIV}. Finally, to see that (iii) implies (ii), one reduces to the case $X=\Spec(A)\to Y=\Spec(B)$ is a map of affines. One can write $B\to A$ as a filtered colimit of perfectly finitely presented $B$-algebras $A_i$. Applying condition (iii) with $Z_i=\Spec(A_i)$ gives a map $A\to A_i$ of $B$-algebras such that $A\to A_i\to A$ is the identity. This implies that $A_i\to A$ is surjective, so replacing $B$ by $A_i$ we may assume that $B\to A$ is surjective, i.e. $X\subset Y$ is a closed subscheme. In that case, $X\subset Y$ is a cofiltered intersection of closed subschemes $X_i=\Spec(A_i)\subset Y$ with $B\to A_i$ perfectly finitely presented. Applying condition (iii) with $Z_i = X_i$ (and limit $Z=X$) shows that $X_i\subset X$ for some $i$. But $X\subset X_i$, so $X=X_i$ and $A=A_i$ is perfectly finitely presented.
\end{proof}

Moreover, one has the usual approximation properties for perfectly finitely presented morphisms. For any $X\in \Perf$, let $\Perf^\fp_{/X}\subset \Perf_{/X}$ be the full subcategory of perfectly finitely presented $Y\to X$.

\begin{proposition}\label{ApproxFP} Let $X_i$ be a cofiltered inverse system in $\Perf$ with affine transition maps.\footnote{We recall that all objects of $\Perf$ are assumed to be qcqs.} Then the limit $X=\lim X_i$ exists in $\Perf$, and agrees with the limit taken in schemes.

The natural pullback functor
\[
\twoinjlim_i \Perf^\fp_{/X_i}\to \Perf^\fp_{/X}
\]
is an equivalence.
\end{proposition}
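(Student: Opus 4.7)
The plan is to first verify the existence of the limit using standard facts about cofiltered limits of schemes, and then establish the equivalence of categories by combining classical Grothendieck approximation with Proposition~\ref{PropFP}(iii). For the existence, I would appeal to the standard result that a cofiltered inverse system of qcqs schemes with affine transition maps admits a qcqs limit $X$ in schemes (see \cite[\S 8]{EGAIV}). Since the Frobenius on $X$ is the limit of the Frobenii on the $X_i$, each of which is an isomorphism by hypothesis, $X$ is perfect. A direct check of universal properties then identifies this $X$ with the limit in $\Perf$ as well.

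For essential surjectivity of the pullback functor, I would take $f: Y \to X$ perfectly finitely presented and fix a finitely presented scheme-theoretic model $Y_0 \to X$ with $Y \simeq (Y_0)_\perf$. Classical approximation (\cite[Th\'eor\`eme~8.8.2]{EGAIV}) lets me descend $Y_0$ to a finitely presented $X_i$-scheme $Y_{0,i}$ for some $i$, giving $Y_0 \simeq Y_{0,i} \times^{\Sch}_{X_i} X$. Setting $Y_i := (Y_{0,i})_\perf \in \Perf^\fp_{/X_i}$, I need the identity $Y_i \times^{\Perf}_{X_i} X \simeq Y$; since fibre products in $\Perf$ are perfections of scheme-theoretic fibre products, this reduces to the elementary ring-theoretic observation that $(R_1 \otimes_A R_2)_\perf \simeq ((R_1)_\perf \otimes_A R_2)_\perf$ for $\F_p$-algebras, applied to
\[
Y_i \times^{\Perf}_{X_i} X \simeq \bigl( (Y_{0,i})_\perf \times^{\Sch}_{X_i} X \bigr)_\perf \simeq \bigl( Y_{0,i} \times^{\Sch}_{X_i} X \bigr)_\perf \simeq (Y_0)_\perf \simeq Y.
\]

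For full faithfulness, let $Y_i, Z_i \in \Perf^\fp_{/X_i}$, and for $i' \geq i$ write $Y_{i'}, Z_{i'}$ (resp.\ $Y, Z$) for their base changes to $X_{i'}$ (resp.\ $X$) in $\Perf$. The system $\{Y_{i'}\}_{i' \geq i}$ has affine transition maps (being base changes of the affine transition maps among the $X_{i'}$) and limit $Y$, so applying Proposition~\ref{PropFP}(iii) to the perfectly finitely presented $Z_i \to X_i$ yields $\colim_{i' \geq i} \Hom_{X_i}(Y_{i'}, Z_i) \simeq \Hom_{X_i}(Y, Z_i)$. The universal property of base change rewrites this as $\colim \Hom_{X_{i'}}(Y_{i'}, Z_{i'}) \simeq \Hom_X(Y, Z)$, giving the claim. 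The main subtlety throughout is the behaviour of perfection under scheme-theoretic base change (invoked in essential surjectivity and implicit in identifying the limit of $\{Y_{i'}\}$ with $Y$); everything else is a formal consequence of Proposition~\ref{PropFP} together with classical approximation.
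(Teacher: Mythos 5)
Your existence argument and the full faithfulness argument are both correct: the limit in $\Perf$ agrees with the limit in schemes because Frobenius is functorial and a cofiltered limit of isomorphisms is an isomorphism, and full faithfulness is exactly Proposition~\ref{PropFP}(iii) applied to the system $\{Y_{i'}\}_{i' \geq i}$ (affine transitions, limit $Y$) and the perfectly finitely presented map $Z_i \to X_i$, combined with the universal property of base change.

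The essential surjectivity step, however, is circular as written. You fix a finitely presented scheme-theoretic model $Y_0 \to X$ with $Y \simeq (Y_0)_\perf$; the existence of such a global model is precisely Proposition~\ref{FPModelsExist}, and the paper's proof of that result invokes Proposition~\ref{ApproxFP}. The definition of a perfectly finitely presented morphism (via Proposition~\ref{PropFP}) only provides local models on affine charts; the global statement is a theorem, not a definition. What the one-line appeal to [EGA~IV, \S 8.8] is pointing at is the usual local-to-global argument: cover $Y$ by finitely many affines $\Spec(B_k)$ mapping into affines $\Spec(A_k) \subset X$, take local models $B_k = (B_{k,0})_\perf$ with $B_{k,0}$ finitely presented over $A_k$, cover the (quasi-compact, as $Y$ is quasi-separated) overlaps by finitely many affines and do the same there, descend this finite collection of data together with the gluing isomorphisms along $X = \lim X_i$ using classical approximation level by level, then perfect and glue. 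Your computation for a single affine chart is fine; it is the bookkeeping with the cover that your argument skips by assuming a global model exists. Once Proposition~\ref{ApproxFP} is proved this way, Proposition~\ref{FPModelsExist} follows, and only then does the streamlined version you wrote become available.
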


\begin{proof} The standard argument as in \cite[\S 8.8]{EGAIV} works.
\end{proof}

All the perfectly finitely presented morphisms encountered in this paper will by their definitions be perfections of finitely presented morphisms. However, this is a general statement, cf. also \cite[Proposition A.13]{ZhuMixedCharGeomSatake} for a more geometric proof of a similar statement.

\begin{proposition}\label{FPModelsExist} Let $f: X\to Y$ be a perfectly finitely presented morphism in $\Perf$. Then there exists a finitely presented morphism $f_0: X_0\to Y$ of schemes such that $f=(f_0)_\perf$.
\end{proposition}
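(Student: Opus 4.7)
My plan is to construct the finitely presented model of $X \to Y$ by choosing local models over affine pieces of an open cover of $X$ and then gluing them, using the approximation property Proposition~\ref{ApproxFP}.

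First, I would cover $X$ by finitely many affine opens $U_i = \Spec(A_i)$, each mapping into an affine open $V_i = \Spec(B_i) \subset Y$. By Proposition~\ref{PropFP}, each $B_i \to A_i$ is perfectly finitely presented, so I pick a finitely presented $B_i$-algebra $A_{0,i}$ with $(A_{0,i})_\perf \cong A_i$ and set $U_{0,i} := \Spec(A_{0,i})$. For each pair $i,j$, the intersection $U_{ij} := U_i \cap U_j$ is quasi-compact open in both $U_i$ and $U_j$, and by Lemma~\ref{SchVsPerfProp} corresponds to quasi-compact opens $W_{ij} \subset U_{0,i}$ and $W_{ji} \subset U_{0,j}$, both finitely presented over $V_{ij} := V_i \cap V_j$ with canonically isomorphic perfections equal to $U_{ij}$.

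The main step is to upgrade these to compatible gluing data: $V_{ij}$-isomorphisms $\phi_{ij}\colon W_{ij} \xrightarrow{\sim} W_{ji}$ inducing the identity on perfections and satisfying the cocycle condition on triple overlaps. The canonical isomorphism $(W_{ij})_\perf \cong (W_{ji})_\perf$ is a $V_{ij}$-morphism into a perfection; writing the perfection on affines as a filtered colimit along Frobenius at the ring level, and using that $W_{ij}$ is finitely presented, this isomorphism factors through a $V_{ij}$-morphism $W_{ij} \to W_{ji}$ at some finite Frobenius level. After modifying each $A_{0,i}$ by a suitable Frobenius twist—an operation that does not change the perfection and so still produces a valid model of $A_i$—one can arrange that all overlap morphisms become genuine isomorphisms at level zero and that the cocycle condition (which holds after perfection, being the identity on triple overlaps) also holds at finite level. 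Since only finitely many pairs and triples $(i,j,k)$ are involved, all conditions can be met by a single uniform choice of twists. Gluing the $U_{0,i}$ along the resulting $\phi_{ij}$ then produces the desired finitely presented $Y$-scheme $X_0$, and the perfection functor's compatibility with gluing of open covers shows that $(X_0)_\perf = X$.

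The main obstacle is the gluing step. The approximation property gives overlap morphisms at some finite Frobenius level for each pair individually, but ensuring simultaneously that they can all be made into mutually inverse isomorphisms and that the cocycle condition holds at the finite level requires a coherent bookkeeping of Frobenius twists of the local models; the finiteness of the cover is what makes this achievable. Once the gluing data are coherent, the rest is routine scheme-theoretic gluing.
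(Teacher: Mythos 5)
Your approach — choosing finitely presented models on an affine cover and gluing them — is genuinely different from the paper's argument, and it has a real gap at the step you correctly identify as the main obstacle.

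The problematic claim is that Frobenius twisting the local models $A_{0,i}$ can arrange for the overlap comparison maps $\phi_{ij}\colon W_{ij}\to W_{ji}$ to become \emph{isomorphisms}. Over a perfect base, a Frobenius twist of a finitely presented algebra is abstractly isomorphic to the original (the twist merely reparametrizes where the model sits inside the perfection), so twisting cannot change the isomorphism type of $W_{ij}$ or $W_{ji}$. But $W_{ij}$ and $W_{ji}$ are opens of \emph{independently chosen} models $U_{0,i}$ and $U_{0,j}$, and the only thing forcing them together is that their perfections agree — that is much weaker than being isomorphic. Concretely, over $\F_p$ the schemes $\Spec(\F_p[t])$ and the cuspidal curve $\Spec(\F_p[t^2,t^3])$ have the same perfection, are finitely presented, and are not isomorphic; one can easily arrange the restrictions of two local models to an overlap to realize exactly this kind of mismatch. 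The maps $\phi_{ij}$ you obtain by finite presentation are finite universal homeomorphisms that become isomorphisms only after passing to the limit along Frobenius; no choice of Frobenius levels on the $U_{0,i}$ upgrades them to genuine isomorphisms at a finite level, so there are no gluing data to assemble, and the cocycle condition is moot.

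The paper avoids the gluing problem entirely. In the absolute case it writes $X$ as a cofiltered limit of perfections $X_i$ of finitely presented schemes, and then uses the finite presentation of $X$ itself (Proposition~\ref{PropFP}(iii)) to split the projection $X\to X_i$ for some $i$: the resulting section exhibits $X$ as a \emph{closed} subscheme of the single perfect scheme $X_i$. One then takes the reduced closed subscheme of a model $X_{i,0}$ with underlying set $|X|$; its perfection receives a universal homeomorphism from $X$, which is an isomorphism by Lemma~\ref{UnivHomeomIsomPerf}. The relative case reduces to the absolute case by Proposition~\ref{ApproxFP}. The structural point is that a global closed immersion into a model sidesteps the need to reconcile independently chosen local models — which is exactly where your argument breaks down. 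If you want to salvage a local-to-global argument, you would need a mechanism for producing \emph{compatible} models on the cover (not just arbitrary ones), and Frobenius twisting alone does not provide one.
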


Any such $f_0: X_0\to Y$ will be called a model for $f$. Again, any two models differ by finite purely inseparable morphisms.

\begin{proof} First, we deal with the absolute case $Y=\Spec(\F_p)$. Using \cite[Theorem C.9]{ThomasonKtheory}, we can write $X$ as a cofiltered limit of $X_{i,0}\in \Sch^\fp_{/\F_p}$ with affine transition maps. Let $i_0$ be any chosen index, and replace the indexing category by the set of indices $i\geq i_0$. Let $X_i$ be the perfection of $X_{i,0}$. Then $X=\lim X_i$, and we write $f_i: X\to X_i$ for the projection maps. Proposition \ref{PropFP} (iii) implies that the map $X=\lim X_i\to X$ of perfect $X_{i_0}$-schemes factors through a map $g: X_i\to X$ for some $i\geq i_0$, i.e. the composite
\[
X\buildrel {f_i}\over\longrightarrow X_i\buildrel g\over\longrightarrow X\ ,
\]
is the identity, where all maps are maps over $X_{i_0}$. We claim that $f_i$ is a closed immersion. As everything is affine over $X_{i_0}$, we can assume that $X_i=\Spec(A_i)$ and $X=\Spec(A)$ are affine, by pulling back to an open affine of $X_{i_0}$. But then there are maps $A\to A_i\to A$ whose composite is the identity. Thus, $A_i\to A$ is surjective, meaning that $f_i$ is a closed immersion.

In other words, we can assume that $X\hookrightarrow X^\prime$ is a closed subscheme of some $X^\prime$ which is the perfection of a finitely presented $X^\prime_0\in \Sch^\fp_{/\F_p}$. Then $|X|\subset |X^\prime|=|X^\prime_0|$ is a closed subset, defining a reduced subscheme $X_0\subset X^\prime_0$, with $X_0\in\Sch^\fp_{/\F_p}$. Moreover, there is a map $X\to (X_0)_\perf$, which is a universal homeomorphism, and therefore an isomorphism by Lemma \ref{UnivHomeomIsomPerf}.

In general, using \cite[Theorem C.9]{ThomasonKtheory} again, we can write $Y$ as a limit of $Y_{i,0}\in \Sch^\fp_{/\F_p}$ with affine transition maps. Letting $Y_i$ be the perfection of $Y_{i,0}$, we see that we can write $Y$ as a limit of $Y_i\in \Perf^\fp_{/\F_p}$. Using Proposition \ref{ApproxFP}, we can assume that $Y=Y_i$ is the perfection of some $Y_0 = Y_{i,0}\in \Sch^\fp_{/\F_p}$. Therefore, $X$ is also perfectly finitely presented over $\F_p$. By the case already handled, $X$ is the perfection of some $X_0^\prime\in \Sch^\fp_{/\F_p}$. Now as $Y_0$ is finitely presented (as a scheme), the composite map $X=(X_0^\prime)_\perf\to Y\to Y_0$ factors through a map $X_0^\prime\to Y_0$, up to a power of Frobenius, which we can forget. Taking $X_0=X_0^\prime\times_{Y_0} Y$ gives the desired model.
\end{proof}

\begin{definition} Let $f: X\to Y$ be a perfectly finitely presented map in $\Perf$. Then $f$ is called proper if it is separated and universally closed.
\end{definition}

\begin{corollary} Let $f: X\to Y$ be a perfectly finitely presented map in $\Perf$, with a model $f_0: X_0\to Y$. Then $f$ is proper if and only if $f_0$ is proper.
\end{corollary}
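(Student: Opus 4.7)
The plan is to reduce the statement to Lemma~\ref{SchVsPerfProp}, which already gives the equivalence of separatedness and universal closedness between a morphism of $\F_p$-schemes and its perfection. Concretely, since $Y \in \Perf$ we have $Y_\perf = Y$, and since $f_0 : X_0 \to Y$ is a model for $f$ we have $(f_0)_\perf = f : X = (X_0)_\perf \to Y$. Because $f_0$ is finitely presented (by definition of a model) and $f$ is perfectly finitely presented, in each setting ``proper'' amounts to the conjunction of ``separated'' and ``universally closed''.

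First, I would invoke Lemma~\ref{SchVsPerfProp}(iv) to get that $f_0$ is separated if and only if $f = (f_0)_\perp$ is separated. Then I would invoke Lemma~\ref{SchVsPerfProp}(vi) to get that $f_0$ is universally closed if and only if $f$ is universally closed. Combining these two equivalences yields that $f_0$ is proper if and only if $f$ is proper.

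There is no real obstacle here; the only thing to verify is that the notion of ``proper'' used in the corollary (separated plus universally closed, in the perfectly finitely presented setting, as specified in the definition immediately preceding the corollary) matches the notion of ``proper'' for $f_0$ on the scheme side, which holds since $f_0$ is finitely presented and properness for finitely presented morphisms of schemes is equivalent to being separated and universally closed. Independence of the choice of model $f_0$ follows from the remark that any two models differ by finite purely inseparable morphisms, and such morphisms are proper, so propriety of one model is equivalent to propriety of any other by composition.
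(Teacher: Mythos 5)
Your proof is correct and matches the paper's: the paper's proof simply says "This follows from Lemma \ref{SchVsPerfProp}," and you have spelled out exactly which clauses of that lemma ((iv) for separatedness, (vi) for universal closedness) supply the two halves of the definition of proper. The closing remark about independence of model is a fine observation, though not strictly required since the corollary fixes a particular model $f_0$.
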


\begin{proof} This follows from Lemma \ref{SchVsPerfProp}.
\end{proof}

The following somewhat miraculous vanishing result for $\Tor$-groups is responsible for many of the remarkable properties of the $v$-topology on perfect schemes.

\begin{lemma}\label{NoTorPerf} Let $C\leftarrow A\to B$ be a diagram of perfect rings. Then
\[
\Tor_i^A(B,C)=0
\]
for all $i>0$.
\end{lemma}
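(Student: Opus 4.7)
The strategy is to reduce to a surjective $A\twoheadrightarrow B$ via a flat base change, and then invoke Frobenius.

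First, I would replace $A$ by the perfection $A^\prime := A[x_b^{1/p^\infty} : b\in B]$ of a polynomial $A$-algebra admitting a tautological surjection $A^\prime\twoheadrightarrow B$ via $x_b\mapsto b$. This $A^\prime$ is perfect, and it is flat over $A$ since it is a filtered colimit of the free $A$-modules underlying its finite polynomial subalgebras $A[x_{b_1}^{1/p^n},\ldots,x_{b_k}^{1/p^n}]$. Setting $C^\prime := C\otimes_A A^\prime = C[x_b^{1/p^\infty}]$ (still perfect), flat base change for Tor gives $\Tor_i^A(B,C)\cong \Tor_i^{A^\prime}(B,C^\prime)$. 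Replacing $(A,B,C)$ with $(A^\prime,B,C^\prime)$, we may assume $A\twoheadrightarrow B$, and write $B = A/J$.

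Next, I would record the key property of $J$. Since $B$ is reduced and $A$ has unique $p$-th roots, any $a\in A$ with $a^p\in J$ already lies in $J$, so $J$ is closed under $p$-th roots; combined with perfectness of $A$, this forces $J = J^p$, and iterating, $J = J^n$ for every $n\geq 1$. In particular $J$ is idempotent.

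For the vanishing itself, I would view $R := B\otimes_A^{L} C$ as a simplicial commutative $\F_p$-algebra equipped with its natural Frobenius $\Phi$. The crux is that $\Phi$ acts on each $\pi_i(R)$ in two incompatible ways: it is an isomorphism (because the Frobenii on each of $A,B,C$ are isomorphisms, hence so is the derived Frobenius on $R$), and simultaneously the zero map when $i>0$ (a general feature of simplicial commutative $\F_p$-algebras, reflecting the identity $d(x^p)=0$, or equivalently that the derived perfection of such an algebra is concentrated in degree $0$). Forcing both conditions yields $\pi_i(R)=0$ for $i>0$, which is the claimed Tor vanishing.

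\textbf{Main obstacle.} The subtle point will be the ``$\Phi$ vanishes on $\pi_{>0}$'' assertion, which requires genuine input from animated commutative algebra in characteristic $p$. A more down-to-earth alternative that bypasses this is to show that $J$ is actually $A$-flat, by writing it as a filtered colimit of submodules built from iterated $p$-power roots of finitely generated subideals (using $J=J^p$ crucially); this reduces $\Tor_{\geq 2}^A(B,C)$ to flatness of $J$, and $\Tor_1^A(B,C) = \ker(J\otimes_A C\to C)$ to an injectivity statement that again follows from $J=J^p$ together with perfectness of $C$.
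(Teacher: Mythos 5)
Your primary argument via derived Frobenius is correct, and is exactly the ``conceptual'' alternative the paper itself sketches just before giving its own proof: $R := B\otimes^{\L}_A C$ is a simplicial commutative $\F_p$-algebra on which Frobenius is simultaneously an equivalence (induced by the Frobenius isomorphisms on the cospan $B \leftarrow A \to C$) and zero on $\pi_{>0}$; the latter is precisely the paper's Proposition~\ref{prop:FrobSCR}, proved in \S\ref{ss:PerfSCRDisc} by reduction to the universal case $\Sym_{\F_p}(\F_p[i])$. You have correctly identified that as the genuine input. Your flat base-change reduction to a surjection is clean (the paper instead factors $A \to B$ through the perfection of a free $A$-algebra), though note that the derived-Frobenius argument needs neither the surjectivity reduction nor the observation $J = J^p$; both only matter for your fallback. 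The paper's actual proof is a precise version of that fallback: after a filtered-colimit reduction to a perfectly principal ideal $I = f^{1/p^\infty}A$, it exhibits $I$ as the filtered colimit $\varinjlim_{f^{1/p^n-1/p^{n+1}}} A$, making $I$ visibly flat and simultaneously identifying $I\otimes^{\L}_A C$ with $IC\hookrightarrow C$, which handles $\Tor_1$ as well. Either route works: yours is slicker but imports Proposition~\ref{prop:FrobSCR}; the paper's is self-contained and additionally records the stronger fact that the kernel ideal is $A$-flat.
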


In the language of derived algebraic geometry, this says that the simplicial $\F_p$-algebra $B\buildrel\L\over\otimes_A C$ is discrete. But $B\buildrel\L\over\otimes_A C$ is still perfect, and one can show that {\em any} perfect simplicial $\F_p$-algebra is discrete, see Proposition \ref{prop:FrobSCR}. We give a more direct proof below.

\begin{proof} We may write $A\to B$ as the composition of the perfection of a free algebra and a quotient map. Clearly, $\Tor$-terms vanish for the free algebra, so we may assume that $A\to B$ is surjective. Let $I=\ker(A\to B)$. By a filtered colimit argument, we may assume that $I=(f_1^{\frac{1}{p^\infty}},\ldots,f_n^{\frac{1}{p^\infty}})$ is perfectly finitely generated. By induction, we may assume that $n=1$, so $I=f^{\frac{1}{p^\infty}} A$. We claim that in this case
\[
I= \varinjlim_{f^{\frac{1}{p^n} - \frac{1}{p^{n+1}}}} A
\]
is the direct limit of $A$ with transition maps given by multiplication by $f^{\frac{1}{p^n} - \frac{1}{p^{n+1}}}$. Here, the map
\[
\gamma: \varinjlim_{f^{\frac{1}{p^n} - \frac{1}{p^{n+1}}}} A\to I
\]
is given by $A\to I$, $a\mapsto f^{\frac{1}{p^n}} a$, in the $n$-spot. Clearly, $\gamma$ is surjective. To check that $\gamma$ is injective, assume that $a\in A$ with $f^{\frac{1}{p^n}} a=0$. By perfectness of $A$, we get $f^{\frac{1}{p^{n+1}}} a^{\frac{1}{p}} = 0$, so in particular $f^{\frac{1}{p^{n+1}}} a=0$. But then $a$ is killed under the transition map, which is multiplication by $f^{\frac{1}{p^n} - \frac{1}{p^{n+1}}}$.

Applying the same argument for $IC = f^{\frac{1}{p^\infty}} C\subset C$ shows that
\[
IC = \varinjlim_{f^{\frac{1}{p^n} - \frac{1}{p^{n+1}}}} C = I\buildrel\L\over\otimes_A C\ .
\]
Thus, $B\buildrel\L\over\otimes_A C$, which is the cone of $I\buildrel\L\over\otimes_A C\to C$, is equal to the cokernel of $IC\to C$, i.e. $B\otimes_A C$, showing that all higher $\Tor$-terms vanish.
\end{proof}

\begin{remark}
	The proof of Lemma \ref{NoTorPerf} shows, in particular, that any quotient $R/I$ of a perfect ring $R$ by the radical $I$ of a finitely generated ideal $I_0$ has finite $\Tor$-dimension as an $R$-module. In Proposition \ref{prop:BoundedTorDim}, this statement will be extended to non-reduced quotients under a mild finite presentation constraint.
\end{remark}

The previous lemma implies the promised base-change result:

\begin{lemma}\label{BaseChange} Let
\[\xymatrix{
X^\prime\ar[r]^{g^\prime}\ar[d]^{f^\prime} & X\ar[d]_f\\
Y^\prime\ar[r]_g & Y
}\]
be a pullback diagram in $\Perf$. Then, for any $K\in D_{qc}(X)$, the base-change morphism
\[
Lg^\ast Rf_\ast K\to Rf^\prime_\ast Lg^{\prime\ast} K
\]
is a quasi-isomorphism.
\end{lemma}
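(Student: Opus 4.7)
The plan is to reduce to the case where all four schemes are affine, where the claim becomes an immediate consequence of the Tor-vanishing in Lemma~\ref{NoTorPerf}.

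First, because the base change morphism is Zariski-local on $Y$ and $Y'$, and because $Rf_\ast$ commutes with restriction to Zariski opens (flat base change along open immersions is formal), we may assume $Y = \Spec B$ and $Y' = \Spec B'$ are both affine. Next, we reduce to $X = \Spec A$ affine by a Mayer--Vietoris induction: choose a finite affine open cover $\{U_i\}$ of the qcqs scheme $X$; the finite intersections of the $U_i$ are qcqs opens in affine schemes, hence themselves covered by finitely many principal affine opens, so the procedure can be iterated. At each stage, Zariski descent expresses both $Rf_\ast K$ and $Rf'_\ast Lg'^\ast K$ as finite homotopy limits of terms coming from the pieces; since $Lg^\ast$ is exact and commutes with such finite limits, the claim for $X$ follows from the claims on each piece.

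Once $X = \Spec A$ is affine, $X' = \Spec(A \otimes_B B')$ as a scheme, and Lemma~\ref{NoTorPerf} applied to the pushout diagram $A \leftarrow B \to B'$ of perfect rings yields $A \otimes_B B' \simeq A \otimes_B^{\L} B'$. Identifying $D_{qc}(X) \simeq D(A)$, one computes, for $K \in D(A)$,
\[ Lg^\ast Rf_\ast K \simeq K \otimes_B^{\L} B' \simeq K \otimes_A^{\L} (A \otimes_B B') \simeq Rf'_\ast Lg'^\ast K, \]
with the composite being the canonical base change map. The only non-formal ingredient is therefore Lemma~\ref{NoTorPerf}, which supplies the Tor-independence of the pushout square of rings that would classically be imposed as a flatness hypothesis; the reduction to the affine case is otherwise entirely standard, and we do not expect a serious obstacle there.
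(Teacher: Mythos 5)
Your proof is correct and takes essentially the same approach as the paper: reduce to the affine case and then invoke the Tor-vanishing of Lemma~\ref{NoTorPerf} to identify $A' = A\otimes_B B'$ with $A\otimes^{\L}_B B'$, from which $K\otimes^{\L}_B B' \simeq K\otimes^{\L}_A A'$ follows formally. The paper dismisses the affine reduction as "immediate," whereas you spell out the Zariski-locality and Mayer--Vietoris steps; the substance of the argument is identical.
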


Here as usual $D_{qc}(X)$ denotes the full subcategory of the derived category of $\calO_X$-modules consisting of those complexes of $\calO_X$-modules whose cohomology sheaves are quasicoherent.

\begin{proof} This reduces immediately to the case where all schemes are affine; let $X=\Spec(A)$, $Y=\Spec(B)$ etc., and let $K\in D_{qc}(X) = D_{qc}(A)$. In that case, one has to prove that
\[
K\buildrel\L\over\otimes_A A^\prime\cong K\buildrel\L\over\otimes_B B^\prime\ ,
\]
for which it is enough to see that
\[
B\buildrel\L\over\otimes_A A^\prime = B^\prime\ .
\]
But this is the statement of Lemma \ref{NoTorPerf}.
\end{proof}

\section{$h$-descent for vector bundles on perfect schemes} \label{VDescent}

Our goal is to prove $v$-descent for vector bundles on perfect schemes, as well as certain related bundles defined using the Witt vectors. So, for any perfect scheme $X$ and $n \geq 1$, we write $W_n(X)$ for the scheme obtained by applying the Witt vector functor $W_n(-)$ locally on $X$; let $W(X) := \colim W_n(X)$ be the corresponding $p$-adic formal scheme. Let $\gVect(X)$ denote the groupoid of vector bundles on $X$, and let $\gPic(X)$ be the groupoid of line bundles.

\begin{theorem}\label{thm:VectWhStack}
\begin{enumerate}
\item[{\rm (i)}] If $X=\Spec(A)\in \Perf$ is an affine scheme and $\calE\in \gVect(W_n(X))$ (resp. $\calE\in \gVect(W(X))$) corresponding to some finite projective $W_n(A)$-module (resp. $W(A)$-module) $M$, then
\[
M=R\Gamma_v(X,\calE)\ .
\]
\item[{\rm (ii)}] For each $n$, the functor $X \mapsto \gVect(W_n(X))$ on $\Perf$ is a $v$-stack and, consequently, the functor $X \mapsto \gVect(W(X))$ is a $v$-stack.
\end{enumerate}
\end{theorem}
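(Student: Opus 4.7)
The plan is to verify that vector bundles on $W_n(-)$ and $W(-)$ satisfy the criteria of Section \ref{sec:hsheaves} for $v$-descent on $\Perf$, crucially exploiting the unconditional base-change of Lemma \ref{BaseChange} and the Tor-vanishing of Lemma \ref{NoTorPerf}. For (i) I would apply the $\Perf$-analogue of Corollary \ref{VCohom} to the presheaf of abelian groups on $\Perf_{/X}$ sending $Y \to X$ to $\Gamma(W_n(Y), \calE|_Y)$, and for (ii) the $\Perf$-analogue of Corollary \ref{SheafUnivSubtr} to the presheaf of groupoids $Y \mapsto \gVect(W_n(Y))$. Both criteria reduce the desired $v$-descent to three inputs: fppf descent, compatibility with cofiltered limits of affines, and an ``abstract blowup square'' condition for proper surjective $\pi \colon X' \to Y$ in $\Perf$ that are isomorphisms away from a closed $Z \subset Y$ with preimage $E \subset X'$.

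The first two inputs I would handle by d\'evissage in $n$. For each $n \geq 1$, the closed immersion $W_{n-1}(X) \hookrightarrow W_n(X)$ has ideal isomorphic to $\calO_X$ via the Verschiebung (using that $X$ is perfect), and this reduces classical fppf descent and cofiltered-limit compatibility for vector bundles on $W_n$ to the corresponding statements for vector bundles on $X$ itself, where they are standard; for (ii) the reduction uses that $\gVect(W_n(-))$ sits in an obstruction/deformation fibration over $\gVect(W_{n-1}(-))$ with fibres controlled by $\calO_X$-linear data. Faithful flatness of $W_n(A) \to W_n(B)$ for faithfully flat $A \to B$ between perfect rings drops out inductively from the same filtration. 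Once (i) and (ii) are established for all $W_n$, the case of $W$ follows by passage to the limit along the tower $\{W_n\}$, using compatibility of the groupoid $\gVect$ with inverse limits of $p$-adically complete rings.

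The main obstacle is the abstract blowup-square condition; after the d\'evissage above it suffices to treat $n = 1$. Here I want the square of global sections over $(Y, X', Z, E)$ to be homotopy Cartesian, and similarly for the groupoids of vector bundles. Lemma \ref{BaseChange} provides the base-change identification $R\pi_* \calO_{X'} \otimes^{\L}_{\calO_Y} \calO_Z \simeq R\pi'_* \calO_E$ for $\pi' \colon E \to Z$, which rewrites the Cartesian condition as an excision statement: the cofiber of $\calO_Y \to R\pi_* \calO_{X'}$ should be derived-equivalent to its base-change along $\calO_Y \to \calO_Z$. Since $\pi$ is an isomorphism outside $Z$, this cofiber is supported on $Z$ in a strong sense, and after reducing to the affine situation I would use Lemma \ref{NoTorPerf} to collapse the relevant derived tensor products. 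For part (ii) one additionally needs effective descent for vector bundles along such Cartesian squares of rings; this is the standard Beauville--Laszlo pattern for gluing finite projective modules along a homotopy pullback of rings, which applies cleanly in the perfect setting precisely because the square is genuinely Cartesian (with no derived corrections) by Lemma \ref{NoTorPerf}. In ordinary algebraic geometry the analogous claim requires hypotheses on $R\pi_* \calO_{X'}$ which are automatic here; verifying this cleanly in the stated generality is the step I expect to be the most delicate.
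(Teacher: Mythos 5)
Your overall strategy matches the paper's: translate to presheaves on $\Sch_{/\F_p}$ via Proposition~\ref{SchVsPerf}, apply Corollary~\ref{SheafUnivSubtr} (resp.\ Corollary~\ref{VCohom}), devissage on $n$ for $W_n(-)$ using the Verschiebung filtration, and recover $W(-)$ by a limit. The fppf input and the cofiltered-limit input are correctly identified as routine. The problem is the abstract blowup square, and this is precisely where your proposal has a genuine gap.

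You want the cofiber $C := \cofib(\calO_{Y_\perf} \to R\pi_*\calO_{X'_\perf})$ to satisfy $C \otimes^\L_A I = 0$, where $I$ is the ideal of $Z_\perf$ in $A = \calO(Y_\perf)$; you justify this by saying $C$ is ``supported on $Z$ in a strong sense'' and invoking Lemma~\ref{NoTorPerf} to collapse derived tensor products. Neither step is sound. First, in the perfect world, set-theoretic support on $V(I)$ does \emph{not} imply being annihilated by $I$, even when $I$ is radical: for $A = \F_p[x^{1/p^\infty}]$ and $I = (x^{1/p^\infty})$, the module $M = A/(x^{1/p})$ is supported at $V(I)$ but $x^{1/p^2}\cdot 1 \neq 0$, so $I M \neq 0$ and in fact $M \otimes_A I \neq 0$; there is no formal reason for the analogous statement to hold for your $C$. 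Second, Lemma~\ref{NoTorPerf} is a Tor-vanishing statement for diagrams of perfect \emph{rings}; it tells you $I \otimes^\L_A B = I \otimes_A B$ for a perfect $A$-algebra $B$, but says nothing about $I \otimes^\L_A C$ for an arbitrary complex $C \in D_{qc}(Y_\perf)$, which is what you need. So the conclusion you want does not follow from the tools you cite. The paper is aware this is the crux: the cohomological abstract blowup square is exactly Lemma~\ref{lem:VectcdhStack}(i), which is proved by reduction to $\calE = \calO$ and then outsourced to Gabber (\cite[Lemma 3.9]{BhattSchwedeTakagi}); that result is itself a nontrivial theorem about coherent cohomology of perfections, not a formal consequence of Lemma~\ref{BaseChange} or Lemma~\ref{NoTorPerf}.

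The gap is even wider for part (ii). Essential surjectivity of
\[
\gVect(Y_\perf) \to \gVect(X'_\perf)\times_{\gVect(E_\perf)}\gVect(Z_\perf)
\]
is not a Beauville--Laszlo/Milnor patching situation along a Cartesian square of rings: $X'_\perf$ is a genuinely non-affine scheme, not $\Spec$ of one ring, and $R\pi_*\calO_{X'_\perf}$ need not be concentrated in degree zero. The paper's proof (Lemma~\ref{lem:VectcdhStack}(ii)) first reduces to the case where $X'$ is the blowup of $Y$ along $Z$ via Raynaud--Gruson, then arranges Serre vanishing $H^i(X,I^k/I^{k+1})=0$ for $k\geq n$, $i>0$, deforms the isomorphism across infinitesimal neighborhoods of $E$ using these vanishings, and finally invokes the formal existence theorem to pass from the formal completion to $X$. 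None of this is captured by your sketch. (There is, as you suspect, a more ``derived'' route in §\ref{sec:hDescentDerived} of the paper that avoids these projective methods, but it runs through Mathew's notion of descendable maps and the bounded-index limit Lemma~\ref{lem:DescendableLimits}; the informal derived-tensor manipulation you propose is not enough to substitute for that machinery either.)

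In short: the reduction steps in your plan are correct and match the paper, but the abstract blowup square is the one place where real work happens, and the argument you give there does not hold up.
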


\begin{remark} This theorem implies in particular that the $v$-topology on $\Perf$ is subcanonical. Given the vast generality of $v$-covers, one might be tempted to believe that the $v$-topology is the canonical topology on $\Perf$; however, Lemma \ref{BreakValRing} provides additional covers in the canonical topology that are not $v$-covers. It may be an interesting problem to describe the canonical topology on $\Perf$.
\end{remark}

\begin{remark}
	\label{rmk:NonDescentWarning}
	We warn the reader that Theorem \ref{thm:VectWhStack} does {\em not} imply descent for standard properties of morphisms in the $h$-topology. For example, the inclusion $j: U := (\A^2 - \{0\})_{\perf} \hookrightarrow X := \A^2_{\perf}$ is not affine, but becomes affine after base change to the blowup $\pi:Y := (\mathrm{Bl}_{0}(X))_\perf \to X$ of $X$ at $0$. The main issue, say in comparison with the fpqc topology, is that pullback along $\pi^*$ is not exact. For the same reason, $h$-descent data for flat quasi-coherent sheaves need not be effective: the pullback of the complex $R j_* \calO_U$ along the blowup $\pi$ is a flat quasi-coherent sheaf on $Y$ equipped with descent data along $\pi$ that does not arise as the pullback of a flat quasi-coherent sheaf on $X$.
\end{remark}

Part (i) of this theorem (for the $h$-topology) is a result of Gabber, cf. \cite[\S 3]{BhattSchwedeTakagi}. Note that the analog of this result without perfections is manifestly false: there is no descent for vector bundles along the inclusion of the reduced subscheme of a non-reduced subscheme. By passing to $\Hom$-bundles, we obtain the following.

\begin{corollary}\label{cor:HomVectWhStack}
Fix some perfect qcqs scheme $X$, and $\calE_1,\calE_2 \in \gVect(W(X))$. Then the functor $(f:Y \to X) \mapsto \Hom(f^* \calE_1, f^* \calE_2)$ is a $v$-sheaf on $\Perf_{/X}$.
\end{corollary}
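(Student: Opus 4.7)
The plan is to reduce the corollary to Theorem \ref{thm:VectWhStack}(i) by recognizing the Hom-functor as global sections of an auxiliary vector bundle. Specifically, I would set $\calH := \calE_1^\vee \otimes_{\calO_{W(X)}} \calE_2$, which is itself a vector bundle on $W(X)$. Since vector bundles are locally free of finite rank, formation of the internal Hom commutes with arbitrary pullback, so for each $f: Y \to X$ in $\Perf_{/X}$ there is a canonical identification
\[ \Hom_{W(Y)}(f^*\calE_1, f^*\calE_2) \;\simeq\; \Gamma(W(Y), f^*\calH). \]
The problem thus reduces to proving that the presheaf $F$ on $\Perf_{/X}$ defined by $F(Y) := \Gamma(W(Y), f^*\calH)$ is a $v$-sheaf.

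Next I would reduce to the affine case. Since global sections of a quasi-coherent sheaf on $W(Y)$ satisfy Zariski descent on $Y$, the presheaf $F$ is automatically a Zariski sheaf; consequently it is enough to verify the $v$-sheaf axiom for $v$-covers $g: Y' \to Y$ in $\Perf_{/X}$ with both source and target affine (any $v$-cover over an affine $Y$ can be refined to one of this form by passing to a finite affine open cover of the source and taking the disjoint union). For such an affine cover, Theorem \ref{thm:VectWhStack}(i) identifies $F(Y) = R\Gamma_v(Y, f^*\calH)$ concentrated in degree zero, and likewise $H^i_v(Y'^{n/Y}, f^*\calH) = 0$ for all $i > 0$ and all $n \geq 1$. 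Consequently the Cech-to-cohomology spectral sequence for the $v$-cover $g$ degenerates at $E_2$ and yields the equalizer
\[ F(Y) \;\longrightarrow\; F(Y') \;\rightrightarrows\; F(Y' \times_Y Y'), \]
which is exactly the $v$-sheaf axiom.

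There is essentially no real obstacle here: once one has Theorem \ref{thm:VectWhStack}(i) in hand, the deduction is entirely formal, using only that the internal Hom of two vector bundles is again a vector bundle whose formation commutes with pullback. The only point requiring mild care is the interpretation of $f^*\calH$ along the potentially non-flat ring map $W(A) \to W(B)$ induced by $f: \Spec(B) \to \Spec(A)$, but for vector bundles this is unproblematic: $f^*\calH$ is given by tensor product of a finite projective $W(A)$-module with $W(B)$, which is automatically a finite projective $W(B)$-module and is compatible with the $\calE_1^\vee \otimes \calE_2$ construction.
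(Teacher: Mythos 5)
Your proposal is correct and follows exactly the route the paper intends: the paper's entire proof is the phrase ``By passing to $\Hom$-bundles,'' which is precisely your step of forming $\calH = \calE_1^\vee \otimes \calE_2$, identifying $\Hom(f^*\calE_1,f^*\calE_2)$ with $\Gamma(W(Y),f^*\calH)$, and invoking Theorem \ref{thm:VectWhStack}(i) (together with Zariski descent to reduce to the affine case). You have merely spelled out the details the paper leaves implicit.
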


Here we write $f^* \calE_i \in \gVect(W(Y))$ for the pullback of $\calE_i$ under the map $W(Y) \to W(X)$ induced by $f$ via functoriality. Our strategy for proving Theorem \ref{thm:VectWhStack} is to apply Corollary \ref{SheafUnivSubtr} to the presheaves on $\Sch_{/\F_p}$ obtained from the presheaves on $\Perf$ via $X\mapsto X_\perf$. In this respect, we observe the following.

\begin{proposition}\label{SchVsPerf} Let $F$ be a presheaf of spaces on $\Perf$. Let $F^\prime$ be the presheaf of spaces on $\Sch_{/\F_p}$ defined by $F^\prime(X) = F(X_\perf)$. Then $F$ is a $v$-sheaf if and only if $F^\prime$ is a $v$-sheaf.
\end{proposition}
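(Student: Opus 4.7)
The plan is to reduce both implications to two structural facts: that the perfection functor commutes with the formation of Čech nerves, and that $v$-covers correspond under perfection. Granting these, for any morphism $f: X \to Y$ in $\Sch_{/\F_p}$ the $F'$-descent diagram along $f$ literally coincides with the $F$-descent diagram along $f_\perf: X_\perf \to Y_\perf$, so the equivalence is formal.

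First I would verify that $(-)_\perf : \Sch_{/\F_p} \to \Perf$ is right adjoint to the inclusion $\Perf \hookrightarrow \Sch_{/\F_p}$. Indeed, for any perfect $Z$ and any $\F_p$-scheme $Y$,
\[ \Hom_{\Sch}(Z, Y_\perf) = \lim_{\Frob_Y} \Hom_{\Sch}(Z, Y) = \Hom_{\Sch}(Z, Y), \]
where the second equality uses that the action of $\Frob_Y$ on $\Hom(Z,Y)$ agrees with precomposition by $\Frob_Z$, which is invertible since $Z$ is perfect. As a right adjoint, $(-)_\perf$ preserves limits; in particular, for any map $f: X \to Y$ in $\Sch_{/\F_p}$ the Čech nerve of $f_\perf$ computed in $\Perf$ is naturally identified with $(X^{\bullet/Y})_\perf$, so
\[ F'(X^{n/Y}) = F((X^{n/Y})_\perf) = F((X_\perf)^{n/Y_\perf}). \]

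Next I would show that $f$ is a $v$-cover if and only if $f_\perf$ is. Given $\Spec V \to Y_\perf$ with $V$ a valuation ring, the composite $\Spec V_\perf \to \Spec V \to Y_\perf$ corresponds by the above adjunction to a map $\Spec V_\perf \to Y$; the $v$-cover property of $f$ produces an extension $V_\perf \hookrightarrow W$ of valuation rings with a lift $\Spec W \to X$. Passing to $W_\perf$ (still a valuation ring in characteristic $p$, and containing $V$), we obtain $\Spec W_\perf \to X$, which factors uniquely through $X_\perf$ by the adjunction and gives the required lift of $\Spec V \to Y_\perf$. The converse is analogous: starting with $\Spec V \to Y$, one produces $\Spec V_\perf \to Y_\perf$ via adjunction, applies the $v$-cover property of $f_\perf$ to get $V_\perf \hookrightarrow W$ and $\Spec W \to X_\perf$, and then composes with $X_\perf \to X$. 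In particular a $v$-cover in $\Perf$ is precisely a $v$-cover in $\Sch_{/\F_p}$ between perfect schemes.

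Combining these, the proposition is immediate: assuming $F$ is a $v$-sheaf on $\Perf$ and given a $v$-cover $f$ in $\Sch_{/\F_p}$, its perfection $f_\perf$ is a $v$-cover in $\Perf$, and $F$-descent along $f_\perf$ coincides term-by-term with the $F'$-descent condition for $f$; the reverse implication is identical. The main delicate point is the identification of Čech nerves across perfection: since $\Perf$ is only a coreflective subcategory of $\Sch_{/\F_p}$, the inclusion does not preserve limits, so fiber products in $\Perf$ must be computed as perfections of fiber products in $\Sch_{/\F_p}$; the right-adjointness of $(-)_\perf$ is what makes the comparison $(X \times_Y X)_\perf \simeq X_\perf \times_{Y_\perf}^{\Perf} X_\perf$ work cleanly.
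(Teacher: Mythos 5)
Your proof is correct and follows essentially the same strategy as the paper: identify Čech nerves across perfection and compare $v$-covers, then observe the two descent diagrams literally coincide. You make explicit two facts that the paper states without proof, namely that $(-)_\perf$ is right adjoint to the inclusion $\Perf \hookrightarrow \Sch_{/\F_p}$ (hence preserves limits, giving $(X^{n/Y})_\perf \simeq X_\perf^{n/Y_\perf}$), and that $f$ is a $v$-cover iff $f_\perf$ is, verified directly via the valuative lifting criterion.
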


\begin{proof} If $F^\prime$ is a sheaf, then for any $v$-cover $f:X\to Y$ between $X,Y\in \Perf\subset \Sch_{/\F_p}$, one has Cech descent for $X\to Y$ in $\Sch_{/\F_p}$, saying that
\[
F^\prime(Y) \to \lim F^\prime(X^{\bullet/Y})
\]
is a weak equivalence. But the inclusion $\Perf\subset \Sch_{/\F_p}$ preserves fibre products, so each $X^{n/Y}$ is perfect, and $F^\prime(Y) = F(Y)$ and $F^\prime(X^{n/Y}) = F(X^{n/Y})$, so in particular
\[
F(Y)\to \lim F(X^{\bullet/Y})
\]
is a weak equivalence, proving that $F$ is a sheaf.

Conversely, assume $F$ is a sheaf and let $f: X\to Y$ be a $v$-cover in $\Sch_{/\F_p}$. Then $f_\perf: X_\perf\to Y_\perf$ is a $v$-cover, and Cech descent for $X_\perf\to Y_\perf$ says that
\[
F(Y_\perf)\to \lim F(X_\perf^{\bullet/Y_\perf})
\]
is a weak equivalence. But $X_\perf^{n/Y_\perf} = (X^{n/Y})_\perf$, so this translates into a weak equivalence
\[
F^\prime(Y)\to \lim F^\prime(X^{\bullet/Y})\ ,
\]
showing that $F^\prime$ is a sheaf.
\end{proof}

The following special case is the heart of the proof.

\begin{lemma}\label{lem:VectcdhStack}
Let $f:X \to Y$ be a proper map of noetherian $\F_p$-schemes that is an isomorphism outside some closed subset $Z\subset Y$ with preimage $E\subset X$. 
\begin{enumerate}
\item[{\rm (i)}] If $\calE\in \gVect(Y_\perf)$, then the triangle
\[
R\Gamma_{\fppf}(Y,\calE)\to R\Gamma_{\fppf}(X,\calE)\oplus R\Gamma_{\fppf}(Z,\calE)\to R\Gamma_{\fppf}(E,\calE)
\]
is distinguished.
\item[{\rm (ii)}] Pullback gives an equivalence
\[ \gVect(Y_\perf) \simeq \gVect(X_\perf) \times_{\gVect(E_\perf)} \gVect(Z_\perf)\]
of groupoids.
\end{enumerate}
\end{lemma}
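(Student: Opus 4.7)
My plan for Lemma~\ref{lem:VectcdhStack} is to prove (i) by establishing a sheaf-level distinguished triangle on $Y_\perf$, exploiting the perfect base change of Lemma~\ref{BaseChange} and the $\Tor$-vanishing of Lemma~\ref{NoTorPerf}; then I would derive (ii) from (i) by applying it to sheaves of homomorphisms (for full faithfulness), together with an explicit fiber-product construction (for essential surjectivity).

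For (i), the perfection of the given abstract blowup square is Cartesian in $\Perf$:
\[
\xymatrix{
E_\perf \ar[r] \ar[d]_{g_\perf} & X_\perf \ar[d]^{f_\perf} \\
Z_\perf \ar[r]^{i_\perf} & Y_\perf.
}
\]
The plan is to first show that the triangle
\[
\calO_{Y_\perf} \to R(f_\perf)_\ast \calO_{X_\perf} \oplus (i_\perf)_\ast \calO_{Z_\perf} \to R(i_\perf g_\perf)_\ast \calO_{E_\perf}
\]
is distinguished in $D_{qc}(Y_\perf)$, by checking separately on the open complement $U = Y_\perf \setminus Z_\perf$ (where $f_\perf$ is an isomorphism, so the triangle collapses to $\calO_U \to \calO_U \to 0$) and on the closed subscheme $Z_\perf$ via $L i_\perf^\ast$. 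Here Lemma~\ref{BaseChange} identifies $L i_\perf^\ast R(f_\perf)_\ast \calO_{X_\perf} = R(g_\perf)_\ast \calO_{E_\perf}$, while Lemma~\ref{NoTorPerf} yields $L i_\perf^\ast (i_\perf)_\ast \calO_{Z_\perf} = \calO_{Z_\perf}$, so that the restricted triangle $\calO_{Z_\perf} \to R(g_\perf)_\ast \calO_{E_\perf} \oplus \calO_{Z_\perf} \to R(g_\perf)_\ast \calO_{E_\perf}$ is evidently distinguished. Tensoring this triangle with the flat sheaf $\calE$, invoking the projection formula, and taking global sections on $Y_\perf$ (identifying $R\Gamma_{\fppf}(W, \calE) = R\Gamma(W_\perf, \calE|_{W_\perf})$ for each $W \in \{Y, X, Z, E\}$ via fppf descent for vector bundles on perfect schemes) then yields the desired cohomological triangle.

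For (ii), full faithfulness follows from (i) applied to the internal-$\mathrm{Hom}$ bundle $\mathcal{H}\mathrm{om}(\calE_1, \calE_2)$: taking $H^0$ of the resulting triangle produces exactly the equalizer statement on $\Hom$-sets. For essential surjectivity, given a descent datum $(\calE_X, \calE_Z, \phi)$, I would define the candidate
\[
\calE_Y := (f_\perf)_\ast \calE_X \times_{(i_\perf g_\perf)_\ast \calE_E} (i_\perf)_\ast \calE_Z
\]
as the sheaf-theoretic fiber product on $Y_\perf$, the two maps into the right-hand term being identified by $\phi$. Working Zariski-locally on $Y$ so that $\calE_X$ and $\calE_Z$ trivialize, the sheaf-level distinguished triangle from (i) applied to $\calO^n$ identifies $\calE_Y$ as locally free of the correct rank, and Lemma~\ref{BaseChange} supplies isomorphisms $f_\perf^\ast \calE_Y \simeq \calE_X$ and $i_\perf^\ast \calE_Y \simeq \calE_Z$ compatibly with $\phi$.

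The principal obstacle I anticipate is essential surjectivity in (ii): even after trivializing locally on $Y$, identifying the fiber-product sheaf $\calE_Y$ as locally free of the correct rank requires invoking the sheaf-level distinguished triangle (rather than just its cohomological shadow) and carefully tracking the twist by $\phi$ through the perfect base-change calculus. A secondary technical point in (i) is the identification of fppf cohomology of $\calE$ on $Y$ with the Zariski cohomology of $\calE$ on $Y_\perf$, which ultimately rests on fppf descent for vector bundles in the perfect setting.
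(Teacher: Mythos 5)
Your proposal identifies the right structural skeleton (check on $U$ and on $Z$; use $\calH\mathrm{om}$-bundles for full faithfulness), and the full faithfulness part of (ii) matches the paper's argument. However, there are two genuine gaps, and in fact this lemma is precisely the one the paper notes had an error in the original version, so the subtleties here are real.

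\textbf{Gap in (i).} You reduce to the claim that a complex $D \in D_{qc}(Y_\perf)$ with $D|_U = 0$ and $Li_\perf^\ast D = 0$ must vanish. This ``open plus derived closed fibre'' conservativity is \emph{not} a standard fact, and is the crux that your plan takes for granted. In the noetherian setting one would use Nakayama after observing that $D$ is supported on $Z$, but Nakayama requires finite generation and the relevant objects here (e.g.\ $R f_{\perf,\ast}\calO_{X_\perf}$) are emphatically not coherent over the non-noetherian ring $A_\perf$; moreover $I_\perf = I^{1/p^\infty}$ is not finitely generated, so the usual Krull-intersection arguments also fail. The paper sidesteps this entirely: it uses faithful flatness of completion and Zariski descent to reduce to the case where $A$ is noetherian and $I$-adically complete, and then invokes Gabber's result \cite[Lemma 3.9]{BhattSchwedeTakagi}. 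If you want to run your sheaf-theoretic argument, you would at minimum need to prove this conservativity lemma in the perfect setting (or prove the sheaf-level triangle some other way); as stated it is a hole.

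\textbf{Gap in (ii), essential surjectivity.} Two things go wrong. First, ``working Zariski-locally on $Y$ so that $\calE_X$ and $\calE_Z$ trivialize'' is not available: $\calE_X$ lives on $X_\perf$ and $f$ is proper, not an open cover, so Zariski opens of $Y$ pull back to sets $f^{-1}(V)$ on which $\calE_X$ has no reason to trivialize (indeed, over a point of $Z$ the fibre of $f$ is a positive-dimensional projective variety). Second, and more seriously, the identification of your fibre product $\calE_Y$ as locally free rests on applying the sheaf-level triangle to $\calO^n$ --- but that triangle concerns the \emph{untwisted} gluing map $(a,b) \mapsto a|_E - b|_E$, whereas your $\calE_Y$ is cut out by the $\phi$-twisted map $(a,b)\mapsto a|_E - \phi(b|_E)$. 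There is no reason for the twisted kernel to be locally free without a genuine argument, and this is precisely the hard content. The paper handles it by reducing to the case where $X$ is the blowup of $Y$ along $Z$ via Raynaud--Gruson flattening, approximating the datum at a finite Frobenius level, lifting the bundle from $Z$ to the $I$-adically complete affine $Y$ via deformation theory, and then extending the identification $\psi_0$ over infinitesimal thickenings of $E$ using Serre vanishing plus the formal existence theorem (formal GAGA). None of this is captured by the pure fibre-product construction; without it, your $\calE_Y$ is merely a quasi-coherent sheaf whose pullbacks you cannot control.

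In short: (i) needs a non-trivial conservativity lemma (or the paper's route via $I$-adic completion and Gabber), and (ii) essential surjectivity is missing its entire engine (flattening, deformation theory, Serre vanishing, formal existence).
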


In other words, part (ii) of the lemma asserts that specifying a vector bundle on $Y_\perf$ is equivalent to specifying its pullbacks on $X_\perf$ and $Z_\perf$, together with an identification of the two over $E_\perf$: no higher order isomorphisms, infinitesimal extensions, or coherence data need be specified.

\begin{proof} By faithful flatness of completions and Zariski descent, we may assume $Y = \Spec(A)$, and $Z = \Spec(A/I)$, where $A$ is a noetherian ring, and $I \subset A$ is an ideal such that $A$ is $I$-adically complete.  By abuse of notation, we will also write $I \subset \calO_X$ for the pullback of $I \subset A$ as an ideal sheaf. For part (i), fix some $\calE \in \gVect(Y_\perf)$ corresponding to some finite projective $A_\perf$-module $M$. Writing $M$ as a direct summand of a free module reduces us to the case that $\calE=\calO_{X_\perf}$ is trivial. In that case, the result is \cite[Lemma 3.9]{BhattSchwedeTakagi}.

By passing to $\Hom$-bundles, fully faithfulness of the functor
\[\gVect(Y_\perf) \to \gVect(X_\perf) \times_{\gVect(E_\perf)} \gVect(Z_\perf)\]
follows from ($H^0$ of) part (i).

For essential surjectivity, we must check: given 
\[ \Big(\calE \in \gVect(X_\perf), \calF \in \gVect(Z_\perf), \phi:\calE|_{E_\perf} \simeq f^* \calF\Big) \in \gVect(X_\perf) \times_{\gVect(E_\perf)} \gVect(Z_\perf),\]
there is a unique $\calG \in \gVect(Y_\perf)$ inducing this data. We need some preliminary reductions first.

First, we reduce to the case where $X$ is the blowup of $Y$ along $Z$. By Raynaud-Gruson \cite[Corollary 5.7.12]{RaynaudGruson} and a $2$-out-of-$3$ property for fibre squares, we may assume that $X$ is a blowup of $Y$ along some closed subscheme $Z' \subset Y$ that does not meet $Y-Z$. After replacing $Z$ by a larger infinitesimal neighbourhood if necessary (as this does not change the perfection), we may  assume $Z' \subset Z$ as schemes. Let $E'$ denote the preimage of $Z'$ in $X$. By the full faithfulness we have already shown, it is enough to show that the induced triple $(\calE,\calF|_{Z'_\perf},\phi|_{E'_\perf})$ comes from $\calG \in \gVect(Y_\perf)$. Thus, after replacing $Z$ with $Z'$ and $E$ with $E'$, we may assume that $X$ is the blowup of $Y$ along $Z$. In particular, $E \subset X$ is an effective Cartier divisor whose ideal sheaf is exactly $I \subset \calO_X$, and this ideal sheaf is relatively ample for the map $X \to Y$ by the construction of blowups. In particular, as the base is affine, $I/I^2$ is an ample line bundle on $E$. By Serre vanishing, there exists some $n$ such that $H^i(X, I^k/I^{k+1}) = 0$ for $i > 0$ and $k \geq n$.

We can now begin the proof.  By approximation, the triple $(\calE,\calF,\phi)$ arises from a similar triple $(\calE_0,\calF_0,\phi_0) \in \gVect(X) \times_{\gVect(E)} \gVect(Z)$, at least after Frobenius twisting. Now $\gVect(Y) \to \gVect(Z)$ is bijective on isomorphism classes by affineness of $Y$ and deformation theory. Thus, there is a unique $\calG_0 \in \gVect(Y)$ lifting $\calF_0$. In particular, using $\phi_0$, we have a natural identification $\psi_0:f^*(\calG_0)|_E \simeq \calE_0|_E$. Write $\calG \in \gVect(Y_\perf)$ for its pullback to the perfection. Then $\psi_0$ induces an isomorphism $\psi:f^*(\calG)|_{E_\perf} \simeq \calE|_{E_\perf}$. It is enough to check that the latter lifts to an identification $f^* \calG \simeq \calE$. We will check that $\psi_0$ itself admits such an extension to $X$, at least after replacing all objects by their pullbacks along a fixed (finite) power of Frobenius.

After replacing $\psi_0$ by a large enough Frobenius pullback (depending on $n$), we may assume: there exists an isomorphism $\psi_n:f^*(\calG_0)|_{nE} \simeq \calE_0|_{nE}$ lifting $\psi_0$; here we write $mE \subset X$ for the scheme defined by $I^{m+1}$. The formal existence theorem shows that $\gVect(X) \simeq \lim \gVect(mE)$, so it is enough to show that $\psi_n$ deforms compatible to $mE$ for $m \geq n$. The obstruction to extending across $nE \subset (n+1)E$ lies in $H^1(X, I^n/I^{n+1} \otimes \mathcal{H}\mathrm{om}(f^* \calG_0, \calE_0))$. As $I^n/I^{n+1}$ is an $\calO_E$-module, this simplifies to showing $H^1(X,I^n/I^{n+1}) = 0$; here we use the projection formula, as well as $\psi_0$ to identify $\mathcal{H}\mathrm{om}(f^* \calG_0, \calE_0)|_E \simeq f^* \big(\calEnd(\calG_0)|_Z\big)$. The vanishing now follows from the assumption on $n$, so we have such an extension. Inductively, one extends $\psi_n$ compatibly to all $mE$ for $m \geq n$, proving the theorem.
\end{proof}

\begin{proof}[Proof of Theorem \ref{thm:VectWhStack}]
Part (i) reduces by induction and the $5$-lemma to the case $n=1$. In that case, after the translation given by Proposition \ref{SchVsPerf}, we verify the conditions of Corollary \ref{SheafUnivSubtr}, cf. Corollary \ref{HCohom}. Condition (i) of Proposition \ref{SchVsPerf} follows from Lemma \ref{SchVsPerfProp} and faithfully flat descent. Condition (ii) follows from Lemma \ref{lem:VectcdhStack} (i), while condition (iii) is clear.

Write $\calF_n$ for the prestack $X \mapsto \gVect(W_n(X_\perf))$, and $\calF_\infty = \lim \calF_n$. We will show that $\calF_n$ is a $v$-stack for each $n$ by induction on $n$; this formally implies $\calF_\infty$ is a $v$-stack by passage to limits. For $n = 1$, the result follows from Corollary \ref{SheafUnivSubtr} using Lemma \ref{SchVsPerfProp} and Lemma \ref{lem:VectcdhStack} (ii). Assume inductively that $\calF_{n-1}$ is a $v$-stack. It is enough to prove that if $f: X\to Y$ is a $v$-cover of $Y=\Spec(A)\in \Perf$, and $\calE\in \gVect(W_n(X))$ is a vector bundle equipped with descent data to $Y$, then $\calE$ descends to $Y$. Now $\calE$ defines a $v$-sheaf on $\Perf_{/X}$ which using the descent datum descends to a $v$-sheaf $\calE_Y$ of $W_n(\calO)$-modules on $\Perf_{/Y}$. It is filtered as
\[
0\to \calE_Y/p\buildrel p^{n-1}\over \longrightarrow \calE_Y\to \calE_Y/p^{n-1}\to 0\ .
\]
By induction, both $\calE_Y/p$ and $\calE_Y/p^{n-1}$ are the sheaves associated with a finite projective $A$-module $M_1$, resp. a finite projective $W_{n-1}(A)$-module $M_{n-1}$, with $M_1 = M_{n-1}/p$. By part (i) of the theorem, we have
\[
R\Gamma_v(Y,\calE_Y/p) = M_1\ ,\ R\Gamma_v(Y,\calE_Y/p^{n-1}) = M_{n-1}\ .
\]
But then $M=R\Gamma_v(Y,\calE_Y)$ is an extension of $M_{n-1}$ by $M_1$ which is flat over $\Z/p^n\Z$; any such extension is easily seen to be a finite projective $W_n(A)$-module $M$. One then verifies that $\calE_Y$ is the sheaf associated with $M\in \gVect(W_n(Y))$.
\end{proof}

\section{Construction of line bundles: $K$-theoretic approach}
\label{sec:DetConsKtheory}

Our goal is to attach ``determinant'' line bundles to certain complexes of sheaves that are not linear over the structure sheaf in the setting of perfect schemes. The main technical tool will be the $h$-descent (or, rather, $v$-descent) result proven earlier. For the construction, we first recall the following notion (cf. \S \ref{Appendix}).

\begin{construction}
\label{cons:GradedPic}
	For any scheme $X$, let $\gPic^\Z(X)$ be the groupoid of {\em graded} line bundles on $X$, i.e., an object is given by a pair $(L,f)$ where $L$ is a line bundle on $X$, and $f:X \to \Z$ is a locally constant function; the set $\Isom( (L,f), (M,g))$ is empty if $f \neq g$, and given by $\Isom(L,M)$ otherwise. This groupoid is endowed with a symmetric monoidal structure $\otimes$ where $(L,f) \otimes (M,g) := (L \otimes M, f + g)$, and the commutativity constraint 
	\[ (L \otimes M, f+g) =: (L,f) \otimes (M,g) \simeq (M,g) \otimes (L,f) := (M \otimes L, g+f)\]
	determined by the rule 
	\[ \ell \otimes m \mapsto  (-1)^{f \cdot g} m \otimes \ell.\]
	The endows $\gPic^{\Z}(X)$ with the structure of a (not strictly commutative) Picard groupoid such that 
	\[ \pi_0(\gPic^\Z(X)) = \Pic(X) \times H^0(X_\et,\Z) \quad \mathrm{and} \quad \pi_1(\gPic^\Z(X)) = \calO(X)^\times.\] 
	We view the association $X \mapsto \gPic^{\Z}(X)$ as a sheaf of connective spectra for the \'etale topology. We also write $\gPic(X)$ for the usual (strictly commutative) Picard groupoid of line bundles, so $L \mapsto (L,0)$ establishes a fully faithful embedding $\gPic(X) \subset \gPic^\Z(X)$ compatible with the symmetric monoidal structures. Likewise, the association $(L,f) \mapsto f$ gives a symmetric monoidal functor $\gPic^\Z(X) \to H^0(X_\et,\Z)$. This data fits together into a fibre sequence
	\[ \gPic(X) \to \gPic^{\Z}(X) \to H^0(X_\et,\Z),\]
	of spectra. As $X$ varies, this gives a fibre sequence of sheaves of connective spectra in the \'etale topology. The projection $(L,f) \mapsto L$ gives a (non-symmetric!) monoidal functor $\eta:\gPic^\Z(X) \to \gPic(X)$, and thus splits the sequence as spaces (in fact, as $E_1$-spaces). This allows us to extract honest line bundles from graded ones.
	\end{construction}

The $v$-descent result in the previous section implies:

\begin{proposition}\label{prop:PicZhdescent}
The functor $X \mapsto \gPic^{\Z}(X)$ is a stack in the $v$-topology of $\Perf$.
\end{proposition}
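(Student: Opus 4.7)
The plan is to exploit the fibre sequence of presheaves of connective spectra
\[ \gPic(-) \to \gPic^{\Z}(-) \to H^0((-)_{\et}, \Z) \]
from Construction \ref{cons:GradedPic}. In any $\infty$-category of sheaves of connective spectra (or pointed spaces), if $F \to G \to H$ is a fibre sequence of presheaves and both $F$ and $H$ satisfy descent along a given cover $Y \to X$ with \v{C}ech nerve $Y^{\bullet}$, then so does $G$: limits preserve fibre sequences, so the map of fibre sequences
\[ \Big(F(X) \to G(X) \to H(X)\Big) \longrightarrow \Big(\lim F(Y^{\bullet}) \to \lim G(Y^{\bullet}) \to \lim H(Y^{\bullet})\Big) \]
has outer equivalences, forcing the middle map to be an equivalence. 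Thus it suffices to show that $\gPic$ and $X \mapsto H^0(X_{\et}, \Z)$ are $v$-sheaves on $\Perf$.

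For $\gPic$, I would observe that $\gPic \subset \gVect$ is the full subgroupoid cut out by the rank $1$ condition. The rank function is locally constant and its value at a point $x \in X$ is determined by the fibre at $x$. Since every $v$-cover $X \to Y$ is surjective (it is universally submersive), the condition rank $= 1$ is detected after $v$-localization. Hence $\gPic$ is a $v$-substack of the $v$-stack $\gVect$, where we invoke Theorem \ref{thm:VectWhStack}.

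For $X \mapsto H^0(X_{\et}, \Z)$, I would use Proposition \ref{SchVsPerf} to reduce to checking that the corresponding presheaf on $\Sch_{/\F_p}$ (using $(X_{\perf})_{\et} = X_{\et}$ as topoi) is a $v$-sheaf, and then verify the three hypotheses of Corollary \ref{SheafUnivSubtr}. Condition (i), fppf descent for the constant sheaf $\underline{\Z}$, is classical. Condition (iii), compatibility with cofiltered affine limits of qcqs schemes, follows because locally constant $\Z$-valued functions on such a limit pull back from some finite stage. The only content is condition (ii): for a proper surjective $f : X \to Y$ of finite presentation which is an isomorphism outside a finitely presented closed $Z \subset Y$ with preimage $E \subset X$, the square
\[
\xymatrix{
H^0(Y_{\et}, \Z) \ar[r] \ar[d] & H^0(X_{\et}, \Z) \ar[d] \\
H^0(Z_{\et}, \Z) \ar[r] & H^0(E_{\et}, \Z)
}
\]
is cartesian. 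This is a purely topological statement: given clopens $U_X \subset X$ and $U_Z \subset Z$ with $U_X \cap E = f^{-1}(U_Z) \cap E$, the set $U_Y := U_Z \cup f(U_X \setminus E) \subset Y$ satisfies $f^{-1}(U_Y) = U_X$; since $f$ is a proper surjection (hence a topological quotient map), $U_Y$ is clopen in $Y$. A symmetric argument on complements shows $U_Y$ is uniquely determined.

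The main obstacle, such as it is, is purely bookkeeping: one must package the above into the spectrum-valued formalism of Construction \ref{cons:GradedPic} so that the fibre-sequence argument applies. No nontrivial geometric input beyond Theorem \ref{thm:VectWhStack} and the elementary topological descent for $\pi_0$ is required — the hard work has already been done in Section \ref{VDescent}.
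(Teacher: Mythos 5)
Your proof is correct and follows the same strategy as the paper: use the fibre sequence $\gPic \to \gPic^{\Z} \to H^0((-)_{\et},\Z)$, note that $\gPic$ is a $v$-stack by Theorem~\ref{thm:VectWhStack} (as the rank-$1$ substack of $\gVect$, a condition detected $v$-locally), and observe that $H^0((-)_{\et},\Z)$ is a $v$-sheaf (which the paper dismisses as elementary and you verify in detail via Corollary~\ref{SheafUnivSubtr}).
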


\begin{proof}
We have a fibre sequence 
\[ \gPic(X) \to \gPic^{\Z}(X) \to H^0(X_\et,\Z)\]
of groupoids. It is elementary to check that $X \mapsto H^0(X_\et,\Z)$ is a $v$-sheaf. Using the preceding fibre sequence, the claim now follows from Theorem \ref{thm:VectWhStack} (ii).
\end{proof}

To connect $K$-theory with line bundles, recall the determinant map. Here, for any qcqs scheme $X$, $K(X)$ is defined as by Thomason-Trobaugh, \cite[Definition 3.1]{ThomasonKtheory}, using the $\infty$-category of perfect\footnote{We apologize for the two very different uses of the word ``perfect".} complexes $\Perf(X)$.\footnote{In \cite{ThomasonKtheory}, the language of Waldhausen categories was used. See \cite[Notation 12.11]{Barwick} for a definition of the $K$-theory spectrum in terms of the stable $\infty$-category $\Perf(X)$.} In particular, there is natural map of spaces $\Perf(X)^\simeq\to K(X)$, where $\Perf(X)^\simeq\subset \Perf(X)$ denotes the subcategory with all objects, and only isomorphisms as morphisms. Then $\Perf(X)^\simeq$ is an $\infty$-category (i.e., weak Kan complex) with only invertible morphisms, which is the same thing as a space (i.e., Kan complex). Under the map $\Perf(X)^\simeq\to K(X)$, any perfect complex $C\in \Perf(X)$ defines a point $[C]\in K(X)$. For any distinguished triangle
\[
C^\prime\to C\to C^{\prime\prime}\ ,
\]
one has an identity $[C] = [C^\prime] + [C^{\prime\prime}]$ (up to homotopy) in $K(X)$, under the interpretation of $K(X)$ as a space equipped with an invertible coherently commutative and associative group law, cf. Theorem \ref{EMonVsSpectra}. These are, in some sense, the defining properties of $K(X)$.

\begin{proposition}\label{prop:DetClassical}
There exists a natural functorial map $\det:K(X) \to \gPic^{\Z}(X)$ of connective spectra.
\end{proposition}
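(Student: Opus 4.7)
The plan is to build $\det$ in three stages: first on vector bundles, then extend to perfect complexes via local resolutions, and finally promote to a map of connective spectra using the universal property of $K$-theory. The guiding principle is that $\gPic^\Z(X)$ is designed so that the graded tensor product with its Koszul sign rule encodes exactly the signs produced by rank-compatible permutations of basis vectors, which is what makes the naive ``alternating tensor of determinants of terms'' construction coherent.

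In Stage 1, the rule $V \mapsto (\det V, \rk V)$ defines a symmetric monoidal functor $(\gVect(X), \oplus) \to (\gPic^\Z(X), \otimes)$: the sign $(-1)^{\rk V \cdot \rk W}$ in the symmetry constraint of $\gPic^\Z$ is precisely the sign of the shuffle permutation swapping local frames of $V$ and $W$. In Stage 2, one uses that any $C \in \Perf(X)$ is Zariski-locally quasi-isomorphic to a strict bounded complex $V^\bullet$ of finite locally free sheaves, and sets
\[
\det V^\bullet \;:=\; \bigotimes_{i} \bigl(\det V^i,\, \rk V^i\bigr)^{\otimes (-1)^i}.
\]
A quasi-isomorphism between strict bounded complexes of vector bundles induces a canonical isomorphism of determinants (reduce to the acyclic case, then to iterated cones of identity maps, where the sign rule produces a canonical trivialization); similarly, a short exact sequence of strict complexes yields a canonical identification $\det C \simeq \det C' \otimes \det C''$.

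In Stage 3, these local determinants are glued: any two strict resolutions of the same perfect complex differ by a canonical zig-zag of quasi-isomorphisms, producing descent data for $\gPic^\Z$, which is a $v$-stack by Proposition~\ref{prop:PicZhdescent} and in particular an \'etale stack. This produces a symmetric monoidal functor $\Perf(X)^\simeq \to \gPic^\Z(X)$ sending cofiber sequences to additive identifications. By the universal property of algebraic $K$-theory (i.e., $K(X)$ is the spectrum obtained from the symmetric monoidal $\infty$-groupoid $\Perf(X)^\simeq$ by group completion and by inverting cofiber sequences), this functor lifts uniquely to a map $\det : K(X) \to \gPic^\Z(X)$ of connective spectra, functorial in $X$ by construction.

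The main obstacle is coherence: turning the pointwise-defined determinant into a genuine map of spectra (not merely of $\pi_0$) requires one to produce \emph{canonical} isomorphisms of determinants for each quasi-isomorphism and cofiber sequence, compatible with all higher homotopies. Fortunately, $\gPic^\Z(X)$ is $1$-truncated, so only $\pi_0$ and $\pi_1 = \mathcal{O}(X)^\times$ are at play; the required coherence therefore reduces to a finite amount of sign bookkeeping, which is the content of the Appendix. The other subtle point is verifying that the Stage~2 construction is independent of the chosen strict resolution, and this is again controlled by the Koszul sign rule built into $\gPic^\Z$.
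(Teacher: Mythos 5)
Your plan is sound, but it takes a genuinely different and longer route than the paper. The paper's proof is a two-line reduction: by Zariski descent for $\gPic^\Z$ one only needs the map for affine $X=\Spec(A)$, and for affine $X$ the Thomason--Trobaugh $K$-theory $K(X)$ agrees with the group-completion $K$-theory $K(A)$ of the symmetric monoidal groupoid $(\gVect(A),\oplus)$ constructed in the Appendix, so one simply applies the $K$-theory functor of the Appendix to the symmetric monoidal functor $\det:\gVect(A)\to\gPic^\Z(A)$ of Proposition~\ref{DetSymmMon}. This sidesteps any need to touch perfect complexes, quasi-isomorphisms, or cofiber sequences: in the affine split-exact setting the $S_\bullet$-construction is unnecessary, and the determinant on vector bundles lifts to spectra by pure formality (functoriality of nerve plus group completion).

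Your version instead builds the determinant directly on $\Perf(X)^\simeq$ via local strict resolutions and Knudsen--Mumford alternating tensors, and then invokes a universal property of Waldhausen $K$-theory to factor through $K(X)$. This can be made to work, but it is substantially heavier: Stage~2 requires the full Knudsen--Mumford coherence package (independence of resolution, compatibility with quasi-isomorphisms and short exact sequences, the notoriously delicate sign bookkeeping), and Stage~3 invokes a nontrivial characterization of $K(X)$ as the universal target of a map out of $\Perf(X)^\simeq$ that ``inverts cofiber sequences.'' That characterization is not merely ``group completion of the $\oplus$-monoid'' --- the Waldhausen $K$-theory of $\Perf(X)$ for general $X$ is not the group completion of $(\Perf(X)^\simeq,\oplus)$, and one must appeal to the additivity theorem or to the Barwick/Blumberg--Gepner--Tabuada universal property to get the desired factorization. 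You should make this input explicit rather than gesturing at ``the universal property of algebraic $K$-theory.'' Beyond that, the structure of your argument is correct; it just pays a price in coherence verification that the paper avoids by reducing to affines first.
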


Concretely, this means that there is a functor $\Perf(X)^\simeq\to \gPic^\Z(X)$, $C\mapsto \det(C)$ such that for any distinguished triangle as above, $\det(C)\simeq \det(C^\prime)\otimes \det(C^{\prime\prime})$. In this language, this was first discussed by Knudsen-Mumford, \cite{KnudsenMumford}.

\begin{proof} By Zariski descent for $\gPic^\Z$, we need only construct a functorial map for affine schemes $X=\Spec(A)$. But for affine schemes $X=\Spec(A)$, there is a natural equivalence $K(X)\simeq K(A)$, where $K(A)$ is defined as in \S \ref{Appendix}, which comes equipped with $\det: K(A)\to \gPic^\Z(A)$.
\end{proof}

\begin{remark}
The determinant map $K(X) \to \gPic^{\Z}(X)$ does {\em not} factor through $\gPic(X) \subset \gPic^{\Z}(X)$, and is the reason we use $\gPic^{\Z}(X)$.
\end{remark}

Assume now that $X$ is a perfect scheme. Let $\Perf(W(X)\on X)$ be the $\infty$-category of perfect complexes on $W(X)$ which are acyclic after inverting $p$; if $X = \Spec(A)$, then this simply the $\infty$-category of perfect $W(A)$-complexes which are acyclic after base change to $W(A)[\frac{1}{p}]$. Using $v$-descent of line bundles, we will construct a ``determinant'' for objects in $\Perf(W(X)\on X)$, extending the map from Proposition \ref{prop:DetClassical}. To this end, we use $K$-theory spectra, so let $K(W(X)\on X)$ be the $K$-theory spectrum associated with $\Perf(W(X)\on X)$ as in \cite[Definition 3.1]{ThomasonKtheory}. The functor $\Perf(X)\to \Perf(W(X)\on X)$ induces a map of connective spectra $K(X) \to K(W(X)\on X)$. It is known that such maps are equivalences if $X$ and $W(X)$ are regular; this is essentially Quillen's d\'evissage theorem \cite[Theorem 4]{QuillenKtheory}:

\begin{theorem}\label{QuillenDevissage} Let $Y$ be a regular scheme, and $Z\subset Y$ a closed subscheme such that $Z$ is regular. Then the natural map
\[
K(Z)\to K(Y\on Z)
\]
of spaces is a weak equivalence.
\end{theorem}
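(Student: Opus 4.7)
The plan is to reduce this to Quillen's original dévissage theorem for abelian categories by exploiting regularity to pass between $K$-theory of perfect complexes and $K$-theory (= $G$-theory) of coherent sheaves. By Zariski descent and the Thomason--Trobaugh localization sequence it suffices to handle the affine case, so write $Y = \Spec(A)$ with $Z = V(I) = \Spec(A/I)$ where both $A$ and $A/I$ are regular. Let $\Coh_Z(Y) \subset \Coh(Y)$ be the Serre subcategory of coherent $A$-modules with support in $V(I)$, and let $\Coh(Z) \subset \Coh_Z(Y)$ denote the full subcategory of modules killed by $I$.

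First, I would identify the three $K$-theory spectra appearing in the problem with the $K$-theories of these abelian categories. Since $Y$ is regular, every coherent $A$-module has finite Tor-dimension, so the natural map $K(\Coh(Y)) \to K(Y)$ from Quillen $K$-theory of the abelian category to the Waldhausen $K$-theory of $\Perf(Y)$ is a weak equivalence; the same applies to $Z$. For the relative object $K(Y \on Z)$, a standard argument (cf.\ Thomason--Trobaugh, Theorem 3.21) identifies it with the $K$-theory of the Serre subcategory $\Coh_Z(Y) \subset \Coh(Y)$: any perfect complex on $Y$ acyclic outside $Z$ has cohomology sheaves in $\Coh_Z(Y)$, and conversely every object of $\Coh_Z(Y)$ admits a finite resolution by finite projective $A$-modules (again by regularity) which becomes acyclic after inverting any element of $I$.

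Next, I would apply Quillen's dévissage theorem to the inclusion $\Coh(Z) \hookrightarrow \Coh_Z(Y)$. The hypothesis to check is that every object $M \in \Coh_Z(Y)$ admits a finite filtration whose subquotients lie in $\Coh(Z)$. This is immediate: since $M$ is finitely generated and every element is $I$-power torsion, some power $I^n$ annihilates $M$, so the filtration
\[ 0 = I^n M \subset I^{n-1}M \subset \cdots \subset IM \subset M \]
has all successive quotients killed by $I$, i.e.\ lying in $\Coh(Z)$. Quillen's theorem then gives $K(\Coh(Z)) \xrightarrow{\simeq} K(\Coh_Z(Y))$.

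Chaining the equivalences yields $K(Z) \simeq K(\Coh(Z)) \simeq K(\Coh_Z(Y)) \simeq K(Y \on Z)$, and one checks these identifications are compatible with the forgetful map in the statement. The only genuine subtlety is the comparison between the Waldhausen $K$-theory of $\Perf(Y \on Z)$ and Quillen's $K$-theory of $\Coh_Z(Y)$; this is where regularity of $Y$ enters crucially (to produce perfect resolutions of objects supported on $Z$), and is the main technical input beyond the formal dévissage argument.
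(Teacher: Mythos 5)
Your proposal is correct and takes essentially the same route as the paper. The paper also reduces to the identification $K = G$ on the regular schemes $Y$, $U = Y\setminus Z$, and $Z$ (Thomason--Trobaugh), and then compares the Thomason--Trobaugh localization sequence $K(Y \on Z) \to K(Y) \to K(U)$ with Quillen's $G$-theory localization sequence $G(Z) \to G(Y) \to G(U)$; the latter already packages the dévissage step (identifying the fibre with $G(Z)$ via the filtration by powers of $I$) that you spell out explicitly for $\Coh(Z) \hookrightarrow \Coh_Z(Y)$. So the only real difference is presentational: you unwind the fibre-sequence comparison into an explicit identification of $K(Y \on Z)$ with $K(\Coh_Z(Y))$ followed by a direct application of dévissage, while the paper leaves that unwinding to the cited localization theorems.
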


Concretely, this means that any $C\in \Perf(Y\on Z)$ can be filtered in such a way that all associated gradeds are in $\Perf(Z)$; moreover, the choice of this filtration is essentially irrelevant.

\begin{proof} Let $U=Y\setminus Z$. Then $Y$, $U$ and $Z$ are regular, so $K(Y)=G(Y)$, $K(U)=G(U)$ and $K(Z)=G(Z)$ agree with $G=K^\prime$-theory, cf. \cite[Theorem 3.21]{ThomasonKtheory}. Now the result follows by comparing the localization sequences of \cite[Theorem 5.1]{ThomasonKtheory} and \cite[Proposition 3.2]{QuillenKtheory}.
\end{proof}

A similar result holds true in the perfect case.

\begin{corollary}\label{cor:KRegular}
Assume $X = \Spec(R)$ for the perfection $R$ of a regular $\F_p$-algebra $R_0$. Then $K(X) \simeq K(W(X)\on X)$.
\end{corollary}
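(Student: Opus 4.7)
The plan is to deduce this from Quillen's dévissage theorem (Theorem \ref{QuillenDevissage}) via a filtered colimit along Frobenius that reduces everything to the noetherian case.

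First, I would use that since $R_0$ is regular, Kunz's theorem implies its Frobenius is faithfully flat, so $R = \colim_n R_0$ along Frobenius realizes $R$ as a filtered colimit of noetherian regular rings with flat transition maps. By continuity of $K$-theory, this gives $K(R) \simeq \colim_n K(R_0)$. The right-hand side admits an analogous decomposition: every $C \in \Perf(W(R) \on R)$ is $p$-power torsion (being perfect and acyclic after inverting $p$), hence lies in $\Perf(W_m(R))$ for some $m$; thus $\Perf(W(R) \on R) \simeq \colim_m \Perf(W_m(R))$, and $K(W(R) \on R) \simeq \colim_m K(W_m(R))$. Since each truncated Witt functor $W_m$ commutes with filtered colimits of $\F_p$-algebras (being defined by finitely many polynomial operations), $W_m(R) = \colim_n W_m(R_0)$, so combining yields
\[
K(W(R) \on R) \simeq \colim_n \colim_m K(W_m(R_0)) \simeq \colim_n K(W(R_0) \on R_0).
\]

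The problem then reduces to showing, for each fixed noetherian regular $\F_p$-algebra $R_0$, that the natural map $K(R_0) \to K(W(R_0) \on R_0)$ is an equivalence. I would establish this by applying Theorem \ref{QuillenDevissage} to the closed immersion $\Spec(R_0) \hookrightarrow \Spec(A_0)$, where $A_0$ is a noetherian regular $p$-adic lift of $R_0$ (for instance, a smooth $\Z_p$-lift when $R_0$ is essentially smooth; in general, an étale-local lift constructed via Cohen's structure theorem), followed by identifying $K(A_0 \on R_0) \simeq K(W(R_0) \on R_0)$ using the universal property of Witt vectors applied to the perfection.

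The hardest step will be ensuring that this final identification of $K(A_0 \on R_0)$ with $K(W(R_0) \on R_0)$ is made compatibly with the Frobenius system so that the double colimit can be interchanged at the end. This is immediate in the essentially smooth case, where $A_0$ admits a lift of Frobenius, and should be handled in general via Zariski descent from smooth charts.
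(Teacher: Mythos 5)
Your high-level strategy (dévissage for regular noetherian rings, then pass to the perfection via continuity of $K$-theory) matches the paper's, but the way you try to turn $K(W(X)\on X)$ into a filtered colimit contains a genuine gap, and the paper routes around exactly this difficulty in a different way.

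The false step is the claim that $\Perf(W(R)\on R)\simeq \colim_m \Perf(W_m(R))$. It is true (by compactness) that any $C\in\Perf(W(R)\on R)$ is killed by $p^m$ for some $m$, and hence admits a $W_m(R)$-module structure, but this does \emph{not} make $C$ a perfect $W_m(R)$-complex. Concretely, take $C=R\oplus W_2(R)[1]$: this is a perfect $W(R)$-complex killed by $p^2$, but it is not a $W_1(R)$-complex ($p$ acts nontrivially), and its direct summand $R=W(R)/p$ has infinite $\Tor$-dimension over $W_m(R)$ for every $m\geq 2$, so $C\notin\Perf(W_m(R))$ for any $m$. Moreover, there is not even a filtered system $\Perf(W_1(R))\to\Perf(W_2(R))\to\cdots$ to take the colimit of: pushforward along $W(R)\twoheadrightarrow W_m(R)$ does not carry $\Perf(W_m(R))$ into $\Perf(W_{m+1}(R))$ because $W_m(R)$ has infinite $\Tor$-dimension over $W_{m+1}(R)$. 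A related difficulty infects your final step: the expression $K(W(R_0)\on R_0)$ for a non-perfect $R_0$ is not well posed, since $W(R_0)$ is then pathological ($W(R_0)/p\neq R_0$ in general), so there is no support condition to impose.

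What the paper does instead is to work with a $p$-adically complete flat $\Z_p$-lift $A_0$ of $R_0$ together with a lift $\phi:A_0\to A_0$ of Frobenius (whose existence uses affineness), form $A_\infty=\colim_\phi A_0$, and use a Beauville--Laszlo-type argument to identify $\Perf(\Spec A_\infty\on X)$ with $\Perf(W(X)\on X)$: both are categories of perfect complexes that are $p^n$-torsion, and the argument uses precisely that such a complex admits an $A_\infty/p^n$-\emph{module structure} (not perfection over $A_\infty/p^n$), together with $A_\infty/p^n\simeq \widehat{A_\infty}/p^n$. Then continuity of $K$-theory gives $K(A_\infty\on X)\simeq\colim K(A_0\on X_0)$, and Quillen's dévissage applies to the regular closed immersion $\Spec(R_0)\hookrightarrow\Spec(A_0)$ in the noetherian world. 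Your instinct to build a regular noetherian lift and use dévissage is correct; the missing ingredient is that you should pass to a \emph{non-complete} lift $A_\infty$ along a Frobenius lift and use the Beauville--Laszlo comparison, rather than trying to filter $\Perf(W(R)\on R)$ by $\Perf(W_m(R))$, which does not work.
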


\begin{proof} Choose a $p$-adically complete flat $\Z_p$-algebra $A_0$ deforming $R_0$, and a map $\phi:A_0 \to A_0$ lifting Frobenius on $R_0$; this is possible as all obstructions live in (positive degree) coherent cohomology groups on $\Spec(R_0)$, and thus vanish. Let $A_\infty = \colim A_0$, where the colimit is computed along $\phi$, so $A_\infty$ is a flat $\Z_p$-algebra deforming $R$, and $\widehat{A_\infty} \simeq W(R)$: the right hand side is the unique $p$-adically complete $\Z_p$-flat lifting of the perfect ring $R$, and the left hand side provides one such lifting. Let $Y = \Spec(A_\infty)$. 

We claim that $\Perf(Y\on X) \to \Perf(W(X)\on X)$ is an equivalence. Unwinding definitions, we must check that base change along the natural map $A_\infty \to \widehat{A_\infty}$ identifies the $\infty$-categories of perfect complexes on either ring which are acyclic after inverting $p$. This is a standard argument found in derived analogues of the Beauville-Laszlo theorem (see \cite[Lemma 5.12]{BhattAlgTD} for example), and we sketch a proof here for convenience. It suffices to check that $K \simeq K \otimes_{A_\infty} \widehat{A_\infty}$ where $K$ is an $A_\infty$-perfect complex (resp. an $\widehat{A_\infty}$-perfect complex) with $K[\frac{1}{p}] = 0$. But observe that any such $K$ admits an $A_\infty/p^n$-structure (resp. an $\widehat{A_\infty}/p^n$-structure): the colimit of the system
\[ K \xrightarrow{p} K \xrightarrow{p} K \xrightarrow{p} K \to ...\]
is $0$, so multiplication by $p^n$ on $K$ must be $0$ for $n \gg 0$ by the compactness of $K$. The claim now follows from the observation that $A_\infty/p^n \simeq \widehat{A_\infty}/p^n$, and that the same is true in the derived sense since both $A_\infty$ and $\widehat{A_\infty}$ are $p$-torsionfree. 

The equivalence from the previous paragraph gives $K(W(X)\on X) \simeq K(Y\on X) \simeq \colim K(Y_0\on X_0)$, where $X_0 = \Spec(R_0)$, and $Y_0 = \Spec(A_0)$, and the last isomorphism comes from \cite[Proposition 3.20]{ThomasonKtheory}. We also have a compatible description $K(X) \simeq \colim K(X_0)$ for the $K$-theory of $X$. Now the result follows from the equivalence $K(X_0)\simeq K(Y_0\on X_0)$ of Theorem \ref{QuillenDevissage}.
\end{proof}

Combining the previous corollary with $v$-descent gives the promised extension of the determinant map:

\begin{theorem}\label{thm:DetGeneral}
There is a natural functorial (in $X\in \Perf$) map $\widetilde{\det}:K(W(X)\on X) \to \gPic^{\Z}(X)$ of connective spectra extending the determinant map $K(X) \to \gPic^{\Z}(X)$; it is unique up to a contractible space of choices.
\end{theorem}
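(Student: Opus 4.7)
The plan is to reduce to the case of perfections of regular $\F_p$-algebras via de Jong's alteration theorem, where the map is supplied by the classical determinant and Quillen d\'evissage (Corollary \ref{cor:KRegular}), and then extend globally by working with the $v$-sheafification of the source. The key structural fact is that $\gPic^{\Z}$ is already a $v$-sheaf on $\Perf$ by Proposition \ref{prop:PicZhdescent}, so maps of presheaves $K(W(-)\on -) \to \gPic^{\Z}$ correspond to maps of $v$-sheaves $L K(W(-)\on -) \to \gPic^{\Z}$, where $L$ denotes $v$-sheafification in the $\infty$-category of presheaves of connective spectra on $\Perf$.

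Let $\Perf^{\mathrm{reg}} \subset \Perf$ denote the full subcategory of perfections of regular $\F_p$-algebras. By Corollary \ref{cor:KRegular}, the forgetful map $\alpha \colon K(-) \to K(W(-)\on -)$ of presheaves is a pointwise equivalence on $\Perf^{\mathrm{reg}}$. I would then verify that $\Perf^{\mathrm{reg}}$ forms a $v$-basis for $\Perf$: any perfectly finitely presented $X \in \Perf$ admits a model $X_0$, to which de Jong's theorem provides a proper surjective alteration $Y_0 \to X_0$ with $Y_0$ regular, so $(Y_0)_\perf \to X$ is a $v$-cover by an object of $\Perf^{\mathrm{reg}}$; general $X \in \Perf$ are reduced to the perfectly finitely presented case via Proposition \ref{ApproxFP}. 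Granting this basis property, $L\alpha \colon L K \to L K(W(-)\on -)$ is an equivalence of $v$-sheaves, since $\mathrm{cofib}(\alpha)$ vanishes pointwise on the $v$-basis $\Perf^{\mathrm{reg}}$ and therefore is locally zero in the $v$-topology, hence killed by $L$. I then define $\widetilde{\det}$ as the composition
\[
K(W(-)\on -) \longrightarrow L K(W(-)\on -) \xleftarrow[\sim]{L\alpha} L K \xrightarrow{L\det} \gPic^{\Z},
\]
where $L\det$ is the sheafification of the classical determinant of Proposition \ref{prop:DetClassical} (which factors through $\gPic^{\Z}$ since the target is already a $v$-sheaf).

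The main obstacle is the sheafification step: one must carefully argue in the $\infty$-category of connective-spectrum-valued $v$-sheaves that pointwise vanishing of the cofibre on the $v$-basis $\Perf^{\mathrm{reg}}$ forces its sheafification to be zero; concretely, one shows that every class in any homotopy presheaf of $\mathrm{cofib}(\alpha)$ vanishes after pullback along a $v$-cover by objects of $\Perf^{\mathrm{reg}}$, and so dies in $L$. Uniqueness up to a contractible space of choices is then automatic: by the universal property of $L$, the space of maps $K(W(-)\on -) \to \gPic^{\Z}$ identifies with the space of maps $L K(W(-)\on -) \to \gPic^{\Z}$, which, via the equivalence $L\alpha$, identifies with the space of maps $L K \to \gPic^{\Z}$, which contains $L\det$ as a canonical point. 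This forces $\widetilde{\det}$ to be unique once we ask that it extends $\det$ along $\alpha$.
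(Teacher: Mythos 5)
Your overall strategy is the same as the paper's: use de Jong's alterations together with Corollary \ref{cor:KRegular} to establish that the forgetful map $\alpha: K(-)\to K(W(-)\on -)$ is an equivalence on perfections of regular schemes, and then $v$-sheafify to transport the determinant. The place where your argument has a genuine gap is in asserting that $L\alpha$ is an equivalence. You argue that $\mathrm{cofib}(\alpha)$ vanishes pointwise on $\Perf^{\mathrm{reg}}$ and hence ``dies in $L$,'' but for presheaves of spectra that are not truncated, pointwise vanishing on a basis only forces the \emph{sheafified homotopy presheaves} of $\mathrm{cofib}(\alpha)$ to vanish; it does not force $L(\mathrm{cofib}(\alpha)) = 0$, unless one is using hypersheafification or has hypercompleteness available. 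Concretely, if $Y \to X$ is a $v$-cover with $Y \in \Perf^{\mathrm{reg}}$, the higher terms $Y\times_X Y,\, Y\times_X Y\times_X Y, \ldots$ of the \v{C}ech nerve are generally not in $\Perf^{\mathrm{reg}}$, and $\mathrm{cofib}(\alpha)$ need not vanish there; and the $v$-topology on $\Perf$ is not known to be hypercomplete (the hypercompleteness results of \S\ref{sec:hDescentDerived} are only for $\Perf^\fp_{/k}$ and require extra hypotheses). So in principle $L(\mathrm{cofib}(\alpha))$ could be a nonzero sheaf with all homotopy sheaves zero, in which case $L\alpha$ is not invertible and your definition of $\widetilde{\det}$ never gets off the ground.

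The paper avoids this by working throughout with the $1$-truncations: it sheafifies the diagram
\[
\tau_{\leq 1}K(W(X)\on X) \xleftarrow{\;f\;} \tau_{\leq 1}K(X) \xrightarrow{\;g\;} \gPic^\Z(X)
\]
of presheaves of groupoids. For $1$-truncated presheaves, \v{C}ech sheafification and hypersheafification agree, so local vanishing of $\pi_0,\pi_1$ of the cofibre does imply that the sheafified $f$ and $g$ are equivalences. Since $\gPic^\Z$ is itself $1$-truncated, nothing is lost by this step, and $\widetilde{\det}$ is the composite $K(W(X)\on X)\to \tau_{\leq 1}K(W(X)\on X)\to L\tau_{\leq 1}K(W(X)\on X)\simeq \gPic^\Z(X)$. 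Once you insert this truncation, your reduction to $\Perf^\fp$ and your uniqueness argument go through as written (the universal property of $L$ applied to the $1$-truncated presheaves).
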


Intuitively, this map is constructed as follows. If $X=\Spec(A)$, where $A$ is the perfection of a regular $\F_p$-algebra, then any $C\in \Perf(W(X)\on X)$ can be filtered in a way that all associated gradeds $C_i$ lie in $\Perf(X)$. Then, one defines
\[
\widetilde{\det}(C) = \bigotimes_i \det(C_i)\ .
\]
Here, Corollary \ref{cor:KRegular} ensures that this expression is well-defined up to unique isomorphism. The general case follows by $v$-descent, using $v$-descent of line bundles and de Jong's alterations. However, to do the descent, one needs to remember higher homotopies, which is the main reason that we have to work with the $\infty$-category of (connective) spectra, and cannot work with its homotopy category.\footnote{One could, however, truncate all spectra in degrees $>1$, i.e. apply $\tau_{\leq 1}$, and work in the $2$-category of groupoids.}

\begin{proof} Consider the maps of presheaves of groupoids on $\Perf$,
\[
\tau_{\leq 1} K(W(X)\on X)\buildrel f\over\longleftarrow \tau_{\leq 1} K(X)\buildrel g\over\longrightarrow \gPic^\Z(X)\ .
\]
Passing to the $v$-sheafification (in fact, $h$-sheafification is enough), both $f$ and $g$ become equivalences. Indeed, it is enough to check this on perfections of finitely presented schemes. For $f$, this follows from de Jong's alterations, \cite{deJongAlterations}, and Corollary \ref{cor:KRegular}. For $g$, this follows from Proposition \ref{prop:Pic1TruncationofK}. Thus, $\gPic^\Z$ agrees with the $v$-sheafification of $\tau_{\leq 1} K(W(X)\on X)$. But there is a natural map from $K(W(X)\on X)$ to its $1$-truncation, and then to the sheafification.
\end{proof}

\section{Construction of line bundles: Geometric approach}
\label{sec:DetConsGeom}

In this section, we record some geometric observations, which can also be used to construct line bundles\footnote{In fact, even if the line bundle is constructed using the $K$-theoretic approach, some of the geometric lemmas of this section are used.}. We begin by giving a criterion for pullback of line bundles along a proper map to be lossless:

\begin{proposition}\label{PullbackFullyFaithful} Let $f: X\to Y$ be a proper surjective perfectly finitely presented map in $\Perf$. Assume that all geometric fibres of $f$ are connected. Then the pullback functor $\gVect(Y)\to \gVect(X)$ is fully faithful.
\end{proposition}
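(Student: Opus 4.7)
The plan is to reduce full faithfulness of $f^\ast$ to the identity $f_\ast \calO_X \simeq \calO_Y$, and then to verify that identity via Stein factorization applied to a model. Full faithfulness asks that $\Hom_Y(\calE_1,\calE_2) \to \Hom_X(f^\ast\calE_1,f^\ast\calE_2)$ be bijective for all $\calE_1,\calE_2 \in \gVect(Y)$, and tensoring with $\calE_1^\vee$ reduces this to showing that $\Gamma(Y,\calF) \to \Gamma(X,f^\ast\calF)$ is an isomorphism for every $\calF \in \gVect(Y)$. The projection formula $f_\ast f^\ast \calF \simeq \calF \otimes_{\calO_Y} f_\ast\calO_X$ (valid since $\calF$ is locally free) then shows it suffices to prove $f_\ast\calO_X \simeq \calO_Y$.

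The remaining statement is Zariski-local on $Y$, so I would reduce to $Y=\Spec(A)$ affine. Using Proposition \ref{FPModelsExist} to pick a model $f_0 \colon X_0 \to Y$ (a proper finitely presented map of schemes with $f = (f_0)_\perf$), together with Proposition \ref{ApproxFP} and standard spreading-out, I would further descend everything to a noetherian base: we may assume $Y = Z_\perf$ for a noetherian affine $\F_p$-scheme $Z$, and $f_0 \colon X_0 \to Z$ is a proper surjective map of finite type with geometrically connected fibers. Geometric connectedness of fibers transfers between $f$ and $f_0$ because perfection does not alter underlying topological spaces.

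In this noetherian setting, Stein factorization writes $f_0 = g_0 \circ h_0$ with $g_0 \colon Z' \to Z$ finite, $(h_0)_\ast \calO_{X_0} = \calO_{Z'}$, and $h_0$ having geometrically connected fibers. Combined with the connectedness of the geometric fibers of $f_0$, each geometric fiber of the finite map $g_0$ must be a single point, so $g_0$ is a finite universal homeomorphism. For any qcqs $\F_p$-scheme $V$ one has $\calO(V_\perf) \simeq \calO(V)_\perf$ (global sections commute with the cofiltered limit along Frobenius defining $V_\perf$); applied to $V = f_0^{-1}(U)$ for an affine open $U = \Spec(B) \subset Y$, this gives
\[ (f_\ast \calO_X)(U) \;=\; \big(\calO_{Z'}(g_0^{-1}(U))\big)_\perf. \]
Since $g_0$ is a finite universal homeomorphism in characteristic $p$, every element of the $B$-algebra $C := \calO_{Z'}(g_0^{-1}(U))$ has some $p$-power lying in $B$; as $B$ is already perfect, $C_\perf = B$, so $(f_\ast \calO_X)(U) = B = \calO_Y(U)$, as required. (In spirit, Lemma \ref{UnivHomeomIsomPerf} is what is being used here.)

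The main obstacle I anticipate is the noetherian approximation step. Spreading out properness, finite presentation, and surjectivity along a cofiltered limit of the base is routine, but transporting ``geometrically connected fibers'' through the limit requires invoking constructibility of this locus on the base (or an equivalent spreading-out argument). Once past this reduction, the Stein factorization together with the sheaf-theoretic identity $\calO(V_\perf) = \calO(V)_\perf$ makes the argument essentially formal in characteristic $p$.
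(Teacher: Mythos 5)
Your proposal is correct and follows essentially the same route as the paper's own proof: reduce via the adjunction/projection formula to showing $f_\ast\calO_X = \calO_Y$, pick a finitely presented model $f_0$, observe that the induced map on global sections $A\to \Gamma(X_0,\calO_{X_0})$ is a (finite, surjective, radicial) universal homeomorphism, and conclude by Lemma~\ref{UnivHomeomIsomPerf} after passing to perfections. You spell out the Stein factorization and noetherian approximation that the paper leaves terse; the spreading-out worry you flag can be sidestepped, since one only needs connectedness of the geometric fibres of $f$ itself (not of the noetherian descent), which already forces each geometric fibre of the finite piece of the Stein factorization over $Y$ to be a single point.
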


A similar result holds true for $\gVect(W_n(X))$ and $\gVect(W(X))$.

\begin{proof} For full faithfulness, by passing to $\Hom$-bundles, it is enough to prove that for $\calE\in \gVect(X)$, the adjunction map
\[
\calE\to f_\ast f^\ast \calE
\]
is an isomorphism. This is a local statement, so we can assume that $\calE = \calO_Y$ is trivial, and $Y$ is affine. Choose a model $f_0: X_0\to Y$ of $X$. We need to see that the map of $\calO_Y$-algebras
\[
\calO_Y\to f_\ast \calO_{X_0}
\]
is an isomorphism after perfection. But $A\to H^0(Y,f_\ast \calO_{X_0})$ is a universal homeomorphism (on spectra), thus an isomorphism after perfection by Lemma \ref{UnivHomeomIsomPerf}.
\end{proof}

Next, we want to give criteria for when a vector bundle descends along a proper map. The following lemma breaks arbitrary rings into valuation rings $v$-locally, and will help simplify the base of the morphism:

\begin{lemma}\label{wLoc} 
Let $X$ be a qcqs scheme. Then there is a $v$-cover $\Spec(A) \to X$ such that:
\begin{enumerate}
\item Each connected component of $\Spec(A)$ is the spectrum of a valuation ring.
\item The subset of closed points in $\Spec(A)$ is closed.
\end{enumerate}
\end{lemma}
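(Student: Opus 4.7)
The plan is to construct $\Spec(A) \to X$ by taking $A$ to be a large product of valuation rings over $X$, and to read off both properties from the ultrafilter-theoretic description of $\Spec(A)$. After reducing to the affine case $X = \Spec(R)$ (Zariski covers are $v$-covers), I fix a cardinal $\lambda$ large enough that every valuation ring with a map from $R$ embeds into an algebraically closed valuation ring of cardinality $\leq \lambda$; such a $\lambda$ exists within the universe fixed for $\Perf$. Let $I$ denote the set of isomorphism classes of pairs $(V_i, \phi_i : R \to V_i)$ with $V_i$ an algebraically closed valuation ring of cardinality $\leq \lambda$, and set $A := \prod_{i \in I} V_i$. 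The map $\Spec(A) \to \Spec(R)$ is a $v$-cover: any $\Spec(W) \to \Spec(R)$ with $W$ a valuation ring can be extended to some $W' = V_i$, and the projection $A \to V_i$ provides a lift $\Spec(V_i) \to \Spec(A)$.

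For property (1), since each $V_i$ is local (and hence connected), the Boolean algebra of idempotents of $A$ equals $\calP(I)$, so $\pi_0(\Spec(A)) \simeq \beta I$. The connected component of $\Spec(A)$ indexed by an ultrafilter $\calU$ on $I$ has coordinate ring
\[ A_{\calU} := \colim_{S \in \calU} \prod_{i \in S} V_i, \]
and unwinding this filtered colimit identifies $A_{\calU}$ with the model-theoretic ultraproduct $\prod V_i / \calU$, i.e., $\prod V_i$ modulo the ideal of tuples whose zero-set lies in $\calU$. The key input is that ``being a valuation ring'' is expressible in first-order logic (for each $x$ in the fraction field, either $x$ or $x^{-1}$ belongs to the ring), so the standard transfer of first-order sentences to ultraproducts shows that $A_{\calU}$ is again a valuation ring.

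For property (2), set $\fram := \prod_i \fram_i \subset A$, where $\fram_i \subset V_i$ is the maximal ideal, so that $A/\fram \simeq \prod_i V_i/\fram_i$ is a product of fields and hence absolutely flat. Thus $Z := \Spec(A/\fram) \subset \Spec(A)$ is a closed subset each of whose points is already closed in $Z$. The ultrafilter analysis from (1) identifies the maximal ideals of $A$ as $\{(v_i) : \{i : v_i \in \fram_i\} \in \calU\}$ for ultrafilters $\calU$ on $I$, each of which contains $\fram$; conversely, any prime of $A$ containing $\fram$ is a prime of the absolutely flat ring $A/\fram$ and hence maximal. So the set of closed points of $\Spec(A)$ coincides with $Z$, which is closed by construction. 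The main technical point throughout is the set-theoretic bookkeeping --- keeping the cardinality of $A$ within the universe fixed for $\Perf$ --- which is precisely the issue flagged in the footnote attached to this lemma.
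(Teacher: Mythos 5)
Your overall strategy --- take a large product $A = \prod_i V_i$ of valuation rings over $X$ and read off both properties from the ultrafilter description of $\Spec(A)$ --- is exactly the paper's. The one genuinely different ingredient is your use of Łoś's theorem to conclude that an ultraproduct of valuation rings is a valuation ring; the paper verifies this directly by unwinding the filtered colimit and checking the two relevant first-order sentences (being a domain, and the dichotomy $f \mid g$ or $g \mid f$) by hand. Both are correct; Łoś packages the computation more cleanly. Your treatment of property (2) is also essentially the paper's (closed points all contain $\fram$, and $A/\fram$ is absolutely flat), merely phrased through the ultrafilter picture rather than via the observation that $\fram$ is the Jacobson radical.

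There is, however, a genuine gap in the $v$-cover verification, created by your insistence that the $V_i$ be algebraically closed. You claim one can fix $\lambda$ so that \emph{every} valuation ring $W$ with a map from $R$ \emph{embeds into} some algebraically closed $V_i$ of cardinality $\leq \lambda$, and then you take $W' = V_i \supset W$ as the extension required by the definition of a $v$-cover. But the cardinality of such $W$ is unbounded: take any valuation ring over $R$, pass to a purely transcendental extension of its fraction field of large transcendence degree with a Gauss-type prolongation of the valuation, and iterate. So no single $\lambda$ works, and the lifting step fails as written. Note that algebraic closure of $V_i$ is never actually used elsewhere in your argument; it is only the thing forcing you to seek an extension of $W$ rather than a sub-valuation ring. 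The paper avoids the issue by taking one representative valuation ring $R_i$ for each equivalence class of \emph{valuations on $R$ itself} (i.e.\ pairs of a prime $\frap \subset R$ and a valuation on $\Frac(R/\frap)$); these form a set of bounded cardinality, and any $\Spec(W) \to \Spec(R)$ induces a valuation on $R$ equivalent to some $R_i$ with $R_i \hookrightarrow W$, so the composite $\Spec(W) \to \Spec(R_i) \hookrightarrow \Spec(A)$ is already the required lift, with no further extension needed. Dropping the algebraic closure requirement and arguing this way repairs your proof; retaining it would require an additional amalgamation of $W$ and $V_i$ over their common small sub-valuation ring, which your write-up does not carry out.
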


In particular, $A$ is w-local in the sense of \cite[\S 2]{BhattScholze}.

\begin{proof} 
We may assume that $X=\Spec(S)$ is affine. Pick a set $V$ of representatives for all equivalence classes of valuations, and a map $\Spec(R_i)\to \Spec(S)$ from a valuation ring $R_i$ for each $i\in V$, realizing this valuation. Let $A=\prod_{i\in V} R_i$. Clearly, $\Spec(A)\to\Spec(S)$ is a $v$-cover. We first check (2). For this, note that the formation of Jacobson radicals commutes with products of rings. Hence, the Jacobson radical of $A$ is given by $\fram := \prod_{i \in V} \fram_i$, where $\fram_i \subset R_i$ is the maximal ideal. If we set $k_i := R_i/\fram_i$ to be the corresponding residue field, then $A/\fram \simeq \prod_i k_i =: B$. The closed immersion $\Spec(B) \subset \Spec(A)$ induces a homeomorphism on $\pi_0$: for this, it is enough to note that any idempotent of $B$ lifts uniquely to an idempotent of $A$ (as can be checked in each factor).  Moreover, the ring $B$ is absolutely flat by \cite[Tag 092G]{StacksProject}, and hence $\Spec(B)$ is Hausdorff by \cite[Tag 092F]{StacksProject}. Now any closed point of $\Spec(A)$ comes from $\Spec(B)$ (as the kernel of $A \to B$ is the Jacobson radical), and every point of $\Spec(B)$ gives a closed point (since $\Spec(B)$ is Hausdorff). Thus, the subset of closed points of $\Spec(A)$ coincides with the closed subset $\Spec(B) \subset \Spec(A)$, giving (2).

To proceed further, let $T$ be the Stone-Cech compactification of the discrete set $V$. Recall that elements of $T$ are ultrafilters on $V$, i.e. collections $\calP$ of subsets $U\subset V$ satisfying: (a) stability under finite intersections, (b) if $U\in \calP$ and $U\subset U^\prime$, then $U^\prime\in \calP$, and (c) for each $W\subset V$, exactly one of $W$ and $V\setminus W$ lies in $\calP$. For each subset $W\subset V$, there is a clopen decomposition of $T = T_W\sqcup T_{V\setminus W}$ according to whether the ultrafilter $\calP\in T$ contains $W$ or $V\setminus W$. It is a classical fact that $\Spec(B) \simeq T$ (see \cite[\S 3]{KruckmanStoneCech} for example); explicitly, an ultrafilter $\calP$ corresponds to the prime ideal $\{ (b_i) \in \prod_{i \in V} k_i \mid \{i \in V \mid b_i = 0\} \in \calP \} \subset B$.

Now, as shown above in the proof of (2), the inclusion $\Spec(B) \to \Spec(A)$ identifies $\Spec(B) \simeq \pi_0(\Spec(B)) \simeq \pi_0(\Spec(A))$ (see \cite[\S 2]{BhattScholze} for a discussion of the topology on $\pi_0$). Thus, the canonical map $\Spec(A) \to \pi_0(\Spec(A))$ can be identified with a map $\beta:\Spec(A) \to T$, which can be described explicitly as follows. The preimage of $T_W$ in $\Spec(A)$ is $\Spec(\prod_{i\in W} R_i)$. The fibres $\beta^{-1}(t)=\Spec(A_t)$ are connected components of $\Spec(A)$, and are given by 
\[
A_t = \varinjlim_{W\in \calP} \prod_{i\in W} R_i,
\]
i.e., each $A_t$ is identified with an ultraproduct of the valuation rings $R_i$ (by definition of the ultraproduct), where $t\in T$ corresponds to the ultrafilter $\calP$. Note that the colimit is filtered. For (1), it now suffices to prove that an ultraproduct of valuation rings is a valuation ring. First, $A_t$ is a domain: if $f,g\in A_t$ have product $fg=0$, then $f,g\in \prod_{i\in W} R_i$ for some $W$, and $fg=0\in \prod_{i\in W} R_i$, possibly after shrinking $W$. As each $R_i$ is a domain, $W$ is covered by $\{i\in W|f_i=0\}$ and $\{i\in W|g_i=0\}$. By definition of an ultrafilter, at least one of these sets lies in $\calP$, so $f=0$ or $g=0$ in $A_t$. Now, if $f/g\in \Frac(A_t)$, $f,g\in A_t$, $f,g\neq 0$, then one may again assume $f,g\in \prod_{i\in W} R_i$, and all coordinates of $f,g$ are nonzero. As each $R_i$ is a valuation ring, one of $f_i/g_i$ and $g_i/f_i$ lies in $R_i$ for each $i\in W$. One possibility happens for a set contained in the ultrafilter, showing that one of $f/g$ and $g/f$ lies in $A_t$. Thus, $A_t$ is indeed a valuation ring, proving (1).
\end{proof}

In fact, one can even break up valuation rings, allowing a reduction to valuation rings of rank\footnote{The rank of a valuation ring $V$ is simply the Krull dimension of $V$, and can also be defined purely combinatorially from the value group of $V$. We refer to \cite[Chapter 6]{BourbakiCA} for more on valuation rings. } $1$.

\begin{lemma}\label{BreakValRing} Let $V$ be a perfect valuation ring with valuation $|\cdot|: V\to \Gamma\cup \{0\}$. Let $\alpha: \Gamma\to \Gamma^\prime$ be a map of ordered abelian groups with kernel $\Gamma_0$. Let $V^\prime = V[S^{-1}]$ where $S$ is the set of all elements $f\in V$ with $\alpha(|f|) = 1$; then $V^\prime$ is a valuation ring with a valuation $|\cdot|^\prime: V^\prime\to \Gamma^\prime$. Moreover, let $V\to V_0$ be the quotient of $V$ by the ideal $I$ of all $f\in V$ with $\alpha(|f|)<1$; then $V_0$ is a valuation ring with a valuation $|\cdot|_0: V_0\to \Gamma_0$. Let $V_0^\prime$ be the fraction field of $V_0$.

The sequence
\[
0\to V\to V^\prime\oplus V_0\to V_0^\prime\to 0
\]
is exact, and for any perfect $V$-scheme $X$ and $K\in D_{qc}(X)$, the triangle
\[
R\Gamma(X,K)\to R\Gamma(X\times_{\Spec V} \Spec V^\prime,K)\oplus R\Gamma(X\times_{\Spec V} \Spec V_0,K)\to R\Gamma(X\times_{\Spec V} \Spec V_0^\prime,K)
\]
is distinguished.
\end{lemma}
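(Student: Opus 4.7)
The strategy is to first establish the short exact sequence as an exercise in valuation theory, then deduce the distinguished triangle by derived tensoring with $R\Gamma(X, K)$ via the base change Lemma~\ref{BaseChange}.

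For the exact sequence, I would first check that $I := \{f \in V : \alpha(|f|) < 1\}$ is a prime ideal of $V$ (using the non-Archimedean inequality for additivity, $V$-stability from $\alpha$ being order-preserving, and primality from multiplicativity of $\alpha \circ |\cdot|$). Its complement in $V$ is exactly $S$, so $V' = V[S^{-1}] = V_I$ is the standard localization at the prime $I$. The key observation is that $IV_I = I$: since $\alpha$ is order-preserving, $\alpha(\gamma) < 1$ in $\Gamma'$ forces $\gamma < 1$ in $\Gamma$; applied to each generator $ia/s$ of $IV_I$ (with $i \in I$, $a \in V$, $s \in S$), one gets $\alpha(|ia/s|_V) = \alpha(|i|)\alpha(|a|) < 1$, hence $|ia/s|_V < 1$, so $ia/s \in I \subset V$. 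Consequently $V_I/IV_I = V_I/I$ is a field, canonically identified with $V_0' = \Frac(V/I)$.

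The exactness of $0 \to V \to V' \oplus V_0 \to V_0' \to 0$ is then straightforward: injectivity at $V$ holds since $V$ is a domain; surjectivity at $V_0'$ holds because $V_I \twoheadrightarrow V_I/I = V_0'$ already; and for exactness in the middle, if $(g, h) \in V_I \oplus V_0$ has $\bar g = h$ in $V_0'$, pick any lift $\tilde h \in V$ of $h$ and note that $g - \tilde h \in V_I$ reduces to $0$ in $V_0'$, hence lies in $IV_I = I \subset V$; then $f := \tilde h + (g - \tilde h) \in V$ is a preimage of $(g, h)$.

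For the derived statement, all four rings are perfect: $V'$ as a localization of the perfect ring $V$, $V_0 = V/I$ because $I$ is radical (if $f^p \in I$ then $\alpha(|f|)^p < 1$ forces $\alpha(|f|) < 1$), and $V_0'$ as a localization of the perfect domain $V_0$. Hence Lemma~\ref{BaseChange} applies to each cartesian square $X_R := X \times_{\Spec V} \Spec R \to X$ for $R \in \{V', V_0, V_0'\}$ and identifies $R\Gamma(X_R, K|_{X_R}) \simeq R\Gamma(X, K) \otimes^{\L}_V R$. The short exact sequence from the previous step, viewed as a distinguished triangle in $D(V)$, tensored derived with $R\Gamma(X, K)$, yields the asserted triangle. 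The main obstacle is the purely algebraic identification $IV_I = I$ together with exactness in the middle; the derived statement is then a formal consequence of the Tor-vanishing in Lemma~\ref{NoTorPerf} (packaged as base change), which notably handles the non-flat base change $V \to V/I$.
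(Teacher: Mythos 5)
Your proof is correct and follows essentially the same route as the paper's: the paper likewise observes that $I$ is an ideal of both $V$ and $V'$ and serves simultaneously as $\ker(V\to V_0)$ and $\ker(V'\to V_0')$, which is exactly your $IV_I=I$ observation, and then invokes Lemma~\ref{BaseChange} for the derived statement. Your write-up is slightly more explicit than the paper's in two helpful places: you spell out the middle-exactness argument (lifting $h$ and noting $g-\tilde h\in IV_I=I\subset V$), and you verify that $V'$, $V_0$, $V_0'$ are perfect (localizations of a perfect ring are perfect, and $I$ is stable under $p$-th roots), which is the hypothesis needed to apply Lemma~\ref{BaseChange}; the paper leaves both of these implicit.
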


In other words, one may consider $\Spec V_0\sqcup \Spec V^\prime\to \Spec V$ as a cover for the purposes of quasi-coherent sheaf theory, although it is \emph{not} a $v$-cover. Geometrically, $\Spec V$ is a totally ordered chain of points, $\Spec V_0^\prime\subset \Spec V$ is a point, and $\Spec V_0$ (resp. $\Spec V^\prime$) forms the set of specializations (resp. generalizations) of $\Spec V_0^\prime$ in $\Spec V$.

\begin{proof} The distinguished triangle for general $X$ follows from the exact sequence and Lemma \ref{BaseChange}.

Let us rewrite everything in terms of the fraction field $L$ of $V$, which comes with the valuation $|\cdot|: L\to \Gamma\cup \{0\}$. Then
\[
V=|\cdot|^{-1}(\Gamma_{\leq 1}\cup \{0\})
\]
and
\[
V^\prime = |\cdot|^{-1}(\Gamma_0 \cdot \Gamma_{\leq 1}\cup \{0\})\ .
\]
Moreover, $V_0$ is the quotient of $V$ by
\[
I = |\cdot|^{-1}(\Gamma_{<\Gamma_0}\cup \{0\})
\]
and $V_0^\prime$ is the quotient of $V^\prime$ by $I$; note that $I$ is an ideal of both $V$ and $V^\prime$. Thus, $I=\ker(V\to V_0) = \ker(V^\prime\to V_0^\prime)$, which implies the exactness of
\[
0\to V\to V^\prime\oplus V_0\to V_0^\prime\to 0\ ,
\]
proving the claim.
\end{proof}

The next lemma uses the preceding reductions to give a crucial special case of a fibral criterion for descent of vector bundles under proper maps:

\begin{lemma}\label{VectTrivialValRing} Let $V$ be a perfect valuation ring, let $f: X\to \Spec V$ be a proper perfectly finitely presented map with $R\Gamma(X,\calO_X) = V$; in particular, all geometric fibres of $f$ are connected. Let $\calE$ be a vector bundle on $X$ such that for all geometric points $\bar{y}$ of $\Spec V$, $\calE$ is trivial on $X_{\bar{y}}$. Then $\calE$ is trivial.
\end{lemma}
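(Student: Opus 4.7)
The plan is to construct $n$ global sections of $\calE$ that trivialize it, by reducing the statement to the case where $V$ is a field via iterated splittings provided by Lemma~\ref{BreakValRing}.

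\emph{Base case ($V$ a field).} If $V = L$ is a field with algebraic closure $\bar L$, then by hypothesis $\calE|_{X_{\bar L}}$ is trivial, so $R\Gamma(X_{\bar L}, \calE|_{X_{\bar L}}) = \bar L^n$ concentrated in degree $0$ (using $R\Gamma(X_{\bar L}, \calO_{X_{\bar L}}) = \bar L$, which follows from $R\Gamma(X,\calO_X) = V$ and Lemma~\ref{BaseChange}). Applying Lemma~\ref{BaseChange} along the flat map $L \to \bar L$ and descending along this faithfully flat extension, one obtains $H^0(X, \calE) \simeq L^n$ and higher cohomology vanishing. The adjunction $\calO_X^n \to \calE$ is then a map of rank-$n$ bundles that is an isomorphism on the geometric fibre $X_{\bar L}$, and hence (by faithfully flat descent on $X_{\bar L}/X_L$, or by checking pointwise) an isomorphism on $X$.

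\emph{Reduction.} For general $V$, choose a nontrivial convex subgroup $\Gamma_0 \subsetneq \Gamma$ in the value group and apply Lemma~\ref{BreakValRing} to the corresponding quotient $\alpha: \Gamma \to \Gamma/\Gamma_0$, obtaining an exact sequence $0 \to V \to V' \oplus V_0 \to V_0' \to 0$ with $V'$ and $V_0$ both perfect valuation rings of strictly ``smaller'' value group. Setting $X' = X \times_V V'$, $X^0 = X \times_V V_0$, and $X_0' = X \times_V V_0'$, each of these maps still satisfies the hypotheses of the lemma (using Lemma~\ref{BaseChange} to inherit the condition $R\Gamma = $ base). By induction, $\calE|_{X'}$ and $\calE|_{X^0}$ admit trivializations $\phi'$ and $\phi^0$; these differ on $X_0'$ by an element $g \in \GL_n(R\Gamma(X_0', \calO)) = \GL_n(V_0')$, where the identification uses Lemma~\ref{BaseChange}. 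Since $V_0'$ is precisely the residue field of the local ring $V'$, the reduction $\GL_n(V') \twoheadrightarrow \GL_n(V_0')$ is surjective, so we may lift $g$ to $\GL_n(V')$ and adjust $\phi'$ accordingly. The resulting trivializations now agree on $X_0'$, and the distinguished triangle of Lemma~\ref{BreakValRing} (applied with $K = \calE$) then glues them to $n$ sections $s_1, \ldots, s_n \in H^0(X, \calE)$. Since $|X| = |X'| \cup |X^0|$ as topological spaces, and the $s_i$ generate $\calE$ on both $X'$ and $X^0$, they generate $\calE$ pointwise on $X$; the induced map $\calO_X^n \to \calE$ is thus an isomorphism of rank-$n$ vector bundles.

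\emph{Main obstacle.} The induction requires care: the value group of a general valuation ring need not admit a maximal proper convex subgroup (e.g.\ the only convex subgroups of $\Q$ are $0$ and $\Q$), so the reduction must be set up as a transfinite induction along a well-ordered chain of convex subgroups, with the rank-$1$ case handled as a genuinely separate base case. For the rank-$1$ case, where Lemma~\ref{BreakValRing} affords no nontrivial splitting, the plan is to argue directly: the field case provides trivializations $\phi_L$ on $X_L$ (generic fibre) and $\phi_k$ on $X_k$ (special fibre), and the crucial input is the vanishing $H^i(X_k, \calO_{X_k}) = 0$ for $i > 0$, which follows from $R\Gamma(X_k, \calO_{X_k}) = V \otimes_V^L k = k$ by Lemma~\ref{BaseChange}. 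This vanishing kills the obstructions to deforming $\phi_k$ formally across the thickenings $V/\fram^n$; a Beauville--Laszlo-style algebraization using $\phi_L$ for the generic-fibre data then produces the required global trivialization. This rank-$1$ base case is the technical crux of the proof.
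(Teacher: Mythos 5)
Your field case and the reduction step via Lemma~\ref{BreakValRing} are both sound, and the latter is in fact essentially the induction step the paper has in mind (lifting the change-of-trivialization matrix from $\GL_n(V_0')$ to $\GL_n(V')$ using that $V'$ is local with residue field $V_0'$, then gluing via the distinguished triangle). Your worry about transfinite induction is also easily dispatched: the paper first writes $V$ as a filtered colimit of finite-rank valuation subrings and reduces to the finite-rank case by a limit argument, so only finitely many applications of Lemma~\ref{BreakValRing} are ever needed.

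The genuine gap is the rank-$1$ case, which you correctly identify as the crux but then only sketch, and the sketch is a non-starter. For a perfect valuation ring $V$ of rank $1$ that is not a field, the value group is $p$-divisible (perfectness forces $|\pi|^{1/p}$ to lie in the value group for every $|\pi|$), hence dense in $\mathbb{R}_{>0}$, and therefore $\fram^n = \fram$ for all $n \geq 1$. Your proposed ``thickenings $V/\fram^n$'' are all identically $k$: there is no formal neighbourhood of the special fibre to deform across, and no Beauville--Laszlo datum to algebraize. This is precisely the pathology the paper flags in the remark after the auxiliary lemma (``For $R = V$ a perfect valuation ring, there exist complexes $0 \neq C \in D(V)$ such that $C \otimes_V K = C \otimes^{\L}_V k = 0$, e.g.\ $C = K/\fram$''): triviality over the generic and special fibres alone is not enough. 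The paper's actual argument is of a different nature: it first enlarges $V$ so that $K$ is algebraically closed (justified by Proposition~\ref{PullbackFullyFaithful}), producing a section $s:\Spec V \to X$; it then upgrades triviality over $X_k$ to triviality over the derived thickening $X \times^{\L}_{\Spec V} \Spec V/g$ for some nonzero $g \in \fram$ by a finite-presentation (spreading-out) argument on the trivializing isomorphism; and finally it applies the elementary lemma that a complex $C$ with $C \otimes R[g^{-1}] = C \otimes^{\L}_R R/g = 0$ vanishes to conclude that $s^*$ is a quasi-isomorphism, whence $f_*\calE$ is free and $f^*f_*\calE \to \calE$ is an isomorphism fibrewise. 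The spreading-out step from $k$ to $V/g$ is the key ingredient your deformation-theoretic approach lacks and cannot reproduce.
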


\begin{remark} The lemma holds more generally, replacing the condition $R\Gamma(X,\calO_X) = V$ by the condition that all geometric fibres are connected (and nonempty), cf.~Remark~\ref{RemVectTrivialnew} below.
\end{remark}

\begin{proof}
We begin with some generalities that help reduce us to the rank $1$ case. Every commutative ring $R$ can be regarded as a filtered colimit $\colim_i R_i$ of its finitely generated $\Z$-subalgebras $R_i \subset R$. When $R$ is a valuation ring, there is an induced valuation on $R_i$, and the corresponding valuation ring $R_i'$ has finite rank (which can be bounded in terms of the transcendence degree of its fraction field over the prime subfield). Moreover, in this case, the inclusion $R_i \subset R$ extends canonically to an extension $R_i' \subset R$ of valuation rings. Thus, we also have $R = \colim_i R_i'$, so  we can write any valuation ring as a filtered colimit of finite rank valuation subrings. Note that the map $\Spec(R) \to \Spec(R_i')$ is surjective: it is flat (as $R$ is a torsionfree $R_i'$-module) and its image contains the closed point, and thus all points as the image of a flat map is stable under generalizations. Applying this to $R=V$, by the usual limit arguments, we may thus assume that the valuation of $V$ is of finite rank. Applying Lemma \ref{BreakValRing}, we may inductively assume that $V$ is of rank $1$. Moreover, we can assume that $V$ is complete, and that the fraction field $K$ of $V$ is algebraically closed, as by Proposition \ref{PullbackFullyFaithful}, it is enough to prove the result over a $v$-cover of $\Spec V$. 

As the fraction field of $V$ is algebraically closed, there is a section $s: \Spec V\to X$, by taking a point of the generic fibre and then taking the closure. We claim that the map
\[
s^\ast: R\Gamma(X,\calE)\to R\Gamma(\Spec V,s^\ast \calE)\cong V^{\rk \calE}
\]
is a quasi-isomorphism. As $\calE$ is trivial on the generic fibre and $R\Gamma(X\times_{\Spec V} \Spec K,\calO)\cong K$, it follows that $s^\ast\otimes_V K$ is a quasi-isomorphism. On the other hand, $\calE$ is trivial over $X\times_{\Spec V} \Spec k$, there $k$ is the residue field of $V$. By finite presentation, $\calE$ is trivial over (the derived scheme) $X\times^{\mathbb{L}}_{\Spec V} \Spec V/g$ for some $g\in \fram=\ker(V\to k)$. As $R\Gamma(X\times^{\mathbb{L}}_{\Spec V} \Spec V/g,\calO)\cong V/g$, one sees that also $s^\ast\buildrel\L\over\otimes_V V/g$ is a quasi-isomorphism. Now the following lemma applied to the cone of $s^\ast$ implies that $s^\ast$ is a quasi-isomorphism.

\begin{lemma} Let $R$ be a ring and $g\in R$ a non-zero divisor. Let $C\in D(R)$ be a complex such that $C\otimes_R R[g^{-1}] = C\buildrel\L\over\otimes_R R/g=0$. Then $C=0$.
\end{lemma}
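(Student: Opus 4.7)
The plan is to exploit the fact that since $g$ is a non-zero divisor, the quotient $R/g$ admits the two-term projective resolution $0 \to R \xrightarrow{g} R \to R/g \to 0$. Tensoring with $C$ produces a distinguished triangle
\[
C \xrightarrow{\,g\,} C \to C \otimes^{\L}_R R/g \to C[1].
\]
First, I would use the hypothesis $C \otimes^{\L}_R R/g = 0$ to conclude that multiplication by $g$ is a quasi-isomorphism on $C$. Passing to cohomology, this means that $g \colon H^i(C) \to H^i(C)$ is an isomorphism for every $i$, so each $H^i(C)$ is already an $R[g^{-1}]$-module.

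Next, I would exploit the hypothesis $C \otimes_R R[g^{-1}] = 0$. Since $R \to R[g^{-1}]$ is flat, derived and ordinary tensor products agree here, and taking cohomology yields $H^i(C)[g^{-1}] = 0$ for all $i$. Combined with the previous step (where $H^i(C) \simeq H^i(C)[g^{-1}]$), this forces $H^i(C) = 0$ for every $i$, and hence $C = 0$ in $D(R)$.

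There is no serious obstacle: the argument is essentially a one-line consequence of the Koszul-type resolution of $R/g$. The only step that requires any care is justifying that the derived tensor product $C \otimes^{\L}_R R/g$ is computed by the mapping cone of $g \colon C \to C$, which uses precisely the assumption that $g$ is a non-zero divisor (so that the two-term complex $[R \xrightarrow{g} R]$ is genuinely a projective resolution of $R/g$).
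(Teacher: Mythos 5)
Your proof is correct and is essentially the same argument as the paper's: both tensor $C$ with the two-term resolution of $R/g$ to conclude that $g$ acts invertibly on each $H^i(C)$, and both combine this with $H^i(C)[g^{-1}]=0$ (equivalently, $H^i(C)$ being $g$-power torsion) to force $H^i(C)=0$. No meaningful difference in approach.
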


\begin{remark} For $R=V$ a perfect valuation ring, there exist complexes $0\neq C\in D(V)$ such that $C\otimes_V K = C\buildrel\L\over\otimes_V k=0$, e.g. $C=K/\fram$ (sitting in degree $0$). For this reason, we needed to lift the second condition from $k$ to $V/g$ for some $g\in \fram$ in the proof of Lemma \ref{VectTrivialValRing}. In contrast, if $R$ is a noetherian ring, then any $K \in D(R)$ satisfies: $K \simeq 0$ if and only if $K \otimes^L_R k$ for any residue field $k$ of $R$.
\end{remark}

\begin{proof} As $C\otimes_R R[g^{-1}] = 0$, any cohomology group $H^i(C)$ is $g$-torsion. On the other hand, the long exact cohomology sequence one gets from
\[
0\to R\buildrel g\over\longrightarrow R\to R/g\to 0
\]
by tensoring with $C$ shows that multiplication by $g$ is an isomorphism on $H^i(C)$. Together, these imply $H^i(C)=0$, for all $i$.
\end{proof}

In particular, we see that $R\Gamma(X,\calE)=V^{\rk \calE}$, so $f_\ast \calE$ is a vector bundle on $\Spec V$. Now consider the adjunction map
\[
f^\ast f_\ast \calE\to \calE\ ,
\]
which is a map of vector bundles on $X$. To see whether this is an isomorphism, we can check on fibres, where it follows from the assumption that $\calE$ is trivial on fibres, and the fact that taking $f_\ast \calE = Rf_\ast \calE$ commutes with any base-change by Lemma \ref{BaseChange}. Thus, $\calE\cong f^\ast f_\ast \calE$ is the pullback of the trivial vector bundle $f_\ast \calE$, and therefore trivial.
\end{proof}

Finally, we can give the fibral criterion to descend vector bundles along proper covers.

\begin{theorem}\label{ThmVectTrivial} Let $f: X\to Y$ be a proper perfectly finitely presented map in $\Perf$ such that $Rf_\ast \calO_X = \calO_Y$; in particular, all geometric fibres of $f$ are connected. Let $\calE\in \gVect(X)$. Then $\calE$ descends to $Y$ if and only if for all geometric points $\bar{y}$ of $Y$, $\calE$ is trivial on the fibre $X_{\bar{y}}$.
\end{theorem}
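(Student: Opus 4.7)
The plan is to construct the descended vector bundle directly as $\calF := f_*\calE \in \Qcoh(Y)$ and verify the needed properties. The ``only if'' direction is immediate: if $\calE \simeq f^*\calF$, then $\calE|_{X_{\bar y}}$ is the pullback of the $k(\bar y)$-vector space $\calF|_{\bar y}$ along the connected scheme $X_{\bar y}$, hence trivial.

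For the ``if'' direction, I first observe that the two claims to verify---namely, that $\calF$ is a vector bundle of rank $\rk \calE$, and that the counit $f^*\calF \to \calE$ is an isomorphism---are both $v$-local on $Y$. This uses that $f_*$ commutes with arbitrary base change in $\Perf$ (Lemma~\ref{BaseChange}) together with the $v$-descent for vector bundles from Theorem~\ref{thm:VectWhStack}. Using Lemma~\ref{wLoc}, I may therefore reduce to the case $Y = \Spec A$ with $A$ w-local and every local ring $A_s$ a valuation ring; write $\calF_A := \calF|_{\Spec A} = f_{A,*}\calE_A$ and $\phi: f_A^*\calF_A \to \calE_A$ for the adjunction.

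The key computation happens stalk-wise. For each $s \in \Spec A$, base change to $\Spec A_s$ identifies (via Lemma~\ref{BaseChange}) the stalk of $\calF_A$ at $s$ and the restriction of $\phi$ to $X_{A_s}$ with the analogous objects for $f_{A_s}: X_{A_s} \to \Spec A_s$. This latter map satisfies $Rf_{A_s,*}\calO = \calO$ (Lemma~\ref{BaseChange} again) and has fibrewise trivial $\calE|_{X_{A_s}}$, so Lemma~\ref{VectTrivialValRing} applies to give that $\calE|_{X_{A_s}}$ is trivial, that $f_{A_s,*}\calE|_{X_{A_s}} = A_s^{\rk \calE}$, and that the adjunction over $X_{A_s}$ is an isomorphism. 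Since every point of $X_A$ lies in some $X_{A_s}$, the map $\phi$ is an isomorphism on all of $X_A$.

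The main obstacle is upgrading $\calF_A$ from a pointwise-free quasi-coherent sheaf with vector-bundle pullback $f_A^*\calF_A \simeq \calE_A$ to an honest vector bundle. For this, I use the iso $\phi$ to equip $\calE_A$ with descent datum for the $v$-cover $f_A: X_A \to \Spec A$; Theorem~\ref{thm:VectWhStack} then produces a vector bundle $\calG$ on $\Spec A$ with $f_A^*\calG \simeq \calE_A$. The projection formula combined with $Rf_{A,*}\calO_{X_A} = \calO_{\Spec A}$ yields $f_{A,*}f_A^* \simeq \id$ on $\Qcoh(\Spec A)$, so $f_A^*$ is fully faithful on quasi-coherent sheaves; this identifies $\calG$ with $\calF_A$, proving $\calF_A$ is a vector bundle. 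A final application of $v$-descent along $g: \Spec A \to Y$ produces $\calF$ as a vector bundle on $Y$, and the iso $f^*\calF \simeq \calE$ follows by descending the already-established iso on $X_A$ along the $v$-cover $X_A \to X$.
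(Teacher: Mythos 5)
Your argument is essentially correct and relies on the same key ingredients as the paper (Lemma~\ref{wLoc}, Lemma~\ref{VectTrivialValRing}, $v$-descent from Theorem~\ref{thm:VectWhStack}), but the route is genuinely different. The paper, after passing to a $v$-cover by a w-local ring $A$, reduces to each connected component $\Spec V$ (using an approximation/spreading argument: $\Spec V$ is a cofiltered limit of clopens, so a trivialization of $\calE|_{X_V}$ from Lemma~\ref{VectTrivialValRing} spreads to a clopen neighborhood), and concludes from there. You instead form $\calF_A = f_{A,*}\calE_A$ directly, check the adjunction $\phi\colon f_A^*\calF_A \to \calE_A$ is a pointwise isomorphism by the stalk-wise reduction to the valuation rings $A_s$, and then use $\phi$ as a descent datum for the $v$-cover $f_A$ to produce a vector bundle $\calG$ on $\Spec A$ that is then identified with $\calF_A$. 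Your route avoids the spreading argument, at the cost of an extra round of $v$-descent along $f_A$ and a projection-formula argument; both are acceptable trades.

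Two imprecisions, neither of them fatal, deserve a comment. First, the opening claim that $\calF := f_*\calE$ satisfies $\calF|_{\Spec A} = f_{A,*}\calE_A$ uses Lemma~\ref{BaseChange}, but that lemma concerns $Rf_*$, not $f_*$, and the map $g\colon \Spec A \to Y$ is not flat; concentration of $Rf_*\calE$ in degree $0$ is part of what is being proved, so one cannot invoke it at the start. The cleaner way to set up the reduction is the paper's: $v$-descent of vector bundles together with full faithfulness of pullback (Proposition~\ref{PullbackFullyFaithful}) makes the statement ``$\calE$ descends'' itself $v$-local, and then one works entirely over $\Spec A$. Your last paragraph effectively does this anyway, so the proof can be reorganized without new ideas. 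Second, the assertion ``$f_{A,*}f_A^* \simeq \id$ on $\Qcoh(\Spec A)$'' is too strong: the projection formula yields $Rf_{A,*}Lf_A^*M \simeq M$, and the passage to the underived $f_A^*$ requires $\calO_{X_A}$ to be $A$-flat, which is false in general. The identity $f_{A,*}f_A^*\calG \simeq \calG$ does hold for $\calG$ a vector bundle (since then $Lf_A^*\calG = f_A^*\calG$), which is all you use when identifying $\calG$ with $\calF_A$; just phrase the full faithfulness claim for vector bundles rather than all of $\Qcoh$.
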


\begin{proof} Clearly, if $\calE\in \gVect(X)$ comes via pullback from $Y$, then it it is trivial on geometric fibres.

For the converse, by $v$-descent for vector bundles and Proposition \ref{PullbackFullyFaithful}, it is enough to prove the result after pullback along some $v$-cover $Y^\prime\to Y$. By Lemma \ref{wLoc}, we may assume that $Y$ is affine, and each connected component is the spectrum of a valuation ring. If $\calE$ is trivial over one connected component, this trivialization spreads to a small open and closed neighborhood. Thus, it is enough to prove that $\calE$ is trivial over each connected component of $Y$, which reduces us to the case that $Y = \Spec(V)$ is the spectrum of a valuation ring. In that case, the result follows from Lemma \ref{VectTrivialValRing}.
\end{proof}

In fact, the condition $Rf_\ast \calO_X = \calO_Y$ can also be checked on geometric fibres:

\begin{lemma}\label{CriterionO} Let $f: X\to Y$ be a proper surjective perfectly finitely presented map in $\Perf$ such that for all geometric points $\bar{y}$ of $Y$ with field of definition $k(\bar{y})$ and fibre $X_{\bar{y}}$, one has $R\Gamma(X_{\bar{y}},\calO)=k(\bar{y})$. Then $Rf_\ast \calO_X = \calO_Y$.
\end{lemma}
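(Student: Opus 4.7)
The plan is to show that the cone $C:=\cofib(\calO_Y\to Rf_\ast\calO_X)$ vanishes. First, the hypothesis $R\Gamma(X_{\bar y},\calO)=k(\bar y)$ forces the geometric fibres of $f$ to be connected, so the Stein-factorization argument in the proof of Proposition~\ref{PullbackFullyFaithful} applies to a finitely presented model $f_0:X_0\to Y$ (obtained via Proposition~\ref{FPModelsExist}) and, together with Lemma~\ref{UnivHomeomIsomPerf}, yields $f_\ast\calO_X=\calO_Y$. Hence $C$ is concentrated in degrees $\geq 1$ with $H^i(C)=R^if_\ast\calO_X$, and the remaining task is to show $C=0$.

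Next, by Lemma~\ref{BaseChange} applied to the Cartesian square defining $X_{\bar y}$, we have $Rf_\ast\calO_X\otimes^{\mathrm L}_{\calO_Y}k(\bar y)\simeq R\Gamma(X_{\bar y},\calO)\simeq k(\bar y)$, and hence $C\otimes^{\mathrm L}_{\calO_Y}k(\bar y)=0$ for every geometric point $\bar y$ of $Y$. To deduce $C=0$, we reduce to the case where $Y$ is the spectrum of a perfect valuation ring. By Theorem~\ref{thm:VectWhStack}, quasi-coherent sheaves form a $v$-stack, so vanishing of $C$ can be checked $v$-locally; Lemma~\ref{wLoc} supplies a $v$-cover $\Spec A\to Y$ all of whose stalks are perfect valuation rings, and vanishing of a quasi-coherent complex on $\Spec A$ can be tested stalk-wise. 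We may therefore assume $Y=\Spec V$ for a perfect valuation ring $V$. Inductively applying Lemma~\ref{BreakValRing} brings us to $V$ of rank $1$, and further $v$-covers (completion followed by passage to an extension with algebraically closed fraction field) let us assume that $V$ is complete, perfect, of rank $1$, with algebraically closed fraction field $K$; a henselian lifting argument then forces the residue field $k$ to be algebraically closed as well.

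In this situation $\Spec V$ has exactly two points and both yield geometric points, so the hypothesis translates to $C\otimes^{\mathrm L}_V K=0$ and $C\otimes^{\mathrm L}_V k=0$. The critical remaining step is to upgrade the special-fibre vanishing to $C\otimes^{\mathrm L}_V V/g=0$ for some $g\in\mathfrak m$; granting this, the sublemma recorded in the proof of Lemma~\ref{VectTrivialValRing} (a bounded complex over $V$ that is acyclic after $\otimes K$ and after $\otimes^{\mathrm L}V/g$ must itself be acyclic) concludes the argument. To effect the upgrade, one uses a classical finitely presented model $f_0:X_0\to \Spec V$ of $f$: after noetherian approximation of $V$ by a finite-type subring carrying a model of $f_0$, classical cohomology-and-base-change applies, and the Frobenius-colimit nature of perfection translates the perfect-world vanishing at $k$ into a classical vanishing sufficient to produce a suitable $g$. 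The main obstacle will be executing this last step precisely, as one must keep track of the interaction between perfection, noetherian approximation of the base, and the non-Frobenius-invariant nature of classical cohomology-and-base-change statements.
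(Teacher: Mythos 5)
Your reductions track the paper's quite closely: checking the assertion $v$-locally, invoking Lemma~\ref{wLoc} to get to stalks, reducing to a rank-$1$ valuation ring via Lemma~\ref{BreakValRing}, and then making $V$ complete with algebraically closed fraction field. The base-change observation via Lemma~\ref{BaseChange} is also correct. Where your proposal diverges — and where the genuine gap lies — is in the final step, which you yourself flag as ``the main obstacle.''

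Your plan is to apply the sublemma from the proof of Lemma~\ref{VectTrivialValRing} (a $V$-complex that is acyclic after $\otimes_V K$ and after $\otimes^{\mathrm L}_V V/g$ vanishes), so you need to upgrade $C\otimes^{\mathrm L}_V k=0$ to $C\otimes^{\mathrm L}_V V/g=0$ for some $g\in\fram$. The trouble is that $C$ itself is the Frobenius colimit $\varinjlim_\varphi K_0$ of the cone $K_0$ for a finitely presented model $f_0:X_0\to\Spec V$, and $C$ does \emph{not} have finitely presented cohomology over $V$. The Nakayama-type spreading that works for a vector bundle in Lemma~\ref{VectTrivialValRing} (where finite presentation of $\calE$ immediately gives triviality over some $V/g$) does not apply to $C$; and the hypothesis gives you vanishing of $C\otimes^{\mathrm L}_V k$, not of $K_0\otimes^{\mathrm L}_V k$, so you cannot simply quote cohomology-and-base-change for the noetherian model and be done. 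Your phrase ``the Frobenius-colimit nature of perfection translates the perfect-world vanishing at $k$ into a classical vanishing sufficient to produce a suitable $g$'' names the problem but does not solve it.

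The paper never produces the complex-level $g$ you are after and does not invoke the sublemma. Instead it argues module-by-module: for each $i$, the cohomology group $M=H^i(K_0)$ is a finitely presented $V$-module carrying a Frobenius-linear endomorphism $\varphi$, and $N:=\varinjlim_\varphi M = H^i(C)$ is to be shown zero. The generic-fibre vanishing gives $\varphi^m(M)\subset M_\tors$ for some $m$, the special-fibre vanishing then gives $\varphi^m(M)\subset\fram M_\tors$, and — this is where finite presentation enters crucially — since $M_\tors$ is a finitely presented torsion module over a valuation ring, $\fram M_\tors\subset g M_\tors$ for a single $g\in\fram$, after which $g^n M_\tors=0$ and hence $\varphi^{mn}(M)=0$, forcing $N=0$. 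This Frobenius-semilinear bootstrap on finitely presented modules is the substance of the lemma; it is not a consequence of routine cohomology-and-base-change, and your proposal would need to reproduce it (or something equivalent) before the sublemma can be applied. A minor additional point: you separately establish $f_\ast\calO_X=\calO_Y$ via the Stein-factorisation argument from Proposition~\ref{PullbackFullyFaithful} before handling higher cohomology; this is fine but unnecessary, since the paper's argument treats $H^0$ uniformly with $H^{>0}$ through the cone.
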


We remark that if $f: X\to Y$ arises as the perfection of a proper map $f_0: X_0\to Y_0$ of schemes of finite type over a perfect field $k$, and $X_0$ admits a Frobenius splitting (cf. \cite{MehtaRamanathan}), the conclusion of the lemma implies that $R^i f_{0\ast} \calO_{X_0} = 0$ for $i>0$. Namely, if $X_0$ admits a Frobenius splitting, then $\calO_{X_0}$ is a direct summand of $\calO_X$, and the same is true for its higher direct images. It may be interesting to compare our results on perfect schemes with related results on Frobenius split schemes.

\begin{proof} It is enough to check the assertion locally in the $v$-topology. By Lemma \ref{wLoc}, we may assume that $Y=\Spec(A)$ is the spectrum of a w-local ring all of whose local rings are valuation rings. Then it is enough to check the assertion on stalks, which reduces us to the case that $Y=\Spec(V)$ is the spectrum of a valuation ring. By approximation, we can assume that the valuation on $V$ is of finite rank. Applying Lemma \ref{BreakValRing} inductively, we can assume that $V$ is a rank-$1$-valuation ring, which we can also assume to be complete and algebraically closed. Finally, pick a model $f_0: X_0\to Y=\Spec(V)$ of $f$ as in Proposition \ref{FPModelsExist}.

Consider the complex $K\in D^b(V)$ given as a cone of $V\to R\Gamma(X_0,\calO_{X_0})$, which comes equipped with a Frobenius-linear map $\varphi: K\to K$. Let $M=H^i(K)$ be a cohomology group of $K$. This is a finitely presented $V$-module which comes equipped with a Frobenius-linear map $\varphi: M\to M$. Moreover, by Lemma~\ref{BaseChange} and our hypothesis, $N=\varinjlim_\varphi M$ satisfies $N\otimes_V k(s)=N\otimes_V k(\eta)=0$, where $s,\eta\in \Spec(V)$ are the special and generic point.\footnote{If $i>0$, then $N=H^i(X,\calO_X)$, and there is a short exact sequence $0\to H^i(X,\calO_X)\otimes_V k(s)\to H^i(X_s,\calO_{X_s})\to \Tor_1^V(H^{i+1}(X,\calO_X),k(s))\to 0$, where the middle term is zero; thus, $N\otimes_V k(s)=0$. A similar argument works for $i=0$, where $N=H^0(X,\calO_X)/V$.} The statement for the generic fibre implies that there is some $m$ such that $\varphi^m(M)\subset M_\tors$ is contained in the torsion submodule of $M$. Then the statement for the special fibre implies that there is some $m$ such that $\varphi^m(M)\subset \fram M_\tors$, where $\fram\subset V$ is the maximal ideal. By finite presentation, this implies that there is some $g\in \fram$ with $\varphi^m(M)\subset gM_\tors$. But $M_\tors$ is finitely presented, so there is some $n$ such that $g^n M_\tors = 0$. Then $\varphi^{mn}(M)\subset g^n M_\tors = 0$. This implies that $N=\varinjlim_\varphi M=0$. Thus, $\varinjlim_\varphi K$ is quasi-isomorphic to $0$, which implies that $R\Gamma(X,\calO_X) = V$, as desired.
\end{proof}

In particular, checking isomorphisms can be done pointwise.

\begin{corollary}\label{PointwiseIsom} Let $f: X\to Y$ be a proper perfectly finitely presented map of perfect $\F_p$-schemes such that for all geometric points $\bar{y}$ of $Y$, the fibre $X_{\bar{y}}$ is isomorphic to $\Spec(k(\bar{y}))$. Then $f$ is an isomorphism.
\end{corollary}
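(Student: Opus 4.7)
The plan is to reduce the claim to Lemma~\ref{UnivHomeomIsomPerf} by showing that $f$ is a universal homeomorphism of perfect schemes. The fibral hypothesis gives a surprising amount for free. Surjectivity of $f$ is immediate, since each geometric fibre $X_{\bar y}\simeq\Spec k(\bar y)$ is nonempty. For radicialness (universal injectivity), note that for any point $y\in Y$ with geometric point $\bar y\to y$, the fibre $X_y$ is a $k(y)$-scheme whose base change along $k(y)\to k(\bar y)$ is $\Spec k(\bar y)$; writing $X_y=\Spec A$, this forces $\dim_{k(y)}A=\dim_{k(\bar y)}(A\otimes_{k(y)}k(\bar y))=1$, hence $X_y=\Spec k(y)$. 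Thus $f$ is bijective on points with trivial residue field extensions, and in particular radicial.

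Since $f$ is proper by hypothesis, it is universally closed, so combined with the universal bijectivity above it is a universal homeomorphism. To make this precise in the perfectly finitely presented setting, I would pick a finitely presented model $f_0:X_0\to Y$ of $f$ provided by Proposition~\ref{FPModelsExist}. The underlying topological spaces of $X_0$ and $X$ coincide, as do those of their fibres, so surjectivity, radicialness, and universal closedness all transfer between $f$ and $f_0$; applying the classical implication ``proper $+$ surjective $+$ radicial $=$ universal homeomorphism'' to the scheme map $f_0$, and then using Lemma~\ref{SchVsPerfProp}(vii) to pass to perfections, shows that $f$ itself is a universal homeomorphism.

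At this point Lemma~\ref{UnivHomeomIsomPerf} applies and gives that $f$ is an isomorphism, concluding the argument. I do not anticipate any substantive obstacle: the fibral hypothesis is strong enough that one need not invoke the heavier descent machinery (Lemma~\ref{CriterionO} or Theorem~\ref{ThmVectTrivial}), and everything reduces to standard topological properties of proper morphisms combined with the perfectness input of Lemma~\ref{UnivHomeomIsomPerf}. The only mildly delicate point is the approximation step, but this is taken care of cleanly by Proposition~\ref{FPModelsExist} and Lemma~\ref{SchVsPerfProp}.
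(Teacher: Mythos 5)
Your proof is correct and coincides with the alternative argument given in the paper and attributed to the referee: one establishes that the finite type model $f_0$ is a universal homeomorphism (surjectivity $+$ radicialness from the fibral hypothesis, integrality from properness plus quasi-finiteness), and then concludes via Lemma~\ref{UnivHomeomIsomPerf}. The paper's first-presented proof instead observes that $f_0$ is quasi-finite hence finite, so $f$ is affine and determined by $f_\ast\calO_X$, and invokes Lemma~\ref{CriterionO} to get $f_\ast\calO_X=\calO_Y$; as you note, the universal-homeomorphism route is more elementary, avoiding the $v$-descent machinery that underlies Lemma~\ref{CriterionO}.
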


\begin{proof} Picking any finite type model $f_0$ of $f$, the assumption implies that $f_0$ is quasifinite, thus finite, and in particular affine; therefore, $f$ itself is affine, so $f$ is determined by $f_\ast \calO_X$. But Lemma \ref{CriterionO} implies $f_\ast \calO_X = \calO_Y$, so that $f$ is an isomorphism.

Alternately, one may argue as follows, as suggested by the referee. As above, we first show that $f$ is the perfection of a finite morphism $f_0$. The fibral assumption shows that $f_0(k)$ is bijective for any algebraically closed field $k$, and hence $f_0$ is universally injective. It is also clear that $f_0$ is universally surjective. As $f_0$ is finite, it follows that $f_0$ is a universal homeomorphism. One then concludes using Lemma~\ref{UnivHomeomIsomPerf}.
\end{proof}

In our application, the geometric fibres will be of the following form.

\begin{lemma}\label{TrivialLineBundle} Let $k$ be a perfect field, and let $Q$ be a finite length $W(k)$-module. Let $X$ be a perfect $k$-scheme which comes equipped with a filtration $\Fil^i \calQ_X\subset \calQ_X=Q\otimes_{W(k)} W(\calO_X)$ whose associated gradeds $\gr^i \calQ_X = \Fil^i \calQ_X / \Fil^{i+1} \calQ_X$ are finite projective $\calO_X$-modules. Then the line bundle
\[
\calL = \bigotimes_i \det_{\calO_X} \gr^i \calQ_X
\]
on $X$ is trivial.
\end{lemma}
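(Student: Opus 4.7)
The plan is to interpret $\calL$ as the line bundle underlying the $K$-theoretic determinant $\widetilde{\det}([\calQ_X])$ from Theorem \ref{thm:DetGeneral}, and then invoke naturality with respect to the structure map $X\to\Spec k$ to reduce triviality to the vanishing $\Pic(\Spec k)=0$.

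First, I will verify that $\calQ_X$ defines a class in $K(W(X)\on X)$. The decomposition $Q\simeq\bigoplus_j W(k)/p^{n_j}$ gives $\calQ_X\simeq\bigoplus_j W(\calO_X)/p^{n_j}$, which is visibly perfect as a $W(\calO_X)$-module and killed by $p^N$ for $N=\max_j n_j$, hence acyclic after inverting $p$. The given filtration then yields the additivity identity
\[ [\calQ_X]\;=\;\sum_i [\gr^i\calQ_X] \]
in $\pi_0 K(W(X)\on X)$, where each $\gr^i \calQ_X$ is viewed in $K(W(X)\on X)$ via the natural map $K(X)\to K(W(X)\on X)$ (it is a finite projective $\calO_X$-module, hence perfect on $W(\calO_X)$ with support on $X$). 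Applying $\widetilde{\det}$, and using that it extends $\det:K(X)\to\gPic^\Z(X)$ by Theorem \ref{thm:DetGeneral}, together with the forgetful functor $\eta:\gPic^\Z(X)\to\gPic(X)$ of Construction \ref{cons:GradedPic}, identifies $\calL=\bigotimes_i\det_{\calO_X}\gr^i\calQ_X$ with $\eta\widetilde{\det}([\calQ_X])$.

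Next, I will use naturality. The structure map $f:X\to\Spec k$ in $\Perf$ induces $f^*:K(W(\Spec k)\on\Spec k)\to K(W(X)\on X)$, and the identification $\calQ_X=Q\otimes_{W(k)}W(\calO_X)$ exhibits $[\calQ_X]$ as $f^*[Q]$. Naturality of $\widetilde{\det}$ then gives $\widetilde{\det}([\calQ_X])\simeq f^*\widetilde{\det}([Q])$ in $\gPic^\Z(X)$, so $\calL\simeq f^*\eta\widetilde{\det}([Q])$. Since $k$ is a field, $\Pic(\Spec k)=0$, and hence $\eta\widetilde{\det}([Q])\cong \calO_{\Spec k}$; pulling back along $f$ gives $\calL\cong\calO_X$, as desired.

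The substantive input is Theorem \ref{thm:DetGeneral}; given it, the argument is a formal chase. A more direct alternative would be to compare with the $p$-adic filtration $p^i\calQ_X$, whose gradeds $p^iW(\calO_X)/p^{i+1}W(\calO_X)$ are free $\calO_X$-modules, making the associated $\calL$ manifestly trivial, and then to prove filtration-independence of the construction. The trouble is that canonicalizing such a comparison sufficiently to glue over a non-local base $X$ seems to require essentially the same ingredients as the construction of $\widetilde{\det}$ itself.
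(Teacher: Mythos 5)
Your proof is correct, and it is precisely the $K$-theoretic one-liner the paper records immediately before its formal proof: ``Using Theorem \ref{thm:DetGeneral}, this follows from the identity $\calL = \widetilde{\det}(\calQ_X) = \widetilde{\det}(Q)\otimes_k \calO_X$.'' The paper then opts to give ``an independent, more geometric, proof,'' so the argument it actually spells out differs from yours. You have filled in what the paper left implicit: verifying that $\calQ_X$ and each $\gr^i\calQ_X$ lie in $\Perf(W(X)\on X)$ (for the latter via the factorization through $K(X)$, which lands in $\Perf(W(X)\on X)$ because $\calO_X$ has projective dimension $1$ over $W(\calO_X)$), using additivity of $\widetilde{\det}$ in $\pi_0$ together with the fact that $\widetilde{\det}$ extends $\det$ to identify $\calL = \eta\widetilde{\det}([\calQ_X])$, and finally using naturality and $\Pic(\Spec k)=0$. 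Each step is sound; in particular the filtration is finite because $\calQ_X$ is $p$-power torsion, so the $K_0$-additivity is a finite sum of genuine classes in $\Perf(W(X)\on X)$.

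The paper's geometric proof works differently in mechanism: it reduces to the universal moduli problem, builds a tower $X_n\to\cdots\to X_0=\Spec k$ of perfect Quot-bundles with $Rf_{i\ast}\calO_{X_i}=\calO_{X_{i-1}}$ via Lemma \ref{CriterionO}, and descends $\calL$ down the tower step by step using the fibral descent criterion of Theorem \ref{ThmVectTrivial}; the base case is settled by comparing $\calL$ with the $p$-adic filtration on the locus $Y$ of refinements, where it is manifestly trivial, and then applying Proposition \ref{PullbackFullyFaithful}. That route avoids invoking Theorem \ref{thm:DetGeneral}, whose proof itself needs de Jong's alterations and $v$-descent for $\gPic^\Z$, at the cost of more intricate geometry. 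Your closing remark correctly anticipates the shape of the paper's argument — filtration-independence is indeed achieved by comparing with the $p$-adic filtration after passing to the universal case — and the ``canonicalization'' difficulty you flag is exactly what the universal moduli space plus Theorem \ref{ThmVectTrivial} resolve in place of $\widetilde{\det}$.
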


Using Theorem \ref{thm:DetGeneral}, this follows from the identity
\[
\calL = \widetilde{\det}(\calQ_X) = \widetilde{\det}(Q)\otimes_k \calO_X\ .
\]
Here, we give an independent, more geometric, proof.

\begin{proof} We may reduce to the universal case where $X$ parametrizes filtrations of $\calQ_X$ with $\gr^i \calQ_X$ finite projective of given rank. Refining further (which gives rise to a map with geometrically connected fibres), we may assume that $X$ parametrizes such filtrations with each $\gr^i \calQ_X$ a line bundle.

Let $n$ be the length of $Q$. Let $X_0=\Spec(k)$, $\calQ_0=Q$ regarded as a sheaf on $X_0$, and inductively let $f_i: X_i\to X_{i-1}$ parametrize locally free quotients $\calG_i$ of rank $1$ of $\calQ_{i-1}/p$, and set $\calQ_i = \ker(f_i^\ast \calQ_{i-1}\to \calG_i)$ over $X_i$. Then $f_i$ is a proper surjective perfectly finitely presented map, and $X=X_n$. Moreover, each geometric fibre of $f_i$ is the perfection of a projective space. Applying Lemma \ref{CriterionO}, this shows that $Rf_{i\ast} \calO_{X_i} = \calO_{X_{i-1}}$.

We claim by descending induction on $i=n,n-1,\ldots,0$ that $\calL$ descends (necessarily uniquely) to a line bundle $\calL_i$ on $X_i$. For $i=n$, this is a tautology, so assume $\calL$ descends to $\calL_{i+1}$ over $X_{i+1}$. By Theorem \ref{ThmVectTrivial}, to check whether $\calL_{i+1}$ descends to $X_i$, it is enough to check on all geometric fibres, so let $\bar{x}\in X_i$ be a geometric point, giving a finite length $W(k(\bar{x}))$-module $Q_i$, equipped with a fixed filtration into $1$-dimensional $k(\bar{x})$-modules. Replacing $Q$ by $Q_i$, we can assume that $i=0$.

Now $\calL_1$ is a line bundle over $X_1$, which is the perfection of a projective space $\P^k$. Let $Y\subset X$ parametrize those filtrations which are refinements of the $p$-adic filtration of $Q$. Then $\calL$ restricted to $Y$ agrees with
\[
\left(\bigotimes_i \det_k (p^i Q/p^{i+1}Q)\right)\otimes_k \calO_Y\ ,
\]
and is therefore trivial. On the other hand, the map $Y\to X_1$ is the perfection of a proper surjective map with geometrically connected fibres: It is a successive (perfection of a) flag variety bundle. But $\calL_1$ becomes trivial over $Y$, and is thus trivial by Proposition \ref{PullbackFullyFaithful}.
\end{proof}

\begin{remark}\label{RemVectTrivialnew} In the period in which this paper was being refereed, we discovered that the fibral conditions in Theorem~\ref{ThmVectTrivial} can be weakened substantially: one only needs geometric connectedness for the fibers. Since this result is likely more applicable, we record it here; it is not used elsewhere in the paper. The proof involves reduction to very big nonarchimedean fields. One may wonder whether the theorem admits a more ``classical'' proof.

\begin{theorem} Let $f: X\to Y$ be a proper surjective perfectly finitely presented map in $\Perf$ such that all geometric fibres of $f$ are connected. Let $\calE\in \gVect(X)$. Then $\calE$ descends to $Y$ if and only if for all geometric points $\bar{y}$ of $Y$, $\calE$ is trivial on the fibre $X_{\bar{y}}$.
\end{theorem}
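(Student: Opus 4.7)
The plan is to revisit the proof of Theorem~\ref{ThmVectTrivial}, adapting its key technical input, Lemma~\ref{VectTrivialValRing}, to work under the weaker hypothesis of merely geometrically connected fibres. As in Theorem~\ref{ThmVectTrivial}, I would first reduce to the case $Y = \Spec V$ for $V$ a perfect valuation ring by applying $v$-descent for vector bundles (Theorem~\ref{thm:VectWhStack}) and full faithfulness of pullback (Proposition~\ref{PullbackFullyFaithful}) along with the w-local decomposition of Lemma~\ref{wLoc}. Iterated application of Lemma~\ref{BreakValRing} reduces $V$ to rank $1$, and then completion together with passing to the algebraic closure of the fraction field (both refined by $v$-covers) leaves $V$ a perfect complete rank-$1$ valuation ring with algebraically closed fraction field $K$.

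The heart of the matter is then a strengthened version of Lemma~\ref{VectTrivialValRing}: for $f:X\to \Spec V$ proper perfectly finitely presented with geometrically connected fibres, and $\calE$ a vector bundle on $X$ trivial on every geometric fibre, show $\calE$ is trivial. A preliminary observation is a Stein-style factorization $X \to X' \to \Spec V$ with $X' := \Spec_Y((f_*\calO_X)^{\perf})$: because the fibres of $f$ are geometrically connected and $K$ is algebraically closed, $X'\to\Spec V$ is universally bijective and integral, hence a universal homeomorphism, hence an isomorphism of perfect schemes by Lemma~\ref{UnivHomeomIsomPerf}. Thus one can at least arrange $f_*\calO_X = \calO_Y$, but \emph{not} the full derived identity that the argument of Lemma~\ref{VectTrivialValRing} used.

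To bridge the remaining gap, the plan is to pass to a yet larger $v$-cover $\Spec W \to \Spec V$ where $W$ is a very large perfect complete algebraically closed valuation ring --- concretely, a spherically complete valuation ring of very large transcendence degree over $V$. Over such a $W$, the proper map $f_W: X_W \to \Spec W$ should admit a rich supply of $W$-valued sections spreading out each geometric point of its generic fibre. Using these sections together with the fibrewise trivializations of $\calE_W$, one aims to produce a global trivialization of $\calE_W$ directly: evaluate $\calE_W$ at enough $W$-sections realizing the generic-fibre trivialization, and invoke the spherical completeness of $W$ to extend across the special fibre by a valuative-style argument, exploiting geometric connectedness of fibres to ensure that the various fibre trivializations can be matched. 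Then $v$-descent produces the trivialization of $\calE$ over $\Spec V$.

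The main obstacle is this last step: producing a global trivialization from fibrewise ones without cohomological vanishing of $\calO$. The original proof of Lemma~\ref{VectTrivialValRing} crucially used $R\Gamma(X_K,\calO) = K$ and $R\Gamma(X_{V/g},\calO) = V/g$, which may fail badly under the weaker hypothesis --- for instance when the geometric fibres have nontrivial $H^1(\calO)$ --- so the section map $s^*:R\Gamma(X,\calE)\to V^r$ need no longer be a quasi-isomorphism. Circumventing this without any control on higher cohomology seems to require the nonarchimedean-geometric input of spherically complete valuation rings, which is presumably what the authors' remark (that the theorem lacks an obvious classical proof) is alluding to; the correct construction of $W$ and the compatible system of $W$-sections is the technically most delicate point in this plan.
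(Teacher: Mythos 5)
Your high-level plan --- reducing to a rank-$1$ valuation ring $V$ and then invoking spherical completeness of the fraction field to compensate for the loss of $R\Gamma(X,\calO_X)=V$ --- is on the right track, and you are correct that the remark about the proof not being ``classical'' is pointing at this nonarchimedean input. But the mechanism you propose for exploiting spherical completeness (produce many $W$-valued sections and match the fibrewise trivializations by a valuative argument) is not what works, and as you yourself flag, this is exactly where the plan has a gap: fibrewise trivializations carry a large $\GL_r$-ambiguity, and geometric connectedness alone gives no way to rigidify or glue them. The Stein-type observation $f_\ast\calO_X=\calO_Y$ is also not strong enough to help, as you note.

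What the paper actually does, after reducing to $V$ of rank $1$, is a sequence of structural reductions followed by a purely module-theoretic lemma, with no sections appearing at all. First, one reduces to the case that $f$ is \emph{flat}: take a finite-type model $f_0:X_0\to\Spec V$, let $X_0'$ be the schematic closure of the generic fibre; this is proper flat with geometrically connected fibres, and $X_0'\hookrightarrow X_0$ is an isomorphism away from the special fibre. By $v$-descent (Theorem~\ref{thm:VectWhStack}) it suffices to trivialize $\calE$ on $X'=(X_0')_\perf$ and on the special fibre $X_s$ separately (triviality on $X_s$ is a hypothesis). Second, assuming $K$ algebraically closed, one normalizes a flat reduced model in its generic fibre; by \cite[\S 6.4.1, Corollary 5]{BoschGuentzerRemmert} the normalization is still finitely presented, and the same devissage reduces to the case that $\calO_X$ is integrally closed in $\calO_{X_\eta}$. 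The payoff of these two reductions is a precise handle on $M:=H^0(X,\calE)\subset K^r$: it is bounded (since $\calE$ embeds in $\calO_X^r$), and it satisfies $M=\Hom_V(\fram,M)$; the latter uses that $\calO_{X_0}$ is, locally, a free (infinite-rank) $V$-module via Raynaud--Gruson \cite[Corollaire 3.3.13]{RaynaudGruson}. Only now does spherical completeness enter: after a further $v$-cover one may assume $K$ is spherically complete with value group $\mathbf{R}_{\geq 0}$, and the paper proves that for such $V$, any \emph{bounded} submodule $M\subset K^r$ with $M=\Hom_V(\fram,M)$ is finite free. The proof is by induction on $r$, the rank-$1$ case being elementary and the inductive step using precisely the vanishing $\Ext^1_V(\fram,V)=0$ that spherical completeness provides. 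Finally, with $M$ known to be finite free, the short exact sequence
\[
0\to M\otimes_V k(s)\to H^0(X_s,\calE_s)\to \Tor_1^V(H^1(X,\calE),k(s))\to 0
\]
together with equality of ranks (from triviality of $\calE_s$) shows $f^\ast M\to\calE$ is an isomorphism on both fibres, hence an isomorphism. None of this resembles a section-matching or valuative-extension argument; the key missing ideas in your plan are the flattening and integral-closure reductions, the identification of the $M=\Hom_V(\fram,M)$ condition as the usable substitute for cohomological vanishing, and the resulting module-theoretic lemma.
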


\begin{proof} As in the proof of Theorem~\ref{ThmVectTrivial}, one reduces to the case that $Y=\Spec(V)$ is the spectrum of a valuation ring, and then, as in the first paragraph of the proof of Lemma~\ref{VectTrivialValRing}, we reduce to $V$ having rank $1$. Let $K$ be the fraction field of $V$, and $\frak m\subset V$ the maximal ideal, and let $\eta$ and $s$ be the generic and special points of $\Spec(V)$ respectively. We begin by explaining a reduction to the case where $f$ is flat.  Choose a model $f_0:X_0 \to \Spec(V)$ of $f_0$, so $f_0$ is a finitely presented proper map with perfection $f$. Let $X_0^\prime \subset X_0$ be the closure of the generic fiber $X_{0,\eta} \subset X_0$. The induced map $g_0:X_0^\prime \to \Spec(V)$ is proper and flat with geometrically connected fibers: $g_{0,\ast} \calO_{X_0^\prime}$ is a torsionfree finite $V$-algebra of rank $1$, and must thus coincide with $V$. The closed immersion $X_0^\prime \to X_0$ is an isomorphism away from the special fibre, so $X$ has a $v$-cover by $X^\prime$ and $X_s$ with overlap $X^\prime_s$; here we drop the subscript `0' to denote passage to the perfection. By Theorem~\ref{thm:VectWhStack}, vector bundles on $X$ are the same as vector bundles on $X^\prime$  and $X_s$ together with an isomorphism over $X^\prime_s$. As the global functions on $X^\prime_s$ are constant by geometric connectedness, it suffices to show triviality of $\calE$ over $X^\prime$ and $X_s$ separately. Since we are assuming triviality over $X_s$, we may thus assume $X = X^\prime$ is flat over $V$.

We are now in the following setup: $V$ is a valuation ring of rank $1$, $f:X \to \Spec(V)$ is a proper perfectly finitely presented flat map with geometrically connected fibers, and $\calE \in \Vect(X)$ is trivial on both fibers $X_\eta$ and $X_s$. We will need to arrange a few extra properties. First, we want that $\calO_X$ is integrally closed in $\calO_{X_\eta}$. For this, we make the extra assumption that the fraction field $K$ of $V$ is algebraically closed, which amounts to a further $v$-cover on the base. Let $f_0: X_0\to \Spec(V)$ be a proper flat and reduced model of $f$. In this situation, by \cite[\S 6.4.1, Corollary 5]{BoschGuentzerRemmert}, the normalization $X_0^\prime$ of $X_0$ in its generic fibre is of finite presentation over $\Spec(V)$. Arguing as in the first paragraph, we can then reduce to the case where $X=X^\prime$ has the property that $\calO_X$ is integrally closed in $\calO_{X_\eta}$.

Let $M=H^0(X,\calE)\subset H^0(X_\eta,\calO_{X_\eta})\cong K^r$. This is bounded (as one can find an injection $\calE\hookrightarrow \calO_X^r$ into a trivial vector bundle, as $\calE_\eta$ is trivial). Moreover, we claim that $M=\Hom_V(\frak m,M)$, where $\frak m\subset V$ is the maximal ideal. This follows from the statement $\calE = \Hom_V(\frak m,\calE)$ on the level of sheaves, which can be reduced to $\calO_X=\Hom_V(\frak m,\calO_X)$. To check this, let $f_0: X_0\to \Spec(V)$ be a proper flat reduced model of $f$ such that $\calO_{X_0}$ is integrally closed in $\calO_{X_{0,\eta}}$. Then $\Hom_V(\frak m,\calO_X) = \varinjlim_\varphi \Hom_V(\frak m,\calO_{X_0})$ as $\calO_{X_0} = \calO_{X_{0,\eta}}\cap \calO_X$ by our assumption that $\calO_{X_0}$ is integrally closed in $\calO_{X_{0,\eta}}$. But $\calO_{X_0}$ is locally a free $V$-module (of infinite rank), by~\cite[Corollaire 3.3.13]{RaynaudGruson}, so $\Hom_V(\frak m,\calO_{X_0}) = \calO_{X_0}$ by reduction to $\Hom_V(\frak m,V)=V$. In conclusion,
\[
\Hom_V(\frak m,\calO_X) = \varinjlim_\varphi \Hom_V(\frak m,\calO_{X_0}) = \varinjlim_\varphi \calO_{X_0} = \calO_X\ ,
\]
as desired.

In fact, as we are allowed to make further $v$-covers, we can assume in addition that $K$ is spherically complete (i.e., any decreasing sequence of closed balls in $V$ has nonempty intersection; equivalently, $\Ext^1_V(\frak m,V) = 0$) and with value group $\mathbb R_{\geq 0}$. We claim that in this situation $M=H^0(X,\calE)$ is a finite free $V$-module.

\begin{lemma} Let $K$ be a complete nonarchimedean field which is spherically complete, and with value group $\mathbb R_{\geq 0}$. Let $V$ be the valuation ring of $K$, $\frak m\subset V$ the maximal ideal, and let $M\subset K^r$ be a sub-$V$-module such that $M$ is bounded and $M=\Hom_V(\frak m,M)$. Then $M$ is a finite free $V$-module.
\end{lemma}

\begin{proof} If $r=1$, then any sub-$V$-module is of the form $\{x\in K\mid |x|\leq a\}$ or $\{x\in K\mid |x|<a\}$ for some $a\in \mathbb R_{\geq 0}\cup \{\infty\}$. The case $a=\infty$ is not allowed as $M$ is bounded, and the case $\{x\in K\mid |x|<a\}$ is excluded by the condition $M=\Hom_V(\frak m,M)$; the right-hand side evaluates to $\{x\in K\mid |x|\leq a\}$ in this case. As any $a\in \mathbb R_{\geq 0}$ is the absolute value of some $x\in K$ by assumption, it follows that $M$ is free of rank $1$ if $a\neq 0$; otherwise $M=0$.

In general, we induct on $r$, so let $M^\prime = M\cap K^{r-1}$ and $M^{\prime\prime} = M/M^\prime$. Then $M^\prime\subset K^{r-1}$ and $M^{\prime\prime}\subset K$ are bounded $V$-submodules. Moreover, $M^\prime=\Hom_V(\frak m,M^\prime)$ as this holds for $M$; thus, by induction, $M^\prime$ is finite free. This implies, as $K$ is spherically complete, that $\Ext^1_V(\frak m,M^\prime) = 0$, so one finds that also $M^{\prime\prime} = \Hom_V(\frak m,M^{\prime\prime})$. Therefore, $M^{\prime\prime}$ is finite free, and so is the extension $M$ of $M^{\prime\prime}$ by $M^\prime$.
\end{proof}

In particular, this applies to show that $M=H^0(X,\calE)$ is a finite free $V$-module. We have a short exact sequence
\[
0\to M\otimes_V k(s)\to H^0(X_s,\calE_s)\to \Tor_1^V(H^1(X,\calE),k(s))\to 0\ .
\]
But $M\otimes_V k(s)$ and $H^0(X_s,\calE_s)$ are $k(s)$-vector spaces of the same dimension as $\calE_s$ is trivial, so $M\otimes_V k(s)\to H^0(X_s,\calE_s)$ is an isomorphism. This implies that the map of vector bundles $f^\ast M\to \calE$ is an isomorphism, as it is an isomorphism in both fibres. Thus, $\calE$ is trivial, as desired.
\end{proof}

\end{remark}

\section{Families of torsion $W(k)$-modules}
\label{sec:torsionfamilies}

In this section, we collect some results on the behaviour of ``families'' of torsion $W(k)$-modules indexed by perfect schemes; some of the discussion overlaps with \cite{ZhuMixedCharGeomSatake}. The following definition will help with the bookkeeping:

\begin{definition} Consider the set of sequences $\lambda := (\lambda_1,\lambda_2,...,\lambda_n,\ldots)$ of non-negative integers such that $\lambda_j \geq \lambda_{j+1}$ for all $j$, and $\lambda_j=0$ for sufficiently large $j$. For another such sequence $\mu$, we say $\mu \leq \lambda$ if $\lambda - \mu$ (computed term wise) is a non-negative linear combination of $\epsilon_j := (0,\ldots,0,1,-1,0,\ldots)$, where the $1$ entry is in the $j$-th spot. 

If $\lambda$ is any such sequence, we write $\lambda-1$ for the sequence with entries $(\lambda-1)_j$ given by $\lambda_j-1$ in case $\lambda_j\geq 1$, and $0$ otherwise.
\end{definition}

The sequence $\lambda$ represents the isomorphism class of a finite torsion module over $W(k)$.

\begin{definition}
Fix a perfect field $k$. A finitely generated $p$-power torsion $W(k)$-module $Q$ is said to have {\em type} $\lambda$ if $Q \simeq \oplus_j W(k)/p^{\lambda_j}$; in that case, write $\lambda=\lambda(Q)$. Let $R$ be a perfect ring, and let $Q$ be a finitely generated $p$-power torsion $W(R)$-module. Then $Q$ has type $\leq \lambda$ if $\lambda(Q \otimes W(k(x)))\leq \lambda$ for all $x\in \Spec(R)$. If $\lambda(Q \otimes W(k(x)))=\lambda$ for each $x \in \Spec(R)$, then $Q$ has type \emph{exactly} $\lambda$.
\end{definition}

The case where $Q$ has type exactly $\lambda$ is very rigid; in particular, it implies that $Q$ is finitely presented.

\begin{lemma}\label{lem:ConstantTypeProjective}
Let $R$ be a perfect ring, and let $Q$ be a finitely generated $W(R)$-module of $p$-power torsion, of type exactly $\lambda$. Then each $p^i Q / p^{i+1} Q$ is a finite projective $R$-module.
\end{lemma}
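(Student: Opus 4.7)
My plan is to proceed by induction on $|\lambda| := \sum_j \lambda_j$, the trivial base case being $\lambda = 0$ (forcing $Q = 0$). For the inductive step, I would reduce to the following two subclaims:
\begin{enumerate}
\item[(a)] $Q/pQ$ is a finite projective $R$-module of rank $n := \#\{j : \lambda_j > 0\}$;
\item[(b)] $pQ \subset Q$ is a finitely generated $p$-power torsion $W(R)$-module of type exactly $\lambda - 1$.
\end{enumerate}
Granting both, the inductive hypothesis applied to $pQ$ gives that $p^{i+1}Q/p^{i+2}Q = p^{i}(pQ)/p^{i+1}(pQ)$ is finite projective over $R$ for every $i \geq 0$, and (a) handles the remaining case $i = 0$.

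The implication (a) $\Rightarrow$ (b) is a $\mathrm{Tor}$-computation. The two-term resolution $W(R) \xrightarrow{\cdot p} W(R) \twoheadrightarrow R$ (which is exact since $R$ is perfect, so $p$ is a non-zero-divisor on $W(R)$) yields $R \otimes^{\mathrm{L}}_{W(R)} W(k(x)) \simeq k(x)$ for every $x \in \Spec R$. Consequently, $\mathrm{Tor}^{W(R)}_1(Q/pQ, W(k(x))) = 0$ whenever $Q/pQ$ is $R$-flat, since then $(Q/pQ) \otimes^{\mathrm{L}}_{W(R)} W(k(x)) \simeq (Q/pQ) \otimes^{\mathrm{L}}_R k(x)$ is concentrated in degree zero. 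Applying this vanishing to the short exact sequence $0 \to pQ \to Q \to Q/pQ \to 0$ and tensoring with $W(k(x))$ shows that $pQ \otimes_{W(R)} W(k(x)) \hookrightarrow Q_x$ is injective with image $pQ_x$; using the type decomposition $Q_x \simeq \bigoplus_j W(k(x))/p^{\lambda_j}$, this image has type exactly $\lambda - 1$.

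The heart of the argument is therefore (a). The module $Q/pQ$ is certainly finitely generated over $R$, and at every $x \in \Spec R$ the natural surjection $(Q/pQ) \otimes_R k(x) \twoheadrightarrow Q_x/pQ_x \simeq k(x)^n$ must be an isomorphism (the two sides have the same dimension, since a minimal generating set of $Q_x$ as a $W(k(x))$-module has cardinality $n$ by the type assumption). Thus $Q/pQ$ has locally constant fiber rank $n$. To upgrade this to finite projectivity, I would apply $v$-descent for vector bundles (Theorem~\ref{thm:VectWhStack}) after using Lemma~\ref{wLoc} to pass to a $v$-cover $\Spec S \to \Spec R$ where $S$ has valuation-ring stalks; the base-change compatibility of $Q/pQ$ is ensured by Lemma~\ref{NoTorPerf}, which identifies $(Q/pQ) \otimes_R S$ with $Q_S/pQ_S$ for $Q_S := Q \otimes_{W(R)} W(S)$, and the type-exactly property persists under such pullback (using flatness of $W(k(x)) \to W(k(y))$ for an extension of perfect fields). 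This reduces us to $R = V$ a perfect valuation ring.

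Over $V$, I would construct an explicit isomorphism $\bigoplus_j W(V)/p^{\lambda_j} \xrightarrow{\sim} Q$: choose $q_j \in Q$ whose reductions modulo $p$ realize the type decomposition at the closed point of $\Spec V$, use Nakayama (valid since $Q$ is $p^N$-torsion and hence automatically $p$-adically complete) to see that the $q_j$ generate $Q$ over $W(V)$, and verify both that $p^{\lambda_j} q_j = 0$ holds globally in $Q$ (not merely at the closed point) and that the induced surjection has no kernel. The main obstacle I anticipate is precisely this last verification: the constant-type hypothesis must be exploited at both the closed and generic points of $\Spec V$ (and at intermediate points for higher rank, where Lemma~\ref{BreakValRing} may be used to reduce to the rank $1$ case) in order to control the orders of the $q_j$ and rule out spurious torsion, likely combined with the structure theory of finitely presented modules over a valuation ring.
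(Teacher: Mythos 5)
Your decomposition into (a) "$Q/pQ$ is finite projective of rank $n$" and (b) "$pQ$ has type exactly $\lambda - 1$", together with the $\Tor$-computation reducing (b) to (a), is exactly the inductive structure the paper uses (the paper phrases this slightly differently but the underlying argument for (b) is the same: once $Q/p$ is $R$-flat, $pQ\otimes_{W(R)}W(k(x))$ injects into $Q\otimes W(k(x))$ with image $p(Q\otimes W(k(x)))$). The key fiberwise-rank computation in (a) is also correctly identified.

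However, your proposed proof of (a) via $v$-descent has a genuine gap. Knowing that $(Q/pQ)\otimes_R S$ is a vector bundle for some $v$-cover $\Spec(S)\to\Spec(R)$ does not let you conclude that $Q/pQ$ is a vector bundle on $\Spec(R)$: since $v$-covers are not flat, descent for vector bundles (Theorem~\ref{thm:VectWhStack}) only produces \emph{some} vector bundle $\calE$ on $\Spec(R)$ whose pullback is $(Q/pQ)\otimes_R S$ with its canonical descent data, and there is no a~priori reason for $\calE$ to equal $Q/pQ$ — the map $Q/pQ\to(Q/pQ)\otimes_R S$ need not even be injective in the absence of flatness. (Compare Remark~\ref{rmk:NonDescentWarning}, which illustrates that the $h$-/$v$-topology is not well suited for checking membership in a class of quasi-coherent sheaves after pullback.) The paper avoids this entirely by an elementary argument: a finitely generated module over a \emph{reduced} ring with locally constant fiber rank is automatically finite locally free. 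Concretely, after localizing one finds $n$ elements of $Q/p$ generating the fiber freely; the induced map $R^n\to Q/p$ is surjective in a neighborhood by Nakayama, an isomorphism on each fiber (surjection of $n$-dimensional vector spaces), and therefore has kernel contained in $\bigcap_x\mathfrak{p}_x^{\oplus n}=0$ since $R$ is reduced. This is both simpler and logically complete, and no reduction to valuation rings is needed. Your planned valuation-ring argument is in any case more than what is required (you only need $Q/pQ$ free, not a full structure theorem for $Q$), and as you note yourself it is not worked out.
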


\begin{proof} Consider first $i = 0$. For any point $x \in \Spec(R)$, we know $Q/p\otimes k(x) \simeq (Q\otimes W(k(x)))/p$. Thus, $Q/p$ is a finitely generated $R$-module such that $Q/p\otimes k(x)$ has the same rank $n$ for all $x\in \Spec(R)$. As $R$ is reduced, this implies that $Q/p$ is finite projective: for any point $x\in \Spec(R)$, after replacing $R$ by some localization around $x$, there are $f_1,\ldots,f_n\in Q/p$ freely generating $Q/p\otimes k(x)$. The cokernel $C$ of the resulting map $R^n\to Q/p$ is finitely generated with $C\otimes k(x)=0$. By Nakayama, $C$ is trivial in a neighborhood. But in this neighborhood, the surjective map $R^n\to Q/p$ has to be an isomorphism at all points (as it is a surjective map of vector spaces of the same dimension), and thus is injective, as $R$ is reduced. In particular, the kernel $pQ$ of $Q\to Q/p$ is still finitely generated, and of type exactly $\lambda-1$, so the claim for $i>0$ follows by induction.
\end{proof}

\begin{remark}\label{rmk:RankGraded}
In Lemma \ref{lem:ConstantTypeProjective}, one can compute the rank of the projective module $p^iQ/p^{i+1}Q$ explicitly: it is the largest $j$ such that $i < \lambda_j$; we denote this number by $n_\lambda(i)$. More pictorially: if one visualizes $\lambda$ as a ``bar graph'' where each bar is made from blocks and the $j$-th bar has $\lambda_j$ blocks, then $n_\lambda(i)$ is the size of the $i$-th row (starting at $0$).
\end{remark}

Using these notations, let us recall the usual characterization of the majorization inequality $\lambda\geq \mu$.

\begin{lemma}\label{CharMajorization} Let $\lambda=(\lambda_1,\lambda_2,\ldots)$ and $\mu=(\mu_1,\mu_2,\ldots)$ be eventually $0$ decreasing sequences of nonnegative integers. Let $Q_\lambda,Q_\mu$ be torsion $W(k)$-modules with $\lambda(Q_\lambda)=\lambda$ and $\lambda(Q_\mu)=\mu$. Then the following conditions are equivalent.
\begin{enumerate}
\item[{\rm (i)}] One has $\lambda\geq \mu$.
\item[{\rm (ii)}] For all $n\geq 1$, the inequality
\[
\sum_{j=1}^n \lambda_j\geq \sum_{j=1}^n \mu_j
\]
holds, and is an equality for sufficiently large $n$.
\item[{\rm (iii)}] For all $m\geq 0$, the inequality
\[
\sum_{i\geq m} n_\lambda(i)\geq \sum_{i\geq m} n_\mu(i)
\]
holds true (noting that both sums are finite), and is an equality for $m=0$.
\item[{\rm (iv)}] The lengths $\lg(Q_\lambda) = \lg(Q_\mu)$ are equal, and for all $m\geq 0$, $\lg(p^m Q_\lambda)\geq \lg(p^m Q_\mu)$.
\end{enumerate}
\end{lemma}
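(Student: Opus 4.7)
I will prove the implications cyclically by establishing (i) $\Leftrightarrow$ (ii), (ii) $\Leftrightarrow$ (iii), and (iii) $\Leftrightarrow$ (iv). The first and last equivalences are essentially computational, and the middle one is the classical duality between a partition and its conjugate. None of the steps should present serious difficulty, but the conjugation step requires the most care in bookkeeping.

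For (i) $\Leftrightarrow$ (ii): observe that $\sum_{k=1}^{n} (\epsilon_j)_k$ equals $1$ if $n=j$ and $0$ otherwise. So if $\lambda - \mu = \sum_j c_j \epsilon_j$ for integers $c_j$ (which is automatic: partial sums determine $(c_j)$ uniquely), then taking the partial sum $\sum_{k=1}^n$ of both sides gives $\sum_{k=1}^{n}(\lambda_k - \mu_k) = c_n$. Thus the assertion ``$c_n \geq 0$ and eventually $c_n = 0$'' is exactly condition (ii). Conversely, define $c_n := \sum_{k=1}^n (\lambda_k - \mu_k)$; then the $k$-th entry of $\sum_j c_j \epsilon_j$ is $c_k - c_{k-1} = \lambda_k - \mu_k$, so $\lambda - \mu = \sum_j c_j \epsilon_j$ with the $c_j$ nonnegative by (ii).

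For (iii) $\Leftrightarrow$ (iv): pick concrete models $Q_\lambda = \bigoplus_j W(k)/p^{\lambda_j}$ and similarly $Q_\mu$. Then by direct computation
\[
\lg\!\big(p^m Q_\lambda\big) \;=\; \sum_j \max(\lambda_j - m, 0) \;=\; \sum_j \big|\{i \geq m : \lambda_j > i\}\big| \;=\; \sum_{i \geq m} n_\lambda(i),
\]
using the definition $n_\lambda(i) = |\{j : \lambda_j > i\}|$ from Remark~\ref{rmk:RankGraded}. Specializing to $m=0$ gives $\lg(Q_\lambda) = \sum_{i\geq 0} n_\lambda(i) = \sum_j \lambda_j$. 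Hence (iii) and (iv) are term-by-term identical after this rewriting; in particular condition (iv) depends only on the type, not on the choice of $Q_\lambda, Q_\mu$.

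For (ii) $\Leftrightarrow$ (iii): the sequence $n_\lambda := (n_\lambda(0), n_\lambda(1), \ldots)$ is the conjugate partition $\lambda^T$ to $\lambda$, i.e., it is again an eventually zero decreasing sequence of nonnegative integers with the same total weight $\sum_i n_\lambda(i) = \sum_j \lambda_j$. The key combinatorial identity is
\[
\sum_{j=1}^{n} \lambda_j \;+\; \sum_{i \geq n} n_\lambda(i) \;=\; \sum_j \lambda_j \;+\; n \cdot \max(0,\; n_\lambda(n-1)-0) \;-\; \text{(correction)},
\]
but the clean way is to use conjugation directly: one checks $\sum_{k \geq m+1}\lambda^T_k = \sum_{j}\max(\lambda_j - m, 0)$ (same computation as above), so (iii) says
\[
\sum_{k > m} \lambda^T_k \;\geq\; \sum_{k > m} \mu^T_k \quad \text{for all } m \geq 0,\text{ with equality at } m=0.
\]
Using the equality at $m=0$ to rewrite tails as totals minus initial segments, this is equivalent to $\sum_{k=1}^m \lambda^T_k \leq \sum_{k=1}^m \mu^T_k$ with equality for large $m$. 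Thus condition (iii) for $(\lambda, \mu)$ is equivalent to condition (ii) for the conjugate pair $(\mu^T, \lambda^T)$. To close the loop, I invoke the standard fact that the dominance order is reversed by conjugation: $(\lambda, \mu)$ satisfy (ii) iff $(\mu^T, \lambda^T)$ do. The quickest proof of this last fact is to apply the already-established equivalence (i) $\Leftrightarrow$ (ii) and note that a single move $\lambda \mapsto \lambda - \epsilon_j$ on a partition corresponds (under conjugation) to a single move of the same form on $\lambda^T$ in the opposite direction; I would verify this by a small picture (moving a box down one row in $\lambda$'s Young diagram equals moving a box left by one column in $\lambda^T$'s). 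This conjugation duality is the only place requiring genuine combinatorial care, and is the main obstacle.
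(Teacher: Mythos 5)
Your plan mirrors the paper's structure, and the arguments for (i) $\Leftrightarrow$ (ii) and (iii) $\Leftrightarrow$ (iv) match the paper's essentially verbatim. The one genuine difference is in (ii) $\Leftrightarrow$ (iii): the paper proves (ii) $\Rightarrow$ (iii) by a direct case analysis (choosing $n$, $n'$ maximal with $\mu_n > m$, $\lambda_{n'} > m$ and comparing partial sums in the two cases $n \le n'$ and $n > n'$), then observes that the converse direction follows by the transpose symmetry $\lambda \mapsto n_\lambda$ (footnoted, not fully spelled out). You instead reduce (iii) cleanly to ``(ii) for the pair $(\mu^T,\lambda^T)$'' and invoke the standard conjugation duality of dominance order. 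This is a tidy reorganization: it makes the footnote's symmetry the explicit engine and isolates the combinatorial content in one well-known lemma.

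However, your sketched proof of that duality has a real imprecision. A single move $\lambda \mapsto \lambda - \epsilon_j$ does \emph{not} correspond to a single move of the same form on $\lambda^T$: removing a box from position $(j,\lambda_j)$ and adding one at $(j+1,\lambda_{j+1}+1)$ corresponds in $\lambda^T$ to moving a box from row $\lambda_j$ to row $\lambda_{j+1}+1$, i.e.\ to $\lambda^T + (\epsilon_{\lambda_{j+1}+1} + \cdots + \epsilon_{\lambda_j-1})$, which in general spans many rows. Worse, a chain $\lambda, \lambda-\epsilon_{j_1}, \ldots, \mu$ of single $\epsilon$-moves need not pass through valid partitions, so ``apply conjugation to each intermediate step'' is not directly licensed; one needs either the stronger (and also standard, but separate) fact that $\lambda \ge \mu$ is witnessed by a chain of partitions, or a direct partial-sum computation --- and the latter is essentially exactly what the paper does. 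So the step you flag as ``the only place requiring genuine combinatorial care'' is precisely where your proof is not yet complete, and filling it in honestly brings you back to the paper's case analysis or an equivalent.
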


\begin{proof} The equivalence of (i) and (ii) is standard: Clearly (i) implies (ii) as adding a sequence $\epsilon_n=(0,\ldots,0,1,-1,0,\ldots)$ preserves the inequalities. Conversely, for the smallest $n$ where the inequality is (ii) is strict, one can subtract $\epsilon_n+\epsilon_{n+1}+\ldots+\epsilon_{n^\prime}$ from $\lambda$ for suitable $n^\prime$ while preserving all inequalities, and then argue inductively. Moreover, (iii) and (iv) are the same, as
\[
\lg(p^mQ_\lambda) = \sum_{i\geq m} n_\lambda(i)\ .
\]

As
\[
\sum_j \lambda_j = \sum_i n_\lambda(i)\ ,
\]
one sees that the equality part in (ii) and (iii) is equivalent. It remains to show that the inequalities are equivalent. We give the proof that (ii) implies (iii); the converse is identical.\footnote{In fact, $\lambda\mapsto n_\lambda$ is (up to shift $i\mapsto i+1$) the transpose of a partition, so the situation is symmetric.} Thus, take any $m\geq 0$, and let $n$ be maximal such that $\mu_n>m$. Then
\[
\sum_{i\geq m} n_\mu(i) = \sum_{j=1}^n (\mu_j - m)\ .
\]
Let $n^\prime$ be maximal such that $\lambda_{n^\prime}>m$, so that we have a similar equality
\[
\sum_{i\geq m} n_\lambda(i) = \sum_{j=1}^{n^\prime}(\lambda_j - m)\ .
\]
Assume first that $n\leq n^\prime$. Applying (ii) for $n$ shows
\[
\sum_{i\geq m} n_\mu(i) = \sum_{j=1}^n (\mu_j - m)\leq \sum_{j=1}^n (\lambda_j - m)\ ,
\]
which is at most
\[
\sum_{j=1}^{n^\prime}(\lambda_j - m) = \sum_{i\geq m} n_\lambda(i)\ ,
\]
as all further summands for $n<j\leq n^\prime$ are positive. If $n>n^\prime$, then again applying (ii) for $n$ shows that
\[
\sum_{i\geq m} n_\mu(i) = \sum_{j=1}^n (\mu_j - m)\leq \sum_{j=1}^n (\lambda_j - m)\ ,
\]
which is at most
\[
\sum_{j=1}^{n^\prime}(\lambda_j - m) = \sum_{i\geq m} n_\lambda(i)\ ,
\]
as all extra summands $\lambda_j - m$ for $n^\prime<j\leq n$ are nonpositive.
\end{proof}

The basic source of finite torsion $W(k)$-modules in the sequel comes from isogenies:

\begin{definition} For any perfect ring $R$, a map $f: N\to M$ of finite projective $W(R)$-modules is called an {\em isogeny} if it admits an inverse up to multiplication by a power of $p$.
\end{definition}

In particular, any isogeny is injective.

\begin{lemma}\label{PDDim1} A finitely generated $p$-torsion $W(R)$-module $Q$ is of projective dimension $1$ if and only if it can be written as the cokernel of an isogeny.
\end{lemma}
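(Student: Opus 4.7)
The plan is to verify each direction by the natural construction, with the main work in the converse direction being a finiteness check.

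For the ``if'' direction, suppose $Q = \coker(f)$ for an isogeny $f : N \to M$ between finite projective $W(R)$-modules. I first argue that $f$ is automatically injective. Since $R$ is perfect, $W(R)$ is $p$-torsion-free, hence so are the projective modules $N$ and $M$; in particular, $N$ embeds into $N[\tfrac{1}{p}]$. The isogeny hypothesis gives an isomorphism $f[\tfrac{1}{p}] : N[\tfrac{1}{p}] \xrightarrow{\cong} M[\tfrac{1}{p}]$, so the composition $N \hookrightarrow N[\tfrac{1}{p}] \xrightarrow{\cong} M[\tfrac{1}{p}]$ is injective; as it factors through $f$, the map $f$ must be injective too. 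Thus $0 \to N \xrightarrow{f} M \to Q \to 0$ is a length-one resolution of $Q$ by finite projective $W(R)$-modules, so $Q$ has projective dimension at most $1$.

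For the converse, suppose $Q$ is a finitely generated $p$-power torsion $W(R)$-module of projective dimension $\leq 1$. Choosing generators $q_1,\dots,q_n$, I form the surjection $\pi : M := W(R)^n \twoheadrightarrow Q$ and set $N := \ker(\pi)$. The projective dimension assumption makes $N$ projective. Since $p^\ell Q = 0$ for some $\ell \geq 0$, we have $p^\ell M \subseteq N \subseteq M$, which forces $N[\tfrac{1}{p}] = M[\tfrac{1}{p}]$; equivalently, the inclusion $N \hookrightarrow M$ is an isogeny (multiplication by $p^\ell$ on $M$ provides the inverse up to a power of $p$).

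The remaining task is to check that $N$ is itself finitely generated, so that $Q$ is exhibited as the cokernel of an isogeny of finite projective modules. Since $p^\ell M \subseteq N \subseteq M$, this reduces to showing that $N/p^\ell M \subseteq M/p^\ell M$ is a finitely generated $W_\ell(R)$-submodule of the finite free $W_\ell(R)$-module $M/p^\ell M$, which is in turn equivalent to $Q$ being finitely presented (over either $W(R)$ or $W_\ell(R)$). This is the step where I anticipate the main difficulty. The cleanest route is to extract a finite projective resolution of length one directly from the pd $\leq 1$ hypothesis and Schanuel's lemma: given any two surjections $P \twoheadrightarrow Q$ and $P' \twoheadrightarrow Q$ from finitely generated projectives with kernels $K,K'$, one has $K \oplus P' \cong K' \oplus P$, so finite generation of the kernel in one presentation propagates to all presentations. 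Producing one such finite projective resolution can be done \'etale-locally on $\Spec(R)$, after reducing to the case where $Q$ has constant type $\lambda$, where the successive quotients $p^i Q / p^{i+1} Q$ are finite projective $R$-modules by Lemma~\ref{lem:ConstantTypeProjective}; lifting bases of these quotients then gives a finite free presentation whose kernel is visibly finitely generated. Transferring back via Schanuel's lemma yields finite generation of our $N$, completing the proof.
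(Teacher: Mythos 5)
Your ``if'' direction is correct and matches the paper's: both produce the length-one projective resolution $0 \to N \to M \to Q \to 0$. (The paper notes the injectivity of an isogeny in the remark immediately after the definition; your explicit check of it is fine.)

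In the ``only if'' direction you correctly set up $N := \ker(W(R)^n \twoheadrightarrow Q)$, observe that $N$ is projective by dimension shifting, and reduce to showing $N$ is finitely generated. But the proposed finish has a genuine gap: the claim that one can ``reduce to the case where $Q$ has constant type $\lambda$'' by passing to an \'etale cover is false. Constant type is not a Zariski-local, let alone an \'etale-local, condition, even for $Q$ of projective dimension $\leq 1$. For a concrete obstruction, take $R = k[[t]]^{1/p^\infty}$ a perfect valuation ring over an algebraically closed field $k$, so that $\Spec(R)$ is strictly henselian and has no nontrivial \'etale covers, and set $Q = \coker\bigl(p + [t] : W(R) \to W(R)\bigr)$. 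Then $Q$ manifestly has projective dimension $1$, but its type is $(1)$ at the closed point and $(0)$ at the generic point, so no localization makes the type constant. Consequently Lemma~\ref{lem:ConstantTypeProjective} is not applicable, and the Schanuel step never gets off the ground.

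The paper's argument is more direct and avoids the stratification entirely: one shows that a projective $W(R)$-module $N$ fitting into $p^\ell W(R)^n \subseteq N \subseteq W(R)^n$ (with $p^\ell$ a nonzerodivisor in $W(R)$, as $R$ is perfect) is automatically a direct summand of a finite free module. Indeed, choose a splitting $s: N \hookrightarrow W(R)^{(I)}$ of a surjection $W(R)^{(I)} \twoheadrightarrow N$. The finitely many elements $s(p^\ell e_1), \dots, s(p^\ell e_n)$ lie in some finite free subsummand $W(R)^J$. For any $x \in N$ one has $p^\ell x \in p^\ell W(R)^n$, hence $p^\ell s(x) \in W(R)^J$; since the complementary summand $W(R)^{(I \setminus J)}$ is $p^\ell$-torsion-free, this forces $s(x) \in W(R)^J$. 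Thus $s(N) \subseteq W(R)^J$ and $N$ is finite projective. The paper states this concisely as: $N$ has rank $n$ at the dense set of characteristic-$0$ primes of $\Spec(W(R))$, hence is finite projective; the finite-generation step is exactly the observation just spelled out.
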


\begin{proof} If $Q$ is the cokernel of the isogeny $f: N\to M$, then
\[
0\to N\to M\to Q\to 0
\]
is a projective resolution of $Q$ as $W(R)$-module, showing that the projective dimension is (at most) $1$; clearly, $Q$ is not projective, so the projective dimension is $1$.

Conversely, if $M$ is of projective dimension $1$, then pick a surjection $M=W(R)^n\to Q$. Its kernel $N$ is projective. By checking at the characteristic $0$ points of $\Spec(W(R))$ (which are dense), one sees that $N$ is of rank $n$, and thus finite projective.
\end{proof}

We will also need the following lemma.

\begin{lemma}\label{ReductionProjectiveDim}
Let $R$ be a perfect ring, and let $Q$ be a finitely generated $R$-module. Then $Q$ is a projective $R$-module if and only if it is of projective dimension $1$ as $W(R)$-module.
\end{lemma}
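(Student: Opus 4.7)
The plan is to treat each implication separately, exploiting that $W(R)$ is $p$-torsion free and $p$-adically complete with $W(R)/p = R$. For the forward direction, I realize a finite projective $R$-module $Q$ as the image of an idempotent $e \in M_n(R)$, lift $e$ to an idempotent $\widetilde{e} \in M_n(W(R))$ via Hensel's lemma in the $p$-adically complete ring $W(R)$, and set $\widetilde{Q} := \widetilde{e}(W(R)^n)$. Then $\widetilde{Q}$ is a finite projective (hence $p$-torsion free) $W(R)$-module with $\widetilde{Q}/p\widetilde{Q} \cong Q$, so
$$0 \to \widetilde{Q} \xrightarrow{p} \widetilde{Q} \to Q \to 0$$
is a projective resolution of length $1$, yielding $\pd_{W(R)}(Q) \leq 1$.

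For the converse, the main obstacle is translating the $W(R)$-side resolution into $R$-flatness. Suppose $\pd_{W(R)}(Q) \leq 1$. Since $pQ = 0$, $Q$ is a finitely generated $p$-torsion $W(R)$-module, so Lemma~\ref{PDDim1} supplies a short exact sequence $0 \to N \to M \to Q \to 0$ with $M, N$ finite projective over $W(R)$. Reducing modulo $p$ exhibits $Q$ as the cokernel of a map $N/pN \to M/pM$ of finite projective $R$-modules, so $Q$ is finitely presented over $R$. By the local criterion for flatness it now suffices to show $\Tor_1^R(Q, k(x)) = 0$ for every $x \in \Spec(R)$. The key computation is that the standard resolution $W(R) \xrightarrow{p} W(R) \to R$, tensored with the $p$-killed module $Q$, has zero differential, giving a splitting
$$Q \otimes_{W(R)}^{L} R \simeq Q \oplus Q[1]$$
in $D(R)$. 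Tensoring over $R$ with $k(x)$ and comparing cohomology yields
$$\Tor_n^{W(R)}(Q, k(x)) \cong \Tor_n^R(Q, k(x)) \oplus \Tor_{n-1}^R(Q, k(x))$$
for all $n$. Taking $n = 2$ and using $\pd_{W(R)}(Q) \leq 1$ forces $\Tor_1^R(Q, k(x)) = 0$, whence $Q$ is flat, and hence finite projective, over $R$.
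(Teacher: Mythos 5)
Your proof is correct, and it takes a genuinely different route from the paper's in both directions. For the forward direction, the paper observes that $R = W(R)/p$ has projective dimension $1$ over $W(R)$ and then propagates this first to finite free $R$-modules and then to direct summands; you instead lift an idempotent presenting $Q$ from $M_n(R)$ to $M_n(W(R))$ (legitimate by $p$-adic completeness of $W(R)$) and produce the length-one resolution $0 \to \widetilde{Q} \xrightarrow{p} \widetilde{Q} \to Q \to 0$ explicitly. For the converse, the paper argues structurally: it chooses a presentation $W(R)^n \to Q$ surjective near a point, shows the kernel $N$ is finite projective, and then applies Nakayama a second time to the inclusion $pW(R)^n \hookrightarrow N$ to conclude $N \cong pW(R)^n$ Zariski-locally, whence $Q \cong R^n$ locally. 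You instead invoke Lemma~\ref{PDDim1} to get finite presentation of $Q$ over $R$, observe that $Q \otimes^L_{W(R)} R$ splits as $Q \oplus Q[1]$ (since the differential $Q \xrightarrow{p} Q$ vanishes), deduce the exchange formula $\Tor_n^{W(R)}(Q, k(x)) \cong \Tor_n^R(Q,k(x)) \oplus \Tor_{n-1}^R(Q,k(x))$, and read off the vanishing of $\Tor_1^R(Q,k(x))$ from $\pd_{W(R)}(Q) \leq 1$, closing with the local freeness criterion for finitely presented modules. Your argument is a bit more homological in flavor but isolates the useful general identity relating $W(R)$-Tor and $R$-Tor for $p$-killed modules, while the paper's argument stays elementary and produces the local trivialization of $Q$ directly; both are valid.
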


\begin{proof} Note that $R=W(R)/p$ is of projective dimension $1$ as $W(R)$-module. Thus, the same is true for any free module, and then also for any direct summand of a free module, i.e. any projective module.

Conversely, assume $Q$ is a finitely generated $R$-module that is of projective dimension $1$ as $W(R)$-module. Take any $x\in \Spec(R)$, and let $n=\dim_{k(x)} (Q\otimes k(x))$. Pick a map $M=W(R)^n\to Q$ that is surjective after $\otimes k(x)$. By Nakayama, we may assume that it is surjective, after some localization on $\Spec(R)$. Let $f: N\hookrightarrow M$ be the kernel of $M\to Q$. Then $N$ is a finite projective $W(R)$-module (as $Q$ is of projective dimension $1$), and there is an injection
\[
g: pW(R)^n\hookrightarrow N\ .
\]
On the other hand, after base-change $R\to k(x)$, $g$ is surjective. Applying Nakayama again (noting that $g$ being surjective is equivalent to $g\mod p$ being surjective), we can assume that $g$ is surjective, after a further localization around $x$. Then $g: pW(R)^n\cong N$, and $Q=W(R)^n/pW(R)^n = R^n$ is finite projective.
\end{proof}

Now we want to prove that the locus where the cokernel $Q$ of an isogeny has type $\leq \lambda$ is closed.

\begin{lemma}\label{Semicontinuity} Let $R$ be a perfect ring, and let $\beta: N\to M$ be an isogeny of finite projective $W(R)$-modules with cokernel $Q$. Fix any sequence $\lambda$ of non-negative integers as above. The set
\[
\Spec(R)_{\leq \lambda}\subset \{x\in \Spec(R)\mid \lambda(Q\otimes W(k(x)))\leq \lambda\}
\]
is a closed subset of $\Spec(R)$.
\end{lemma}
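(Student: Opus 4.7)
Since the statement is local on $\Spec R$, I would trivialize $N$ and $M$ as $W(R)^n$ on a Zariski open cover, reducing to the case where $\beta$ is represented by a matrix $B \in M_n(W(R))$ whose determinant is a non-zero-divisor dividing some $p^L$. Set $E_r := \sum_{j > r}(\lambda_0)_j$ and $E_r(x) := \sum_{j > r} \lambda_j(Q_x) = v_p(\mathrm{Fit}_r(Q_x))$. By characterization~(ii) of Lemma~\ref{CharMajorization}, $\lambda(Q_x) \leq \lambda_0$ holds if and only if $\lg(Q_x) = |\lambda_0|$ and $E_m(x) \geq E_m$ for all $m \geq 1$, which I would verify as a clopen and a closed condition on $\Spec R$, respectively.

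For the length equality, the main point is that $\lg(Q_x) = v_p(\det B_x)$ is locally constant on $\Spec R$. Since $R$ is perfect, hence reduced, it embeds in $\prod_{\frap} R/\frap$ over its minimal primes, and functoriality of $W(-)$ gives $W(R) \hookrightarrow \prod W(R/\frap)$. Each $R/\frap$ is a perfect domain, its fraction field is perfect, and $W(R/\frap) \hookrightarrow W(\Frac(R/\frap))$ exhibits $W(R/\frap)$ as a subring of a complete DVR, hence a domain. An induction on $L$ using that $W(R/\frap)/p = R/\frap$ is an integral domain (as in the proof of Lemma~\ref{PDDim1}) shows that any divisor of $p^L$ in $W(R/\frap)$ has the form $p^k u$ for a unit $u$. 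Hence $v_p(\det B_x)$ is constant on each irreducible component of $\Spec R$; since the values at a point lying on multiple components are determined by the single image of $\det B$ in $W(k(x))$, the function is locally constant, and $C := \{x : \lg(Q_x) = |\lambda_0|\}$ is a clopen subset containing $\Spec(R)_{\leq \lambda_0}$.

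For the Fitting inequalities, recall that $\mathrm{Fit}_m(Q) \subset W(R)$ is generated by the $(n-m) \times (n-m)$-minors of $B$, and base-change gives $\mathrm{Fit}_m(Q_x) = \mathrm{Fit}_m(Q) \cdot W(k(x))$. The condition $E_m(x) \geq E_m$ asks every such minor $f \in W(R)$ to have image in $p^{E_m} W(k(x))$; writing the image of $f$ in $W_{E_m}(R) := W(R)/p^{E_m}$ in Witt coordinates as $(f_0, \ldots, f_{E_m - 1}) \in R^{E_m}$, this becomes $x \in V(f_0, \ldots, f_{E_m - 1})$, a closed condition on $x$. A finite intersection over all minors and over the (finitely many) $m \geq 1$ with $E_m > 0$ gives a closed subset, whose intersection with the clopen $C$ equals $\Spec(R)_{\leq \lambda_0}$. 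The main obstacle is the local constancy of $\lg(Q_x)$: without it, one obtains only a locally closed subset, and it is this step that crucially uses both the isogeny hypothesis and the domain-lifting properties of $W$ on perfect rings.
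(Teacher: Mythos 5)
Your approach is genuinely different from the paper's. The paper proves this lemma in one line as a corollary of Lemma~\ref{ImageDemazure} and Proposition~\ref{ExDemazure}: the set $\Spec(R)_{\leq\lambda}$ is exhibited as the image of the proper perfectly finitely presented map $\Dem_\lambda(\calQ)\to\Spec(R)$, hence closed. Your route via Fitting ideals and Witt coordinates bypasses the Demazure scheme entirely. The decomposition into a clopen length condition plus closed Fitting inequalities is the right idea, the identification $v_p(\mathrm{Fit}_r(Q_x))=\sum_{j>r}\lambda_j(Q_x)$ is correct, and the conversion of ``$f_x\in p^{E_m}W(k(x))$'' into vanishing of the first $E_m$ Witt coordinates of $f$ is exactly right and gives the closedness of the Fitting locus.

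The gap is in the step ``constant on each irreducible component $\Rightarrow$ locally constant.'' For a general (non-noetherian) perfect ring $R$, this implication fails: take $R=\prod_{n\in\N}\F_p$, so $\Spec(R)=\beta\N$ and every irreducible component is a singleton; then \emph{every} function is constant on irreducible components and well-defined at intersections, but most functions $\beta\N\to\{0,\ldots,L\}$ are not continuous. Constancy on components gives you stability of the level sets $\{v_p(\det B_x)=k\}$ under specialization and generalization, but without an accompanying constructibility statement this does not yield clopen. You in fact have the missing ingredient at hand but do not invoke it here: writing $\det B=\sum_i[a_i]p^i$, the set $\{v_p(\det B_x)\geq k\}=V(a_0,\ldots,a_{k-1})$ is closed \emph{constructible}, and the isogeny gives a cofactor $\gamma$ with $\det\beta\cdot\det\gamma=p^{L'}$, so $\{v_p(\det B_x)\leq k\}=\{v_p(\det\gamma_x)\geq L'-k\}$ is likewise closed constructible. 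Then each $\{v_p(\det B_x)=k\}$ is closed, and since the finitely many values $k=0,\ldots,L'$ partition $\Spec(R)$ into closed sets, they are clopen. (Alternatively: constructibility of the level sets plus your specialization/generalization stability gives clopenness by the standard characterization of clopen subsets of spectral spaces.) Note also that the argument you sketch via minimal primes would, on its own, only deliver the result in the noetherian case; the paper itself asserts local constancy of $x\mapsto v_p(\det f_x)$ without proof in the $\lambda=0$ case of Proposition~\ref{ExDemazure}, and the cofactor argument above is the clean way to justify it.
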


The proof uses the Demazure scheme, so we define this first (see also \cite[\S 1.3]{ZhuMixedCharGeomSatake}).

\begin{definition}\label{DefDemazure} Let $X$ be a perfect $\F_p$-scheme, let $\calQ$ be a finitely generated $p$-power torsion quasicoherent $W(\calO_X)$-module of projective dimension $1$, and let $\lambda$ be a sequence of non-negative integers as above. The Demazure scheme is the perfect scheme
\[
\Dem_\lambda(\calQ)\to X
\]
parametrizing decreasing filtrations $\Fil^i \calQ\subset \calQ$ such that $\gr^i \calQ = \Fil^i \calQ/\Fil^{i+1} \calQ$ is a finite projective $\calO_X$-module of rank $n_\lambda(i)$.\footnote{See Remark \ref{rmk:RankGraded} for the definition of $n_\lambda(i)$.}
\end{definition}

We reserve the name Demazure resolution for the ``absolute" construction defined in Definition \ref{DefDemazureRes} below; a Demazure scheme may not be (the perfection of) a smooth scheme.

\begin{proposition}\label{ExDemazure} The Demazure scheme exists, i.e. the moduli problem is representable by a perfect scheme. The map $\Dem_\lambda(\calQ)\to X$ is a proper perfectly finitely presented morphism.
\end{proposition}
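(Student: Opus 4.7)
The plan is to construct $\Dem_\lambda(\calQ) \to X$ iteratively by peeling off one graded piece of the filtration at a time, inducting on $\lambda_1$ (equivalently on $|\lambda| = \sum_j \lambda_j$). A type-$\lambda$ filtration on $\calQ$ consists of a choice of $\gr^0 = \calQ/\Fil^1$ (a finite projective $\calO_X$-module of rank $n_\lambda(0)$) together with a type-$(\lambda - 1)$ filtration on $\Fil^1$. Since $\gr^0$ is killed by $p$ we have $p\calQ \subset \Fil^1$, so $\gr^0$ is in fact a projective quotient of $\calQ/p$. By Lemma~\ref{PDDim1} I may write $\calQ = \coker(N \to M)$ for an isogeny of finite projective $W(\calO_X)$-modules, whence $\calQ/p = \coker(N/pN \to M/pM)$ is a finitely presented $\calO_X$-module.

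I will build the ``first-step'' moduli $\pi_1 \colon X_1 \to X$ parametrizing projective rank-$n_\lambda(0)$ quotients of $\calQ/p$ as follows. Zariski-locally, pick a presentation $\calO_X^a \xrightarrow{A} \calO_X^b \twoheadrightarrow \calQ/p$. Then giving a projective rank-$n_\lambda(0)$ quotient of $\calQ/p$ is the same as giving such a quotient of $\calO_X^b$ on which $A$ vanishes, and this is represented by the closed subfunctor of the perfected Grassmannian bundle $\Gr(b - n_\lambda(0),\calO_X^b)_{\perf}$ cut out by the vanishing of the universal composition $\calO_X^a \xrightarrow{A} \calO_X^b \twoheadrightarrow \calG^{\mathrm{univ}}$. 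This is proper and perfectly finitely presented over $X$ (using Proposition~\ref{FPModelsExist} and Lemma~\ref{SchVsPerfProp}), and the local constructions glue by the universal property of the moduli problem.

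Over $X_1$, put $\calQ_1 := \ker(\calQ_{X_1} \to \gr^0)$. Applying $\Tor^{W(\calO_{X_1})}_\ast(-,L)$ to $0 \to \calQ_1 \to \calQ_{X_1} \to \gr^0 \to 0$ and using that both $\calQ_{X_1}$ (by base-change of the isogeny resolution via Lemma~\ref{BaseChange}) and $\gr^0$ (by Lemma~\ref{ReductionProjectiveDim}) have Tor-dimension $\leq 1$ over $W(\calO_{X_1})$ shows $\calQ_1$ has Tor-dimension $\leq 1$. The base case $\lambda_1 = 0$ forces $\calQ = 0$, which is the closed condition on $X$ that the finitely presented module $\calQ/p$ vanishes (a closed immersion of finite presentation). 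In the inductive step, the hypothesis produces a proper perfectly finitely presented $\Dem_{\lambda-1}(\calQ_1) \to X_1$; composing with $X_1 \to X$ yields the desired $\Dem_\lambda(\calQ) \to X$ (properties of being proper and perfectly finitely presented are preserved under composition, via Lemma~\ref{SchVsPerfProp} and Proposition~\ref{PropFP}). That the composite really represents $\Dem_\lambda(\calQ)$ follows directly from the equivalence of data described in the first paragraph.

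The principal obstacle is ensuring the inductive hypothesis can be applied to $\calQ_1$, i.e., that $\calQ_1$ is again a finitely generated $p$-power torsion $W(\calO_{X_1})$-module of projective dimension $\leq 1$. The Tor-dimension computation above handles projective dimension; finite generation is seen concretely by building a length-$1$ projective resolution of $\calQ_1$ via the horseshoe lemma applied to the base-changed resolution $0 \to N_{X_1} \to M_{X_1} \to \calQ_{X_1} \to 0$ and the Zariski-local resolution $0 \to pW(\calO_{X_1})^{n_\lambda(0)} \to W(\calO_{X_1})^{n_\lambda(0)} \to \gr^0 \to 0$, glued along a lift $M_{X_1} \to W(\calO_{X_1})^{n_\lambda(0)}$ of the surjection $\calQ_{X_1} \twoheadrightarrow \gr^0$. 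This simultaneously exhibits $\calQ_1$ as the cokernel of an isogeny between finite projective $W(\calO_{X_1})$-modules, closing the induction.
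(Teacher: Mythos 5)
Your inductive structure matches the paper's: peel off $\gr^0$ via a $\Quot$-scheme (realized Zariski-locally inside a perfect Grassmannian bundle, which is what the paper calls $\Quot(\calQ/p, n_\lambda(0))$), verify that $\Fil^1\calQ=\calQ_1$ is again a finitely generated $p$-power torsion $W(\calO_{X_1})$-module of projective dimension $1$, and recurse with $\lambda-1$. The argument that $\calQ_1$ is again the cokernel of an isogeny (via the kernel of a lift $M_{X_1}\to W(\calO_{X_1})^{n_\lambda(0)}$, using Lemma~\ref{ReductionProjectiveDim} to see that kernel is projective) is correct and is essentially what the paper is silently using when it asserts that $\Fil^1\calQ$ is ``still of projective dimension~$1$.''

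However, there is a genuine gap in your base case. You claim that the locus $\{x : \calQ_x = 0\}$ is a \emph{closed} condition because $\calQ/p$ is a finitely presented $\calO_X$-module. This is false in general: for a finitely presented module, the support $V(\mathrm{Fitt}_0)$ is closed, so the vanishing locus (its complement) is \emph{open}, not closed. An open immersion need not be universally closed, so this argument does not establish properness of $\Dem_0(\calQ)\to X$ — and properness is exactly what you need here, since the whole morphism $\Dem_\lambda(\calQ)\to X$ is assembled as a tower with this as the last (or first, depending on how you set it up) rung. What is actually true, and what the paper proves, is that this locus is \emph{clopen}, and the ``closed'' direction crucially uses the isogeny presentation: write $\calQ = \coker(f: N\to M)$; then $\calQ_x = 0$ precisely where $\det f$ is a unit in $W(k(x))$, and since $\det f\in W(\calO_X)[\frac{1}{p}]^\times$, the function $x\mapsto v_p\bigl(\det(f\otimes W(k(x)))\bigr)$ is locally constant, so its zero set is clopen. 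You have the isogeny $N\to M$ available from Lemma~\ref{PDDim1}, but you never invoke the unit $\det f$ or the local constancy of its $p$-adic valuation; without that, the base case only gives an open immersion, and the proof of properness collapses. Replacing your parenthetical ``closed condition'' with the paper's determinant argument would repair this.
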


In the proof, we use the following convention: if $X$ is a perfect scheme, $\calF$ a finitely presented quasicoherent $\calO_X$-module, and $n \in \Z_{\geq 0}$, then $\Quot(\calF,n)$ denotes the perfection of the $\Quot$-scheme, over $X$, parametrizing locally free quotients $\calF \twoheadrightarrow \calG$ with $\rk(\calG) = n$. If $\calF$ is locally free, this is the perfection of a Grassmannian. In general, $\Quot(\calF,n)\to X$ is a proper perfectly finitely presented morphism.

\begin{proof} Assume first that $\lambda=0$. In that case $\Dem_\lambda(\calQ)\subset X$ is the subset of $x\in X$ such that $Q\otimes W(k(x))=0$, and we claim that this is open and closed (which implies the proposition in this case). The claim is local, so we may assume $X = \Spec(R)$, and $\calQ$ is associated to a finitely presented $W(R)$-module $Q$ that can be written as a cokernel of an isogeny $f: N\to M$ of finite free $W(R)$-modules. Then the locus where $Q\otimes W(k(x))=0$ is (by Nakayama) the locus where $f$ is an isomorphism, i.e. where $\det f\in W(R)$ is invertible. But $\det f\in W(R)[\frac{1}{p}]^\times$, which implies that $x\mapsto v_p(\det (f\otimes W(k(x))))$ is a locally constant function on $\Spec R$; in particular, the locus where $\det f$ is invertible is open and closed.

In general, consider $\Quot(\calQ/p,n_\lambda(0))\to X$. This parametrizes the different choices for $\Fil^1 \calQ$ such that $\calQ/\Fil^1 \calQ$ is a finite projective $\calO_X$-module of rank $n_\lambda(0)$: Over $\Quot(\calQ/p,n_\lambda(0))$, one has the universal cokernel $\calQ/p\to \calG$, and one can define $\Fil^1 \calQ = \ker(\calQ\to \calG)$. This is still of projective dimension $1$, and
\[
\Dem_\lambda(\calQ) = \Dem_{\lambda - 1}(\Fil^1 \calQ)\ ,
\]
where $\lambda - 1=(\lambda_1-1,\ldots,\lambda_n-1,0,\ldots)$ is the sequence of non-negative integers obtained from $\lambda$ by subtracting $1$ from all positive entries. By induction, the proposition follows.
\end{proof}

\begin{remark}
The special case $\lambda = 0$ in the above proof can be presented slightly differently via $K$-theory as follows. In \S \ref{sec:DetConsKtheory}, we defined a canonical map $r:K_0(W(X) \on X) \to H^0(X,\Z)$ by composing the determinant construction with the canonical map $\gPic^\Z(X) \to H^0(X,\Z)$. Now any $\calQ$ as in the proposition defines element of $K_0(W(X) \to X)$. Unwinding definitions shows that $r(\calQ)$ is given by the locally constant function $X \to \Z$ sending $x \in X$ to the length of $\calQ \otimes^L_{W(\calO_X)} W(\kappa(x)) \simeq \calQ \otimes_{W(\calO_X)} W(\kappa(x))$ as a finite torsion $W(\kappa(x))$-module. In particular, the vanishing locus of this function is clopen in $X$. 
\end{remark}

\begin{lemma}\label{ImageDemazure} Let $X$, $\calQ$ and $\lambda$ be as in Definition \ref{DefDemazure}. The image of the morphism $\Dem_\lambda(\calQ)\to X$ is the locus
\[
X_{\leq \lambda} = \{x\in X\mid \lambda(\calQ\otimes W(k(x)))\leq \lambda\}\ .
\]
If $\calQ$ is of type exactly $\lambda$, then $\Dem_\lambda(\calQ)\to X$ is an isomorphism. Moreover, if the fibre over a geometric point $\bar{x}$ of $X$ is nonempty, then
\[
R\Gamma(\Dem_\lambda(\calQ)\times_X \bar{x},\calO) = k(\bar{x})\ ,
\]
where $k(\bar{x})$ is the residue field of $\bar{x}$.
\end{lemma}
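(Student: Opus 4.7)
The plan is to reduce each assertion to a fibrewise statement at a geometric point $\bar x \to X$, where $Q := \calQ \otimes W(k(\bar x))$ is a finite torsion $W(k(\bar x))$-module of some type $\mu$. The three parts correspond, respectively, to (a) non-emptiness of $\Dem_\lambda(Q)$ precisely when $\mu \leq \lambda$, (b) identification of $\Dem_\lambda(Q)$ with $\Spec k(\bar x)$ when $\mu = \lambda$, and (c) the vanishing $R\Gamma(\Dem_\lambda(Q),\calO) = k(\bar x)$ whenever it is non-empty. For (a), the ``only if'' direction follows by noting that any filtration $\Fil^\bullet Q$ of the prescribed graded shape has $\Fil^i Q \supseteq p^i Q$ (since each $\gr^i$ is killed by $p$), so $\lg(\Fil^i Q) \geq \lg(p^i Q)$; combined with $\lg Q = \sum_i n_\lambda(i) = \sum_j \lambda_j$, Lemma~\ref{CharMajorization}(iv) yields $\mu \leq \lambda$. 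Conversely, writing $Q = \bigoplus_j W(k(\bar x))/p^{\mu_j}\cdot e_j$ in Smith normal form (with $\mu$ decreasing) and setting $\Fil^1 Q := pQ + \sum_{j > n_\lambda(0)} W(k(\bar x))\cdot e_j$ gives $Q/\Fil^1 Q \cong k(\bar x)^{n_\lambda(0)}$, and produces $\Fil^1 Q$ whose type is obtained from $\mu$ by subtracting $1$ from each of the first $n_\lambda(0)$ entries (and re-sorting); a direct calculation using Lemma~\ref{CharMajorization}(iii) verifies that this modified type is $\leq \lambda - 1$ whenever $\mu \leq \lambda$, so induction on $\lambda_1$ produces a full filtration.

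Claim (b) is then short: Lemma~\ref{lem:ConstantTypeProjective} provides the $p$-adic filtration $\Fil^i \calQ := p^i \calQ$ as a canonical section of $\Dem_\lambda(\calQ) \to X$, and for uniqueness the surjection $\calQ/p\calQ \twoheadrightarrow \calQ/\Fil^1\calQ$ between finite locally free $\calO_X$-modules of common rank $n_\lambda(0)$ is automatically an isomorphism, forcing $\Fil^1 \calQ = p \calQ$; iterating the same argument on $\Fil^1 \calQ$ (still of type exactly $\lambda - 1$ by Lemma~\ref{lem:ConstantTypeProjective}) recovers $\Fil^i \calQ = p^i \calQ$ for all $i$.

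For (c), I would adapt the strategy of Lemma~\ref{TrivialLineBundle}. Introduce over $\Spec k(\bar x)$ the ``completely refined'' Demazure scheme $\widetilde{\Dem}(Q) \to \Dem_\lambda(Q)$ whose points over a filtration $\Fil^\bullet Q$ are complete $k(\bar x)$-flags in each graded $\gr^i Q$; the fibres of this map are products $\prod_i \mathrm{Fl}(k(\bar x)^{n_\lambda(i)})$ of (perfections of) complete flag varieties, which satisfy $R\Gamma(\calO) = k(\bar x)$, so Lemma~\ref{CriterionO} gives $R\Gamma(\widetilde{\Dem}(Q),\calO) = R\Gamma(\Dem_\lambda(Q),\calO)$. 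On the other hand, $\widetilde{\Dem}(Q)$ is constructed as an iterated tower of perfections of projective space bundles $X_N \to X_{N-1} \to \cdots \to X_0 = \Spec k(\bar x)$ (with $N = \lg Q$), exactly as in the proof of Lemma~\ref{TrivialLineBundle}: at each stage one chooses a rank-$1$ locally free quotient $\calG_i$ of the previous $\calQ_{i-1}/p$ and sets $\calQ_i := \ker(\calQ_{i-1} \to \calG_i)$, so each geometric fibre of $X_i \to X_{i-1}$ is the perfection of a projective space. Iterated application of Lemma~\ref{CriterionO} then gives $R\Gamma(\widetilde{\Dem}(Q),\calO) = k(\bar x)$, completing the argument. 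The main technical points I expect to need care for are the combinatorial verification in (a) (which amounts to telescoping the Lemma~\ref{CharMajorization}(iii) inequality using $n_\mu(0) \geq n_\lambda(0)$) and the fibrewise projective-space structure of each stage of the tower in (c).
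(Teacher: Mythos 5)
Your arguments for parts (a) and (b) are correct, and in fact somewhat cleaner than the paper's. For the ``only if'' half of (a), the observation that $p^i Q \subseteq \Fil^i Q$ (since each graded is killed by $p$) combined with Lemma~\ref{CharMajorization}(iv) gives a short, direct argument; the paper instead obtains this by induction on $\lambda_1$ via the extension $0 \to \Fil^1 Q \to Q \to k^{n_\lambda(0)} \to 0$. For the ``if'' half, the paper does \emph{not} give a direct construction of a filtration at all: it obtains non-emptiness as a corollary of the cohomology claim $R\Gamma(\Dem_\lambda(Q),\calO) = k$. Your explicit construction of $\Fil^1 Q$ and the verification $\mu' \leq \lambda - 1$ (which indeed amounts to the inequality $n_\lambda(0) \geq n_\lambda(m)$ plugged into Lemma~\ref{CharMajorization}(iv), as I checked) is a genuine alternative; it is arguably preferable because it cleanly separates the existence of points from the more delicate cohomological statement. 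Part (b) is essentially the paper's argument in monomorphism-plus-section form.

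However, there is a genuine gap in part (c). You claim that $\widetilde{\Dem}(Q)$ ``is constructed as an iterated tower \dots exactly as in the proof of Lemma~\ref{TrivialLineBundle}.'' This identification is false. The tower $X_N$ from Lemma~\ref{TrivialLineBundle} parametrizes \emph{all} complete filtrations $Q = Q_0 \supset Q_1 \supset \cdots \supset Q_N = 0$ whose successive quotients are $1$-dimensional and $p$-torsion; it coincides with $\Dem_{(N,0,\dots)}(Q)$ and does not depend on $\lambda$. Your $\widetilde{\Dem}(Q)$, on the other hand, parametrizes complete flags that moreover satisfy the condition that, after grouping the gradeds in blocks of sizes $n_\lambda(0), n_\lambda(1), \dots$, each block $Q_{a_i}/Q_{a_{i+1}}$ (with $a_i = n_\lambda(0) + \cdots + n_\lambda(i-1)$) is itself killed by $p$. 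This condition is \emph{not} automatic: from $pQ_{a_i} \subset Q_{a_i + 1}$ one only deduces $p Q_{a_i} \subset Q_{a_i+1}$, not $pQ_{a_i} \subset Q_{a_{i+1}} = Q_{a_i + n_\lambda(i)}$ when $n_\lambda(i) > 1$. So $\widetilde{\Dem}(Q)$ is a proper closed subscheme of $X_N$ in general. Concretely, take $Q = W(k)/p^2 \oplus W(k)/p$ and $\lambda = (2,1)$, so $N = 3$ and $n_\lambda(0) = 2$, $n_\lambda(1) = 1$. Then $\Dem_\lambda(Q) = \Spec k$ (the unique Demazure filtration is $Q \supset pQ \supset 0$), and $\widetilde{\Dem}(Q) = \mathbb{P}^1_\perf$ (complete flags in $Q/pQ$). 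But $X_3 = X_2$ is strictly larger: the first step $X_1 = \mathbb{P}(Q/p)$ is a $\mathbb{P}^1$ parametrizing $Q_1 \supset pQ$ with $Q_1/pQ$ a line, and over the unique point where $Q_1$ is killed by $p$ (namely $Q_1 = pQ + k\cdot e_2$), the sheaf $\calQ_1/p$ has rank $2$ and $\Quot(\calQ_1/p, 1)$ contributes an entire extra $\mathbb{P}^1$-fibre. The induced map $\widetilde{\Dem}(Q) \hookrightarrow X_3$ is a section of $X_3 \to X_1$, not an isomorphism. Since the tower has fibres of jumping dimension, and since there is no map $X_N \to \Dem_\lambda(Q)$ in general (the grouping of an arbitrary complete flag need not be of shape $\lambda$), your iterated application of Lemma~\ref{CriterionO} does not compute $R\Gamma(\Dem_\lambda(Q),\calO)$. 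The paper's actual argument for (c) proceeds differently: it uses the recursion $\Dem_\lambda(\calQ) = \Dem_{\lambda-1}(\Fil^1 \calQ)$ over $\Quot(\calQ/p, n_\lambda(0))$, reduces by induction on $\lambda_1$ (via Lemma~\ref{CriterionO}) to showing $R\Gamma(\Quot(\calQ/p, n_\lambda(0))_{\leq \lambda - 1},\calO) = k$ for the closed Schubert-type locus, and then proves that separately by a lemma about spaces of subspaces satisfying dimension inequalities against a fixed filtration. You would need to adopt this (or a similarly structured) route for part (c).
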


\begin{proof} To determine the image, we may assume that $X=\Spec(k)$ is a point; then $\calQ$ determines a torsion $W(k)$-module $Q$. We need to show that the module
\[
Q \cong \bigoplus_{i=1}^n W(k)/p^{\lambda_i(Q)}
\]
admits a decreasing filtration $\Fil^i Q\subset Q$ with gradeds $\gr^i Q\cong k^{n_\lambda(i)}$ if and only if $\lambda(Q)\leq \lambda$. Assume first that $Q$ admits such a filtration. Then $\Fil^1 Q$ admits a filtration of type given by $\lambda-1$, as $n_{\lambda-1}(i) = n_\lambda(i+1)$ for all $i\geq 0$; it follows by induction that $\lambda(\Fil^1 Q)\leq \lambda-1$. But then $Q$ is an extension
\[
0\to \Fil^1 Q\to Q\to k^{n_\lambda(0)}\to 0
\]
from which it follows that
\[
\lambda(Q)\leq \lambda(\Fil^1 Q) + (\underbrace{1,\ldots,1}_{n_\lambda(0)},0,\ldots)\leq (\lambda-1) + (\underbrace{1,\ldots,1}_{n_\lambda(0)},0,\ldots) = \lambda\ ,
\]
as desired. This analysis also implies that if one has equality $\lambda(Q)=\lambda$, then the filtration is given by $\Fil^i Q = p^iQ$; thus, Corollary  \ref{PointwiseIsom} implies that $\Dem_\lambda(\calQ)\to X$ is an isomorphism if $\calQ$ is of type exactly $\lambda$.

Conversely, assume that $\lambda(Q)\leq \lambda$, again in the case $X=\Spec(k)$. In that case, we have to show that
\[
R\Gamma(\Dem_\lambda(\calQ),\calO) = k\ ,
\]
which implies in particular that $\Dem_\lambda(\calQ)$ is nonempty. Recall that $\Dem_\lambda(\calQ)$ can be written as
\[
\Dem_\lambda(\calQ) = \Dem_{\lambda - 1}(\Fil^1 \calQ)\ ,
\]
where $\Fil^1 \calQ$ is the kernel of the universal quotient $\calQ\to \calG$ over $\Quot(\calQ/p,n_\lambda(0))$. Arguing inductively (always using Lemma \ref{CriterionO} to pass from fibrewise information to global information), we see that the locus
\[
\Quot(\calQ/p,n_\lambda(0))_{\leq \lambda-1}\subset \Quot(\calQ/p,n_\lambda(0))
\]
where $\ker(\calQ\to \calG)$ is of type $\leq \lambda - 1$ is the image of
\[
f: \Dem_{\lambda-1}(\Fil^1 \calQ)\to \Quot(\calQ/p,n_\lambda(0))\ ,
\]
and in particular closed, and $Rf_\ast \calO = \calO_{\mathrm{Im}\ f}$, so it remains to prove that
\[
R\Gamma(\Quot(\calQ/p,n_\lambda(0))_{\leq \lambda-1},\calO) = k\ .
\]

Let us describe the locus $\Quot(\calQ/p,n_\lambda(0))_{\leq \lambda-1}$. Note that $Q/p$ comes with a decreasing filtration
\[
F^m (Q/p)=\ker(Q/p\buildrel {p^m}\over\longrightarrow p^mQ/p^{m+1}Q)\ .
\]
We claim that a quotient $\calQ/p\to \calG$ with kernel $\calF\subset \calQ/p$ lies in $\Quot(\calQ/p,n_\lambda(0))_{\leq \lambda-1}$ if and only if for all $m\geq 0$, $\dim (\calF\cap F^m(\calQ/p))\geq a_m$ for certain integers $a_m$ (determined by $\lambda$ and $Q$).\footnote{Here, $\calQ$ stands for the base extension of $Q$ to varying fields.} Indeed, by Lemma \ref{CharMajorization}, $\lambda(\Fil^1 \calQ)\leq \lambda - 1$ if and only if for all $m\geq 0$, the length of $p^m \Fil^1 \calQ$ is bounded by the length of $p^m Q_{\lambda-1}$, where $Q_{\lambda-1}$ is a torsion $W(k)$-module with $\lambda(Q_{\lambda-1})=\lambda-1$. But there is an exact sequence
\[
0\to p^m \Fil^1 \calQ\to p^m \calQ\to \coker(\calF\buildrel {p^m}\over\longrightarrow p^m\calQ/p^{m+1}\calQ)\to 0\ ,
\]
which allows one to compute the length of $p^m \Fil^1 \calQ$ in terms of $Q$, $\dim \calF = n_{\lambda_Q}(0) - n_\lambda(0)$ and $\dim (\calF\cap F^m(\calQ/p))$. More precisely, the inequalities are
\[
\dim (\calF\cap F^m(\calQ/p))\geq n_{\lambda_Q}(0) - n_\lambda(0) + \sum_{i\geq m+1} (n_{\lambda_Q}(i)-n_\lambda(i))=:a_m\ .
\]

Thus, the following lemma finishes the proof. For this we need to observe two inequalities. First,
\[\begin{aligned}
\dim F^m Q/p = n_{\lambda_Q}(0) - n_{\lambda_Q}(m)&\geq n_{\lambda_Q}(0) - n_\lambda(m) + \sum_{i\geq m+1} (n_{\lambda_Q}(i)-n_\lambda(i))\\
&\geq n_{\lambda_Q}(0) - n_\lambda(0) + \sum_{i\geq m+1} (n_{\lambda_Q}(i)-n_\lambda(i))=a_m\ ,
\end{aligned}\]
using the majorization
\[
\sum_{i\geq m} n_\lambda(i)\geq \sum_{i\geq m} n_{\lambda_Q}(i)\ .
\]
Second,
\[
\dim \calF = n_{\lambda_Q}(0) - n_\lambda(0)\geq n_{\lambda_Q}(0) - n_\lambda(0) + \sum_{i\geq m+1} (n_{\lambda_Q}(i)-n_\lambda(i))=a_m\ ,
\]
using the same majorization.
\end{proof}

\begin{lemma} Let $k$ be a perfect field, and $V$ a finite-dimensional $k$-vector space equipped with a decreasing filtration $F^m V\subset V$, $m=1,\ldots,N$. Let $a_1,\ldots,a_N$ be integers such that $a_m\leq \dim F^m V$, and let $n\leq \dim V$ be an integer such that $n\geq a_m$ for all $m$. Consider the space $X$ of subspaces $\calF\subset V$ of dimension $n$ such that $\dim (\calF\cap F^m V)\geq a_m$ for all $m=1,\ldots,N$. Then $X$ is a projective $k$-scheme, and
\[
R\Gamma(X_\perf,\calO)=k\ .
\]
\end{lemma}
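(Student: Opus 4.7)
The plan is as follows. First, for projectivity, I would observe that the condition $\dim(\calF\cap F^m V)\geq a_m$ translates, via the natural map $\calF\to V/F^m V$ of tautological sheaves on $\Gr(n,V)$, to a rank bound of $n-a_m$, and this is a closed determinantal condition; so $X\subset \Gr(n,V)$ is a closed subscheme, hence projective over $k$.

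For the cohomology statement, I would first normalize so that $a_1\geq\cdots\geq a_N$: replacing $a_m$ by $\max_{m'\geq m}a_{m'}$ leaves $X$ unchanged, since $\calF\cap F^m V\supset \calF\cap F^{m'}V$ for $m'\geq m$. Then I would construct a Demazure-style resolution $\pi:\widetilde X\to X$, where $\widetilde X$ is the $k$-scheme parametrizing tuples $(\calF,W_1,\ldots,W_N)$ with
\[ W_N\subset W_{N-1}\subset\cdots\subset W_1\subset \calF\subset V,\quad \dim\calF=n,\quad \dim W_m=a_m,\quad W_m\subset F^m V. \]
Read bottom-up, $\widetilde X$ is an iterated tower of Grassmannian bundles over $\Spec k$: first choose $W_N\in \Gr(a_N,F^N V)$, then $W_{N-1}/W_N\in \Gr(a_{N-1}-a_N,\, F^{N-1}V/W_N)$, and so on, finally $\calF/W_1\in \Gr(n-a_1,\, V/W_1)$. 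The relevant Grassmannians will all be nonempty by the hypotheses $a_m\leq \dim F^m V$ and $n\geq a_1$ together with the monotonicity of $a_m$. Hence $\widetilde X$ will be smooth and projective over $k$ with $R\Gamma(\widetilde X,\calO)=k$, and so $R\Gamma(\widetilde X_\perf,\calO)=k$ as well.

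Next, the projection $\pi:(\calF,W_\bullet)\mapsto \calF$ will be proper, and surjective onto $X$: given $\calF\in X(\bar k)$, the $W_m$ can be chosen inductively inside $\calF\cap F^\bullet V$ thanks to $\dim(\calF\cap F^m V)\geq a_m$. The geometric fiber $\pi^{-1}(\calF)$ will itself be an iterated Grassmannian bundle over $\Spec k(\bar k)$, with $m$th-stage Grassmannians $\Gr(a_m-a_{m+1},\,(\calF\cap F^m V)/W_{m+1})$, again nonempty by the definition of $X$; in particular $R\Gamma(\pi^{-1}(\calF)_\perf,\calO)=k(\bar k)$. Applying Lemma~\ref{CriterionO} to $\pi_\perf$ (which is proper, surjective and perfectly finitely presented since $\pi$ is obtained from a morphism of finite-type $k$-schemes) will yield $R\pi_{\perf,\ast}\calO_{\widetilde X_\perf}=\calO_{X_\perf}$, whence
\[ R\Gamma(X_\perf,\calO)=R\Gamma(\widetilde X_\perf,\calO)=k. \]

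The main technical point throughout is bookkeeping the dimensions to ensure that every Grassmannian appearing in the tower (both for $\widetilde X$ itself and for the fibers of $\pi$) is actually nonempty; this reduces to elementary arithmetic in $(a_m,\, n,\, \dim F^m V)$ given the monotonicity reduction. Everything else is then a formal consequence of Lemma~\ref{CriterionO} and the trivial cohomology of Grassmannian bundles.
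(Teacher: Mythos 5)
Your argument is correct, and the projectivity step and the reduction to Lemma~\ref{CriterionO} are sound. However, the route differs somewhat from the paper's. The paper proves the cohomological statement by induction on $N$: it introduces a single partial resolution $\tilde X \to X$ that only records a subspace $W\subset \calF\cap F^N V$ of dimension $a_N$, notes that $\tilde X\to X$ has Grassmannian fibres, and then observes that the other projection $\tilde X\to \Gr(a_N,F^N V)$ has fibres which are schemes of the same form for $V/W$ with one fewer filtration step, so the inductive hypothesis together with two applications of Lemma~\ref{CriterionO} closes the argument. You instead build the full flag resolution $\widetilde X$ all at once, identify it globally as an iterated tower of Grassmannian bundles over $\Spec k$ (which gives $R\Gamma(\widetilde X_\perf,\calO)=k$ directly), and apply Lemma~\ref{CriterionO} a single time to the projection to $X$, checking pointwise that each geometric fibre is a nonempty iterated Grassmannian tower. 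The monotonicity normalization $a_m\mapsto\max_{m'\geq m}a_{m'}$, which you verify is harmless (it preserves the hypotheses $a_m\leq\dim F^m V$ and $n\geq a_m$ because $F^\bullet$ is decreasing), is what makes the one-shot tower structure work; the paper avoids needing it by consuming only the bottom piece of the filtration at each inductive step. Both arguments reduce to the same arithmetic and the same application of Lemma~\ref{CriterionO}; your version front-loads the bookkeeping into the global construction, while the paper's spreads it across the induction.
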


\begin{proof} Clearly, $X$ is a projective $k$-scheme, as it is closed inside a Grassmannian.

For the cohomological statement, we argue by induction on $N$. We may clearly assume $a_m > 0$ for all $m$. Let $\tilde{X}\to X$ be the covering which parametrizes subspaces $W\subset \calF\cap F^N V$ of exact dimension $a_N$. Then all fibres of $\tilde{X}\to X$ are perfections of Grassmannians. By Lemma \ref{CriterionO}, the derived pushforward of $\calO_{\tilde{X}_\perf}$ is $\calO_{X_\perf}$. Thus, it is enough to prove that
\[
R\Gamma(\tilde{X}_\perf,\calO)=k\ .
\]
On the other hand, $\tilde{X}$ maps to the Grassmannian of $a_N$-dimensional subspaces of $F^N V$, and each fibre is a similar scheme for $V/W$ with the induced filtration and $a_m^\prime = a_m - a_N$, $n^\prime = n-a_N$. By induction and Lemma \ref{CriterionO}, the result follows.
\end{proof}

\begin{proof}[Proof of Lemma \ref{Semicontinuity}] By Lemma \ref{ImageDemazure} and Proposition \ref{ExDemazure}, the desired locus is the image of a proper perfectly finitely presented morphism, and therefore closed.
\end{proof}

\section{The Witt vector affine Grassmannian}
\label{sec:WittAffGrGLn}

In this section, we use the results proved earlier in the paper to establish the promised representability result for the Witt vector affine Grassmannian.

\subsection{Statements}

We now introduce the affine Grassmannian. For this, fix once and for all an integer $n\geq 0$.

\begin{definition} For any sequence $\lambda=(\lambda_1,\ldots,\lambda_n,0,\ldots)$ of non-negative integers as above, let $\Gr_{\leq \lambda}$ be the functor on $\Perf$ sending $X\in \Perf$ to the set of finite projective $W(\calO_X)$-submodules $\calE\subset W(\calO_X)^n$ such that the defining inclusion $\beta: \calE\hookrightarrow W(\calO_X)^n$ is an isogeny, and $\calQ=\coker(\beta)$ is of type $\leq \lambda$.

	Let $\Gr_\lambda\subset \Gr_{\leq \lambda}$ be the subfunctor where the type is exactly $\lambda$. For $\mu\leq \lambda$, there is a closed immersion $\Gr_{\leq \mu} \subset \Gr_{\leq \lambda}$ of functors (by Lemma \ref{Semicontinuity}), and $\Gr_\lambda = \Gr_{\leq \lambda} \setminus \cup_{\mu < \lambda} \Gr_{\leq \mu}$ is open in $\Gr_{\leq \lambda}$.
\end{definition}

Note that by Theorem \ref{thm:VectWhStack} and Corollary \ref{cor:HomVectWhStack}, the functor $\Gr_{\leq \lambda}$ is a $v$-sheaf. The following theorem was proved by Zhu, \cite{ZhuMixedCharGeomSatake}.

\begin{theorem}[Zhu]\label{thm:ZhuWittGrProper}
The functor $\Gr_{\leq \lambda}$ is represented by the perfection of a proper algebraic space over $\F_p$.
\end{theorem}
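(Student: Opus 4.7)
The plan is to realize $\Gr_{\leq \lambda}$ as the quotient of a Demazure-style resolution that is manifestly the perfection of a projective scheme, using the Demazure scheme construction from \S\ref{sec:torsionfamilies} in the global setting.

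Define the Demazure functor $\widetilde{\Gr}_\lambda$ on $\Perf$ sending $X$ to the set of decreasing filtrations $W(\calO_X)^n = F^0 \supset F^1 \supset \cdots \supset F^{\lambda_1}$ by $W(\calO_X)$-submodules with each successive quotient $F^i/F^{i+1}$ a finite locally free $\calO_X$-module of rank $n_\lambda(i)$ (in the notation of Remark~\ref{rmk:RankGraded}). Since $p \cdot F^i \subset F^{i+1}$ at each step, $\calE := F^{\lambda_1}$ contains $p^{\lambda_1} W(\calO_X)^n$, hence defines an isogeny $\calE \hookrightarrow W(\calO_X)^n$, and the induced filtration on $\calQ := W(\calO_X)^n/\calE$ is as in Definition~\ref{DefDemazure}, which by Lemma~\ref{CharMajorization} forces $\calQ$ to have type $\leq \lambda$. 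This yields a forgetful map $\pi: \widetilde{\Gr}_\lambda \to \Gr_{\leq \lambda}$, $(F^\bullet) \mapsto F^{\lambda_1}$. Inductively on $i$, one checks that $F^i$ is a finite projective $W(\calO_X)$-module of rank $n$ (using Lemma~\ref{ReductionProjectiveDim} to see that $\gr^i = F^i/F^{i+1}$ has projective dimension $1$ over $W(\calO_X)$), and that choosing $F^{i+1} \subset F^i$ amounts to choosing a rank-$n_\lambda(i)$ locally free quotient of $F^i/pF^i \cong \calO_X^n$. The latter is parametrized by the perfection of a classical Grassmannian over the previous stage, so $\widetilde{\Gr}_\lambda$ is representable by the perfection of an iterated classical Grassmannian bundle, in particular of a smooth projective $\F_p$-scheme.

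By Lemma~\ref{ImageDemazure}, $\pi$ is surjective on geometric points, and the geometric fibre over a point corresponding to $\calE \hookrightarrow W(\calO_X)^n$ is precisely the Demazure scheme $\Dem_\lambda(\calQ)$ from Proposition~\ref{ExDemazure}, which is proper and collapses to a point exactly over the open stratum $\Gr_\lambda \subset \Gr_{\leq \lambda}$. The self-fibre product $R := \widetilde{\Gr}_\lambda \times_{\Gr_{\leq \lambda}} \widetilde{\Gr}_\lambda$ parametrizes pairs of such filtrations on a common $\calE$; projecting to the first factor and applying Proposition~\ref{ExDemazure} to the universal $\calQ$ over $\widetilde{\Gr}_\lambda$, one sees that $R$ is also the perfection of a projective $\F_p$-scheme. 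Thus $\Gr_{\leq \lambda}$ has a presentation as the coequalizer (in $v$-sheaves) of two proper maps between perfections of projective schemes.

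To promote this presentation to the statement that $\Gr_{\leq \lambda}$ is the perfection of a proper algebraic space, I would take finite-type models of $\widetilde{\Gr}_\lambda$ and $R$ via Proposition~\ref{FPModelsExist}, producing a proper equivalence relation in finite-type $\F_p$-schemes, and invoke the representability of such quotients as proper algebraic spaces. The main obstacle is precisely this last step: the source--target maps of $R$ are proper but not flat, so classical quotient theorems do not immediately apply. One concrete way to address this is by induction on $\lambda$ in the dominance order: the open stratum $\Gr_\lambda$ is representable as the open subscheme $\pi^{-1}(\Gr_\lambda) \subset \widetilde{\Gr}_\lambda$ (where $\pi$ is an isomorphism by Lemma~\ref{ImageDemazure}), the closed complement $\bigcup_{\mu < \lambda} \Gr_{\leq \mu}$ is representable by induction, and one glues these along their common boundary, with the gluing being automatic because $\Gr_{\leq \lambda}$ is already known to be a $v$-sheaf by Theorem~\ref{thm:VectWhStack}, Corollary~\ref{cor:HomVectWhStack}, and Theorem~\ref{CritHSheaf}.
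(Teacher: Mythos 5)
The paper does not prove Theorem~\ref{thm:ZhuWittGrProper}: it is attributed to Zhu and cited to \cite{ZhuMixedCharGeomSatake}, while \S\ref{sec:WittAffGrGLn} gives an independent proof of the strictly stronger Theorem~\ref{thm:WittGrProjective} (projectivity together with an ample line bundle) that does not pass through the algebraic-space statement. So there is no argument in the paper for you to have matched.

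Your reduction is correct up to the final paragraph: $\widetilde{\Gr_\lambda}$ and the relation $R = \widetilde{\Gr_\lambda} \times_{\Gr_{\leq \lambda}} \widetilde{\Gr_\lambda}$ are both perfections of projective $\F_p$-schemes, the coequalizer description of $\Gr_{\leq\lambda}$ as a $v$-sheaf is right, and you correctly flag that the projections $R \rightrightarrows \widetilde{\Gr_\lambda}$ are proper but not flat, so no off-the-shelf quotient theorem applies. The proposed fix does not close this gap, however. Knowing that the open stratum $\Gr_\lambda$ and the closed complement $\bigcup_{\mu<\lambda}\Gr_{\leq\mu}$ are each representable, and that $\Gr_{\leq\lambda}$ is a $v$-sheaf, does \emph{not} by itself produce an algebraic space structure on the union: the two strata have empty intersection, so there is no overlap over which to glue, and passing from a locally closed decomposition by representable pieces to a representable total space is exactly the nontrivial content to be supplied --- it amounts, roughly, to exhibiting a contraction of the boundary $\psi^{-1}(\bigcup_{\mu<\lambda}\Gr_{\leq\mu})\subset\widetilde{\Gr_\lambda}$, and such contractions need not exist in general. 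The paper's proof of Theorem~\ref{thm:WittGrProjective} supplies this missing mechanism differently: it constructs the determinant line bundle $\calL$ on $\Gr_{\leq\lambda}$, shows $\widetilde{\calL}=\psi^\ast\calL$ is nef and big with exceptional locus $E(\widetilde{\calL})$ contained in $\psi^{-1}(\bigcup_{\mu<\lambda}\Gr_{\leq\mu})$, and then invokes Keel's theorem \cite{KeelSemiAmple}: the inductive hypothesis that $\calL$ is ample on the boundary feeds into the hypothesis of Keel's theorem (semiampleness on $E(\widetilde{\calL})$), yielding semiampleness of $\widetilde{\calL}$ globally, after which $\Gr_{\leq\lambda}$ is identified with the Stein factorization of $\widetilde{\Gr_\lambda}$ with respect to $\widetilde{\calL}$. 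Your strata-by-strata induction is in the same spirit, but it must be run through a result like Keel's (or an Artin-type contraction theorem, or Zhu's original argument) rather than a bare $v$-sheaf gluing claim.
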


Our main result is:

\begin{theorem}\label{thm:WittGrProjective}
The functor $\Gr_{\leq \lambda}$ is representable by a proper perfectly finitely presented $\F_p$-scheme, and there is a natural ample line bundle $\calL \in \Pic(\Gr_{\leq \lambda})$.
\end{theorem}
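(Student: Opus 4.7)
The strategy breaks into two parts: constructing the line bundle $\calL$, then proving it is ample. Given ampleness, Zhu's Theorem~\ref{thm:ZhuWittGrProper} represents $\Gr_{\leq \lambda}$ as the perfection of a proper algebraic space $X_0/\F_p$; since $\Pic(X_0)[\tfrac{1}{p}] \simeq \Pic(\Gr_{\leq \lambda})$ some power of $\calL$ descends to a line bundle $\calL_0$ on $X_0$, and Lemma~\ref{AmplePerf} together with standard algebraic-space theory then upgrades $X_0$ to a projective scheme, yielding the representability claim.

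For the construction, the universal cokernel $\calQ$ of the universal isogeny $\beta:\calE \hookrightarrow W(\calO)^n$ over $\Gr_{\leq \lambda}$ has projective dimension $1$ as a $W(\calO)$-module by Lemma~\ref{PDDim1}, so it defines a class in $K_0$ of the category $\Perf(W(\Gr_{\leq \lambda}) \on \Gr_{\leq \lambda})$. Applying the extended determinant of Theorem~\ref{thm:DetGeneral} and projecting $\gPic^\Z \to \gPic$ gives $\calL := \eta(\widetilde{\det}(\calQ)) \in \Pic(\Gr_{\leq \lambda})$.

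For ampleness I would induct on $\lambda$ for the partial order $\leq$. The inductive hypothesis is that $\calL|_{\Gr_{\leq \mu}}$ is ample for each $\mu < \lambda$, so in particular $\calL$ is ample on the boundary $B := \bigcup_{\mu < \lambda} \Gr_{\leq \mu}$, which is closed in $\Gr_{\leq \lambda}$ by Lemma~\ref{Semicontinuity}. First I check $\calL$ is nef: the Demazure scheme $\pi:\Dem_\lambda(\calQ) \to \Gr_{\leq \lambda}$ from Definition~\ref{DefDemazure} is proper, surjective onto $\Gr_{\leq \lambda}$ by Lemma~\ref{ImageDemazure}, and built as an iterated tower of perfections of Grassmannian bundles. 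On $\Dem_\lambda(\calQ)$ the universal filtration together with the construction of $\widetilde{\det}$ identifies $\pi^*\calL \simeq \bigotimes_i \det_{\calO}(\gr^i \calQ)$ (this is the same identification that underlies Lemma~\ref{TrivialLineBundle}), and each $\det(\gr^i \calQ)$ is a relatively ample line bundle on its step of the tower; hence $\pi^*\calL$ is nef on the Demazure scheme, which forces $\calL$ to be nef on $\Gr_{\leq \lambda}$ since $\pi$ is a proper surjection.

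With $\calL$ nef, I would invoke Keel's theorem on semiample line bundles in positive characteristic, which reduces semiampleness of $\calL$ to semiampleness of its restriction to the exceptional locus $\mathbb{E}(\calL)$ (the locus on which $\calL$ fails to be big). The crucial geometric claim to verify is $\mathbb{E}(\calL) \subseteq B$: on the open stratum $\Gr_\lambda$, Lemma~\ref{lem:ConstantTypeProjective} gives globally defined projective graded pieces for the $p$-adic filtration on $\calQ$, and together with a pushforward computation via $\pi$ and the explicit formula $\pi^*\calL = \bigotimes_i \det(\gr^i \calQ)$ one checks that the top self-intersection of $\calL$ is positive on every irreducible component of $\Gr_\lambda$. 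Granted $\mathbb{E}(\calL) \subseteq B$, the inductive hypothesis gives $\calL|_{\mathbb{E}(\calL)}$ ample, in particular semiample, so Keel provides semiampleness of $\calL$ globally. Combined with bigness of $\calL$ on every closed integral subscheme of $\Gr_{\leq \lambda}$ (again via Demazure plus the stratification $\Gr_{\leq \lambda} = \Gr_\lambda \sqcup B$ and induction), Nakai--Moishezon then upgrades semiampleness to ampleness.

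The main obstacle is the bigness/exceptional-locus analysis on $\Gr_\lambda$: one must match the $K$-theoretic definition of $\calL$ on the open stratum with an intersection-theoretic computation on the Demazure tower, and show that the birational contraction $\pi$ does not make any positive-dimensional subvariety of $\Gr_\lambda$ disappear into $\mathbb{E}(\calL)$. Everything else (nefness, semiampleness on $B$, the descent from perfect back to finite-type models) is a formal consequence of the tools already assembled in Sections~\ref{sec:DetConsKtheory}--\ref{sec:torsionfamilies}.
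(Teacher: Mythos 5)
Your high-level plan (build $\calL$ via $\widetilde{\det}$, pass to the Demazure tower, invoke Keel, and induct on $\lambda$) matches the paper, and the identification $\pi^*\calL \simeq \bigotimes_i \det(\gr^i\calQ)$ is correct. But there is a genuine gap in the nefness step, and the overall route differs from the paper's in ways worth noting.

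\textbf{The gap.} You argue: ``each $\det(\gr^i\calQ)$ is a relatively ample line bundle on its step of the tower; hence $\pi^*\calL$ is nef on the Demazure scheme.'' This inference does not hold. Relative ampleness of $\det(\calQ_i)$ for the single projection $X_{i+1}\to X_i$ controls its sign only along the fibers of that one map; pulled all the way up to $X_{\lambda_1}$, nothing prevents $\det(\calQ_i)$ from being negative along curves contracted by the \emph{other} projections in the tower, and the unweighted product $\bigotimes_i\det(\calQ_i)$ could a priori have negative degree on such curves. What is true (and is the content of Lemma~\ref{lem:LineBundlesonDemazure}) is that a \emph{weighted} product $\bigotimes_i\det(\calQ_i)^{a_i}$ with $a_0\gg a_1\gg\dots\gg 0$ is ample, and that each $\det(\calQ_i)$ individually has a global section non-vanishing at any prescribed point of $\Gr_\lambda$. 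The paper's nefness argument (Lemma~\ref{lem:StrictlyNef}) is exactly the combination of these two facts: on a curve meeting $\Gr_\lambda$ each $\det(\calQ_i)$ is effective by the sections statement; if the tensor product had degree $0$, all would be trivial on the curve, contradicting ampleness of the weighted product. You mention the sections statement only later, inside your bigness discussion, but it is essential already for nefness and cannot be replaced by relative ampleness.

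\textbf{The different route.} You propose to get representability from Zhu's Theorem~\ref{thm:ZhuWittGrProper} (perfection of a proper algebraic space) and then upgrade to a projective scheme via ampleness, and to pass from semiampleness to ampleness via a Nakai--Moishezon style argument on all closed subvarieties. The paper instead avoids Zhu's result entirely: it takes the Stein factorization $\phi:\widetilde{\Gr_\lambda}\to X$ of the semiample $\widetilde\calL$ and identifies $X\simeq\Gr_{\leq\lambda}$ by matching the two equivalence relations on $\widetilde{\Gr_\lambda}$, using strict nefness to detect contracted curves. This gets representability and ampleness in one stroke, with no recourse to algebraic-space theory or to Nakai--Moishezon, and is precisely what makes the proof logically independent of Zhu's theorem (a point the paper highlights). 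Your route is not unreasonable, but it needs to be fleshed out: your ``bigness on every closed integral subscheme'' is only sketched on the irreducible components of $\Gr_\lambda$, whereas Nakai--Moishezon needs it for \emph{every} integral closed subvariety (this does follow from the inductive hypothesis for subvarieties living in the boundary plus Lemma~\ref{lem:BigonDemazure} for those meeting $\Gr_\lambda$, but you should say so). You also need a version of Nakai--Moishezon valid on proper algebraic spaces in order to conclude that $X_0$ is a scheme, and a justification for transferring intersection positivity along the perfection map, since Lemma~\ref{AmplePerf} as stated is for schemes.
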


In particular, $\Gr_{\leq\lambda}$ is the perfection of a projective $\F_p$-scheme.

In the $K$-theoretic approach of Theorem \ref{thm:DetGeneral}, the line bundle $\calL$ is given by $\calL=\widetilde{\det}(\calQ)$. Note that $\calQ$ is quasi-isomorphic to the perfect complex $\calE\to W(\calO)^n$, which is supported set-theoretically on the special fibre $\{p=0\}$. In the geometric approach, the existence of $\calL$ is Theorem \ref{ExLineBundle} below.

Our proof of Theorem \ref{thm:WittGrProjective} is independent of Theorem \ref{thm:ZhuWittGrProper}. Our strategy is to first construct the line bundle $\calL$, and then to employ a fundamental theorem of Keel \cite{KeelSemiAmple} on positivity of line bundles in characteristic $p$; this will allow us to work our way up to $\Gr_{\leq \lambda}$ from lower-dimensional strata by induction. Keel's theorem only applies to projective schemes, so we cannot apply it directly to $\Gr_{\leq \lambda}$; instead, we will apply it to a suitable $h$-cover. 

\subsection{The Demazure resolution}

To study $\Gr_{\leq \lambda}$, it will be useful to have access to (what will turn out to be) a convenient resolution.

\begin{definition}\label{DefDemazureRes} The Demazure resolution is the map
\[
\psi: \widetilde{\Gr_\lambda} = \Dem_\lambda(\calQ)\to \Gr_{\leq \lambda}\ ,
\]
of $v$-sheaves on $\Perf$, where $W(\calO)^n\to \calQ$ denotes the universal cokernel on $\Gr_{\leq \lambda}$.
\end{definition}

\begin{remark}\label{RemDemazure}
By Proposition \ref{ExDemazure}, the map $\psi: \widetilde{\Gr_\lambda}\to \Gr_{\leq \lambda}$ is relatively representable by a proper perfectly finitely presented map. Moreover, $\psi$ is surjective and an isomorphism over $\Gr_\lambda$ by Lemma \ref{ImageDemazure}, and
\[
R\psi_\ast \calO_{\widetilde{\Gr}_\lambda} = \calO_{\Gr_{\leq \lambda}}
\]
by Lemma \ref{ImageDemazure} and Lemma \ref{CriterionO}.\footnote{As we do not yet know that $\Gr_{\leq \lambda}$ is representable, all of these assertions mean that they hold true after an arbitrary base-change to a representable $S$ over $\Gr_{\leq \lambda}$. Note that viewing $\Gr_{\leq \lambda}$ as an object of a suitable ringed topos (the one associated to the site $\Perf$ ringed using $\calO$ equipped with the $v$-topology) leads to a potentially different notion of a pushforward. Nevertheless, both these definitions coincide by Lemma \ref{BaseChange}. }
\end{remark}

The following proposition justifies the name Demazure \emph{resolution}.

\begin{proposition}\label{ExDemazureResolution}	The functor $\widetilde{\Gr_\lambda}$ is representable by the perfection of a smooth projective $\F_p$-scheme. It represents the functor associating to $X\in \Perf$ the set of $\lambda_1$-tuples $\calE_{\lambda_1}\subset \ldots\subset \calE_1\subset \calE_0 = W(\calO_X)^n$ of finite projective $W(\calO_X)$-submodules of $W(\calO_X)^n$ such that each $\calQ_i := \calE_i/\calE_{i+1}$ is a finite projective $\calO_X$-module of rank $n_\lambda(i)$, where $n_\lambda(i)$ is the number defined in Remark \ref{rmk:RankGraded}.
\end{proposition}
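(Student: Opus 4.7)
The strategy splits into two parts: (A) identify $\widetilde{\Gr_\lambda}(X)$ with the moduli of flags $\calE_\bullet$ described in the statement, and (B) exhibit this moduli as the perfection of an iterated Grassmannian bundle over $\Spec \F_p$.

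For (A), I would proceed as follows. By Definition \ref{DefDemazureRes}, a point of $\widetilde{\Gr_\lambda}(X)$ is a point of $\Gr_{\leq \lambda}(X)$ (an isogeny $\calE\hookrightarrow W(\calO_X)^n$ with cokernel $\calQ$ of type $\leq \lambda$) together with a filtration $\Fil^\bullet\calQ$ whose $i$-th graded piece is finite projective of rank $n_\lambda(i)$. Set $\calE_i$ to be the preimage of $\Fil^i\calQ$ under $W(\calO_X)^n\twoheadrightarrow \calQ$, producing the flag $W(\calO_X)^n = \calE_0 \supset \cdots \supset \calE_{\lambda_1} = \calE$ with $\calE_i/\calE_{i+1} \cong \gr^i\calQ$. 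The inverse map sends $\calE_\bullet$ to the pair $(\calE_{\lambda_1}, \Fil^i\calQ := \calE_i/\calE_{\lambda_1})$. The verification reduces to two checks: every intermediate $\calE_i$ is finite projective over $W(\calO_X)$, and $\calE_{\lambda_1}\hookrightarrow W(\calO_X)^n$ is an isogeny with cokernel of type $\leq \lambda$. The first follows from iterating Schanuel's lemma together with Lemma \ref{ReductionProjectiveDim}, which guarantees that each graded piece, being finite projective over $\calO_X$, has projective dimension $1$ over $W(\calO_X)$. The containment $p^{\lambda_1}W(\calO_X)^n \subset \calE_{\lambda_1}$ (obtained by iterating $p\calE_i\subset\calE_{i+1}$) yields the isogeny claim, and the type bound is Lemma \ref{ImageDemazure}.

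For (B), let $F^{(k)}$ be the functor of truncated flags $(\calE_0, \ldots, \calE_k)$ satisfying the stated conditions, so that $F^{(\lambda_1)} = \widetilde{\Gr_\lambda}$. The forgetful morphism $F^{(k+1)}\to F^{(k)}$ parametrizes choices of $\calE_{k+1}\subset\calE_k$ with $\calE_k/\calE_{k+1}$ locally free of rank $n_\lambda(k)$. Being $p$-torsion, this quotient forces $p\calE_k\subset\calE_{k+1}$, so specifying $\calE_{k+1}$ amounts to specifying a rank-$n_\lambda(k)$ locally free quotient of the rank-$n$ $\calO$-bundle $\calE_k/p\calE_k$. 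Hence $F^{(k+1)}\simeq \Gr(n_\lambda(k), \calE_k/p\calE_k)$ as perfect schemes over $F^{(k)}$.

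Finally, I would produce compatible smooth projective $\F_p$-models $T_k$ with $F^{(k)}\simeq (T_k)_\perf$ by induction on $k$, maintaining the stronger hypothesis that the universal $\calE_k/p\calE_k$ is the perfection of a rank-$n$ vector bundle $\tilde V_k$ on $T_k$. Setting $T_{k+1} := \Gr(n_\lambda(k), \tilde V_k)$ gives a smooth projective Grassmannian bundle over $T_k$ whose perfection is $F^{(k+1)}$, since the Grassmannian construction commutes with perfection. I expect the main obstacle to be propagating the inductive hypothesis: one must show that $\calE_{k+1}/p\calE_{k+1}$ on $(T_{k+1})_\perf$ again descends (up to a Frobenius twist) to a rank-$n$ vector bundle on $T_{k+1}$. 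I would address this via the approximation principle underlying Proposition \ref{ApproxFP}: vector bundles on a perfection are the filtered colimit of vector bundles on finite-type models along Frobenius pullback, so any rank-$n$ bundle on $(T_{k+1})_\perf$ arises as the perfection of a bundle on a suitable Frobenius twist of $T_{k+1}$; since $T_{k+1}$ is defined over $\F_p$, such a twist is isomorphic to $T_{k+1}$ itself. Iterating the construction yields the desired smooth projective model of $\widetilde{\Gr_\lambda}$.
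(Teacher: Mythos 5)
Your proposal is correct and takes the same two-step route as the paper: identify the Demazure scheme with the flag moduli via Lemma \ref{ImageDemazure} (which makes the type-$\leq\lambda$ condition automatic), then exhibit it as an iterated perfect Grassmannian bundle using Lemma \ref{ReductionProjectiveDim} to keep the $\calE_i$ finite projective. You spell out one step the paper leaves implicit — that each stage's rank-$n$ bundle $\calE_k/p\calE_k$ on $(T_k)_\perf$ descends, via the approximation argument, to a vector bundle on a finite type model, so the Grassmannian bundle at the next stage is genuinely the perfection of a smooth projective $\F_p$-scheme rather than merely the perfection of a smooth projective scheme over a non-finite-type base — but this is a detail of the same induction, not a different approach.
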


\begin{proof} From the definitions, it follows that $\widetilde{\Gr_\lambda}$ is the subfunctor of those $\lambda_1$-tuples $(\calE_1,\ldots,\calE_{\lambda_1})$ as in the statement, for which the cokernel $\calQ$ of $\calE_{\lambda_1}\hookrightarrow W(\calO_X)^n$ is of type $\leq \lambda$. However, Lemma \ref{ImageDemazure} guarantees that $\calQ$ is always of type $\leq \lambda$.

The moduli description now presents $\widetilde{\Gr_\lambda}$ as a successive perfect Grassmannian bundle. Indeed, set $X_0=\Spec(\F_p)$, $\calE_0 = W(\calO_{X_0})^n$. Define inductively $\pi_i: X_{i+1}\to X_i$ and $\calE_{i+1}$ over $X_{i+1}$, by letting $X_{i+1}=\Quot(\calE_i/p,n_\lambda(i))$ parametrize quotients $\calQ_i$ of $\calE_i/p$ of rank $n_\lambda(i)$, and setting $\calE_{i+1} = \ker(\pi_i^\ast \calE_i\to \calQ_i)$. Then $\calE_{i+1}$ is still a finite projective $W(\calO_{X_{i+1}})$-module by Lemma \ref{ReductionProjectiveDim}, and $X_{\lambda_1} = \widetilde{\Gr_\lambda}$. By induction, each $X_i$ is the perfection of a smooth projective $\F_p$-scheme.
\end{proof}

The intermediate isogenies being recorded in $\widetilde{\Gr_{\lambda}}$ keep track of how the quotient $\calQ$ on $\Gr_{\leq \lambda}$, which is a $p^{\lambda_1}$-torsion module, is filtered by $p$-torsion modules. The following example is perhaps useful: 

\begin{example} Consider the case $n = 3$ and $\lambda = (2,1,0)$. Then $\widetilde{\Gr_\lambda}$ parametrizes chains of isogenies $\calE_2 \to \calE_1 \to \calE_0=W(\calO)^3$, where each $\calQ_i := \calE_i/\calE_{i+1}$ is a vector bundle of rank $2-i$. The only element $\mu < \lambda$ is $\mu := (1,1,1)$. Hence, $\Gr_{\leq \lambda}$ has a $2$-step stratification with open $\Gr_\lambda$, and closed $\Gr_\mu$. The Demazure resolution $\widetilde{\Gr_\lambda}$ is a tower of two perfect $\P^2$-bundles over a point, and hence has dimension $4$; it follows $\Gr_\lambda$ also has dimension $4$. Now $\Gr_\mu$ is a single point, and classifies the inclusion $p\calE_0 \hookrightarrow \calE_0$. The fibre of $\psi$ over $\Gr_\mu$ is the perfection of $\P^2$, corresponding to the Grassmannian of rank-$2$-quotients of $\calE_0/p\calE_0=\calO^3$.
\end{example}

\subsection{The line bundle $\calL$}

The goal of this section is to construct a natural line bundle $\calL$ on $\Gr_{\leq \lambda}$.

\begin{theorem}\label{ExLineBundle} There is a line bundle $\calL_\lambda$ on $\Gr_{\leq \lambda}$ such that $\psi^\ast \calL_\lambda$ is the line bundle $\widetilde{\calL_\lambda}$ on $\widetilde{\Gr_\lambda}$ given by
\[
\widetilde{\calL_\lambda} = \bigotimes_{i=0}^{\lambda_1-1} \det_{\calO_X} \calQ_i\ .
\]
The restriction of $\calL_\lambda$ to $\Gr_{\leq \mu}$ is given by $\calL_\mu$ for $\mu<\lambda$, compatibly in $\mu$.
\end{theorem}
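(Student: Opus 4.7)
The plan is to first define an explicit line bundle $\widetilde{\calL_\lambda}$ on the Demazure resolution $\widetilde{\Gr_\lambda}$ and descend it along $\psi:\widetilde{\Gr_\lambda}\to \Gr_{\leq\lambda}$ using the fibral descent criterion (Theorem \ref{ThmVectTrivial}), and then to establish the compatibility with $\mu\leq \lambda$ by comparing pullbacks along a common complete-flag refinement.

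For the construction, set $\widetilde{\calL_\lambda}=\bigotimes_{i=0}^{\lambda_1-1}\det_{\calO}\calQ_i$ on $\widetilde{\Gr_\lambda}$; this is a line bundle since each $\calQ_i$ is finite projective over $\calO$ by Proposition \ref{ExDemazureResolution}. By Remark \ref{RemDemazure}, the map $\psi$ is proper, perfectly finitely presented, surjective, and satisfies $R\psi_\ast \calO_{\widetilde{\Gr_\lambda}}=\calO_{\Gr_{\leq\lambda}}$, so in particular has geometrically connected fibres. To apply Theorem \ref{ThmVectTrivial}, I must check that $\widetilde{\calL_\lambda}$ is trivial on every geometric fibre of $\psi$. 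Such a fibre over $\bar{x}\in\Gr_{\leq\lambda}$ is the Demazure scheme $\Dem_\lambda(Q)$ attached to a fixed finite length $W(k(\bar{x}))$-module $Q$, and $\widetilde{\calL_\lambda}$ restricts there to $\bigotimes_i\det_{\calO}\gr^i$ of the universal filtration of $Q\otimes_{W(k(\bar{x}))}W(\calO)$; this is trivial by Lemma \ref{TrivialLineBundle}. The descent therefore succeeds and produces $\calL_\lambda\in \Pic(\Gr_{\leq\lambda})$; since $\Gr_{\leq\lambda}$ is only known to be a $v$-sheaf at this point, the construction is carried out functorially for any test $S\to\Gr_{\leq\lambda}$, with uniqueness of descent ensuring that these local descents glue into a global line bundle.

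To verify $\calL_\lambda|_{\Gr_{\leq\mu}}\cong \calL_\mu$ for $\mu\leq \lambda$, introduce a complete-flag Demazure $q:Z\to \Gr_{\leq\mu}$ parametrizing filtrations $\calQ=F^0\supset F^1\supset\cdots\supset F^N=0$ of the universal quotient by $W(\calO)$-submodules with each successive quotient $F^j/F^{j+1}$ an $\calO$-line bundle; here $N=\sum_j\mu_j=\sum_j\lambda_j$, the equality being forced by $\mu\leq\lambda$ via Lemma \ref{CharMajorization}. Constructed inductively as a tower of perfect Grassmannians (in the style of the proof of Proposition \ref{ExDemazureResolution}), the map $q$ is proper, perfectly finitely presented, surjective, with geometrically connected fibres. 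Grouping consecutive line subquotients into blocks of sizes $n_\lambda(i)$ (resp.\ $n_\mu(i)$) yields factorizations $Z\to \widetilde{\Gr_\lambda}|_{\Gr_{\leq\mu}}$ and $Z\to \widetilde{\Gr_\mu}$ over $\Gr_{\leq\mu}$. Multiplicativity of determinants in exact sequences then gives canonical identifications of both $q^\ast(\calL_\lambda|_{\Gr_{\leq\mu}})$ and $q^\ast \calL_\mu$ with $\bigotimes_{j=0}^{N-1}\det_{\calO}(F^j/F^{j+1})$; by full faithfulness of $q^\ast$ on vector bundles (Proposition \ref{PullbackFullyFaithful}), the resulting isomorphism descends uniquely to an isomorphism $\calL_\lambda|_{\Gr_{\leq\mu}}\cong \calL_\mu$ on $\Gr_{\leq\mu}$. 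Transitivity along chains $\nu\leq\mu\leq\lambda$ is handled identically by a single complete-flag cover of $\Gr_{\leq\nu}$.

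The main obstacle is the fibral triviality of $\widetilde{\calL_\lambda}$ along $\psi$, which is where all the nontrivial geometric input enters via Lemma \ref{TrivialLineBundle}; the rest of the argument is largely formal once this is granted, resting on the descent machinery of Section \ref{sec:DetConsGeom} together with the structural results on Demazure schemes from Section \ref{sec:torsionfamilies}.
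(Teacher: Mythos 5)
Your construction of $\calL_\lambda$ is correct and matches the paper's geometric approach exactly: descend $\widetilde{\calL_\lambda}$ along $\psi$ via Theorem \ref{ThmVectTrivial}, using Remark \ref{RemDemazure} for the hypotheses on $\psi$ and Lemma \ref{TrivialLineBundle} for the fibrewise triviality; and the functorial gluing over test schemes $S\to\Gr_{\leq\lambda}$ is the right way to handle the fact that representability is not yet known.

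The compatibility argument, however, has a genuine gap at the claimed factorizations. Grouping the steps of a complete flag into consecutive blocks of sizes $n_\mu(i)$ does \emph{not} produce a point of $\widetilde{\Gr_\mu}$: the block-quotient $F^a/F^b$ is a priori only killed by $p^{b-a}$, not by $p$, so it need not be an $\calO$-module. Concretely, take $Q = W(k)/p^3 \oplus W(k)/p$ (type $\mu=(3,1)$, so $n_\mu = (2,1,1)$, $N=4$) and the complete flag
\[
Q \supset pW(k)/p^3 \oplus W(k)/p \supset p^2 W(k)/p^3 \oplus W(k)/p \supset 0 \oplus W(k)/p \supset 0,
\]
all of whose gradeds are lines killed by $p$. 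Grouping into blocks $(2,1,1)$ gives $Q/F^2 \cong W(k)/p^2$, which is not killed by $p$, so this point of your $Z$ has no image in $\widetilde{\Gr_\mu}$. The same problem affects the map to $\widetilde{\Gr_\lambda}|_{\Gr_{\leq\mu}}$.

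The fix is what the paper does: first reduce to $\lambda = (N,0,\ldots)$ (any two comparable elements are both $\leq$ one of this form, so the general case follows by restricting twice), so that $\widetilde{\Gr_\lambda}$ is itself the complete-flag Demazure. Then, instead of taking all complete flags over $\Gr_{\leq\mu}$, take $\widetilde{\Gr_\mu}(\lambda) \to \widetilde{\Gr_\mu}$ to be the cover parametrizing a $\mu$-filtration \emph{together with} a complete-flag refinement of it. This tautologically maps to $\widetilde{\Gr_\mu}$ by forgetting the refinement, and it embeds into $\widetilde{\Gr_\lambda}$ as a \emph{closed} subfunctor (cut out precisely by the conditions that certain $\calE_i/\calE_j$ are killed by $p$ — the conditions your grouping silently assumed). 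The fibres of $\widetilde{\Gr_\mu}(\lambda) \to \widetilde{\Gr_\mu}$ are products of perfections of classical flag varieties, hence geometrically connected, so the composite $\widetilde{\Gr_\mu}(\lambda)\to\Gr_{\leq\mu}$ has fully faithful pullback on vector bundles by Proposition \ref{PullbackFullyFaithful}. Multiplicativity of determinants identifies the two pullbacks of $\calL_\lambda|_{\Gr_{\leq\mu}}$ and $\calL_\mu$ on $\widetilde{\Gr_\mu}(\lambda)$, and full faithfulness then descends this identification to $\Gr_{\leq\mu}$.
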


We recall that by Proposition \ref{PullbackFullyFaithful} and Remark \ref{RemDemazure}, the functor $\psi^\ast$ from line bundles on $\Gr_{\leq \lambda}$ to line bundles on $\widetilde{\Gr_\lambda}$ is fully faithful.

\begin{proof} In the $K$-theoretic approach, one can define $\calL_\lambda = \widetilde{\det}(\calQ)$, which is clearly compatible with restriction to $\Gr_{\leq \mu}$. In the geometric approach, the existence of $\calL_\lambda$ follows by combining Theorem \ref{ThmVectTrivial}, Remark \ref{RemDemazure} and Lemma \ref{TrivialLineBundle}.

However, in the geometric approach it is not immediately clear that the line bundles $\calL_\lambda$ thus constructed are compatible for varying $\lambda$. To see this, it is enough to consider the special case $\lambda=(N,0,\ldots)$: any two comparable $\lambda$'s are both less than some $\lambda$ of this form. Then $\widetilde{\Gr_\lambda}$ parametrizes finite projective $W(\calO)$-submodules $\calE_N\subset \ldots \subset \calE_1\subset \calE_0 = W(\calO)^n$ such that each $\calQ_i = \calE_i / \calE_{i+1}$ is a line bundle. For any $\mu<\lambda$, there is a cover $\widetilde{\Gr_\mu}(\lambda)\to \widetilde{\Gr_\mu}$ refining the universal filtration of $\calQ$ on $\widetilde{\Gr_\mu}$ to a complete flag; this has geometrically connected fibres, given by products of perfections of classical flag varieties. Then $\widetilde{\Gr_\mu}(\lambda)\subset \widetilde{\Gr_\lambda}$ is a closed subfunctor, given by the condition that certain quotients $\calE_i/\calE_j$ are killed by $p$. The line bundles thus constructed on $\widetilde{\Gr_\lambda}$ and $\widetilde{\Gr_\mu}(\lambda)$ are clearly compatible, and then Proposition \ref{PullbackFullyFaithful} implies the same for $\Gr_{\leq \mu}\subset \Gr_{\leq \lambda}$.
\end{proof}

Because of the compatibility between the different $\calL_\lambda$, we will simply call them $\calL$ below.

\subsection{Proof of Theorem \ref{thm:WittGrProjective}}

We put the preceding geometry to use in proving $\calL \in \Pic(\Gr_{\leq \lambda})$ is ample.\footnote{Cf. Lemma \ref{AmplePerf} for the meaning of this statement.}

\begin{lemma}
\label{lem:LineBundlesonDemazure}
The following statements are true about line bundles on $\widetilde{\Gr_\lambda}$.
\begin{enumerate}
\item[{\rm (i)}] The line bundle $\otimes_{i=0}^{\lambda_1 - 1} \det(\calQ_i)^{a_i}$ is ample if $a_0 \gg a_1 \gg a_2 \gg ... \gg a_{\lambda_1 - 1} \gg 0$.
\item[{\rm (ii)}] For each $x \in \Gr_\lambda$ and $i=0,\ldots,\lambda_1-1$, there exists a section $s \in H^0(\widetilde{\Gr_\lambda},\det(\calQ_i))$ such that $s(x) \neq 0$.
\end{enumerate}
\end{lemma}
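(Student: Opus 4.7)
The plan is to prove the two parts separately, exploiting the presentation of $\widetilde{\Gr_\lambda}=X_{\lambda_1}$ as an iterated perfect Grassmannian bundle from the proof of Proposition~\ref{ExDemazureResolution}: each $\pi_i:X_{i+1}=\Quot(\calE_i/p,n_\lambda(i))\to X_i$ is the perfection of a classical relative Grassmannian, and $\det(\calQ_i)$ is the associated relative Pl\"ucker line bundle, hence relatively ample.

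For part (i), I would induct on $j$, proving the stronger claim that $\bigotimes_{i<j}\det(\calQ_i)^{a_i}$ is ample on $X_j$ whenever $a_0\gg\cdots\gg a_{j-1}\gg 0$. The base case $j=1$ follows from Lemma~\ref{AmplePerf} together with ampleness of the classical Pl\"ucker line bundle on $\Gr(n_\lambda(0),\F_p^n)$. For the inductive step, I would invoke the standard fact that for a proper morphism $\pi:X\to Y$, ample $\calL$ on $Y$, and relatively ample $\calM$, the bundle $\calM\otimes\pi^*\calL^N$ is ample on $X$ for $N\gg 0$; applying this to $\pi_j:X_{j+1}\to X_j$ with $\calM=\det(\calQ_j)$ and $\calL=\bigotimes_{i<j}\det(\calQ_i)^{a_i}$, and then rescaling the old exponents by $N$ while setting the new top exponent equal to $1$, preserves the required chain of inequalities and completes the induction. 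Everything passes to perfections by Lemma~\ref{AmplePerf}.

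For part (ii), I would construct explicit Pl\"ucker-type sections of $\det(\calQ_i)$ directly from the universal flag $\calE_{\lambda_1}\subset\cdots\subset\calE_0=W(\calO)^n$. Since each $\calQ_j$ is $p$-torsion, an easy induction gives $p^i\calE_0\subset\calE_i$ universally on $\widetilde{\Gr_\lambda}$, so the composition $p^i\calE_0\hookrightarrow\calE_i\twoheadrightarrow\calQ_i$ factors through $p^i\calE_0/p^{i+1}\calE_0$. Perfectness of $\calO$ gives a canonical $\calO$-linear identification $p^i\calE_0/p^{i+1}\calE_0\cong\calE_0/p\calE_0=\calO^n$ (using $W(R)/p=R$ for $R$ perfect), producing a universal $\calO$-linear map $\alpha:\calO^n\to\calQ_i$ on $\widetilde{\Gr_\lambda}$. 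For each subset $I\subset\{1,\ldots,n\}$ with $|I|=n_\lambda(i)$, the determinant of the restriction $\alpha|_{\calO^I}$ gives a section $s_I\in H^0(\widetilde{\Gr_\lambda},\det(\calQ_i))$.

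The final step is to check that for every $x\in\Gr_\lambda$, some $s_I$ is nonzero at $x$. I would use the fact, established in the proof of Lemma~\ref{ImageDemazure}, that over $\Gr_\lambda$ the filtration is forced to be the $p$-adic one $\calE_i=\calE+p^i\calE_0$: indeed, $\Dem_\lambda(\calQ)\to\Gr_\lambda$ is an isomorphism over the open stratum, and the unique filtration on a module of type exactly $\lambda$ is the $p$-adic one. Consequently, on $\Gr_\lambda$ the composite $p^i\calE_0\to\calQ_i$ is surjective, so the fibre $\alpha_x:k(x)^n\to\calQ_i|_x\cong k(x)^{n_\lambda(i)}$ is a surjective linear map, and some $n_\lambda(i)\times n_\lambda(i)$ minor of its matrix is nonzero; this yields $s_I(x)\neq 0$ for the corresponding $I$. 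The main conceptual step is the construction of the universal map $\alpha$ via perfectness; the rest is a standard Pl\"ucker computation.
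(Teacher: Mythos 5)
Your proof is correct and follows essentially the same strategy as the paper's proof for both parts. For (i), both arguments run the identical induction up the Grassmannian-bundle tower $X_{j+1}\to X_j$, twisting the relatively ample $\det(\calQ_j)$ by a large power of the pulled-back ample bundle from $X_j$. For (ii), your map $\alpha:\calO^n\to\calQ_i$, constructed via $p^i\calE_0\subset\calE_i$ and the canonical $\calO$-linear identification $p^iW(\calO)^n/p^{i+1}W(\calO)^n\cong\calO^n$, is precisely the paper's natural map $p^iW(\calO)^n/p^{i+1}W(\calO)^n\to\calQ_i$ ($f\mapsto p^i\tilde f$); and taking $\det(\alpha|_{\calO^I})$ over coordinate subsets $I$ is the coordinate-free version of the paper's choice of a suitable $1$-dimensional subspace $L_0\subset\wedge^{\rk\calQ_i}(\F_p^n)$. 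The key point in both arguments — surjectivity of $\alpha$ at $x\in\Gr_\lambda$ via the forced $p$-adic filtration from Lemma~\ref{ImageDemazure} — is identical.
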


\begin{proof} For (i), we work by induction using the tower
\[ X_{\lambda_1} \stackrel{\pi_{\lambda_1 - 1}}{\to} X_{\lambda_1 - 1} \to ... \stackrel{\pi_1}{\to} X_1 \stackrel{\pi_0}{\to} X_0\]
encountered in the proof of Proposition \ref{ExDemazureResolution}. Specifically, by induction, we will check that for $m \leq \lambda_1$, the bundle $\otimes_{i=0}^{m-1} \det(\calQ_i)^{a_i}$ is ample on $X_m$ provided $a_0 \gg a_1 \gg a_2 \gg \dots \gg a_{m-1} \gg 0$. When $m = 0$, $X_0$ is a point, so the statement is clear. Assume inductively that there exist integers $b_0 \gg b_1 \gg \dots \gg b_{m-1} \gg 0$ such that $\otimes_{i=0}^{m-1} \det(\calQ_i)^{b_i}$ is ample on $X_m$. Now $X_{m+1} \to X_m$ is (the perfection of) a Grassmannian fibration with universal quotient bundle $\calQ_m$, so $\det(\calQ_m)$ is relatively ample for this morphism. It follows that $\det(\calQ_m) \otimes \Big(\otimes_{i=0}^{m-1} \det(\calQ_i)^{b_i}\Big)^N$ is ample on $X_{m+1}$ for $N \gg 0$, which proves the inductive hypothesis, and thus the claim.

For (ii), as all the $\calQ_i$'s are killed by $p$, we first note the following: for each $i$, there is a natural map $\calO^n \simeq p^i W(\calO)^n/p^{i+1}W(\calO^n) \to \calQ_i$ of vector bundles on $\widetilde{\Gr_\lambda}$ given by taking a local section $f \in \calO^n$ to the residue class of $p^i \tilde{f}$ for a suitable lift $\tilde{f} \in W(\calO)^n$. Moreover, these maps are surjective over $\Gr_\lambda$. Indeed, at a point $x \in \Gr_\lambda$, the $p$-adic filtration on $\calQ_x$ coincides with the one coming from the $\calQ_i$, i.e., $\calQ_i = p^i\calQ/p^{i+1}\calQ$ over $\Gr_\lambda$; see the proof of Lemma \ref{ImageDemazure}. As the map  $W(\calO)^n \to \calQ$ is surjective, also $p^i\calO^n/p^{i+1} \calO^n \to p^i\calQ/p^{i+1}\calQ$ is surjective, and hence the map $\calO^n \to \calQ_i$ considered above is surjective at $x$. This implies that $\wedge^{\rk(\calQ_i)} (\calO^n) \to \wedge^{\rk(\calQ_i)} (\calQ_i) = \det(\calQ_i)$ is surjective at $x$. We then find a section of $\det(\calQ_i)$ non-vanishing at $x$ by picking a suitable general $1$-dimensional subspace of $L_0 \subset \wedge^{\rk(\calQ_i)} (\F_p^{\oplus n})$, and using the induced map $L_0 \otimes \calO_{\widetilde{\Gr_{\lambda}}} \to \wedge^{\rk(\calQ_i)} (\calO^n) \to \det(\calQ_i)$ as a section.
\end{proof}

Next, we check that $\calL$ is nef, even strictly nef.

\begin{lemma}\label{lem:StrictlyNef}
The line bundle $\calL \in \Pic(\Gr_{\leq \lambda})$ is strictly nef. In particular, $\widetilde{\calL}\in \Pic(\widetilde{\Gr_\lambda})$ is nef.
\end{lemma}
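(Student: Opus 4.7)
The plan is induction on $\lambda$ in the dominance order, with trivial base case $\lambda = 0$. For the inductive step, fix a proper integral curve $C$ mapping to $\Gr_{\leq \lambda}$. By semicontinuity (Lemma~\ref{Semicontinuity}), the cokernel $\calQ$ on $C$ has a generic type $\mu_C \leq \lambda$, and the map factors through $\Gr_{\leq \mu_C}$. If $\mu_C < \lambda$ strictly, then by the compatibility $\calL|_{\Gr_{\leq \mu_C}} = \calL_{\mu_C}$ from Theorem~\ref{ExLineBundle} and the inductive hypothesis, we have $\deg(\calL|_C) = \deg(\calL_{\mu_C}|_C) > 0$. So the only remaining case is $\mu_C = \lambda$, meaning $C$ meets the open stratum $\Gr_\lambda$ in a dense open subset.

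In this remaining case, let $\widetilde{C} \subset \widetilde{\Gr_\lambda} \times_{\Gr_{\leq \lambda}} C$ be the closure of the open subscheme $\psi^{-1}(C \cap \Gr_\lambda) \simeq C \cap \Gr_\lambda$; this is a proper integral curve in $\widetilde{\Gr_\lambda}$, and the induced map $\widetilde{C} \to C$ is proper and birational. Since $\widetilde{\calL} = \psi^{\ast}\calL = \bigotimes_i \det(\calQ_i)$, we obtain
\[
\deg(\calL|_C) \; = \; \deg(\widetilde{\calL}|_{\widetilde{C}}) \; = \; \sum_{i=0}^{\lambda_1-1} \deg\bigl(\det(\calQ_i)|_{\widetilde{C}}\bigr).
\]
Each summand is non-negative: for any choice of $\tilde{x} \in \widetilde{C} \cap \Gr_\lambda$, Lemma~\ref{lem:LineBundlesonDemazure}~(ii) furnishes a global section of $\det(\calQ_i)$ on $\widetilde{\Gr_\lambda}$ non-vanishing at $\tilde{x}$, whose restriction to $\widetilde{C}$ is a non-zero section of $\det(\calQ_i)|_{\widetilde{C}}$, forcing $\deg(\det(\calQ_i)|_{\widetilde{C}}) \geq 0$. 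On the other hand, by Lemma~\ref{lem:LineBundlesonDemazure}~(i), for some positive integers $a_0 \gg a_1 \gg \cdots \gg a_{\lambda_1-1} > 0$ the bundle $\bigotimes_i \det(\calQ_i)^{a_i}$ is ample on $\widetilde{\Gr_\lambda}$, hence has strictly positive degree on $\widetilde{C}$: $\sum_i a_i \deg(\det(\calQ_i)|_{\widetilde{C}}) > 0$.

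The key conceptual step is now clean: combining non-negativity of each individual summand with strict positivity of a single \emph{weighted} combination forces strict positivity of the unweighted sum, since all $a_i > 0$ implies at least one summand-degree must itself be strictly positive. Thus $\deg(\calL|_C) = \sum_i \deg(\det(\calQ_i)|_{\widetilde{C}}) > 0$, completing the induction. The "In particular" statement follows immediately: any proper integral curve $\widetilde{C}' \subset \widetilde{\Gr_\lambda}$ either gets contracted by $\psi$ (and $\widetilde{\calL}|_{\widetilde{C}'}$ is trivial, so degree $0$), or surjects onto a proper curve $C' \subset \Gr_{\leq \lambda}$ (so its degree is a positive multiple of the strictly positive $\deg(\calL|_{C'})$); either way the degree is non-negative. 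The only peripheral obstacle is bookkeeping of degrees on perfect-scheme curves, handled via models and Lemma~\ref{AmplePerf}.
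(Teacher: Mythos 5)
Your proof is correct and follows essentially the same route as the paper's: reduce to a curve meeting the open stratum $\Gr_\lambda$ (the paper simply renames $\mu$ as $\lambda$, whereas you package this as an induction on the dominance order — same content), lift the curve to the Demazure resolution, and combine non-negativity of each $\deg(\det(\calQ_i))$ from Lemma~\ref{lem:LineBundlesonDemazure}~(ii) with strict positivity of a weighted combination from Lemma~\ref{lem:LineBundlesonDemazure}~(i). The paper phrases the final step as a contradiction ("if the tensor product were trivial on $C_\perf$, so would each factor be, contradicting ampleness of a weighted combination"), while you argue directly; these are equivalent.
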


Here ``strictly nef'' means that $\calL$ has positive degree on any non-constant curve.

\begin{proof} Fix a smooth connected projective curve $C$ over $k$, and a non-constant map $f: C_\perf\to \Gr_{\leq \lambda}$. Then the generic point of $C_\perf$ maps into $\Gr_\mu$ for a unique $\mu$, in which case $f$ factors through $\Gr_{\leq \mu}$ by Lemma \ref{Semicontinuity}. Renaming $\mu$ as $\lambda$, we can assume that $C_\perf$ meets $\Gr_\lambda$. Then the pullback of $\widetilde{\Gr_\lambda}\to \Gr_{\leq \lambda}$ to $C_\perf$ is proper and admits a generic section; by properness, the section extends to a map $\tilde{f}: C_\perf\to \widetilde{\Gr_\lambda}$. In that case, we need to prove that $\widetilde{\calL} = \otimes_i \det_{\calO} (\calQ_i)$ is ample on $C_\perf$.

By Lemma \ref{lem:LineBundlesonDemazure} (ii) (and the assumption that $C_\perf$ meets $\Gr_\lambda$), $\det_{\calO} (\calQ_i)|_{C_\perf}$ has a nonvanishing section, and thus is effective. If their tensor product is not ample, then it follows that all $\det_{\calO} (\calQ_i)$ become trivial over $C_\perf$. On the other hand, Lemma \ref{lem:LineBundlesonDemazure} (i) guarantees that a weighted tensor product of the $\det_{\calO}(\calQ_i)$ is ample (on $\widetilde{\Gr_\lambda}$, and thus on $C_\perf$ since $f$ is non-constant), which is a contradiction.
\end{proof}

Now we can prove that $\widetilde{\calL}$ is big.

\begin{lemma}\label{lem:BigonDemazure}
The line bundle $\widetilde{\calL}$ is big, with exceptional locus contained in the boundary $\widetilde{\Gr_\lambda} \setminus \Gr_\lambda$.
\end{lemma}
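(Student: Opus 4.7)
The strategy is to write a sufficiently large power of $\widetilde{\calL}$ as an ample bundle tensored with an effective line bundle whose zero locus lies in the boundary $\widetilde{\Gr_\lambda} \setminus \Gr_\lambda$. Concretely, fix integers $a_0 \gg a_1 \gg \cdots \gg a_{\lambda_1 - 1} \gg 0$ as in Lemma \ref{lem:LineBundlesonDemazure} (i), so that $\calA := \bigotimes_{i=0}^{\lambda_1-1} \det(\calQ_i)^{a_i}$ is ample. Set $N := \max_i a_i$. Then
\[
\widetilde{\calL}^{\otimes N} \;\simeq\; \calA \otimes \calE, \qquad \calE := \bigotimes_{i=0}^{\lambda_1 - 1} \det(\calQ_i)^{N - a_i},
\]
with each exponent $N - a_i \geq 0$.

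The plan is next to use Lemma \ref{lem:LineBundlesonDemazure} (ii) to show that $\calE$ is generated by its global sections on $\Gr_\lambda \subset \widetilde{\Gr_\lambda}$. Indeed, given $x \in \Gr_\lambda$, that lemma supplies, for each $i$, a section $s_i \in H^0(\widetilde{\Gr_\lambda},\det(\calQ_i))$ with $s_i(x) \neq 0$; then $\bigotimes_i s_i^{\otimes (N - a_i)}$ is a global section of $\calE$ that does not vanish at $x$. Thus the base locus of $\calE$ (with respect to these global sections) is contained in $\widetilde{\Gr_\lambda} \setminus \Gr_\lambda$.

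To deduce bigness of $\widetilde{\calL}$ with exceptional locus contained in the boundary, recall that in Keel's sense the exceptional locus is the union of integral closed subvarieties $Z$ with $\widetilde{\calL}|_Z$ not big. So it suffices to show: for every integral closed $Z \subset \widetilde{\Gr_\lambda}$ with $Z \cap \Gr_\lambda \neq \emptyset$, the restriction $\widetilde{\calL}|_Z$ is big. Given such a $Z$, pick $x \in Z \cap \Gr_\lambda$ and a section $s$ of $\calE$ with $s(x) \neq 0$, as above. Then $s|_Z$ is a nonzero global section of $\calE|_Z$, so $\calE|_Z \simeq \calO_Z(D)$ for some effective Cartier divisor $D \subsetneq Z$. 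Hence
\[
\widetilde{\calL}^{\otimes N}|_Z \;\simeq\; \calA|_Z \otimes \calO_Z(D),
\]
expressing $\widetilde{\calL}^{\otimes N}|_Z$ as an ample bundle plus an effective divisor; by the standard fact (sections of $\calA|_Z^{\otimes m}$ multiplied by powers of the defining section of $D$ produce $\sim m^{\dim Z}$ independent sections of $\widetilde{\calL}^{\otimes NM}|_Z$ for $M \gg 0$), $\widetilde{\calL}|_Z$ is big. Taking $Z = \widetilde{\Gr_\lambda}$ shows that $\widetilde{\calL}$ itself is big, and the preceding argument simultaneously locates the exceptional locus in $\widetilde{\Gr_\lambda} \setminus \Gr_\lambda$.

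The one subtlety to address is that $\widetilde{\Gr_\lambda}$ is the perfection of a smooth projective $\F_p$-scheme rather than a classical variety, and bigness is a classical notion; however, Proposition \ref{ExDemazureResolution} exhibits an explicit projective model, and the line bundles $\det(\calQ_i)$, their tensor products, and ampleness (via Lemma \ref{AmplePerf}) all descend to the model up to a Frobenius twist, so the above classical reasoning applies verbatim on a model and transports back to $\widetilde{\Gr_\lambda}$. The main obstacle is essentially bookkeeping: ensuring Lemma \ref{lem:LineBundlesonDemazure} (ii) is strong enough to rule out $Z$'s meeting $\Gr_\lambda$ from the base locus of $\calE$, which is precisely the pointwise generation statement recorded there.
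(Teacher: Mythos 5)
Your proposal is correct and follows essentially the same route as the paper: both use the factorization $\widetilde{\calL}^{\otimes N} \simeq \calA \otimes \calE$ with $\calA$ ample (Lemma \ref{lem:LineBundlesonDemazure}(i)) and $\calE$ effective with a section nonvanishing at any prescribed $x \in \Gr_\lambda$ (Lemma \ref{lem:LineBundlesonDemazure}(ii)). The only real difference is in how the exceptional locus is bounded: the paper invokes \cite[Lemma 1.7]{KeelSemiAmple} together with nefness (Lemma \ref{lem:StrictlyNef}) to conclude directly that $E(\widetilde{\calL})$ misses each $x \in \Gr_\lambda$, whereas you unwind Keel's definition and reprove, for each integral closed $Z$ meeting $\Gr_\lambda$, that $\widetilde{\calL}|_Z$ is big via the standard ``ample $\otimes$ effective is big'' argument applied on $Z$. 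This is sound (and since the boundary is closed, no separate argument is needed after taking Zariski closure), just slightly more hands-on than citing Keel's lemma; the content is identical. Two small points to tidy: the exceptional locus is only defined for nef bundles, so Lemma \ref{lem:StrictlyNef} should be mentioned as the hypothesis that makes the notion applicable; and the exponent in your asymptotic count should read $\widetilde{\calL}^{\otimes Nm}$ (with the same $m$) rather than $NM$.
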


We briefly recall the meaning the terms used above; see \cite[\S 1,2]{LazarsfeldPos1} and \cite[\S 0]{KeelSemiAmple} for more. Fix a line bundle $N$ on a proper variety $X$ over some field.
\begin{enumerate}
	\item We say that $N$ is {\em big} if, for $m \gg 0$, we can write $N^{\otimes m} \simeq A \otimes E$, where $A$ is an ample line bundle, and $E$ is an effective line bundle. 
	\item If $N$ is nef, its {\em exceptional locus} is the Zariski closure of the union of all closed subvarieties $Z \subset X$ such that $N|_Z$ is not big.
\end{enumerate}

In the perfect setting, we define these notions by passage to finite type models. 

\begin{proof} Recall that $\widetilde{\calL} = \otimes_{i=0}^{\lambda_1 - 1} \det(\calQ_i)$. Choose integers $N \gg a_0 \gg a_1 \gg \dots \gg a_{\lambda_1 - 1} \gg 0$, and write 
\[ \widetilde{\calL}^{\otimes N} =  \Big(\otimes_{i=0}^{\lambda_1 - 1} \det(\calQ_i)^{a_i}\Big) \otimes \Big(\otimes_{i=0}^{\lambda_1 - 1} \det(\calQ_i)^{N - a_i}\Big).\]
By Lemma \ref{lem:LineBundlesonDemazure} (i), the first term on the right is ample, while Lemma \ref{lem:LineBundlesonDemazure} (ii) implies that the second term is effective. It immediately follows that $\widetilde{\calL}^{\otimes N}$, and hence $\widetilde{\calL}$, is big. In fact, this factorisation combined with Lemma \ref{lem:LineBundlesonDemazure} (ii) shows that for any $x \in \Gr_\lambda$, we can write $\widetilde{\calL}^{\otimes N} = A(D)$, where $A$ is ample, and $D$ is an effective divisor missing $x$. Thus, the exceptional locus $E(\widetilde{\calL}^{\otimes N}) = E(\widetilde{\calL})$ misses $x$ by \cite[Lemma 1.7]{KeelSemiAmple} and Lemma \ref{lem:StrictlyNef}. Varying $x$ then shows that $E(\widetilde{\calL}) \subset \widetilde{\Gr_\lambda} \setminus \Gr_{\lambda}$.
\end{proof}

We now finish the promised proof:

\begin{proof}[Proof of Theorem \ref{thm:WittGrProjective}]
We will prove that $\Gr_{\leq \lambda}$ is representable, and $\calL$ is ample on $\Gr_{\leq \lambda}$, by induction on $\lambda$. When $\lambda$ is minimal, $\calL = \widetilde{\calL}$ is ample on the Grassmannian $\Gr_{\leq \lambda} = \widetilde{\Gr_\lambda}$. Assume inductively that $\calL|_{\Gr_{\leq \mu}}$ is ample for all $\mu < \lambda$. 

First, we prove that $\widetilde{\calL}$ is semiample on $\widetilde{\Gr_\lambda}$. Note that $\widetilde{\calL}$ is nef by Lemma \ref{lem:StrictlyNef}. Using Keel's \cite[Theorem 1.9]{KeelSemiAmple}, it is enough to check that $\widetilde{\calL}|_{E(\widetilde{\calL})}$ is semiample. By Lemma \ref{lem:BigonDemazure}, the locus $E(\widetilde{\calL})$ is contained in $\psi^{-1}(\Gr_{\leq \lambda} \setminus \Gr_\lambda) = \psi^{-1}(\cup_{\mu < \lambda} \Gr_{\leq \mu})$. By induction, we know that $\calL|_{\Gr_{\leq \mu}}$ is ample for $\mu < \lambda$. Using \cite[Lemma 1.8]{KeelSemiAmple}, this shows $\calL|_{\cup_{\mu < \lambda} \Gr_{\leq \mu}}$ is ample, so $\widetilde{\calL}|_{\psi^{-1}(\cup_{\mu < \lambda} \Gr_{\leq \mu})}$ is semiample, and thus $\widetilde{\calL}|_{E(\widetilde{\calL})}$ is semiample.

Let $\phi:\widetilde{\Gr_\lambda} \to X$ be the Stein factorization associated to $\widetilde{\calL}$, so $\phi$ is a proper surjective perfectly finitely presented map of perfectly finite presented $k$-schemes with geometrically connected fibres (and thus a $v$-cover), and $\widetilde{\calL}^{\otimes N} = \phi^* \calM$ for some ample $\calM \in \Pic(X)$ and $N\geq 1$.\footnote{For the construction, choose finite type models, apply the Stein factorization, and then go to the perfection.}

We claim that
\begin{equation}
	\label{eqn:relationsagree}
	\widetilde{\Gr_\lambda} \times_{\Gr_{\leq \lambda}} \widetilde{\Gr_\lambda} = \widetilde{\Gr_\lambda} \times_X \widetilde{\Gr_\lambda}
\end{equation}
as closed subschemes of $\widetilde{\Gr_\lambda}\times \widetilde{\Gr_\lambda}$. Assuming this claim for the moment, we can finish the proof as follows. Both $X$ and $\Gr_{\leq \lambda}$ are $v$-sheaves, and thus are given by the coequalizer
\[
\widetilde{\Gr_\lambda} \times_X \widetilde{\Gr_\lambda}\rightrightarrows \widetilde{\Gr_\lambda}\ ,
\]
resp.
\[
\widetilde{\Gr_\lambda} \times_{\Gr_{\leq \lambda}} \widetilde{\Gr_\lambda}\rightrightarrows \widetilde{\Gr_\lambda}\ ,
\]
as $v$-sheaves. As the equivalence relation agrees, we get $\Gr_{\leq \lambda}=X$, on which $\calL^{\otimes N}=\calM$ (by Lemma \ref{PullbackFullyFaithful}) is ample, as wanted.

It remains to verify equation \eqref{eqn:relationsagree} above. As everything is perfect, and in particular reduced, this can be checked on $k$-points, at least after enlarging $k$ to make it algebraically closed.

Thus, let $x,y\in \widetilde{\Gr_\lambda}(k)$ be any pair of points mapping to the same point of $X$. As the fibres of $\widetilde{\Gr_\lambda}\to X$ are geometrically connected, it follows that there is a geometrically connected $F\subset (\widetilde{\Gr_\lambda})_k$ contracted to a point in $X$, such that $x,y\in F$. We can even assume that $F$ is a curve. Note that $\widetilde{\calL}^{\otimes N}$ is trivial on $F$. We want to show that $F$ gets contracted under $\widetilde{\Gr_\lambda}\to \Gr_{\leq \lambda}$, as then $(x,y)\in (\widetilde{\Gr_\lambda}\times_{\Gr_{\leq \lambda}} \widetilde{\Gr_\lambda})(k)$. But as the pullback of $\calL^{\otimes N}$ to $F$ is trivial, $F$ gets contracted by Lemma \ref{lem:StrictlyNef}.

Conversely, let $x,y\in \widetilde{\Gr_\lambda}(k)$ be a pair of points mapping to the same point of $\Gr_{\leq \lambda}$. The same arguments apply, as the fibres of $\widetilde{\Gr_\lambda}\to \Gr_{\leq \lambda}$ are geometrically connected by Lemma \ref{ImageDemazure}, and $\calM$ is ample on $X$.
\end{proof}

\section{Affine Grassmannians for general groups}
\label{sec:WittAffGrG}

In this section, we deduce as corollaries several results concerning more general group schemes. Fix a complete discrete valuation field $K$ of characteristic $0$ with perfect residue field $k$ of characteristic $p$ and ring of integers $\calO_K$. Moreover, let $G$ be a reductive group over $K$, and let $\calG$ be a smooth affine group scheme over $\calO_K$ with generic fibre $G$.

For a $k$-algebra $R$, we define the relative Witt vectors $W_{\calO_K}(R)$ as
\[
W_{\calO_K}(R) = W(R)\otimes_{W(k)} \calO_K\ ,
\]
noting that there is a canonical inclusion $W(k)\hookrightarrow \calO_K$.

\begin{definition} The \emph{($p$-adic) loop group} of $G$ is the functor
\[
LG: R\mapsto G(W_{\calO_K}(R)[\frac{1}{p}])
\]
on perfect $k$-algebras. The \emph{positive ($p$-adic) loop group} of $\calG$ is the functor
\[
L^+\calG: R\mapsto \calG(W_{\calO_K}(R))
\]
on perfect $k$-algebras.
\end{definition}

Clearly, both functors take values in groups. As regards representability, we have the following well-known result.

\begin{proposition} The functor $L^+\calG$ is representable by an affine perfect scheme. The functor $LG$ is a strict ind-(perfect affine scheme), meaning that it can be written as an inductive limit of perfect affine schemes along closed immersions.
\end{proposition}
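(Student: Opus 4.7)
The plan is to reduce both representability statements to the basic fact that the Witt vector functor itself is representable on perfect algebras. Concretely, I will first observe that the functor $R \mapsto W_{\calO_K}(R)$ on perfect $k$-algebras is representable by a perfect affine $\F_p$-scheme: fix a uniformizer $\pi$ of $\calO_K$; the truncations $R \mapsto W_{\calO_K}(R)/\pi^n$ are, as set-valued functors, given by some $R^{N(n)}$ via the universal polynomials describing Witt vector operations (after picking a $W(k)$-basis of $\calO_K$), hence representable by $(\mathbb{A}^{N(n)}_{\F_p})_\perf$; and $W_{\calO_K}$ is their inverse limit, still a perfect affine scheme.

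For $L^+\calG$, pick a closed embedding $\calG \hookrightarrow \mathbb{A}^N_{\calO_K}$, which exists because $\calG$ is affine. Applying $W_{\calO_K}(-)$-valued points gives a closed embedding of functors on perfect $k$-algebras
\[
L^+\calG \hookrightarrow L^+(\mathbb{A}^N_{\calO_K}) = (W_{\calO_K})^N,
\]
cut out by the polynomial equations defining $\calG$ inside $\mathbb{A}^N_{\calO_K}$. Since the target is a perfect affine scheme by the previous paragraph and closed subfunctors of representable affine schemes are representable affine schemes, $L^+\calG$ is representable by a perfect affine scheme.

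For $LG$, first handle $G = \GL_n$. For integers $a \leq b$ define
\[
L^{[a,b]}\GL_n(R) = \bigl\{ g \in \GL_n\bigl(W_{\calO_K}(R)[\tfrac{1}{p}]\bigr) : g \in \pi^a M_n(W_{\calO_K}(R)),\ g^{-1} \in \pi^{-b} M_n(W_{\calO_K}(R)) \bigr\}.
\]
Sending $g \mapsto (g,g^{-1})$ realizes $L^{[a,b]}\GL_n$ as a closed subfunctor of $\pi^a M_n(W_{\calO_K}) \times \pi^{-b} M_n(W_{\calO_K})$ cut out by the matrix relations $g h = h g = 1$. Since $M_n(W_{\calO_K}) \cong (W_{\calO_K})^{n^2}$, this ambient functor is a perfect affine scheme, so $L^{[a,b]}\GL_n$ is one as well. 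Enlarging $[a,b]$ to $[a',b']$ with $a' \leq a$, $b' \geq b$ yields a closed immersion $L^{[a,b]}\GL_n \hookrightarrow L^{[a',b']}\GL_n$: the extra condition is that certain Witt coordinates of the matrix entries vanish, a closed condition in the perfect affine scheme describing the larger stratum. Pointwise every element of $\GL_n(W_{\calO_K}(R)[\tfrac 1p])$ has bounded denominators in $g$ and $g^{-1}$, so the directed union is all of $L\GL_n$, exhibiting it as a strict ind-(perfect affine scheme). For general $G$, pick a closed immersion $G \hookrightarrow \GL_{n,K}$ of $K$-groups (any closed $K$-subgroup of some $\GL_n$ works), inducing a closed embedding of functors $LG \hookrightarrow L\GL_n$. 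Set $L^{[a,b]}G := LG \times_{L\GL_n} L^{[a,b]}\GL_n$; this is a closed subfunctor of $L^{[a,b]}\GL_n$ (the equations cutting out $G_K$ inside $\GL_{n,K}$, multiplied by a suitable power of $\pi$, become integral polynomial conditions on the coordinates of the matrix $g$ bounded by $\pi^a$), hence a perfect affine scheme, and $LG$ is the strict ind-system of these.

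The only nontrivial point is the bookkeeping in the closed-embedding claims: one must check that the conditions ``$g \in \pi^a M_n(W_{\calO_K})$'' and enlargement of $[a,b]$ are closed on the universal pair $(g,g^{-1})$, and that for general $G \hookrightarrow \GL_{n,K}$ the defining ideal clears denominators to give an honest closed subfunctor after restricting to each $L^{[a,b]}\GL_n$. Both follow from writing out explicit Witt-coordinate equations, and should present no conceptual obstacle; this is the step I expect to be the most tedious, but not the hardest.
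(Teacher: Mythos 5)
Your proof is correct and follows essentially the same route as the paper's: both rely on representability of $W_{\calO_K}$ on perfect $k$-algebras via the polynomiality of Witt operations (the paper cites Greenberg and phrases this as ``infinite sequences with polynomial addition and multiplication''), present $L^+\calG$ as a closed subfunctor via an affine embedding of $\calG$, and for $LG$ reduce to $\GL_n$ by a closed embedding of $K$-groups, with the exhausting strata cut out by bounding pole orders of $g$ and $g^{-1}$. Your write-up is somewhat more detailed than the paper's one-paragraph proof, notably in spelling out the denominator-clearing step needed to see that $L^{[a,b]}G$ is a closed subfunctor of $L^{[a,b]}\GL_n$ for a general $G \hookrightarrow \GL_{n,K}$, which the paper leaves implicit.
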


\begin{remark} One can define $LG$ and $L^+\calG$ on general (non-perfect) $k$-algebras by the same formula. Then $L^+\calG$ is already representable by a (non-perfect) affine scheme, but $LG$ is not representable by a \emph{strict} ind-(affine scheme). The problem is that elements of $W_{\calO_K}(R)[\frac{1}{p}]$ do not admit a simple description as infinite sequences of elements of $R$, if $R$ is not perfect.
\end{remark}

\begin{proof} The first assertion is true for any affine scheme $X$ over $\calO_K$ in place of $\calG$, cf. work of Greenberg, \cite{Greenberg}, and follows by observing that any element of $W_{\calO_K}(R)$ can be written as an infinite sequence of elements of $R$ in such a way that addition and multiplication are given by polynomial functions. The statement about $LG$ can be reduced to the case of $\GL_n$ by fixing a closed embedding $G\hookrightarrow \GL_n$ (which induces a closed embedding $LG\hookrightarrow L\GL_n$), where one gets representable subfunctors by restricting the pole orders of $g, g^{-1}\in \GL_n(W_{\calO_K}(R)[\frac{1}{p}])$.
\end{proof}

\begin{definition} The \emph{affine Grassmannian} of $\calG$ is the fpqc quotient
\[
\Gr_\calG = LG / L^+\calG
\]
on the category of perfect $k$-schemes.
\end{definition}

One immediately verifies the following proposition, cf. also \cite[Theorem 5]{Kreidl}.

\begin{proposition} If $\calG=\GL_n$, then for any perfect $k$-algebra $R$, $\Gr_{\GL_n}(R)$ is the set of finite projective $W_{\calO_K}(R)$-modules $M\subset W_{\calO_K}(R)[\frac{1}{p}]^n$ such that $M[\frac{1}{p}] = W_{\calO_K}(R)[\frac{1}{p}]^n$.$\hfill \Box$
\end{proposition}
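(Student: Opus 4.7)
We construct a natural transformation $\Phi: LG / L^+\calG \to F$, where $F$ denotes the functor of lattices in the statement, and show it is an isomorphism of fpqc sheaves. On the presheaf quotient, an element $g \in LG(R) = \GL_n(W_{\calO_K}(R)[\frac{1}{p}])$ is sent to the lattice $M_g := g \cdot W_{\calO_K}(R)^n \subset W_{\calO_K}(R)[\frac{1}{p}]^n$; this is finite projective over $W_{\calO_K}(R)$ (being free of rank $n$), visibly satisfies $M_g[\frac{1}{p}] = W_{\calO_K}(R)[\frac{1}{p}]^n$, and is invariant under right multiplication by $L^+\GL_n(R) = \GL_n(W_{\calO_K}(R))$, so $\Phi$ is well-defined.

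To see that $\Phi$ is injective on sections, suppose $g, g' \in LG(R)$ satisfy $M_g = M_{g'}$. Then $h := g^{-1} g' \in \GL_n(W_{\calO_K}(R)[\frac{1}{p}])$ preserves $W_{\calO_K}(R)^n$, and hence so does $h^{-1}$; thus $h \in \GL_n(W_{\calO_K}(R)) = L^+\calG(R)$, which is exactly the stabilizer of the standard lattice. The target functor $F$ is an fpqc sheaf: given a faithfully flat map $R \to R'$ of perfect rings, the induced map $W_{\calO_K}(R) \to W_{\calO_K}(R')$ is faithfully flat (this uses perfectness; alternatively, it is a special case of Theorem~\ref{thm:VectWhStack} applied to $W(X)$-modules), so fpqc descent for finite projective modules combined with descent of the inclusion condition $M \subset W_{\calO_K}(R)[\frac{1}{p}]^n$ and the non-degeneracy condition $M[\frac{1}{p}] = W_{\calO_K}(R)[\frac{1}{p}]^n$ yields the sheaf property.

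Finally, for surjectivity after fpqc sheafification, fix $M \in F(R)$. Since $M$ is a finite projective $W_{\calO_K}(R)$-module and $W_{\calO_K}(R)$ is $\pi$-adically complete (where $\pi$ is a uniformizer of $\calO_K$), the reduction $M/\pi M$ is a finite projective $R$-module, and a basis of $M/\pi M$ lifts by Nakayama to a basis of $M$ over $W_{\calO_K}(R)$ once $M/\pi M$ becomes free; this latter condition holds Zariski-locally on $\Spec(R)$ (and idempotents of $R$ lift to $W_{\calO_K}(R)$ by $\pi$-adic completeness, matching Zariski localizations on both sides). After such a Zariski localization we obtain an isomorphism $W_{\calO_K}(R)^n \xrightarrow{\sim} M$; composing with the inclusion $M \hookrightarrow W_{\calO_K}(R)[\frac{1}{p}]^n$ and with the identification $M[\frac{1}{p}] = W_{\calO_K}(R)[\frac{1}{p}]^n$ gives an element $g \in \GL_n(W_{\calO_K}(R)[\frac{1}{p}]) = LG(R)$ with $\Phi(g) = M$. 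The main point to verify carefully is the local triviality of lattices over $W_{\calO_K}(R)$, which reduces to local triviality of finite projective $R$-modules via the $\pi$-adic completeness of $W_{\calO_K}(R)$; this is the only step requiring real content beyond the formal manipulation of the quotient.
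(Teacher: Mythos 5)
Your proof is correct. The paper marks this proposition with a $\hfill\Box$ and prefaces it with ``One immediately verifies...'', so no explicit argument is given in the text; your reconstruction fills in the standard verification along exactly the lines one would expect. The four steps (well-definedness of $g \mapsto g\cdot W_{\calO_K}(R)^n$ on the presheaf quotient, injectivity because the stabilizer of the standard lattice is $L^+\GL_n$, sheafiness of the lattice functor $F$ for the fpqc topology, and local freeness of any $M \in F(R)$) do combine to identify $F$ with the fpqc sheafification $LG/L^+\calG$.

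One small remark: the parenthetical about idempotents of $R$ lifting to $W_{\calO_K}(R)$ is not really the relevant point. What you actually use is that for $f \in R$, the functorial pullback of $M$ along $R \to R_f$ (i.e.\ $M \otimes_{W_{\calO_K}(R)} W_{\calO_K}(R_f)$) has reduction mod $\pi$ equal to $(M/\pi)\otimes_R R_f$, so choosing a finite Zariski cover $\Spec(R) = \bigcup \Spec(R_{f_i})$ trivializing the finite projective $R$-module $M/\pi M$ already gives the needed fpqc cover over which $M$ becomes free (by Nakayama in the $\pi$-adically complete ring $W_{\calO_K}(R_{f_i})$). Note that $\Spec W_{\calO_K}(R_f) \to \Spec W_{\calO_K}(R)$ is \emph{not} an open immersion (one must $\pi$-adically complete the localization), but this is irrelevant to the argument. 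A second small point: in establishing that a lift of a basis of $M/\pi M$ is an isomorphism $W_{\calO_K}(R_{f_i})^n \to M$, you should note that by Nakayama it is surjective, and the kernel is finite projective and killed by inverting $p$ (since $M[\frac{1}{p}]$ has rank $n$), hence zero because $W_{\calO_K}(R_{f_i})$ is $p$-torsion-free. These are minor, and the overall argument is sound.
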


Now one has the following result.

\begin{corollary} The affine Grassmannian $\Gr_\calG$ can be written as an increasing union of perfections of quasiprojective schemes over $k$, along closed immersions. If $\calG$ is a parahoric group scheme, then $\Gr_\calG$ can be written as an increasing union of perfections of projective schemes over $k$, along closed immersions.
\end{corollary}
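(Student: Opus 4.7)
The plan is to deduce both claims from the $\GL_n$-case established in Theorem~\ref{thm:WittGrProjective}. The first preliminary step would be to observe that Theorem~\ref{thm:WittGrProjective} extends from $W(k)$ to $W_{\calO_K}$ with only cosmetic changes—one replaces $p$ by a uniformizer of $\calO_K$ and the entire apparatus (Witt vector Grassmannians, the Demazure resolution of \S\ref{sec:torsionfamilies}, the determinant line bundle construction, and the appeal to Keel's theorem) goes through verbatim, since everything rests on structural properties of $W(R)$ that hold identically for $W_{\calO_K}(R) = W(R)\otimes_{W(k)}\calO_K$. Consequently one has an ind-projective presentation $\Gr_{\GL_n} = \colim_\lambda \Gr_{\GL_n, \leq \lambda}$ by perfections of projective $k$-varieties with closed-immersion transition maps.

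For the general smooth affine case, I would choose a closed $\calO_K$-immersion $\iota: \calG \hookrightarrow \GL_n$, which exists because $\calG$ is affine of finite type over $\calO_K$. The induced morphism $\Gr_\iota: \Gr_\calG \to \Gr_{\GL_n}$ is a monomorphism of $v$-sheaves, so defining $\Gr_\calG^{\leq \lambda} := \Gr_\iota^{-1}(\Gr_{\GL_n,\leq \lambda})$ gives $\Gr_\calG = \colim_\lambda \Gr_\calG^{\leq \lambda}$. To see each piece is representable by the perfection of a quasi-projective $k$-variety, I would invoke the classical theorem of Anantharaman that the fppf quotient $\GL_n/\calG$ is representable by a quasi-projective $\calO_K$-scheme of finite type. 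After $v$-localizing the target to trivialize the universal $\GL_n$-torsor and using that the modification is bounded in terms of $\lambda$, the fibres of $\Gr_\iota$ over $\Gr_{\GL_n,\leq \lambda}$ are identified with a bounded truncation $L^+_N(\GL_n/\calG)$ for $N = N(\lambda)$ large; this is the perfection of a quasi-projective $k$-scheme. Combining with Theorem~\ref{thm:WittGrProjective}, $\Gr_\calG^{\leq \lambda}$ is the perfection of a quasi-projective $k$-variety, and the transition maps are closed immersions by pullback from the $\GL_n$-case.

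For the parahoric case, the goal is to promote quasi-projective to projective, i.e., to check each $\Gr_\calG^{\leq \lambda}$ is proper. The input I would use is the classical lattice-chain description from Bruhat-Tits theory: possibly after an unramified base change $K'/K$ followed by Weil restriction, any parahoric $\calG$ can be realized inside $\mathrm{Res}_{\calO_{K'}/\calO_K}\GL_{n'}$ as the scheme-theoretic stabilizer of a (possibly self-dual) periodic lattice chain in a faithful representation. This embeds $\Gr_\calG$ as a closed subfunctor of a product of $\GL_{n_i}$-affine Grassmannians parametrizing compatible chains of lattices, which is ind-projective by the already-established $\GL_n$-case. Properness of each $\Gr_\calG^{\leq \lambda}$ is inherited from the ambient product, and combined with the quasi-projectivity established above this yields the claimed ind-projective structure.

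The main obstacle I expect is the parahoric case: the lattice-chain description is direct for classical groups but, for exceptional groups, requires the full strength of Bruhat-Tits theory, and the most delicate bookkeeping concerns the interplay of Weil restriction along ramified extensions with the bounded-lattice filtration. The general-$\calG$ case, by contrast, is essentially formal once Anantharaman's quotient representability and the $\calO_K$-version of Theorem~\ref{thm:WittGrProjective} are in hand.
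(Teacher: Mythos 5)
Your overall strategy (reduce to $\GL_n$ via a faithful representation, then handle properness separately for parahorics) matches the paper's, but two of your key steps diverge from the argument actually used, and both diverge in a way that leaves a genuine gap.

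For the general smooth affine $\calG$, the paper does not use Anantharaman's quasi-projectivity of $\GL_n/\calG$. Instead it chooses the representation $\calG \hookrightarrow \GL_n$ so that the fppf quotient $\GL_n/\calG$ is \emph{quasi-affine} (this is always possible; see \cite[\S 1.b]{PappasRapoport}), and this is the crucial ingredient: it is precisely what makes $\Gr_\calG \hookrightarrow \Gr_{\GL_n}$ a \emph{locally closed immersion} (cf.\ \cite[Proposition 1.20]{ZhuMixedCharGeomSatake}). Your observation that $\Gr_\iota$ is a monomorphism is correct, but a monomorphism of (ind-)schemes need not be an immersion, and the heuristic that the image is cut out by a bounded positive-loop condition in $L^+_N(\GL_n/\calG)$ is where the quasi-affine (not merely quasi-projective) hypothesis actually enters: one extends the section of $\calP\times^{\GL_n}(\GL_n/\calG)$ from $W(R)[\frac1p]$ to $W(R)$ by first extending to the affine hull (a closed condition) and then imposing the open condition of landing in the quasi-affine locus. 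Without the quasi-affine input, your argument does not produce the locally closed stratification. (A small efficiency remark: rather than re-running the whole of Theorem~\ref{thm:WittGrProjective} over $W_{\calO_K}$, the paper simply applies Weil restriction $\mathrm{Res}_{\calO_K/W(k)}$ to reduce to $\calO_K = W(k)$ once and for all.)

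For the parahoric case, the paper cites \cite[\S 1.5.2]{ZhuMixedCharGeomSatake} for ind-properness of $\Gr_\calG$ and then upgrades ind-quasiprojective + ind-proper to ind-projective. Your proposal instead tries to embed $\Gr_\calG$ as a closed subfunctor of a product of $\GL_{n_i}$-Grassmannians via a periodic lattice-chain model. As you yourself flag, that description is not available for parahorics of exceptional groups (and is already delicate in the ramified classical case), so this route does not cover all parahorics. You should replace it with the ind-properness criterion, which applies uniformly.
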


\begin{proof} Replacing $\calG$ by $\mathrm{Res}_{\calO_K/W(k)} \calG$, we may assume that $\calO_K=W(k)$. One can find a representation $\calG\hookrightarrow \GL_n$ such that $\GL_n/\calG$ is quasi-affine, cf. \cite[\S 1.b]{PappasRapoport}. In that case, the induced map $\Gr_\calG\hookrightarrow \Gr_{\GL_n}$ is a locally closed embedding, cf. \cite[Proposition 1.20]{ZhuMixedCharGeomSatake} in the case considered here. This reduces representability of $\Gr_\calG$ to the case of $\calG=\GL_n$. But here, the result follows from Theorem \ref{thm:WittGrProjective}.

If $\calG$ is parahoric, then \cite[\S 1.5.2]{ZhuMixedCharGeomSatake} shows that $\Gr_\calG$ is ind-proper, and thus (by ind-quasi projectivity) ind-projective.
\end{proof}

One can also determine the connected components of $\Gr_\calG$ in case $\calG$ is parahoric, using Kottwitz' map
\[
\kappa: LG(\bar{k})\to \pi_1(G)_{\Gal_K}\ ,
\]
defined for any algebraically closed field $\bar{k}$ containing $k$.

\begin{proposition}[{\cite[Proposition 1.21]{ZhuMixedCharGeomSatake}}]\label{AffFlagConnComp} Assume that $\calG$ is parahoric. There are canonical bijections $\kappa: \pi_0(LG)\cong \pi_0(\Gr_\calG)\buildrel\cong\over\longrightarrow \pi_1(G)_{\Gal_K}$, where $\Gal_K$ is the absolute Galois group of $K$.$\hfill \Box$
\end{proposition}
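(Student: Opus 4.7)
The plan is to reduce the statement to the classical theory of the Kottwitz homomorphism, using the representability results established earlier as the needed geometric input.

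First, I would reduce to $LG$ via the connectedness of $L^+\calG$. Since $\calG$ is parahoric, its special fibre $\calG_k$ is connected by definition. The positive loop group $L^+\calG$ is built from the Greenberg realizations $R \mapsto \calG(W_{n,\calO_K}(R))$, whose transition maps have pro-unipotent (hence connected) kernel; together with the connectedness of $\calG_k$, this implies $L^+\calG$ is connected as a perfect $k$-scheme. The quotient map $LG \to \Gr_\calG$ is an $L^+\calG$-torsor, so its geometric fibres are connected, giving the bijection $\pi_0(LG) \cong \pi_0(\Gr_\calG)$.

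Next, I would construct the Kottwitz map. For any algebraically closed extension $\bar k$ of $k$, the ring $W_{\calO_K}(\bar k)[\tfrac{1}{p}]$ is naturally isomorphic to $\breve K_{\bar k}$, the completion of the maximal unramified extension of $K$ with residue field $\bar k$. Classical Kottwitz theory provides a functorial surjection $\kappa \colon G(\breve K_{\bar k}) \to \pi_1(G)_I$ (with $I$ the inertia subgroup of $\Gal_K$), which I compose with the projection $\pi_1(G)_I \twoheadrightarrow \pi_1(G)_{\Gal_K}$ to obtain $\kappa \colon LG(\bar k) \to \pi_1(G)_{\Gal_K}$.

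The main task is then to check that $\kappa$ is locally constant on $LG$ and induces a bijection $\pi_0(LG) \to \pi_1(G)_{\Gal_K}$. Local constancy reduces to the statement that the image of the identity component of $LG$ is trivial, which holds because the kernel of $\kappa$ is classically generated by $L^+\calG$ (or any other parahoric) together with the images of root subgroups, each of which lies manifestly in the connected component. For surjectivity, one exhibits explicit representatives by translating by cocharacters of a maximal torus. For injectivity, the key input is the connectedness of $\Gr_{G_{sc}}$ for the simply connected cover $G_{sc}$ of $G_{der}$, via the standard chase through the short exact sequence $1 \to G_{sc} \to G_{der} \to Z \to 1$ combined with the explicit torus case $\pi_0(\Gr_T) = X_*(T)_{\Gal_K}$.

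The principal obstacle is the connectedness of $\Gr_{G_{sc}}$ in the mixed-characteristic setting. Classically, this rests on the fact that $G_{sc}(\breve K)$ is generated by its root subgroups (Bruhat--Tits), combined with the observation that each root subgroup traces out a connected subfunctor of $LG_{sc}$. Both ingredients transfer to the present setting without modification, using the representability of $\Gr_\calG$ by an ind-(perfect scheme) established in the previous section to make the argument rigorous at the sheaf level.
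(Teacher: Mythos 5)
The paper records this proposition without proof; the terminal $\Box$ indicates that the argument is simply \cite[Proposition 1.21]{ZhuMixedCharGeomSatake} (adapted from the equicharacteristic treatment of Pappas--Rapoport). Your overall plan matches that argument: connectedness of $L^+\calG$ (via the pro-unipotent Greenberg kernels and connectedness of the parahoric special fibre), the classical Kottwitz homomorphism, d\'evissage to tori and to the simply connected group, and connectedness of the loop group of $G_{sc}$. These are indeed the right ingredients.

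There is, however, a genuine logical gap in the step you label ``local constancy.'' To show that $\kappa$ descends to $\pi_0(LG)$ one must prove that the \emph{identity component} of $LG$ is contained in $\ker\kappa$. The fact you invoke --- that $\ker\kappa = G(\breve K)_1$ is generated by parahoric subgroups and root subgroups, each of which lies in the identity component --- establishes the \emph{opposite} inclusion, $\ker\kappa \subset (LG)^{\circ}$. That is precisely the injectivity statement you treat separately in your final paragraph, so you have in effect given the same argument twice while leaving local constancy unaddressed. The missing ingredient is algebraicity of $\kappa$, i.e.\ that it is a morphism of ind-schemes into the constant group $\pi_1(G)_I$: for a torus this is concrete (the valuation $L\G_m \to \Z$ is a morphism since $L\G_m$ decomposes as a $\Z$-indexed disjoint union of translates of $L^+\G_m$), and the general case follows by d\'evissage through the torus quotient $G/G_{\mathrm{der}}$ and a $z$-extension, independently of the other inclusion. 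Separately, the exact sequence $1 \to G_{sc} \to G_{\mathrm{der}} \to Z \to 1$ that you write is misstated: the map $G_{sc}\to G_{\mathrm{der}}$ is \emph{surjective} with finite central kernel, so the relevant sequences are $1\to \mu\to G_{sc}\to G_{\mathrm{der}}\to 1$ and $1\to G_{\mathrm{der}}\to G\to G/G_{\mathrm{der}}\to 1$ (or a $z$-extension presentation of $G$).
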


\section{The central extension of $LG$}
\label{sec:WittAffGrSL}

In this section, let us fix $G=\SL_n$, $n\geq 2$, over a complete discrete valuation ring $\calO_K$ of mixed characteristic with perfect residue field $k$ as above. We have the affine Grassmannian $\Gr_{\SL_n}$.

\begin{proposition} The affine Grassmannian $\Gr_{\SL_n}$ parametrizes finite projective $W_{\calO_K}(R)$-modules $M\subset W_{\calO_K}(R)[\frac{1}{p}]^n$ such that $M[\frac{1}{p}] = W_{\calO_K}(R)[\frac{1}{p}]^n$ and $\det M = W_{\calO_K}(R)$. There is a natural ample line bundle $\calL$ on $\Gr_{\SL_n}$ given by
\[
\calL = \widetilde{\det}_R(p^a W_{\calO_K}(R)^n/M)
\]
for any $a\ll 0$.
\end{proposition}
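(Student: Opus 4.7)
The plan is to identify $\Gr_{\SL_n}$ with a clopen subfunctor of $\Gr_{\GL_n}$, construct the line bundle by restriction, and deduce ampleness from Theorem~\ref{thm:WittGrProjective}.

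For the moduli description, I would begin from the fact that $\SL_n$ is the kernel of $\det\colon \GL_n \to \G_m$. Given $g \in L\SL_n(R)$, the associated lattice $M = g\, W_{\calO_K}(R)^n$ satisfies $\det M = (\det g)\, W_{\calO_K}(R) = W_{\calO_K}(R)$. Conversely, any lattice $M$ with $\det M = W_{\calO_K}(R)$ can, fpqc-locally on $R$ (where $M$ becomes free), be written as $g_0\, W_{\calO_K}(R)^n$ with $g_0 \in L\GL_n(R)$; since $u := \det g_0 \in W_{\calO_K}(R)^\times$, the element $g_0\cdot \mathrm{diag}(u^{-1},1,\ldots,1)$ lies in $L\SL_n(R)$ and represents the same lattice. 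Combined with the obvious equality $L\SL_n \cap L^+\GL_n = L^+\SL_n$, which shows injectivity of $L\SL_n/L^+\SL_n \to L\GL_n/L^+\GL_n$, this identifies $\Gr_{\SL_n}$ with the subfunctor of $\Gr_{\GL_n}$ cut out by the condition $\det M = W_{\calO_K}(R)$. The condition is clopen in $\Gr_{\GL_n}$, since $v_p(\det M)$ is a locally constant $\Z$-valued function on the base (in the spirit of Proposition~\ref{AffFlagConnComp}).

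For the line bundle, the constraint $\det M = W_{\calO_K}(R)$ together with the inclusions $p^b W_{\calO_K}(R)^n \subset M \subset p^a W_{\calO_K}(R)^n$ force $a \leq 0 \leq b$. For each $a \ll 0$, the quotient $Q_a := p^a W_{\calO_K}(R)^n/M$ is a finitely presented $p$-power torsion $W_{\calO_K}(R)$-module of projective dimension one (as the cokernel of the isogeny $M \hookrightarrow p^a W_{\calO_K}(R)^n$ between finite projective modules), so either the $K$-theoretic map $\widetilde{\det}$ of Theorem~\ref{thm:DetGeneral} or the geometric construction underlying Theorem~\ref{ExLineBundle} produces a line bundle $\calL_a$ on the corresponding stratum of $\Gr_{\SL_n}$. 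To check independence of $a$, I would apply $\widetilde{\det}$ to the short exact sequence
\[ 0 \to p^{a'}W_{\calO_K}(R)^n/p^a W_{\calO_K}(R)^n \to Q_{a'} \to Q_a \to 0 \]
for $a' \leq a \ll 0$. The leftmost term is a free $R$-module of rank $n(a-a')$, whose determinant is canonically trivialized by the standard basis; multiplicativity of $\widetilde{\det}$ on distinguished triangles then yields canonical isomorphisms $\calL_{a'} \simeq \calL_a$, and the $\calL_a$'s glue to a single line bundle $\calL$ on $\Gr_{\SL_n}$.

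Finally, for ampleness, the preceding corollary already presents $\Gr_{\SL_n}$ as an increasing union of perfections of projective schemes; a convenient exhaustion is given by the pieces $\Gr_{\SL_n} \cap \Gr^{W\aff,[a,b]}$ as $a \to -\infty$ and $b \to +\infty$, each of which, after shifting lattices by a power of $p$, is identified with a clopen subscheme of some $\Gr_{\leq (b-a)^n}$ from the $\GL_n$ case. On each such piece, $\calL$ is the restriction of the ample line bundle produced by Theorem~\ref{thm:WittGrProjective}, and is therefore ample (using Lemma~\ref{AmplePerf} to pass between $X$ and $X_{\perf}$ when needed). The main geometric content has already been extracted in the $\GL_n$ setting; the only genuine subtlety is the bookkeeping that makes the formula $\widetilde{\det}_R(p^a W_{\calO_K}(R)^n/M)$ independent of $a$ for $a \ll 0$, which is handled by the canonical trivialization of determinants of free modules above.
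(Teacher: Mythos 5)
Your proposal is essentially correct in outline, and in two respects it is more explicit than the paper, which simply cites Kreidl for the moduli description and says the line bundle is ``clearly'' independent of $a$: your short-exact-sequence argument showing $\calL_{a'}\simeq\calL_a$ via additivity of $\widetilde{\det}$ and the canonical trivialization of $\det$ of the free module is exactly the right way to make that precise, and your ampleness argument via the exhaustion by $\Gr_{\SL_n}\cap\Gr^{W\aff,[a,b]}$ fills in a step the paper leaves implicit.

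However, there is a genuine (if small) gap. You invoke $\widetilde{\det}$ from Theorem \ref{thm:DetGeneral} applied to $Q_a = p^aW_{\calO_K}(R)^n/M$, but that theorem constructs a map $K(W(X)\on X)\to\gPic^\Z(X)$: it takes perfect complexes of $W(R)$-modules, not $W_{\calO_K}(R)$-modules. Unless $\calO_K=W(k)$, one must first observe that a perfect $W_{\calO_K}(R)$-complex that is killed after inverting $p$ remains perfect as a $W(R)$-complex, so restriction of scalars gives $K(W_{\calO_K}(R)\on R)\to K(W(R)\on R)$, and then compose; this one-line observation is exactly what the paper's proof supplies and what your proposal omits. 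The same issue recurs in the ampleness step: Theorem \ref{thm:WittGrProjective} and the schemes $\Gr_{\leq\lambda}$ of that section live over $W(R)$, so the identification of a piece of $\Gr_{\SL_n}$ (built from $W_{\calO_K}$) with a clopen subscheme of some $\Gr_{\leq\lambda}$ requires either the restriction-of-scalars/Weil-restriction reduction $\calG\rightsquigarrow\mathrm{Res}_{\calO_K/W(k)}\calG$ used in \S\ref{sec:WittAffGrG}, or a rerun of the $\GL_n$ arguments with $W$ replaced by $W_{\calO_K}$ throughout. Once this is noted the rest of your proof goes through as written. (Minor: your citation of Proposition \ref{AffFlagConnComp} for local constancy of $v_p(\det M)$ is not quite the right reference; the relevant elementary argument appears in the proof of Proposition \ref{ExDemazure}, in the $\lambda=0$ case, where $x\mapsto v_p(\det(f\otimes W(k(x))))$ is shown to be locally constant.)
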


\begin{proof} The first part is standard, cf. \cite[Theorem 5]{Kreidl}. For the second part, note that
\[
\widetilde{\det}_R: K(W(R)\on R)\to \gPic^\Z(R)
\]
induces by composition $K(W_{\calO_K}(R)\on R)\to K(W(R)\on R)$ (using that a perfect complex of $W_{\calO_K}(R)$-modules stays perfect as a complex of $W(R)$-modules) a similar determinant map
\[
\widetilde{\det}_R: K(W_{\calO_K}(R)\on R)\to \gPic^\Z(R)\ .
\]
Clearly, the line bundle $\calL$ is independent of the choice of $a\ll 0$.
\end{proof}

The line bundle $\calL$ is \emph{not} equivariant under the action of $LG$. However, for any fixed element $g\in LG(R)$, the line bundles $g^\ast \calL$ and $\calL$ on $\Gr_{\SL_n}\otimes_k R$ are locally on $R$ isomorphic: Their difference is given by the line bundle $\det_R( p^a W_{\calO_K}(R)^n / g W_{\calO_K}(R)^n )$ on $R$ for $a\ll 0$.

\begin{definition} Let $\widetilde{LG}$ be the functor on perfect $k$-algebras $R$ given by
\[
\widetilde{LG}(R) = \{(g,\alpha)\mid g\in LG(R), \alpha: g^\ast\calL\cong \calL\ \mathrm{on}\ \Gr_{\SL_n}\otimes_k R\}\ .
\]
\end{definition}

With this definition, it is clear that $\widetilde{LG}$ acts on $\calL$.

\begin{proposition} There is a short exact sequence
\[
1\to \G_m\to \widetilde{LG}\to LG\to 1
\]
of Zariski sheaves (thus, of $v$-sheaves). This makes $\widetilde{LG}$ a central extension of $LG$ by $\G_m$.
\end{proposition}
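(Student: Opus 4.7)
The group law on $\widetilde{LG}$ is $(g_1,\alpha_1)\cdot(g_2,\alpha_2)=(g_1g_2,\,\alpha_2\circ g_2^\ast\alpha_1)$, with identity $(1,\id)$, making the projection $\widetilde{LG}\to LG$ a homomorphism.  The embedding $\G_m\hookrightarrow \widetilde{LG}$ sends $\lambda\mapsto(1,\lambda\cdot\id_\calL)$; centrality is immediate since scalar multiplication on $\calL$ commutes with every $\calO$-linear automorphism, so the last assertion of the proposition will follow as soon as we check exactness.  The kernel of $\widetilde{LG}\to LG$ consists of pairs $(1,\alpha)$, so it equals $\Aut_{\calO_{\Gr_{\SL_n}\otimes_k R}}(\calL)=H^0(\Gr_{\SL_n}\otimes_k R,\calO^\times)$.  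Since $\SL_n$ is simply connected, Proposition~\ref{AffFlagConnComp} yields that $\Gr_{\SL_n}$ is connected; Theorem~\ref{thm:WittGrProjective} presents it as an increasing union of perfections of geometrically connected projective $\F_p$-varieties with $H^0(\calO)=k$.  Together with the base-change statement of Lemma~\ref{BaseChange} (no flatness required for perfect schemes), this gives $H^0(\Gr_{\SL_n}\otimes_k R,\calO)=R$, and the kernel therefore equals $R^\times=\G_m(R)$.

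The substantive step is Zariski-local surjectivity of $\widetilde{LG}\to LG$.  Fix $g\in LG(R)$; I will show that $\calN:=g^\ast\calL\otimes\calL^{-1}$ on $\Gr_{\SL_n}\otimes_k R$ is the pullback of a line bundle $\calM_g$ from $\Spec R$; since $\calM_g$ is Zariski-locally trivial, this will produce $\alpha$ Zariski-locally on $R$.  The $K$-theoretic construction of $\calL$ from Theorem~\ref{thm:DetGeneral} gives the fibre of $\calL$ at a universal lattice $M$ as $\widetilde{\det}(p^a W_{\calO_K}(R)^n/M)$ for $a\ll 0$, and that of $g^\ast\calL$ as $\widetilde{\det}(p^a W_{\calO_K}(R)^n/gM)$.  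For commensurable lattices $M_1,M_2\subset W_{\calO_K}(R)[\tfrac{1}{p}]^n$, I define the virtual class $[M_1:M_2]:=[L/M_2]-[L/M_1]\in K_0(W_{\calO_K}(R)\on R)$ for any common lattice container $L$; this is independent of $L$ by additivity and satisfies the cocycle relation $[M_1:M_2]+[M_2:M_3]=[M_1:M_3]$.  Additivity of $\widetilde{\det}$ then identifies $\calN$ with $\widetilde{\det}([M:gM])$ applied to the universal $M$.  Now multiplication by $g$ is a $W_{\calO_K}(R)[\tfrac{1}{p}]$-linear automorphism of the ambient space restricting to a $W_{\calO_K}(R)$-module isomorphism $L'\simeq gL'$ on every lattice $L'$, whence $[gM_1:gM_2]=[M_1:M_2]$; combined with the cocycle, this gives $[M:gM]=[W_{\calO_K}(R)^n:gW_{\calO_K}(R)^n]$ independently of $M$.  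Hence $\calN$ is the pullback of $\calM_g:=\widetilde{\det}([W_{\calO_K}(R)^n:gW_{\calO_K}(R)^n])$ from $\Spec R$, which matches the identification foreshadowed in the discussion preceding the proposition.

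The principal obstacle is upgrading the preceding $K_0$-level identity to a functorial isomorphism of line bundles over the universal base $\Gr_{\SL_n}\otimes_k R$, rather than merely a pointwise one.  This is made possible precisely because Theorem~\ref{thm:DetGeneral} constructs $\widetilde{\det}$ as a morphism of connective spectra and not just of abelian groups, so that the additivity manipulations in $K_0$ lift to coherent natural isomorphisms of graded line bundles; the $v$-descent of vector bundles from Section~\ref{VDescent} and the Picard-groupoid framework of the Appendix are what enable this refinement.  A secondary minor point is that to define $[M:gM]$ uniformly on $\Gr_{\SL_n}\otimes_k R$ one passes to any finite piece $\Gr_{\SL_n}^{[a,b]}\otimes_k R$ large enough to contain both $M$ and $gM$, where one may take the common container $L=p^a W_{\calO_K}^n$; exhausting by such finite pieces and using that the resulting $\calM_g$ is independent of the choice then gives the global identification.
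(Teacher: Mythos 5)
Your kernel argument is identical to the paper's: identify $\ker(\widetilde{LG}\to LG)$ with the automorphism sheaf of $\calL$, reduce to $H^0(\Gr_{\SL_n}\otimes_k R,\calO)=R$, and deduce this from geometric connectedness of the exhausting projective pieces (Proposition~\ref{AffFlagConnComp}) together with the base-change Lemma~\ref{BaseChange}.  The surjectivity of $\widetilde{LG}\to LG$ as Zariski sheaves is not treated in the paper's own proof; it rests on the unproved assertion in the text preceding the definition of $\widetilde{LG}$ that $g^\ast\calL\otimes\calL^{-1}$ is the pullback of the line bundle $\det_R(p^a W_{\calO_K}(R)^n/g W_{\calO_K}(R)^n)$ from $\Spec(R)$.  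Your derivation of this --- introducing the relative index $[M_1:M_2]\in K_0(W_{\calO_K}(R)\on R)$, verifying its cocycle relation and invariance under left multiplication by $g$, and concluding $[M:gM]=[W_{\calO_K}(R)^n:gW_{\calO_K}(R)^n]$ independently of $M$ --- is correct, and you rightly flag that the spectrum-level formulation of $\widetilde{\det}$ in Theorem~\ref{thm:DetGeneral} is exactly what promotes this $K_0$-identity to a natural isomorphism of line bundles over all of $\Gr_{\SL_n}\otimes_k R$, so that Zariski-local triviality of $\calM_g$ on $\Spec(R)$ then supplies the required $\alpha$.  This is a genuine supplement to the paper's terse treatment rather than a divergence from it.  One small slip: Theorem~\ref{thm:WittGrProjective} concerns $\Gr_{\GL_n}$ over $\F_p$; for $\Gr_{\SL_n}$ over $k$ the exhaustion by perfections of geometrically connected projective $k$-schemes is provided by the first corollary of \S\ref{sec:WittAffGrG}.
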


\begin{proof} The projection to $LG$ is $(g,\alpha)\mapsto g$, and the embedding of $\G_m$ is given by $\alpha\in \G_m\mapsto (1,\alpha)$. It remains to see that the automorphism group scheme of $\calL$ on $\Gr_{\SL_n}$ is given by $\G_m$. Thus, we have to see that
\[
H^0(\Gr_{\SL_n}\otimes_k R,\G_m) = R^\times\ .
\]
It is enough to see that
\[
H^0(\Gr_{\SL_n}\otimes_k R,\calO) = R\ .
\]
But $\Gr_{\SL_n}$ is an increasing union of perfections of reduced projective $k$-schemes, which we can assume to be connected by Proposition \ref{AffFlagConnComp}.
\end{proof}

On $k$-rational points, one can identify $\widetilde{LG}(k)$ with a more familiar object. We want to identify the central extension
\[
1\to k^\ast\to \widetilde{LG}(k)\to \SL_n(K)\to 1\ .
\]
For any field $F$, Steinberg, \cite{SteinbergExtension}, has constructed a central extension (whose kernel was identified by Matsumoto, \cite{Matsumoto}),
\[
1\to K_2(F)\to \widetilde{\SL_n(F)}\to SL_n(F)\to 1\ .
\]
Here, $K_2(F)$ is the second $K$-group of $F$; note that Milnor and Quillen $K$-theory give the same answer in this range. Thus, $K_2(F)$ is the quotient of the abelian group $F^\ast\otimes_\Z F^\ast$ by the Steinberg relations $x\otimes (1-x)=0$ for all $0,1\neq x\in F$. One way to explain the relation of this extension to Quillen $K$-theory is as follows. There is a natural map
\[
B\SL_n(F)\to B\GL_n(F)\to B\GL_\infty(F)\to (B\GL_\infty(F))^+ = \tau_{\geq 1} K(F)\ ,
\]
where $\tau_{\geq 1} K(F)$ is the connected component of $0$ of Quillen's $K$-theory space $K(F)$ of $F$. This map factors canonically over the simply connected cover $\tau_{\geq 2} K(F)$, as the (determinant) map $\SL_n(F)\to \pi_1 K(F) = F^\times$ is trivial. Composing with $\tau_{\geq 2} K(F)\to B^2 K_2(F)$, one gets a map
\[
B\SL_n(F)\to B^2 K_2(F)\ ,
\]
which (by passing to loops) is equivalent to a map of $\E_1$-groups $\SL_n(F)\to BK_2(F)$, i.e. an extension of $\SL_n(F)$ by $K_2(F)$.

\begin{proposition} The extension
\[
1\to k^\ast\to \widetilde{LG}(k)\to \SL_n(K)\to 1
\]
is the pushout of Steinberg's extension
\[
1\to K_2(K)\to \widetilde{\SL_n(K)}\to \SL_n(K)\to 1
\]
along the tame Hilbert symbol map $K_2(K)\to K_1(k) = k^\ast$.
\end{proposition}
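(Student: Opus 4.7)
The plan is to use Matsumoto's classification \cite{Matsumoto}: for $n \geq 3$ (the case $n=2$ being reduced to $n \geq 3$ via the block-diagonal embedding $SL_2 \hookrightarrow SL_3$), every central extension of $SL_n(K)$ by an abelian group $A$ is a pushout of Steinberg's $K_2(K)$-extension along a unique homomorphism $K_2(K) \to A$ determined by a Steinberg symbol $K^\times \times K^\times \to A$. Thus it suffices to identify the symbol associated to $\widetilde{LG}(k)$ with the tame symbol.

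First I would reinterpret the $k^\times$-extension $K$-theoretically. Specializing Theorem~\ref{thm:DetGeneral} at $R=k$, note that $W_{\calO_K}(k) = \calO_K$ and $K(\calO_K\on k) \simeq K(k)$ by Quillen's d\'evissage (Theorem~\ref{QuillenDevissage}), and under this identification $\widetilde{\det}$ is the usual determinant. For any $g \in SL_n(K) = LG(k)$, the lattices $M_0 := \calO_K^n$ and $gM_0$ are commensurable, and the class $[p^N M_0/gM_0] - [p^N M_0/M_0] \in K_0(\calO_K \on k) = \Z$ (independent of $N \ll 0$) is the $p$-adic valuation of $\det(g) = 1$, so vanishes. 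Hence an isomorphism $\alpha_g : g^\ast \calL \cong \calL$ exists over the base point; since $H^0(\Gr_{SL_n},\calO) = k$, the discrepancy between two such choices lies in $k^\times$, and the $2$-cocycle
\[ c(g,h) := \alpha_{gh}^{-1} \circ \alpha_g \circ g^\ast \alpha_h \in k^\times \]
is precisely the cocycle defining $\widetilde{LG}(k)$ up to coboundary.

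Next I would restrict to the diagonal torus $T(K) \subset SL_n(K)$ and compute the commutator pairing. For $a, b \in K^\times$, take $t_a = \diag(a, a^{-1}, 1, \dots, 1)$ and $t_b = \diag(b, 1, b^{-1}, 1, \dots, 1)$. The commutator of lifts $[\tilde{t}_a, \tilde{t}_b] \in k^\times$ equals the Steinberg symbol associated to $\widetilde{LG}(k)$ by Matsumoto's theorem. To compute it, unravel the $K$-theoretic determinant: the difference of chosen trivializations of $\calL$ under translation by $t_a$ and $t_b$ is computed by the determinant of automorphisms of the relevant finite-length $\calO_K$-modules $M_0/(M_0 \cap t_a M_0)$, $M_0/(M_0 \cap t_b M_0)$, and their intersections. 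Via the d\'evissage identification $K(\calO_K\on k) \simeq K(k)$, determinants of torsion $\calO_K$-modules reduce to products of determinants of the associated graded $k$-vector spaces under the $p$-adic filtration. A direct calculation—a perfect-geometry version of the classical computation of commutators in the Heisenberg/$GL_2$ central extension over a local field, cf.~\cite{BeauvilleLaszlo} and the Contou-Carr\`ere symbol—yields
\[ c(t_a, t_b) = (-1)^{v(a)v(b)} \,\overline{a^{v(b)} b^{-v(a)}} \in k^\times, \]
which is exactly the tame Hilbert symbol $\{a,b\}_{\mathrm{tame}}$.

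The main obstacle is the explicit determinant computation in the previous step. Cleanly, one can avoid brute calculation by observing that $\phi:K_2(K) \to k^\times$ factors through the boundary map $\partial: K_2(K) \to K_1(\calO_K\on k) \simeq K_1(k) = k^\times$ of the localization fibre sequence $K(\calO_K\on k) \to K(\calO_K) \to K(K)$, since the determinant map of Theorem~\ref{thm:DetGeneral} was constructed functorially from ordinary $K$-theoretic determinants and the d\'evissage identification. Granting that $\partial$ is the tame symbol, which is a standard fact (see e.g.\ Quillen), this pins down $\phi$ without a separate computation. Matching signs with Steinberg's original cocycle then completes the identification.
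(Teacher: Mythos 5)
Your ``clean'' route in the final paragraph is essentially the paper's own proof. The paper also interprets the extension as coming from the map of $\E_1$-groups $\SL_n(K) \to K(\calO_K \on k)$, $g \mapsto p^a\calO_K^n/g\calO_K^n$, observes that this factors as $\SL_n(K) \to \Omega K(K) \to K(\calO_K\on k) \simeq K(k)$ via the localization fibre sequence $K(\calO_K\on k) \to K(\calO_K) \to K(K)$, and identifies the boundary map on $\pi_1$ with the tame symbol. Two differences worth noting: the paper never invokes Matsumoto's classification --- once Steinberg's extension is identified with the map $\SL_n(K) \to \Omega K(K)$ and the factorization is in hand, the pushout statement is immediate from the functoriality of delooping, so the classification theorem is scaffolding you can discard --- and the explicit torus/cocycle computation you flag as the main obstacle is likewise unnecessary once the $K$-theoretic reinterpretation is in place. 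Your remark about reducing $n=2$ to $n\geq 3$ is also not needed in this framing, since the argument never passes through the generators-and-relations presentation of $\widetilde{\SL_n(K)}$.
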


\begin{proof} The extension $\widetilde{LG}(k)$ is the group of pairs $(g,\alpha)$ of $g\in \SL_n(K)$ and $\alpha: \widetilde{\det}_k(p^a \calO_K^n / g \calO_K^n)\cong k$, for any $a\ll 0$. This can be encoded in the map of $\E_1$-groups $\SL_n(K)\to Bk^\ast$ sending $g\in \SL_n(K)$ to the $1$-dimensional $k$-vector space $\widetilde{\det}_k(p^a \calO_K^n / g \calO_K^n)$. This can be refined to a map of $\E_1$-groups $\SL_n(K)\to K(\calO_K\on k)$ by sending $g$ to the perfect complex $p^a \calO_K^n / g \calO_K^n$.

The tame Hilbert symbol map $K_2(K)\to K_1(k) = k^\ast$ can be defined as the boundary map in the long exact sequence
\[
\ldots\to K_2(\calO_K)\to K_2(K)\to K_1(k)\to K_1(\calO_K)\to \ldots
\]
coming from the fibration sequence $K(\calO_K\on k)\to K(\calO_K)\to K(K)$ and the identification $K(\calO_K\on k)\cong K(k)$. Recall that Steinberg's extension comes from the map of $\E_1$-groups $\SL_n(K)\to \Omega K(K)$ sending any $g\in \SL_n(K)$ to the induced loop in $K(K)$. It remains to see that composing this map with the map $\Omega K(K)\to K(\calO_K \on k)$ from the fibration sequence induces the map of $\E_1$-groups $\SL_n(K)\to K(\calO_K\on k)$ considered above. This follows from the constructions.
\end{proof}

\begin{proposition} For any $m\geq 1$, the representation
\[
H^0(\Gr_{\SL_n},\calL^{\otimes m})
\]
of $\widetilde{LG}$ is infinite-dimensional. More precisely, for any projective subscheme $X\subset \Gr_{\SL_n}$, the restriction map
\[
H^0(\Gr_{\SL_n},\calL^{\otimes m}) \to H^0(X,\calL^{\otimes m})
\]
is surjective, and the right-hand side is infinite-dimensional once $\dim X>0$.
\end{proposition}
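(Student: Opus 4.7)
The plan is to leverage the ind-projective structure of $\Gr_{\SL_n}$ together with the observation that, on perfections of projective schemes, Serre vanishing becomes effective in every line bundle degree $m \geq 1$ by absorbing the usual ``$m \gg 0$'' condition into Frobenius twists of finite-type models.

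First I would write $\Gr_{\SL_n} = \varinjlim_\lambda Y_\lambda$ with $Y_\lambda := \Gr_{\SL_n} \cap \Gr_{\leq \lambda}$, a closed subscheme of $\Gr_{\leq \lambda}$ and hence the perfection of a projective $\F_p$-scheme on which $\calL$ restricts to an ample line bundle (by Theorem~\ref{thm:WittGrProjective} and ampleness being preserved under closed immersions). Since $X$ is quasi-compact, $X \subset Y_{\lambda_0}$ for some $\lambda_0$, and global sections turn the ind-colimit into an inverse limit $H^0(\Gr_{\SL_n}, \calL^{\otimes m}) = \varprojlim_\lambda H^0(Y_\lambda, \calL^{\otimes m})$.

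The heart of the argument is the following lemma: \emph{for a closed immersion $Z \hookrightarrow W$ of perfections of projective $\F_p$-schemes and any ample line bundle $\calM$ on $W$, the restriction $H^0(W, \calM^{\otimes m}) \to H^0(Z, \calM^{\otimes m})$ is surjective for every $m \geq 1$.} To prove this, I would pick finite-type models $Z_0 \hookrightarrow W_0$ via Proposition~\ref{FPModelsExist} and a line bundle $\calM_0 \in \Pic(W_0)$ pulling back to $\calM$ under some projection $W \to W_0$; Lemma~\ref{AmplePerf} ensures $\calM_0$ is ample. Writing $W = \varprojlim(W_0 \xleftarrow{F} W_0 \xleftarrow{F} \cdots)$ with affine transition maps, quasi-coherent cohomology gives $H^0(W, \calM^{\otimes m}) = \varinjlim_k H^0(W_0, \calM_0^{\otimes m p^k})$ and analogously for $Z$. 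For any fixed $m \geq 1$, Serre vanishing applied to the ample $\calM_0$ against the ideal sheaf of $Z_0$ forces $H^0(W_0, \calM_0^{\otimes m p^k}) \twoheadrightarrow H^0(Z_0, \calM_0^{\otimes m p^k})$ once $k$ is large, and surjectivity persists under the filtered colimit in $k$.

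Given the lemma, the surjectivity half of the theorem follows in two moves: applied to $Y_{\lambda_0} \subset Y_\lambda$ for $\lambda \geq \lambda_0$, the lemma shows that the transition maps in $\{H^0(Y_\lambda, \calL^{\otimes m})\}_\lambda$ are surjective past $\lambda_0$, so the inverse limit surjects onto $H^0(Y_{\lambda_0}, \calL^{\otimes m})$; a further application to $X \hookrightarrow Y_{\lambda_0}$ completes the surjection onto $H^0(X, \calL^{\otimes m})$. For infinite-dimensionality when $\dim X > 0$, I would pick a positive-dimensional closed subscheme $X' \subset X$ (e.g.\ a reduced irreducible component) with finite-type model $X'_0$; then $\calL^{\otimes m}|_{X'_0}$ is ample, so asymptotic Riemann--Roch gives $\dim_k H^0(X'_0, \calL^{\otimes m p^k}) \sim C \cdot p^{k \dim X'_0} \to \infty$, while the transition maps $s \mapsto s^p$ in $H^0(X', \calL^{\otimes m}) = \varinjlim_k H^0(X'_0, \calL^{\otimes m p^k})$ are injective on the reduced scheme $X'_0$, forcing $H^0(X', \calL^{\otimes m})$ to be infinite-dimensional; the surjection $H^0(X, \calL^{\otimes m}) \twoheadrightarrow H^0(X', \calL^{\otimes m})$ from the previous step then transfers the conclusion to $X$. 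The main obstacle is verifying the key lemma: the classical projective statement only provides vanishing in the stable range $m \gg 0$, and it is precisely the Frobenius colimit structure of perfect schemes that allows one to uniformly harvest all degrees $m \geq 1$.
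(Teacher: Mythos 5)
Your proof is correct and follows essentially the same strategy as the paper's: use the colimit description $H^0(W_\perf, \calM) = \varinjlim_k H^0(W_0, \calM_0^{\otimes p^k})$ to upgrade Serre vanishing (valid only for $k \gg 0$) into surjectivity of restriction in every degree $m \geq 1$ on the perfection, and combine with the fact that these growing section spaces force infinite-dimensionality once $\dim > 0$. You spell out a few details the paper leaves implicit — in particular the identification of $H^0(\Gr_{\SL_n}, \calL^{\otimes m})$ as an inverse limit over the ind-filtration and the subsequent Mittag–Leffler–type argument (which one should note works because the index poset admits a countable cofinal chain) — but this is elaboration, not a new idea.
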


\begin{proof} If $X=(X_0)_\perf$ is the perfection of a positive-dimensional projective scheme $X_0$ over $k$ with an ample line bundle $\calL_0$ on $X_0$ with pullback $\calL$ to $X$, then
\[
H^0(X,\calL) = \varinjlim_{s\mapsto s^p} H^0(X_0,\calL_0^{\otimes p^k})
\]
is infinite-dimensional. If $Y_0\subset X_0$ is a closed subscheme with perfection $Y\subset X$, then the restriction map $H^0(X_0,\calL^{\otimes p^k})\to H^0(Y_0,\calL^{\otimes p^k}|_{Y_0})$ is surjective for $k$ large enough (by Serre vanishing), and thus $H^0(X,\calL)\to H^0(Y,\calL)$ is surjective. Applying these observations, one gets the result; the $n \geq 2$ assumption ensures that $\Gr_{\SL_n}$, and hence a suitable $X \subset \Gr_{\SL_n}$, is not zero-dimensional.
\end{proof}

We end this section with several questions.

\begin{question}
\begin{enumerate}
\item[{\rm (i)}] Can one construct an explicit nonzero section of $\calL$ (or some tensor power) on $\Gr_{\SL_n}$? This would give rise to divisors on $\Gr_{\SL_n}$, which are classically known as Theta-divisors.
\item[{\rm (ii)}] What can be said about the representation $H^0(\Gr_{\SL_n},\calL^{\otimes m})$ of $\widetilde{LG}$? Is it (topologically) irreducible? Classically, these representations are important in Kac-Moody theory and the Verlinde formula, cf. e.g. \cite{BeauvilleLaszlo}. Note that here, the representation is much bigger, as already the space of sections on any finite-dimensional part is infinite-dimensional.
\item[{\rm (iii)}] Is $\Pic(\Gr_{\SL_n}) = \Z[\frac{1}{p}]\cdot \calL$? Note that as $\Gr_{\SL_n}$ is a functor on perfect schemes, its Picard group is a $\Z[\frac{1}{p}]$-module.
\item[{\rm (iv)}] Let $\calG$ be a general split, simple and simply connected group. Is $\Pic(\Gr_\calG)\cong \Z[\frac{1}{p}]\cdot \calL_\calG$, with a specified generator $\calL_\calG$? One can get some multiple of the primitive line bundle via pullback from $\SL_n$, and the line bundle coming from the adjoint representation of $\calG$ should be the $2h^\vee$-th power of $\calL_\calG$, where $h^\vee$ is the dual Coxeter number. E.g., if $\calG=E_8$, the primitive line bundle should be a $60$-th root of the line bundle coming from the adjoint representation of $E_8$. Faltings, \cite{FaltingsPrimLineBundle}, constructed the primitive line bundle by constructing natural divisors (on the corresponding affine flag variety), related to orbits under $G(k[t^{-1}])$ in the equal characteristic case; this approach seems to break down in mixed characteristic.
\item[{\rm (v)}] Steinberg's extension exists for a general split, simple and simply connected group $G$, cf. e.g. \cite{BrylinskiDeligne}. Can one compare it with the central extension of $LG$ corresponding to the primitive line bundle?
\item[{\rm (vi)}] Is there a finite-type structure on $\Gr_{\SL_n}$, of some sort? Presumably, such a structure would give rise to a subrepresentation of $H^0(\Gr_{\SL_n},\calL^{\otimes m})$, so properties of this representation (such as irreducibility) may be relevant to this question.
\end{enumerate}
\end{question}

\section{$h$-descent for the derived category of quasi-coherent complexes}
\label{sec:hDescentDerived}

In this section, we investigate quasi-coherent sheaf theory on perfect schemes more thoroughly; these results complement those in \S \ref{VDescent} by extending them to the derived category (and, in fact, give new proofs of the results in \S \ref{VDescent} that do not rely on projective methods like the ones in Lemma~\ref{lem:VectcdhStack}), but are not used elsewhere in the paper. For convenience, we define:

\begin{definition}
	The $h$-topology on $\Perf$ is the topology generated by declaring perfectly finitely presented $v$-covers to be covers; thus, the perfection of an $h$-cover in $\Sch_{/\F_p}$ gives an $h$-cover in $\Perf$, and any $h$-cover in $\Perf$ is of this form, up to refinements (by Lemma \ref{ApproxUnivSubtr} and Lemma \ref{ApproxFP}).
\end{definition}

First, recall the warning in Remark \ref{rmk:NonDescentWarning}: it is difficult to extend the $h$-descent results for vector bundles in $\Perf$ to a larger class of quasi-coherent sheaves, as even flat quasi-coherent sheaves fail $h$-descent. Nevertheless, it turns out that this problem is specific to the abelian world, and disappears if we pass directly to the derived category. More precisely, in the language of \cite{LurieHTT}, one has the following descent results:

\begin{theorem} 
	\label{thm:hDescentDerived}
	Regard $D_{qc}(-)$, $\Perf(-)$, etc. as presheaves of spaces on $\Perf$. Then:
	\begin{enumerate}
		\item The functor $X \mapsto D_{qc}(X)$ gives an $h$-sheaf of spaces on $\Perf$.
		\item The functor $X \mapsto \Perf(X)$ gives a hypercomplete\footnote{The notion of a hypercomplete sheaf is specific to working in the $\infty$-categorical setting. Roughly speaking, a sheaf in the $\infty$-categorical setup is only required to satisfy descent along Cech covers, while a hypersheaf is required to satisfy descent along hypercovers. If the sheaf takes on $n$-truncated values for some finite integer $n$, then the notions coincide.} $v$-sheaf of spaces on $\Perf$. The same applies to $\Perf(W_n(X))$, $\Perf(W(X))$ and $\Perf(W(X) \on X)$.
		\item Let $\Perf^\fp\subset \Perf$ be the subcategory of all objects, and only perfectly finitely presented morphisms; this still carries the $h$-topology. The association $X\mapsto D^b_{qc}(X)$ is functorial in $X\in \Perf^\fp$, and defines an $h$-sheaf.
		\item Let $k$ be a perfect field, or more generally the perfection of a regular $\F_p$-algebra of finite Krull dimension, and give $\Perf^\fp_{/k}$ the induced $h$-topology. Then the functor $X \mapsto D_{qc}(X)$ is a hypercomplete $h$-sheaf on $\Perf^\fp_{/k}$.
	\end{enumerate}
\end{theorem}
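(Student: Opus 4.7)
The plan is to upgrade statement (1), which gives $\check{\mathrm{C}}$ech $h$-descent for $D_{qc}$ on all of $\Perf$, to hypercomplete $h$-descent on the smaller site $\Perf^\fp_{/k}$, with the upgrade powered entirely by the finite-dimensionality hypothesis on $k$.

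First I would record the geometric input. Any $X \in \Perf^\fp_{/k}$ admits a finite type model over some finite type $\F_p$-subalgebra $k_0 \subset k$, so $|X| = |X_0|$ has Krull dimension bounded by $\dim k_0 + \dim(X_0/k_0) < \infty$; here the hypothesis that $k$ is the perfection of a regular $\F_p$-algebra of finite Krull dimension is essential (recall $\dim k = \dim k_0$ since perfection preserves topological spaces). By Grothendieck vanishing on qcqs schemes, $H^i(X, \calF) = 0$ for $i > \dim X$ and any quasi-coherent $\calF$, so the standard $t$-structure on $D_{qc}(X)$ is left-complete: for every $C \in D_{qc}(X)$ the natural map $C \to R\lim_n \tau_{\geq -n} C$ is an equivalence, with the limit occurring in $D_{qc}(X)$ itself (rather than only up to hypercompletion issues).

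Next, I would truncate the problem. Given a hypercover $U_\bullet \to X$ in $\Perf^\fp_{/k}$, hypercompleteness amounts to showing $D_{qc}(X) \to \lim D_{qc}(U_\bullet)$ is an equivalence of $\infty$-categories (equivalently, of cores). Since $t$-structure truncation $\tau_{\leq n}$ is a right adjoint it commutes with the limit on the right, and Cech descent from (1) passes to each $\tau_{\geq -m} D_{qc}$. For globally given bounded $C, C'$, the mapping spectrum $R\Map_{D_{qc}(-)}(C, C')$ defines a presheaf of spectra on $\Perf^\fp_{/k}$ whose values are truncated uniformly in homotopy degree by $\dim |{-}| + (\text{amplitudes of } C, C')$, using the uniform Krull-dimension bound along the hypercover (the transition maps are perfectly finitely presented, hence of bounded relative dimension). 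But a Cech sheaf of $n$-truncated spectra is automatically hypercomplete: hypercover descent for an $n$-truncated presheaf reduces, via coskeletal approximation, to descent along the $(n{+}1)$-coskeleton, which is $\check{\mathrm{C}}$ech-type and therefore implied by (1).

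Finally, I would assemble the pieces. Hypercompleteness for mapping spectra between arbitrary bounded objects, combined with left-completeness to reduce general objects to their Postnikov towers, and the fact that hypercomplete sheaves form a full subcategory of sheaves closed under limits, yields hypercompleteness of $D_{qc}$ itself (as a sheaf of spaces via its core). The main obstacle is the uniform control of the Postnikov convergence across the hypercover: $\check{\mathrm{C}}$ech descent on $\Perf$ does \emph{not} automatically upgrade to hypercover descent outside $\Perf^\fp_{/k}$, precisely because left-completeness of $D_{qc}$ can fail on qcqs schemes of infinite Krull dimension, producing genuine $R\lim^1$-type obstructions that prevent identifying $D_{qc}(X)$ with any limit of truncated hypersheaves. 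The finite-dimensionality of $k$ is what rules out this pathology and is used twice: once to obtain left-completeness of the target, and once to truncate mapping spectra along the hypercover.
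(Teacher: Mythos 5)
Your proposal addresses only part (4), taking (1) as given, so I'll focus there.  The key issue is your claim that for globally given bounded $C, C'$, the presheaf $X' \mapsto R\Map_{D_{qc}(X')}(C|_{X'}, C'|_{X'})$ has values that are ``truncated uniformly in homotopy degree by $\dim|{-}| + (\text{amplitudes of } C, C')$''.  This is false if ``amplitude'' means cohomological amplitude, and it is the crux of the gap.  Finite Krull dimension of $X$ bounds coherent cohomological dimension (hence gives left-completeness of $D_{qc}(X)$, as you correctly observe), but it does \emph{not} bound projective dimension.  Concretely, if $A = k[x,y]/(xy)$ and $M = A/(x)$, $N = A$, then $\Ext^i_A(M,N)$ is nonzero for infinitely many $i$, and the perfection does not cure this in an obvious way; so $R\Hom(M,N)$ is coconnective but not truncated.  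Thus a Cech-to-hypercover reduction via coskeletal approximation cannot proceed, because you cannot truncate the sheaf of mapping spectra.  The paper's resolution is Proposition~\ref{prop:BoundedTorDim}: every perfectly finitely presented $k$-algebra has \emph{finite global dimension}.  This is the real content of the finiteness hypothesis on $k$, and it is not a consequence of Grothendieck vanishing.  Its proof hinges on the discreteness of derived tensor products of perfect rings (Lemma~\ref{NoTorPerf} / Lemma~\ref{lem:PerfectColimits}), to write $M \simeq M \otimes^L_P R$ for a perfect polynomial ring $P \twoheadrightarrow R$, and then on the perfection of a regular ring being a colimit of regular rings of uniformly bounded dimension, so that any module is a cone on a map of modules of projective dimension $\leq d$.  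Without this input, your ``uniform truncation'' assertion fails.

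There is a second, lesser gap: even granting hypercompleteness of all mapping-spectrum presheaves (which controls full faithfulness of $D_{qc}(S) \to \lim D_{qc}(X^\bullet)$), you do not address essential surjectivity, i.e.\ effectivity of hyperdescent data.  The paper handles this via an abstract $\infty$-topos argument: it embeds $D_{qc}(-)$ as a relatively $(-1)$-truncated subpresheaf of the hypercomplete sheaf of all hypercomplete $\calO$-module sheaves, and invokes Lemma~\ref{lem:HypercompletionTruncated} to deduce that the subpresheaf is itself hypercomplete.  Your Postnikov-tower argument in the last paragraph could in principle be made to work, but only after the finite-global-dimension input is in place (so that tensor products commute with the relevant totalizations, as in the paper's Lemma~\ref{lem:CommuteTotTensor}), and the effectivity part still requires an argument.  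In short: your skeleton (reduce to truncated objects, use Cech descent for the coskeleta, glue via Postnikov towers) is the right shape, but you have silently replaced finite global dimension with finite Krull dimension, and these diverge exactly where the theorem has content.
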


	\begin{remark}
		\label{rmk:BoundedTorDimFFinite}
		In Theorem \ref{thm:hDescentDerived} (4), the same result follows formally if $k$ is the perfection of an algebra $R_0$ which can be written as a quotient of a regular $\F_p$-algebra of finite Krull dimension. Rings with this property include noetherian $F$-finite rings (by \cite[Remark 13.6]{Gabbertstructures}), and complete noetherian local rings (by the Cohen structure theorem).
	\end{remark}

\begin{remark}
	Theorem \ref{thm:hDescentDerived} (2) is a close analogue of $h$-descent for perfect complexes in the setting of derived schemes (see \cite{HalpernLeistnerPreygel} as well as Theorem \ref{thm:hCechDescentDerivedQCoh} below); here the $h$-topology on derived schemes is defined by passing to underlying classical schemes. However, the result above is stronger: in the derived setting, the functor $X \mapsto \Perf(X)$ does {\em not} give a hypercomplete sheaf. Indeed, if $X$ is any derived scheme with classical truncation $\tau(X)$, then the constant simplicial derived scheme with value $\tau(X)$ is a $h$-hypercover of $X$, but $\Perf(X) \neq \Perf(\tau(X))$ unless $X$ is classical. In fact, even if $X$ is classical, but not reduced, then the same argument applies to $X_\red\hookrightarrow X$.
\end{remark}

\begin{remark}
	Theorem \ref{thm:hDescentDerived} (4) gives a fully faithful inclusion $D^b_{qc}(X) \subset D^b(\Perf^\fp_{/X,h},\calO)$ for any $X \in \Perf^\fp_{/k}$. This leads to a new numerical invariant of bounded complexes $K \in D^b_{qc}(X)$ as follows: define the $h$-amplitude of such a $K$ to be the amplitude of $K \in D^b(\Perf^\fp_{/X,h},\calO)$. For example, any flat quasi-coherent $\calO_X$-module has $h$-amplitude $0$. On the other hand, there exist non-discrete complexes in $D^b_{qc}(X)$ with $h$-amplitude $0$: the complex $R j_* \calO_U$ in Remark \ref{rmk:NonDescentWarning} has $h$-amplitude $0$ as it becomes a flat quasi-coherent module after an $h$-cover. It might be interesting to study this invariant further in the context of local algebra.
\end{remark}

The proof of Theorem \ref{thm:hDescentDerived} takes up the rest of \S \ref{sec:hDescentDerived}. We begin in \S \ref{ss:PerfSCRDisc} by proving that any perfect simplicial commutative ring is discrete. In \S \ref{ss:DescendableMaps}, we study a property of maps of $E_\infty$-rings singled out recently by Akhil Mathew; the key result here is Proposition \ref{prop:ProperSurjectiveDescendable}, which shows that $h$-covers give descendable maps on cohomology (there is also a partial converse in Theorem \ref{thm:hCoverClassical}). The results in \S \ref{ss:PerfSCRDisc} and \S \ref{ss:DescendableMaps} are then applied in \S \ref{ss:hDescentComplexes} to prove Theorem \ref{thm:hDescentDerived} (1) - (3).  Theorem \ref{thm:hDescentDerived} (4) is then established in \S \ref{ss:hHyperDescentBounded}; the key result here is Proposition \ref{prop:BoundedTorDim}: perfectly finitely presented rings have finite global dimension. Finally, in \S \ref{ss:WittVectorCoh}, we use the discreteness of perfect simplicial commutative rings proven in \S \ref{ss:PerfSCRDisc} to improve and reprove an $h$-descent theorem for Witt vector cohomology from \cite{BBE}.

\subsection{Discreteness of perfect simplicial commutative rings}
\label{ss:PerfSCRDisc}

We work in the setting of derived algebraic geometry given by simplicial commutative rings over $\F_p$. Note that any such ring carries a canonical Frobenius endomorphism, so it makes sense to talk about perfect simplicial commutative $\F_p$-algebras. Our basic observation is that Frobenius kills higher homotopy:

\begin{proposition}
	\label{prop:FrobSCR}
If $A$ is perfect simplicial commtuative $\F_p$-algebra, then $A$ is discrete.
\end{proposition}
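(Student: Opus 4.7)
The plan has two steps. First, I would establish the key claim that for any simplicial commutative $\F_p$-algebra $B$, the Frobenius endomorphism $\Phi_B : B \to B$ induces the zero map on $\pi_i(B)$ for each $i > 0$. Granting this, the proposition follows immediately: if $A$ is perfect then $\Phi_A$ is a weak equivalence, hence induces an isomorphism on each $\pi_i(A)$; combining with the vanishing forces $\pi_i(A) = 0$ for every $i > 0$, so $A$ is discrete.

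To prove the key claim, I would use naturality to reduce to a universal calculation. Any class $\alpha \in \pi_i(B)$ with $i > 0$ is the image of the tautological generator of $\pi_i(F)$ under some map $F \to B$ of simplicial commutative $\F_p$-algebras, where $F = \F_p\{x\}$ is the free simplicial commutative $\F_p$-algebra on a single generator $x$ placed in degree $i$. It therefore suffices to show that $\Phi_F$ acts as zero on $\pi_i(F)$.

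For this universal case, I would invoke the classical description, due to Cartan and Dold and originally computed via the homology of Eilenberg--MacLane spaces, of $\pi_*(F)$ as (essentially) a divided-power algebra over $\F_p$ on the generator~$x$. Inside such a divided-power algebra in characteristic $p$, the $p$-th power of any positive-degree class $y$ is given by $y^p = p! \cdot \gamma_p(y) = 0$. After identifying the simplicial Frobenius with the operation ``$p$-th power'' on this divided-power structure, the vanishing of positive-degree $p$-th powers forces $\Phi_{F,*}|_{\pi_{>0}(F)} = 0$, as desired.

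The main obstacle I foresee is reconciling the simplicial Frobenius on $F$, defined levelwise by $a \mapsto a^p$ in each commutative ring $F_n$, with the divided-power $p$-th power on $\pi_*(F)$ arising from the shuffle/Eilenberg--Zilber product on the normalized chain complex. Once this compatibility is established, the rest of the argument is a direct combinatorial check.
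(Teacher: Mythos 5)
Your overall strategy is the same as the paper's: show that Frobenius kills $\pi_i(B)$ for $i>0$ and any simplicial commutative $\F_p$-algebra $B$, then use that Frobenius is an isomorphism when $A$ is perfect. The reduction by naturality to the free algebra $F$ on a single generator in degree $i$ is also exactly what the paper does.

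The gap is in the universal computation, and you have put your finger on it yourself without seeing that it is unfixable as stated. The simplicial Frobenius $\Phi_F\colon F \to F$ is defined levelwise and therefore preserves simplicial degree; the induced map $(\Phi_F)_*$ sends $\pi_i(F)$ to $\pi_i(F)$. By contrast, the $p$-th power in the graded-commutative ring $\pi_*(F)$ (defined via the Eilenberg--Zilber/shuffle product) sends a class $y \in \pi_i(F)$ to $y^p \in \pi_{pi}(F)$. For $i>0$ these live in different degrees, so there is no ``compatibility'' to establish between the two: they are genuinely distinct operations with different targets. The divided-power identity $y^p = p!\,\gamma_p(y) = 0$ is a true and useful fact about $\pi_*(F)$, but it gives vanishing in degree $pi$, which tells you nothing about $(\Phi_F)_*$ on $\pi_i$. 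So the final sentence of your argument --- ``the vanishing of positive-degree $p$-th powers forces $\Phi_{F,*}|_{\pi_{>0}(F)} = 0$'' --- does not follow.

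The mechanism that actually makes the levelwise Frobenius vanish on $\pi_{>0}$ is a degree-shift argument of a different kind. The paper's proof computes $\pi_i$ of $A_i := \Sym_{\F_p}(\F_p[i])$ directly by induction, starting from $A_1 \simeq \F_p \otimes^L_{\F_p[x]} \F_p$, identifying $\pi_1(A_1) \simeq (x)/(x^2)$, and observing that $x\mapsto x^p$ lands in $(x^2)$; the inductive step uses $A_{i+1} = \F_p \otimes^L_{A_i} \F_p$. Gabber's alternative (Remark~\ref{rmk:multSCR}) is perhaps closest in spirit to what you are reaching for: it shows that the multiplication $A\times A\to A$ is zero on $\pi_i$ for $i>0$ by factoring through $K\otimes_\Z K$ for $K=(\Z\oplus\Z)[i]$, which is concentrated in degree $2i>i$. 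The Frobenius $a\mapsto a^p$ factors through iterated multiplication, so this kills it too. Note the ``degree doubles under multiplication'' phenomenon appears here at the level of the \emph{input} free simplicial abelian group $K$, not at the level of $\pi_*(F)$; that is the substitute for the identification you were hoping for. If you replace your divided-power step with this factorization argument (or with the paper's explicit resolution), the proof goes through.
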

\begin{proof}
	Fix an integer $i > 0$, and set $A_i = \Sym_{\F_p}(\F_p[i])$. As an element of $\pi_i(A)$ is induced from the canonical element in $\pi_i(A_i)$ along a map $A_i \to A$, it suffices to show that $\Frob$ kills $\pi_i(A_i)$. For $i = 1$, we can write this ring as $\F_p \otimes_{\F_p[x]}^L \F_p$, which allows us to identify $\pi_1(A_1) \simeq (x)/(x^2)$ via a standard resolution. It is then clear that $\Frob$ kills $\pi_1(A_1)$. In general, we proceed by induction using the formula $A_{i+1} = \F_p \otimes^L_{A_i} \F_p$ to get an identification $\pi_{i+1}(A_{i+1}) \simeq \pi_{i+1}(\F_p \times_{A_i} \F_p) \simeq \pi_i(A_i)$ that is compatible with Frobenius.
\end{proof}

\begin{remark}
	The analogous cosimplicial statement is false: if $X$ is an ordinary elliptic curve over $\F_p$ and $Y = X_\perf$, then $R\Gamma(Y,\calO_Y)$ is a non-discrete complex that can be represented by a cosimplicial perfect $\F_p$-algebra. Indeed, the ordinarity ensures that $H^1(Y,\calO_Y) \neq 0$, while the desired cosimplicial presentation can be obtained via any Cech complex associated to an affine open cover.
\end{remark}

\begin{remark}
	\label{rmk:multSCR}
	Proposition \ref{prop:FrobSCR} can also be deduced from the following more general and more precise assertion, pointed out by Gabber: if $A$ is a simplicial commutative ring, then the multiplication $m:A \times A \to A$ induces the $0$ map on $\pi_i$ for $i > 0$. Gabber suggested an explicit simplicial proof, but we give a different argument here. To show this statement for all $A$, it is enough to check the universal case $A = \Sym(K)$ for $K = (\Z\oplus \Z)[i]$ with $i > 0$. For any (free) simplicial abelian group $K$, there is a natural commutative diagram (of simplicial sets) 
	\[ \xymatrix{ K \times K \ar[r] \ar[d] & K \otimes_{\Z} K \ar[d] & \\
	\Sym(K) \times \Sym(K) \ar[r] & \Sym(K) \otimes_{\Z} \Sym(K) \ar[r] & \Sym(K) }\]
	with the composite $K \times K \to \Sym(K)$ being induced by the multiplication map. For $K  = (\Z\oplus \Z)[i]$ with $i > 0$, one has $K \otimes_{\Z} K = \Z^4[2i]$, so $\pi_i(K \otimes_{\Z} K) = 0$. The diagram then shows that the multiplication map $A \times A \to A$ induces the $0$ map $\pi_i(K \times K) \simeq \pi_i(A \times A) \to \pi_i(A)$, which proves the claim.
\end{remark}

Passage to the perfection makes sense in the world of simplicial commutative $\F_p$-algebras, and is local for the \'etale topology. Consequently, there is a perfection functor $X \mapsto X_\perf$ on derived $\F_p$-schemes. The previous result then translates to:

\begin{corollary}
Let $X$ be a derived $\F_p$-scheme with underlying scheme $\tau(X) \hookrightarrow X$. Then $\tau(X)_\perf \simeq X_\perf$. In particular, $X_\perf$ is classical.
\end{corollary}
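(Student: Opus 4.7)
The plan is to reduce to the affine case and then directly apply Proposition~\ref{prop:FrobSCR}. Since the perfection functor on derived $\F_p$-schemes is defined by gluing the affine perfection (which is functorial in simplicial commutative $\F_p$-algebras), and since the natural map $X_\perf \to X$ is a universal homeomorphism on underlying topological spaces (being a filtered colimit of identities on the level of topological spaces, i.e., Frobenius is a homeomorphism), the statement is Zariski-local on $X$. Thus, I may assume $X = \Spec(A)$ for a simplicial commutative $\F_p$-algebra $A$, so that $\tau(X) = \Spec(\pi_0(A))$ and $X_\perf = \Spec(A_\perf)$, where
\[
A_\perf = \colim\bigl(A \xrightarrow{\Frob} A \xrightarrow{\Frob} A \to \cdots\bigr)
\]
is computed in the $\infty$-category of simplicial commutative $\F_p$-algebras.

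The key observation is that $A_\perf$ is perfect essentially by construction: Frobenius on the colimit is an equivalence because it agrees with the shift on the indexing diagram. By Proposition~\ref{prop:FrobSCR}, any perfect simplicial commutative $\F_p$-algebra is discrete; hence $A_\perf$ is discrete, i.e., $A_\perf \simeq \pi_0(A_\perf)$.

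Finally, since $\pi_0$ commutes with filtered colimits of simplicial commutative rings, one has
\[
\pi_0(A_\perf) \;=\; \colim\bigl(\pi_0(A) \xrightarrow{\Frob} \pi_0(A) \xrightarrow{\Frob} \cdots\bigr) \;=\; \pi_0(A)_\perf,
\]
the perfection of the ordinary $\F_p$-algebra $\pi_0(A)$. Assembling the chain of equivalences gives $A_\perf \simeq \pi_0(A)_\perf$, and passing to spectra yields $X_\perf \simeq \tau(X)_\perf$. In particular, $X_\perf$ is classical.

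There is essentially no obstacle here: the entire content has been packaged into Proposition~\ref{prop:FrobSCR}. The only minor point to verify is that the perfection functor on derived schemes can be described affine-locally as $\Spec(A) \mapsto \Spec(A_\perf)$, which follows because Frobenius (and hence its colimit) is compatible with localization, and the resulting map of simplicial rings induces a homeomorphism on underlying topological spaces so that the gluing is unambiguous.
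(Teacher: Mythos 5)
Your proof is correct and follows essentially the same route as the paper, which simply observes that the dual affine statement (that $A \to \pi_0(A)$ becomes an equivalence after perfection) reduces to Proposition~\ref{prop:FrobSCR}. You have usefully filled in the two small points the paper leaves implicit — that $A_\perf$ is perfect by the standard cofinality argument, and that $\pi_0$ commutes with filtered colimits so that $\pi_0(A_\perf) \simeq \pi_0(A)_\perf$ — but the underlying idea is identical.
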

\begin{proof}
	The dual statement is that $A \to \pi_0(A)$ is an isomorphism after perfection for any simplicial commutative $\F_p$-algebra $A$, which was shown in Proposition \ref{prop:FrobSCR}.
\end{proof}

One consequence of the discreteness of perfect simplicial commutative $\F_p$-algebras is the coincidence of homotopy-colimits with naive ones:

\begin{lemma}
	\label{lem:PerfectColimits}
	The collection of perfect rings is closed under colimits in all $\F_p$-algebras. Moreover, any such colimit is automatically a homotopy-colimit in simplicial commutative $\F_p$-algebras. In particular, the perfection functor $A \mapsto A_\perf$ is cocontinuous on simplicial commutative $\F_p$-algebras.
\end{lemma}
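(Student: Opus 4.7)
The plan is to leverage Proposition \ref{prop:FrobSCR} (which asserts that any perfect simplicial commutative $\F_p$-algebra is discrete), together with the naturality of Frobenius as an endomorphism of the identity functor on simplicial commutative $\F_p$-algebras.

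For the first claim, given a diagram $\{A_i\}$ of perfect $\F_p$-algebras with $1$-categorical colimit $A$ in $\F_p$-algebras, the Frobenius $\Frob_A$ on $A$ coincides with the colimit of the Frobenii $\Frob_{A_i}$ (since Frobenius is a natural endomorphism of the identity functor). As each $\Frob_{A_i}$ is an isomorphism by perfectness, so is $\Frob_A$, so $A$ is perfect.

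For the second claim, view the $A_i$ as discrete simplicial commutative $\F_p$-algebras and let $B := \hocolim A_i$ denote their homotopy colimit. The same naturality of Frobenius applied now in the $\infty$-category of simplicial commutative $\F_p$-algebras shows that $\Frob_B \simeq \hocolim \Frob_{A_i}$; since each $\Frob_{A_i}$ is an equivalence, so is $\Frob_B$, i.e.\ $B$ is perfect. Proposition~\ref{prop:FrobSCR} then forces $B$ to be discrete, so $B \simeq \pi_0(B) \simeq \colim A_i$, where the last colimit is the $1$-categorical one taken in $\F_p$-algebras. (This identifies the homotopy colimit with the ordinary colimit.)

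For the last claim, observe that the perfection functor $A \mapsto A_\perf := \hocolim(A \xrightarrow{\Frob} A \xrightarrow{\Frob} \cdots)$ is left adjoint to the inclusion of perfect simplicial commutative $\F_p$-algebras into all simplicial commutative $\F_p$-algebras: for perfect $R$, post-composition with $\Frob_R$ acts invertibly on $\Map(A,R)$, so the cofiltered limit computing $\Map(A_\perf, R)$ collapses to $\Map(A,R)$. Left adjoints preserve (homotopy) colimits, giving cocontinuity. The only subtle point in the whole argument is the appeal to Proposition~\ref{prop:FrobSCR}, which is what prevents higher homotopy from appearing when passing between $1$-categorical and $\infty$-categorical colimits; everything else is formal from the naturality of Frobenius.
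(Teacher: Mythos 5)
Your proof is correct and takes essentially the same approach as the paper: naturality of Frobenius plus Proposition~\ref{prop:FrobSCR}. The paper first reduces to filtered colimits and pushouts before making the identical argument (and elides the cocontinuity claim with ``follows easily''), whereas you handle arbitrary diagrams uniformly and spell out the left-adjoint characterization of perfection, but the key idea is the same.
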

\begin{proof}
Filtered colimits are easy to handle in all statements, so we reduce to cofibre coproducts. So say $C \gets A \to B$ is a diagram of $\F_p$-algebras. For the first statement, we must show that $B \otimes_A C$ is perfect if $A$, $B$, and $C$ are so. The Frobenius map on the tensor product is induced by passage to colimits from the Frobenius endomorphism of the diagram $C \gets A \to B$, so the claim is clear. For the second statement,  by the same argument, the simplicial commutative $A$-algebra  $B \otimes^L_A C$ is perfect, and hence discrete by Proposition \ref{prop:FrobSCR}. The last statement follows easily from these considerations.
\end{proof}

\begin{remark}
	This fails if $B$ and $C$ are perfect, but $A$ is not. For example, set $A$ to be an imperfect field, and $B = C = A_\perf$. Then $B \otimes_A C$ has non-trivial nilpotents.
\end{remark}

\subsection{Descendable maps of $E_\infty$-rings}
\label{ss:DescendableMaps}

The goal of this section\footnote{In this section, we depart from our standing conventions, and go ``fully'' derived. Thus, for an $E_\infty$-ring $A$ (which could be a discrete ring), the notation $\Mod(A)$ refers to the stable $\infty$-category of $A$-module spectra (and coincides with the usual derived category $D(A)$ when $A$ is discrete). Likewise, all tensor products are always derived. The one exception is that ``schemes'' refers to ordinary schemes; when we need derived schemes, we say so.} is to prove the following theorem, which gives derived $h$-descent for quasi-coherent complexes on noetherian schemes.

\begin{theorem}
	\label{thm:hCechDescentDerivedQCoh}
	Let $f:X \to S$ be an $h$-cover of noetherian schemes with derived Cech nerve $X^{\bullet/Y}$. Then $D_{qc}(S) \simeq \lim D_{qc}(X^{\bullet/S})$.
\end{theorem}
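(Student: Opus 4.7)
The plan is to leverage the descendability machinery of \S\ref{ss:DescendableMaps}, specifically Proposition~\ref{prop:ProperSurjectiveDescendable}, which guarantees that $h$-covers of noetherian schemes induce descendable morphisms on cohomology. Combined with Mathew's theorem that a descendable map of $E_\infty$-rings gives rise to Cech descent for module categories, this will translate the problem into one of identifying module categories over the Cech nerve of $A \to R\Gamma(X,\calO_X)$ with $D_{qc}$ of the derived fibre products $X^{n/S}$.

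First I would reduce to the affine case $S = \Spec A$ using Zariski descent for $D_{qc}$ (which is classical). Setting $B := R\Gamma(X, \calO_X)$, which is a commutative algebra object in $D(A)$ since $f$ is quasicompact and quasiseparated, Proposition~\ref{prop:ProperSurjectiveDescendable} will give that $A \to B$ is descendable. By Mathew's theorem, this yields an equivalence
\[
D(A) \;\xrightarrow{\sim}\; \lim_{[n]\in \Delta}\, \Mod\!\bigl(B^{\otimes_A \bullet+1}\bigr).
\]
It then remains to compare the right-hand side with $\lim D_{qc}(X^{\bullet/S})$. For each $n$, the derived fibre product $X^{n/S}$ has global sections $B^{\otimes_A n+1}$, so one needs a functorial equivalence $D_{qc}(X^{n/S}) \simeq \Mod(B^{\otimes_A n+1})$. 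This reduces, by Zariski descent applied to an affine cover of $X$ (and using that $h$-covers of noetherian schemes are preserved under base change), to the case where all relevant terms are affine, where the identification is tautological.

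The main obstacle will be the input Proposition~\ref{prop:ProperSurjectiveDescendable}. The expected strategy is to use Lemma~\ref{ProperGood} to decompose a general proper surjective map into successive blowups and fppf covers, so that (by stability of descendability under composition) one is reduced to two cases: faithfully flat maps (where descendability is automatic, as the unit $M \to M \otimes_A^L B$ is split on every module via classical faithfully flat descent) and abstract blowup squares. For the blowup case, the heart of the matter is a cohomological boundedness statement for $\mathrm{fib}(\calO_Y \to Rf_*\calO_X)$ obtained via the projection formula and the explicit structure of blowups; the devissage Lemma~\ref{DevissageLemma}, transported to the setting of descendable maps in Mathew's framework, then assembles these pieces into the general statement. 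A secondary subtlety worth highlighting is that the derived Cech nerve differs from the classical one whenever $f$ fails to be Tor-independent with itself, and convergence of the totalization will ultimately be controlled by the uniform nilpotence exponent built into the definition of descendability.
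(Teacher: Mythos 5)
There is a genuine gap in the comparison step. You invoke Theorem~\ref{thm:DescendableDescent} to obtain $D(A) \simeq \lim \Mod\bigl(B^{\otimes_A \bullet+1}\bigr)$ with $B := R\Gamma(X,\calO_X)$, and then assert that $D_{qc}(X^{n/S}) \simeq \Mod(B^{\otimes_A n+1})$, saying this "reduces by Zariski descent... to the case where all relevant terms are affine, where the identification is tautological." But for a non-affine $X$ (which is the typical situation --- $h$-covers are built from blowups), the equivalence $D_{qc}(X) \simeq \Mod(R\Gamma(X,\calO_X))$ is simply false: already $D_{qc}(\P^1)$ is not $\Mod(k)$. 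The Zariski reduction you gesture at does not repair this, because the non-affineness of the terms $X^{n/S}$ is the whole obstruction; you cannot make them affine by covering without changing which simplicial object you are taking the limit over.

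The correct manoeuvre, and what the paper's proof actually does, is to never attempt to compare $D_{qc}(X^{\bullet/S})$ with module categories directly when $X$ is non-affine. Instead, after reducing to $S = \Spec(A)$ affine, choose an affine Zariski cover $j: U \to X$ and consider the composite $g = f \circ j : U \to S$. Now $g$ is an affine $h$-cover of noetherian schemes, so Proposition~\ref{prop:ProperSurjectiveDescendable} gives that $A \to R\Gamma(U,\calO_U)$ is descendable, and because all the derived Cech powers $U^{n/S}$ are genuinely affine (derived fibre products of affines over an affine are affine), the identification $D_{qc}(U^{n/S}) \simeq \Mod(\cdots)$ is indeed tautological, and Theorem~\ref{thm:DescendableDescent} shows that $g$ is of universal $D_{qc}$-descent. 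Finally, the devissage Lemma~\ref{DevissageLemma}\,(a) says that if $g = f \circ j$ is of universal $\calF$-descent, then so is $f$; this is precisely what is missing from your outline. Note also that the devissage lemma is used here as a formal property of the presheaf $D_{qc}(-)$ on derived schemes, not ``transported to the setting of descendable maps'' as you suggest --- descendability is only invoked for the single affine map $g$.
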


\begin{remark}
	Theorem \ref{thm:hCechDescentDerivedQCoh} applies to {\em all} quasi-coherent complexes, and thus specializes to a similar descent result for perfect or pseudo-coherent complexes (since the property of being perfect or pseudo-coherent can be detected\footnote{For detection of pseudo-coherence after passage to an $h$-cover, one argues as in Lemma \ref{lem:hCoverBounded} below.} after pullback along an $h$-cover for noetherian schemes); the latter was proven earlier in \cite[\S 4]{HalpernLeistnerPreygel} for all noetherian derived schemes. However,  Theorem \ref{thm:hCechDescentDerivedQCoh} does {\em not} extend to derived schemes. Indeed, let $A = \Sym_{\C}(\C[2])$.  Then $A \to \pi_0(A)$ gives an $h$-cover of derived schemes. If one had derived $h$-descent in this setting, then the base change functor $D(A) \to D(\pi_0(A))$ would be conservative. However, $K = A[u^{-1}]$, where $u \in \pi_2(A)$ is the generator, satisfies: (a) $K \neq 0$, and (b) $K \otimes_A \pi_0(A) = 0$.
\end{remark}

To prove this derived $h$-descent result for complexes, we will use a property of morphisms of rings recently singled out by Akhil Mathew \cite{MathewGaloisGroup} that implies descent for complexes. In fact, it is this property, and not Theorem \ref{thm:hCechDescentDerivedQCoh}, that plays an important role in the sequel.

Fix a {\em stable homotopy theory} $\calC$ in the sense of \cite{MathewGaloisGroup}, i.e., $\calC$ is a presentable symmetric monoidal stable $\infty$-category where the $\otimes$-product commutes with colimits in each variable; the main relevant example for us will be $\calC = D_{qc}(X)$ for a qcqs scheme $X$. Mathew studied the following class of maps in $\calC$:

\begin{definition}
	A map $A \to B$ in $\CAlg(\calC)$ is {\em descendable} if $\{A\} \to \{\Tot_{\leq n} B^\bullet \}$ is a pro-isomorphism of $A$-modules, where $B^\bullet$ is the derived Cech nerve of $A \to B$.
\end{definition}

A list of examples coming up in algebraic geometry is given in Lemma \ref{lem:DescendableSchemesBasics} below (see also Example \ref{ex:DescendableGorenstein}). Before studying this notion further, we record the main consequence of interest to us.

\begin{theorem}[{\cite[Proposition 3.21]{MathewGaloisGroup}}]
	\label{thm:DescendableDescent}
	If $\calC$ is a stable homotopy theory, and $A \to B$ is a descendable map in $\CAlg(\calC)$ with Cech nerve $B^{\bullet}$, then $A \simeq R\lim B^\bullet$, and  $\Mod_{\calC}(A) \simeq \lim \Mod_{\calC}(B^\bullet)$.
\end{theorem}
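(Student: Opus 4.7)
The plan is to establish the first claim by simply taking limits, and then to set up an adjunction and use the stability of the pro-isomorphism under tensoring.

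For the first claim $A \simeq R\lim B^\bullet$, I would simply observe that $R\lim B^\bullet = \Tot(B^\bullet) = \lim_n \Tot_{\leq n} B^\bullet$, and that a pro-isomorphism of towers induces an isomorphism on inverse limits. Thus applying $\lim_n$ to the defining pro-isomorphism $\{A\} \to \{\Tot_{\leq n} B^\bullet\}$ gives the desired equivalence.

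For the equivalence of module categories, I would set up the adjunction
\[ F : \Mod_{\calC}(A) \rightleftarrows \lim \Mod_{\calC}(B^\bullet) : G, \]
where $F(M) = \{M \otimes_A B^\bullet\}$ and $G$ is totalization. To show the unit $M \to \Tot(M \otimes_A B^\bullet)$ is an equivalence, I would use the following three observations in sequence. (i) The defining property of descendability is equivalent to asserting that $\cofib(A \to \Tot_{\leq n} B^\bullet)$ is pro-zero in $\Mod_{\calC}(A)$. (ii) For any $M \in \Mod_{\calC}(A)$, the functor $M \otimes_A (-)$ preserves pro-zero towers (since $\otimes$ commutes with colimits, hence with cofibres, and tensoring a pro-zero tower yields a pro-zero tower). (iii) Since $\calC$ is stable and $\otimes$ commutes with colimits, $M \otimes_A -$ also commutes with finite limits, so $M \otimes_A \Tot_{\leq n} B^\bullet \simeq \Tot_{\leq n}(M \otimes_A B^\bullet)$. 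Combining (i)--(iii), the tower $\{M\} \to \{\Tot_{\leq n}(M \otimes_A B^\bullet)\}$ is a pro-isomorphism, and taking $\lim_n$ yields the desired equivalence $M \simeq \Tot(M \otimes_A B^\bullet)$.

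The main obstacle will be showing that the counit is an equivalence, i.e., effectivity of descent. Given a descent datum $\{N_\bullet\} \in \lim \Mod_{\calC}(B^\bullet)$ with totalization $M = \Tot(N_\bullet)$, one needs $M \otimes_A B^\bullet \simeq N_\bullet$; at level zero this reads $B \otimes_A \Tot(N_\bullet) \simeq N_0$. The difficulty is that $\Tot$ is an infinite limit, so tensoring does not automatically commute past it. The strategy is to reformulate descendability as saying that $B$ generates $\Mod_{\calC}(A)$ as a thick $\otimes$-ideal (this is Mathew's equivalent characterization): there exists an integer $N$ and an $A$-module $P$ such that $A \oplus P$ is a retract of an $N$-step iterated extension of $B$-modules. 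This integer $N$ bounds the tower $\{\Tot_{\leq n} B^\bullet\}$ in a quantitative sense (it becomes pro-constant at stage $N$, up to retracts), which converts the infinite totalization into an effectively finite one, allowing $B \otimes_A -$ to commute with it. Combined with the standard extra-degeneracy argument for the augmented cosimplicial object $N_0 \to N_\bullet$ after base change along $A \to B$ (which is split by the multiplication $B \otimes_A B \to B$), this gives the equivalence $B \otimes_A M \simeq N_0$, and then naturality in the cosimplicial degree handles the rest.

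Finally, one should check compatibility between the two statements: since $R\lim B^\bullet \simeq A$, the limit category $\lim \Mod_{\calC}(B^\bullet)$ is indeed the natural target of the base change functor from $\Mod_{\calC}(A)$, and no reindexing issues arise. Alternatively, the entire module-category statement could be extracted from the Barr--Beck--Lurie comonadic descent theorem applied to the adjunction $-\otimes_A B \dashv \mathrm{forget}$, where the hypotheses (conservativity plus preservation of totalizations of split cosimplicial objects) follow from the thick $\otimes$-ideal characterization of descendability.
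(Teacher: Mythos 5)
The paper does not prove this statement; it is quoted verbatim from \cite[Proposition 3.21]{MathewGaloisGroup} with no accompanying argument, so there is no in-paper proof to compare against. Judged against Mathew's original argument: your first claim and the unit check are both correct and match Mathew's reasoning — a pro-isomorphism passes to inverse limits, tensoring with $M$ preserves pro-zero towers, and since $\calC$ is stable $M \otimes_A -$ commutes with the finite limits $\Tot_{\leq n}$.

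For the counit, your ideas are the right ones but the middle paragraph is too vague to carry the argument. Saying that $\{\Tot_{\leq n} B^\bullet\}$ ``becomes pro-constant at stage $N$'' is merely a restatement of the pro-isomorphism hypothesis and does not, by itself, explain why the \emph{infinite} totalization $\Tot(N_\bullet)$ commutes with $B \otimes_A -$. The actual mechanism is a retraction: descendability of finite index means $F^{\otimes m} \to A$ is null-homotopic for some $m$, which exhibits $A$ as a \emph{direct summand} of a finite partial totalization $\Tot_{\leq N} B^\bullet$ (the cofibre of the null map). This finite limit is preserved by $B \otimes_A -$, and retracts are preserved by any functor, which is what converts the commutation question into an effectively finite one; ``pro-constant up to retracts'' is not the precise statement that does the work. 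Your final paragraph correctly identifies the Barr--Beck--Lurie route (conservativity plus preservation of $p^*$-split totalizations, both deduced from the retraction of $A$ off $\Tot_{\leq N} B^\bullet$), and that is indeed how Mathew structures his proof; if you made the retraction explicit, your first, more direct argument would also close.
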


\begin{remark}
	Assume $\calC = D(\Ab)$ is the derived category of abelian groups. Fix a map $A \to B$ of discrete rings with Cech nerve $B^\bullet$. If $A \to B$ is descendable, then $A \simeq R\lim B^\bullet$ by Theorem \ref{thm:DescendableDescent}. It is tempting to guess the converse is also true. However, this is false: if $A = \Z_p$ and $B = \F_p$, then $A \to B$ is not descendable (as it is not so after inverting $p$), but $A \simeq R\lim B^\bullet$ by \cite[Theorem 4.4]{CarlssonDerComp}.
\end{remark}

In order to use this notion, we record some basic stability properties.

\begin{lemma}
\label{lem:DescendableBasics} Let $A \to B \to C$ be composable maps in $\CAlg(\calC)$.
\begin{enumerate}
\item If $A \to B$ and $B \to C$ are descendable, so is $A \to C$.
\item If $A \to C$ is descendable, so is $A \to B$. 
\end{enumerate}
\end{lemma}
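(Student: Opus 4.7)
Both parts use a criterion due to Mathew (see \cite{MathewGaloisGroup}): a map $A \to B$ in $\CAlg(\calC)$ is descendable if and only if the unit $A$ lies in the thick tensor-ideal $\mathrm{thick}^{\otimes}_{A}(B) \subseteq \Mod_{\calC}(A)$ generated by $B$; equivalently, $\mathrm{thick}^{\otimes}_{A}(B) = \Mod_{\calC}(A)$. I will use this freely.

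For (2), the plan is to show $C \in \mathrm{thick}^{\otimes}_{A}(B)$; this gives $\mathrm{thick}^{\otimes}_{A}(C) \subseteq \mathrm{thick}^{\otimes}_{A}(B)$, which combined with the hypothesis $\mathrm{thick}^{\otimes}_{A}(C) = \Mod_{\calC}(A)$ forces $\mathrm{thick}^{\otimes}_{A}(B) = \Mod_{\calC}(A)$. The key observation is that the $B$-algebra structure on $C$ furnishes a section $\sigma : C \to B \otimes_A C$, $c \mapsto 1 \otimes c$, of the multiplication $\mu : B \otimes_A C \to C$; hence $C$ is a retract of $B \otimes_A C$, which lies in $\mathrm{thick}^{\otimes}_{A}(B)$ by tensor-ideal closure.

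For (1), I reduce to (2) via a local-to-global principle for descendability: if $A \to B$ is descendable, then for any $A$-algebra $D$, the map $A \to D$ is descendable if and only if the base change $B \to B \otimes_A D$ is descendable. Granting this, descendability of $B \to C$ combined with (2) applied to the factorization $B \to B \otimes_A C \xrightarrow{\mu} C$ (where $\mu$ is multiplication via the $B$-action on $C$) yields descendability of $B \to B \otimes_A C$; the local-to-global principle applied to the descendable map $A \to B$ and the $A$-algebra $C$ then gives descendability of $A \to C$.

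The main obstacle is the local-to-global principle itself. The forward direction is easy: the symmetric monoidal base change functor $B \otimes_A - : \Mod_{\calC}(A) \to \Mod_{\calC}(B)$ sends $\mathrm{thick}^{\otimes}_{A}(D)$ into $\mathrm{thick}^{\otimes}_{B}(B \otimes_A D)$ and the unit $A$ to $B$, so $A \in \mathrm{thick}^{\otimes}_{A}(D)$ yields $B \in \mathrm{thick}^{\otimes}_{B}(B \otimes_A D)$. The reverse direction is more delicate: one promotes the level-wise containments $B^{\otimes_A(n+1)} \in \mathrm{thick}^{\otimes}_{B^{\otimes_A(n+1)}}(B^{\otimes_A(n+1)} \otimes_A D)$, which hold by base-changing $B \to B \otimes_A D$ along the various maps $B \to B^{\otimes_A(n+1)}$, to a global statement $A \in \mathrm{thick}^{\otimes}_{A}(D)$, using the descent equivalence $\Mod_{\calC}(A) \simeq \lim \Mod_{\calC}(B^{\bullet/A})$ afforded by Theorem \ref{thm:DescendableDescent}.
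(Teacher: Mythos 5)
The paper does not give a proof; it simply cites \cite[Proposition 3.23]{MathewGaloisGroup}, so there is no argument in the paper to compare against, only Mathew's.

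Your part (2) is correct and clean: the $B$-algebra structure on $C$ makes $C$ a retract of $B \otimes_A C$ in $\Mod_{\calC}(A)$ (the unit $C \to B \otimes_A C$ is split by multiplication), so $C \in \mathrm{thick}^{\otimes}_A(B)$; since $\mathrm{thick}^{\otimes}_A(C) = \Mod_{\calC}(A)$ by hypothesis, this forces $\mathrm{thick}^{\otimes}_A(B) = \Mod_{\calC}(A)$.

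Your part (1) is a valid reduction, and the local-to-global principle you invoke is true, but the sketch you give of its reverse direction is the weak point. ``Promoting the level-wise containments \ldots\ using the descent equivalence $\Mod_{\calC}(A) \simeq \lim \Mod_{\calC}(B^{\bullet/A})$'' is not a step one can take at face value: the descent equivalence is for module categories, and it does not on its own transport membership in thick tensor-ideals across the levels of the cosimplicial diagram. The issue is real, not cosmetic. It can be made to work, but one needs an actual argument, for example the following: descendability of $A \to B$ gives a finite stage $N$ at which $A$ is a retract of $\Tot_{\leq N} B^{\bullet}$; tensoring this retraction with $D$ and using that $\mathrm{thick}^{\otimes}_A(D)$ is thick (hence closed under the finite limits $\Tot_{\leq N}$) together with the level-wise containments produces $A \in \mathrm{thick}^{\otimes}_A(D)$. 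A cleaner route, which avoids pro-objects entirely, is this: the forgetful functor $U\colon \Mod_{\calC}(B) \to \Mod_{\calC}(A)$ sends $\mathrm{thick}^{\otimes}_B(B \otimes_A D)$ into $\mathrm{thick}^{\otimes}_A(D)$, because for any $N \in \Mod_{\calC}(B)$ one has $(B \otimes_A D) \otimes_B N \simeq D \otimes_A U(N)$, and $U$ is exact and preserves retracts; hence $B = U(B) \in \mathrm{thick}^{\otimes}_A(D)$, and combining with $A \in \mathrm{thick}^{\otimes}_A(B)$ gives $A \in \mathrm{thick}^{\otimes}_A(D)$. In summary, your strategy is sound and your part (2) is complete, but the reverse implication of the local-to-global principle in part (1) needs to be spelled out rather than gestured at.
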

\begin{proof}
	This is \cite[Proposition 3.23]{MathewGaloisGroup}.
\end{proof}

In applications, it will be useful to use the following more quantitative version:

\begin{definition}
A map $A \to B$ in $\CAlg(\calC)$ is {\em descendable of index $\leq m$} if $F^{\otimes_A m} \to A$ is null-homotopic, where $F$ is the fibre of $A \to B$.
\end{definition}

We give one example first.

\begin{example}
	\label{ex:DescendableGorenstein}
	Let $A = \Z[a,b,c,d]/(ab + cd - 1)$, and $B = A[\frac{1}{a}] \times A[\frac{1}{c}]$. Then $A \to B$ is faithfully flat, so $Q := B/A$ is flat. Let $F \simeq Q[-1]$ be the fibre of $A \to B$, so $F^{\otimes m} = Q^{\otimes m}[-m]$ is concentrated in degree $m$. Thus, the map $F^{\otimes m} \to A$ is classified by an element of $\Ext^m_A(Q^{\otimes m}, A)$. As $A$ is regular (it is smooth over $\Z$) of dimension $4$, it has global dimension $4$, so the relevant $\Ext$-group vanishes for $m \geq 5$, and hence $A \to B$ is descendable of index $\leq 5$. More generally, the same argument shows that if $A$ is any Gorenstein (noetherian) ring of dimension $d$, and $A \to B$ is faithfully flat, then $A \to B$ is descendable of index $\leq d+1$: indeed, $A$ has injective dimension $d$ as an $A$-module.
\end{example}

We begin our analysis by observing that this notion is compatible with the previous one:

\begin{lemma}
A map $A \to B$ in $\CAlg(\calC)$ is descendable if and only if it is descendable of index $\leq m$ for some $m \geq 0$. Moreover, any lax-monoidal cocontinuous functor preserves the property of being descendable of index $\leq m$.
\end{lemma}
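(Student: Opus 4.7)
The plan is to build a filtration on the tower $\{\Tot_{\leq n} B^\bullet\}$ whose graded pieces are shifts of tensor powers of $F$, and then read off both notions of descendability from this filtration. Set $F := \fib(A \to B)$ and let $B^\bullet$ denote the derived Cech nerve of $A \to B$, so $B^k \simeq B^{\otimes_A(k+1)}$. I will first identify the cofibres of the totalization tower: the cofibre of $\Tot_{\leq n} B^\bullet \to \Tot_{\leq n-1} B^\bullet$ computes the $n$-th normalized cochain of $B^\bullet$ (shifted), which via Dold--Kan is a shift of $F^{\otimes_A n}$. Passing to fibres over $A$, the complexes $G_n := \fib(A \to \Tot_{\leq n} B^\bullet)$ fit into cofibre sequences
\[ G_{n-1} \to G_n \to F^{\otimes_A(n+1)}[-n], \]
with attaching maps controlled by the iterated multiplications $F^{\otimes_A k} \to A$ coming from the $A$-algebra structure of $A$ itself. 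Descendability is then precisely the statement that $\{G_n\}$ is pro-zero.

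Next I would deduce both directions of the equivalence. For ``descendable of index $\leq m$ implies descendable,'' the assumed null-homotopy of $F^{\otimes_A m} \to A$ trivialises the attaching maps past level $m$, so that after stabilising the transitions $G_n \to G_{n-m}$ become null and $\{G_n\}$ is pro-zero. For the converse, the pro-zero hypothesis gives an $N$ for which some transition $G_{N+k} \to G_N$ is null; unwinding the filtration and composing with the structure map to $A$ produces an integer $m$ (depending on $N$, $k$) together with a null-homotopy of $F^{\otimes_A m} \to A$. The crucial accounting of shifts and attaching maps is carried out in \cite[\S 3]{MathewGaloisGroup} and I would appeal to that analysis.

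For the preservation statement, let $\Phi \colon \calC \to \calD$ be lax-monoidal and cocontinuous. Being cocontinuous between stable presentable $\infty$-categories forces $\Phi$ to preserve cofibre sequences, hence $\Phi(F) \simeq \fib(\Phi(A) \to \Phi(B))$. Lax monoidality provides a natural comparison map $\Phi(F)^{\otimes_{\Phi(A)} m} \to \Phi(F^{\otimes_A m})$; composing with the image $\Phi(F^{\otimes_A m}) \to \Phi(A)$ of the given null-homotopy yields the required null-homotopy $\Phi(F)^{\otimes_{\Phi(A)} m} \to \Phi(A)$. Hence $\Phi(A) \to \Phi(B)$ is descendable of index $\leq m$, with the \emph{same} $m$.

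The main obstacle is the bookkeeping that identifies the graded pieces of $G_n$ with shifts of $F^{\otimes_A(n+1)}$ and, more importantly, identifies the attaching maps explicitly with the multiplications $F^{\otimes_A k} \to A$: it is this identification that makes pro-nullity of $\{G_n\}$ translate cleanly to nullity of some $F^{\otimes_A m} \to A$. Once this bookkeeping is in place the two implications and the functoriality statement are formal, but without it neither implication is transparent.
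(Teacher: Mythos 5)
Your high-level plan — read descendability off the fibre tower $G_n := \fib(A \to \Tot_{\leq n}B^\bullet)$ — is the right instinct, but the filtration you set up is wrong in the details that matter, and the ``bookkeeping'' you flag as the main obstacle is not salvageable along the lines you propose. The tower maps run $G_n \to G_{n-1}$ (projections), not $G_{n-1} \to G_n$. More seriously, the associated graded is not a shift of $F^{\otimes(n+1)}$: the conormalized $n$-cochain of the Cech nerve is $\bar{B}^{\otimes_A n}\otimes_A B$ with $\bar{B}:=\cofib(A\to B)\simeq F[1]$, so the layer of the $\Tot$-tower is $F^{\otimes n}\otimes_A B$ (no net shift), and the layer of the $G$-tower is $\fib(G_n\to G_{n-1})\simeq (F^{\otimes n}\otimes_A B)[-1]$, carrying an extra factor of $B$ that your formula drops. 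With that extra $B$ in the graded pieces, extracting a null-homotopy of a pure tensor power $F^{\otimes m}\to A$ from pro-nullity of $\{G_n\}$ by ``unwinding the filtration'' is not a formal step.

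What makes the argument go through is the \emph{unfiltered} identification, which the paper cites from \cite[Cor.\ 6.7]{CarlssonDerComp}: $G_n\simeq F^{\otimes_A(n+1)}$ outright, with the transition map $G_n\to G_{n-1}$ given by applying $F\to A$ to one factor. With this in hand, the direction ``index $\leq m$ $\Rightarrow$ descendable'' is a one-line check (nullity of $F^{\otimes m}\to A$ kills the $m$-fold transition $F^{\otimes(m+i)}\to F^{\otimes i}$ for every $i$), and the converse could also be read off (take $n=0$ in the pro-zero condition to get $F^{\otimes N}\to F\to A$ null for some $N$). The paper instead routes the converse through a nilpotence statement, \cite[Prop.\ 3.26]{MathewGaloisGroup}, using that $F\to A$ always becomes null after $\otimes_A B$ together with $A$ lying in the thick $\otimes$-ideal generated by $B$; either route works, but neither is the filtration argument you outline. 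Your treatment of the preservation statement is essentially correct and matches the paper, with the same index $m$; the one point to make explicit is why a lax-monoidal cocontinuous $\Phi$ induces a lax-monoidal functor on $A$-module categories (the paper invokes the bar-resolution description of $\otimes_A$ together with cocontinuity).
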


\begin{proof}
	Let $F$ be the fibre of $A \to B$. Assume first that $A \to B$ is descendable. Then the map $F \to A$ becomes null-homotopic after applying $- \otimes_A B$. By \cite[Proposition 3.26]{MathewGaloisGroup}, it follows that the $m$-fold composition
	\[ F^{\otimes_A m} \to F^{\otimes_A m-1} \to \dots \to F \to A\]
	is null-homotopic for some large $m$, so $A \to B$ is descendable of index $\leq m$. Conversely, assume $A \to B$ is descendable of index $\leq m$, so $F^{\otimes_A m} \to A$ is null-homotopic. If $B^\bullet$ is the Cech nerve of $A \to B$, then the fibre of the map $\{A\} \to \{\Tot_{\leq n} B^\bullet\}$ of pro-objects is identified with $\{F^{\otimes_A n}\}$, where the transition maps multiply the first two factors; see \cite[Corollary 6.7]{CarlssonDerComp}. The assumption that $F^{\otimes_A m} \to A$ is null-homotopic then implies that this fibre is pro-zero: the $m$-fold transition map $F^{\otimes_A (m+i)} \to F^{\otimes_A i}$ is null-homotopic for each $i \in \N$; this proves the claim.
	
	Let $\Phi:\calC \to \calD$ be a lax-monoidal cocontinuous functor between stable homotopy theories, and assume that $A \to B$ in $\CAlg(\calC)$ is descendable of index $\leq m$; so, if $F$ is the fibre, then $F^{\otimes_A m} \to A$ is null-homotopic in $\Mod_A$. As $\Phi$ is exact, $\Phi(F)$ is the fibre of $\Phi(A) \to \Phi(B)$. As $\Phi$ is lax-monoidal and cocontinuous, one checks, using the bar-resolution construction of the tensor product of $A$-modules, that the induced functor $\Mod_A \to \Mod_{\Phi(A)}$ is also lax-monoidal. This gives, for each $i \in \N$, natural maps 
	\[\Phi(F)^{\otimes_{\Phi(A)} i} \to \Phi(F^{\otimes_A i})\] 
	whose composition with the canonical map $\Phi(F^{\otimes_A i}) \to \Phi(A)$ is the canonical map $\Phi(F)^{\otimes_{\Phi(A)} i} \to \Phi(A)$. Specializing to $i = m$ then shows that the canonical map $\Phi(F)^{\otimes_{\Phi(A)} m} \to \Phi(A)$ is null-homotopic,  proving the claim.
\end{proof}

An arbitrary filtered colimit of descendable maps is {\em not} descendable in general:

\begin{example}
	If $R$ is a discrete ring and $I \subset R$ is a locally nilpotent ideal that is not nilpotent (for example: $R = \C[x_1,x_2,x_3,\dots]/(x_1,x_2^2,x_3^3,\dots)$, and $I = (x_1,x_2,x_3,\dots)$), then $R \to R/I$ is not descendable: for any $m$, the multiplication map $I^{\otimes m} \to R$ has image $I^m$, and is thus non-zero by hypothesis. However, we can write $R/I = \colim R/J$ as a filtered colimit of $R$-algebras indexed by finitely generated subideals $J \subset I$. Each such $J$ is nilpotent, so $R \to R/J$ is descendable (by Lemma \ref{lem:DescendableSchemesBasics} below). Thus, descendability is not stable under filtered colimits.
\end{example}

Nevertheless, it turns out that filtered colimits of descendable maps of bounded index are descendable, provided the indexing category is not too large: 

\begin{lemma}
	\label{lem:DescendableLimits}
Let $I$ be a filtered category such that inverse limits over $I^\opp$ have finite cohomological dimension (in spectra). Fix $A \in \CAlg(\calC)$, and an $I$-indexed system $\{A_i\}$ in $\CAlg(\calC)_{A/}$. If each $A \to A_i$ is descendable of index $\leq m$ for some $m$ independent of $i$, then $A \to A_\infty = \colim A_i$ is descendable.
\end{lemma}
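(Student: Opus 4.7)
Write $F_i := \fib(A \to A_i)$, so the canonical map $\mu_m^i : F_i^{\otimes_A m} \to A$ is null-homotopic by hypothesis, and $F := \fib(A \to A_\infty) \simeq \colim_i F_i$ by exactness of filtered colimits in the stable homotopy theory $\calC$. The goal is to exhibit an $n$ such that $\mu_n : F^{\otimes_A n} \to A$ is null-homotopic. Since $\otimes_A$ preserves colimits in each variable and, for filtered $I$, the diagonal $I \to I^n$ is cofinal, we have $F^{\otimes_A n} \simeq \colim_i F_i^{\otimes_A n}$ and hence
\[ \mathrm{Map}_A(F^{\otimes_A n}, A) \simeq \lim_{i \in I^\opp} \mathrm{Map}_A(F_i^{\otimes_A n}, A) =: \lim_i X_{n,i}. \]

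\textbf{Bousfield--Kan spectral sequence and filtration claim.} The spectral sequence
\[ E_2^{s,t} = R^s\!\lim_{I^\opp} \pi_t X_{n,i} \Longrightarrow \pi_{t-s} \lim_i X_{n,i} \]
converges strongly: the finite cohomological-dimension hypothesis supplies an integer $d$ with $R^s\!\lim_{I^\opp} = 0$ for $s > d$, so $E_2^{s,t} = 0$ for $s > d$ and $\pi_0 \lim_i X_{n,i}$ carries a filtration of length $\leq d+1$. The image of the canonical class $[\mu_n]$ in $E_\infty^{0,0} \subseteq \lim_i \pi_0 X_{n,i}$ is the system $\{[\mu_n^i]\}_i$, which vanishes once $n \geq m$, so $[\mu_n]$ has filtration $\geq 1$ for $n \geq m$. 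The central claim is that this persists with $n$: for $n \geq km$, the class $[\mu_n]$ has filtration $\geq k$. Granted this, taking $n \geq (d+1)m$ forces $[\mu_n] \in E_\infty^{\geq d+1, \geq d+1} = 0$, whence $A \to A_\infty$ is descendable of index $\leq (d+1)m$. The intuition is that, via $F_i^{\otimes_A km} \simeq (F_i^{\otimes_A m})^{\otimes_A k}$, the map $\mu_{km}^i$ is a $k$-fold tensor power of the null map $\mu_m^i$, and tensoring in $k$ independent null-homotopies should lift $[\mu_{km}]$ across $k$ successive edge maps of the spectral sequence.

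\textbf{Main obstacle.} The heart of the argument is realizing this filtration claim, and the main difficulty lies in making the null-homotopy choices coherent over the indexing category $I$: individually chosen null-homotopies of $\mu_m^i$ need not agree under transition maps $F_i \to F_j$, yet the higher Bousfield--Kan obstructions depend on compatibility of the entire $I^\opp$-pro-system. One approach is to work with the $m$-truncated cobar tower $\Tot_{\leq m} A_i^\bullet$, which is functorial in $i$ and, by the descendable-of-index-$\leq m$ hypothesis, provides a canonical approximate retraction of $A \to \Tot_{\leq m} A_i^\bullet$ modulo a controlled error landing in $F_i^{\otimes_A m}$; iterating the approximation and passing to the colimit organizes the required compatible family. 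Alternatively, one may appeal to Mathew's equivalent characterizations of descendability in \cite{MathewGaloisGroup} in terms of uniform nilpotence of certain self-maps in the thick tensor-ideal generated by $A_i$, a condition more transparently preserved under filtered colimits whose indexing category has bounded cohomological dimension.
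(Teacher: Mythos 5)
Your proposal follows the same skeleton as the paper's proof: reduce to the computation of $\pi_0\lim_i \Map_A(F_i^{\otimes n},A)$, note that $[\mu_n]$ lands in the positive-filtration part of the Milnor/Bousfield--Kan spectral sequence once $n \geq m$, and try to push it into higher filtration by iterating. But you have not actually proven the statement: what you call the ``filtration claim'' (that $[\mu_{km}]$ has filtration $\geq k$) is the entire content of the lemma, and you flag it as ``the main obstacle'' and then offer only two unfleshed sketches. A proof proposal that names the hard step and stops is not a proof.

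The missing idea is that the filtration shift does not require choosing, much less coherently choosing, null-homotopies at all. Observe that the collection of maps
\[
\Map_A(F_i^{\otimes m},A)\times\Map_A(F_i^{\otimes m},A)\longrightarrow \Map_A(F_i^{\otimes 2m},A)
\]
given by $(\phi,\psi)\mapsto \phi\otimes\psi$ (using $F_i^{\otimes 2m}\simeq F_i^{\otimes m}\otimes_A F_i^{\otimes m}$) is a strict map of $I^{\op}$-indexed towers, functorial in $i$, so after applying $\lim_i$ it is automatically compatible with the Bousfield--Kan filtration on both sides. Thus ``filtration $\geq 1$'' $\times$ ``filtration $\geq 1$'' lands in ``filtration $\geq 2$'' with no further input. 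In the case $I=\N$ treated in the paper, the Milnor sequence gives only a two-step filtration ($\lim\pi_0$ and $R^1\lim\pi_1$), the hypothesis says $[\mu_m]$ dies in $\lim\pi_0$ hence lies in the $R^1\lim$-part, and the pairing sends $R^1\lim\times R^1\lim$ into $R^2\lim$, which vanishes for $\N$-indexed towers; so $[\mu_{2m}]=0$, i.e., $A\to A_\infty$ is descendable of index $\leq 2m$. For general $I$ of finite cohomological dimension $d$, iterating gives index $\leq (d+1)m$. Your worry about ``making the null-homotopy choices coherent over the indexing category'' is exactly what the $R^1\lim$ term is already tracking, and the pairing argument handles it without any choices. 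Once you internalize that the multiplicativity of the Milnor/BK sequence under a strict pairing of towers is automatic, your outline becomes a complete proof and coincides with the paper's.
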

\begin{proof}
We give a proof when $I = \N$ (which is the only relevant case in the sequel), leaving the generalization to general $I$ to the reader. Let $F_i$ be the fibre of $A \to A_i$, so $F_i^{\otimes m} \to A$ is null-homotopic for all $i$. Let $F = \colim F_i$ be the fibre of $A \to A_\infty$. Then one knows
\[ \Map(F,A) \simeq \lim \Map(F_i,A),\]
which gives a short exact sequence
\[ 0 \to R^1 \lim \pi_1 \Map(F_i,A) \to \pi_0 \Map(F,A) \to \lim \pi_0(\Map(F_i,A)) \to 0.\]
The assumption tells us that the map $F^{\otimes m} \to A$ lies in the subspace 
\[ R^1 \lim \pi_1 \Map(F_i^{\otimes m}, A) \subset \pi_0 \Map(F^{\otimes m}, A).\]
It remains to observe that the map on $\pi_0$ induced by the obvious map
\[ \Map(F^{\otimes m},A) \times \Map(F^{\otimes m}, A) \to \Map(F^{\otimes 2m},A)\]
kills the $R^1 \lim$ terms (as it pairs them to an $R^2 \lim$, which always vanishes for $\N$-indexed towers). 
\end{proof}

We now collect some examples of descendable maps from algebraic geometry:

\begin{lemma}
\label{lem:DescendableSchemesBasics}
 Fix a map $f:X \to Y$ of qcqs schemes.
\begin{enumerate}
\item If $\{j_i:U_i \to X\}$ is a finite open cover of $X$ by qc opens,  then $\calO_X \to \prod_i R j_{i,\ast} \calO_{U_i}$ is descendable in $D_{qc}(X)$.
\item If $f$ is fppf, then $\calO_Y \to R f_* \calO_X$ is descendable in $D_{qc}(Y)$.
\item If $I \subset \calO_X$ is nilpotent and quasi-coherent, then $\calO_X \to \calO_X/I$ is descendable in $D_{qc}(X)$.
\item If $\calA \to \calB$ is descendable in $D_{qc}(X)$, then $R f_* \calA \to R f_* \calB$ is descendable in $D_{qc}(Y)$.
\end{enumerate}
\end{lemma}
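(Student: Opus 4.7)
The plan is to handle the four parts in order of difficulty, with (2) depending on (1) and (4) for a reduction to an affine local statement. Parts (3) and (4) are essentially formal; (1) is a consequence of Zariski descent; (2) is the substantive case.

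For (3), the fibre of $\calO_X \to \calO_X/I$ is the ideal sheaf $I$, viewed as an object of $D_{qc}(X)$ concentrated in degree zero. The canonical map $I^{\otimes_{\calO_X} m} \to \calO_X$ factors through the iterated multiplication $I^{\otimes m} \to I^m$ followed by the inclusion $I^m \hookrightarrow \calO_X$; since $I$ is nilpotent, $I^m = 0$ for $m$ exceeding its nilpotency index, forcing the composition to be null-homotopic. For (4), exactness of $Rf_*$ identifies the fibre of $Rf_*\calA \to Rf_*\calB$ with $Rf_*F$, where $F := \fib(\calA \to \calB)$. The lax symmetric monoidal structure on $Rf_*$ produces a natural map $(Rf_*F)^{\otimes_{Rf_*\calA} m} \to Rf_*(F^{\otimes_\calA m})$ fitting into a commutative diagram over $Rf_*\calA$; the null-homotopy of $F^{\otimes m} \to \calA$ transfers along $Rf_*$ and hence along this comparison map, yielding descendability of $Rf_*\calA \to Rf_*\calB$ of the same index.

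For (1), the derived \v{C}ech cosimplicial object $B^\bullet$ of $\calO_X \to B := \prod_i Rj_{i,*}\calO_{U_i}$ has its $k$-th term identified, via the projection formula for open immersions, with $\prod_{|S|=k+1} Rj_{S,*}\calO_{U_S}$ where the product runs over subsets of the index set. The associated normalized \v{C}ech complex vanishes in degrees above $n-1$, where $n$ is the size of the cover. Hence the partial totalizations $\Tot_{\leq m}(B^\bullet)$ stabilize for $m \geq n-1$ to $\Tot(B^\bullet) \simeq \calO_X$ by Zariski descent, making the tower $\{\calO_X\} \to \{\Tot_{\leq m}(B^\bullet)\}$ eventually an isomorphism of pro-objects, which gives descendability of finite index.

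For (2), I would combine (1), (4), and the composition property in Lemma \ref{lem:DescendableBasics} to reduce to the case where $Y = \Spec(A)$ and $X = \Spec(B)$ are both affine with $A \to B$ faithfully flat and of finite presentation. Here the fibre $F = (B/A)[-1]$ sits in cohomological degree one, and $B/A$ is a flat $A$-module: the long exact $\Tor$ sequence for $0 \to A \to B \to B/A \to 0$, combined with injectivity of the unit $M \to M \otimes_A B$ (a standard consequence of faithful flatness), yields $\Tor_1^A(B/A, M) = 0$ for all $M$. The main obstacle will be to convert this flatness into a finite bound on the descendability index, since the $\Ext$-groups computing $\Hom_{D(A)}(F^{\otimes m}, A)$ need not vanish in large degree over a general $A$. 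I would approach this via Lazard's theorem, presenting $B/A$ as a filtered colimit of finite free $A$-modules, together with Lemma \ref{lem:DescendableLimits}, to reduce to the finite locally free case where $B/A$ is finite projective and the required $\Ext$ vanishing in positive degrees is immediate.
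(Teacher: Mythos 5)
Your treatment of parts (3) and (4) is correct and agrees in substance with the arguments in Mathew's paper that the published proof cites (Propositions~3.33 and the lax-monoidal preservation argument, respectively). For (4), the map $(Rf_*F)^{\otimes_{Rf_*\calA}m}\to Rf_*(F^{\otimes_\calA m})$ you invoke does exist (the relevant colimit in the bar construction always maps canonically into $Rf_*$ applied to the colimit), so your argument goes through.

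For (1), you take a different route than the paper, which instead decomposes $D_{qc}(X)$ as a finite limit of the categories $D_{qc}(U_{i_1}\cap\cdots\cap U_{i_k})$ and invokes \cite[Proposition 3.24]{MathewGaloisGroup} to reduce to a single open $U_i$, where $\calO_{U_i}\to\prod_k Rj_{k,*}\calO_{U_i\cap U_k}$ has a retraction and Lemma~\ref{lem:DescendableBasics}~(2) applies. Your direct \v{C}ech argument is in the right spirit, but the specific claim that the \emph{normalized} cochain complex of $B^\bullet$ vanishes above degree $n-1$ is not correct as stated: the cosimplicial object $B^k = B^{\otimes(k+1)}$ is indexed by arbitrary $(k+1)$-tuples, and what vanishes above degree $n-1$ is the \emph{alternating} sub/quotient complex (indexed by strictly increasing tuples), not the conormalization. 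Already for $n=2$ the conormalization contains terms coming from tuples like $(1,2,1,2,\dots)$ in arbitrarily high degree, so the partial totalizations $\Tot_{\leq m}B^\bullet$ do not stabilize on the nose; one needs to show the tower is pro-constant, which requires comparing with the alternating complex or arguing as Mathew does. This can be repaired, but as written it is a gap.

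The substantive gap is in your treatment of part (2) in the affine case. Your reduction to $Y=\Spec(A)$, $X=\Spec(B)$ affine is fine, and the observation that $B/A$ is flat (via the $\Tor$ sequence and purity) is correct. However, the proposed resolution of the difficulty you correctly identify---passing to the finite locally free case via Lazard's theorem and Lemma~\ref{lem:DescendableLimits}---does not work. Lazard's theorem expresses $B/A$ as a filtered colimit of finite free $A$-\emph{modules}, but Lemma~\ref{lem:DescendableLimits} requires a filtered colimit of $A$-\emph{algebras} $\{A_i\}$ in $\CAlg(\calC)_{A/}$, each descendable over $A$ with uniformly bounded index; a colimit presentation of the module $B/A$ gives no such algebra-level factorization of $A\to B$. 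Moreover, the hypothesis in Lemma~\ref{lem:DescendableLimits} on the indexing category having bounded cohomological dimension for inverse limits is only verified in the paper for $\N$-indexed systems, whereas Lazard's colimit is a general directed system. The actual proof of the affine fppf case (Mathew's Proposition~3.31, which the paper cites) proceeds differently: after Noetherian approximation (possible since $A\to B$ is finitely presented), one reduces to a Noetherian base and then exploits the geometry of fppf morphisms---e.g.\ slicing to reduce to quasi-finite flat, hence finite flat \'etale-locally, or refining by a Zariski-local smooth surjection which admits \'etale-local sections---and in either case finishes using Lemma~\ref{lem:DescendableBasics}~(2). You should either cite Mathew's result, as the paper does, or supply such a geometric reduction; the Lazard route as sketched does not close the argument.
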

\begin{proof}
	For (1), we may write $D_{qc}(X)$ as a finite limit of categories of the form $D_{qc}(U_{i_1} \cap ... \cap U_{i_n})$. By \cite[Proposition 3.24]{MathewGaloisGroup}, we reduce to the case where $X = U_i$ for some $i$, whence the claim is clear by Lemma \ref{lem:DescendableBasics} (2). For (2), using (1), as lax-monoidal cocontinuous functors preserve descendability, we reduce to the case where $X$ and $Y$ are affine, and then it follows from \cite[Proposition 3.31]{MathewGaloisGroup}; (3) similarly follows from \cite[Proposition 3.33]{MathewGaloisGroup}. (4) again follows from the preservation of descendability under lax-monoidal cocontinuous functors.  
\end{proof}

\begin{remark}
Mathew shows that if $A \to B$ is a faithfully flat map of discrete rings, and $B$ is countably generated as an $A$-algebra, then $A \to B$ is descendable. We do not know if the countable generation assumption is necessary.
\end{remark}

The main ``new'' class of descendable morphisms relevant for us is:

\begin{proposition}
\label{prop:ProperSurjectiveDescendable}
Let $f:X \to Y$ be an $h$-cover of noetherian schemes. Then $\calO_Y \to R f_* \calO_X$ is descendable.
\end{proposition}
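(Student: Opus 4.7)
The plan is to mirror the inductive structure used earlier in the paper (notably the proof of Theorem~\ref{CritHSheaf}), reducing the general $h$-cover to the atomic pieces identified by Lemma~\ref{ProperGood}. First, I would reduce to the case where $Y$ is affine: descendability is Zariski-local on the target by Lemma~\ref{lem:DescendableSchemesBasics}(1) and Lemma~\ref{lem:DescendableBasics}. Then Theorem~\ref{StructureUnivSubtr} refines $f$ into a composition of a quasi-compact open cover (descendable by Lemma~\ref{lem:DescendableSchemesBasics}(2)) and a proper surjective map of finite presentation, so by Lemma~\ref{lem:DescendableBasics} it is enough to treat the case where $f$ itself is proper surjective and finitely presented.

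Next, by Lemma~\ref{ProperGood}, $f$ has some finite inductive level $n$, and I would induct on $n$. For $n=0$, $f$ is refined by a composition of a finitely presented nilimmersion and a proper fppf cover, both descendable by parts (2) and (3) of Lemma~\ref{lem:DescendableSchemesBasics}, so Lemma~\ref{lem:DescendableBasics} closes the base case. For $n>0$, the definition of inductive level refines $f$ into $X^\prime \to X_0 \to Y_0 \to Y$, where $Y_0 \to Y$ is a nilimmersion, $X^\prime \to X_0$ is proper fppf, and $g: X_0 \to Y_0$ is proper surjective and an isomorphism outside a closed $Z\subset Y_0$ with pullback $g^\prime: E := X_0 \times_{Y_0} Z \to Z$ of inductive level $\leq n-1$. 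The outer two factors are again handled by Lemma~\ref{lem:DescendableSchemesBasics} and Lemma~\ref{lem:DescendableBasics}, so the entire problem reduces to proving descendability of $\calO_{Y_0} \to Rg_*\calO_{X_0}$.

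For this, the inductive hypothesis applied to $g^\prime: E \to Z$ (an $h$-cover of noetherian schemes of lower level) yields descendability of $\calO_Z \to Rg^\prime_*\calO_E$; pushing forward along the closed immersion $i: Z \hookrightarrow Y_0$ (a lax-monoidal cocontinuous functor, hence preserving descendability) gives descendability of $i_*\calO_Z \to Ri_*Rg^\prime_*\calO_E$ in $D_{qc}(Y_0)$. The key geometric input then is that the square
\[
\xymatrix{
\calO_{Y_0} \ar[r] \ar[d] & Rg_*\calO_{X_0} \ar[d] \\
i_*\calO_Z \ar[r] & Ri_*Rg^\prime_*\calO_E
}
\]
is a derived cartesian Milnor square of $E_\infty$-algebra objects in $D_{qc}(Y_0)$; this is the derived avatar of the classical abstract blowup square and has to be checked directly for noetherian schemes. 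Writing $A,B,C,D$ for the four corners so that $A\simeq B\times_D C$, the fibre $F:=\fib(A\to B)$ is identified with $\fib(C\to D)$.

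The main obstacle, which I regard as the technical heart of the proof, is the final algebraic transfer: promoting descendability of $C \to D$ to descendability of $A\to B$ for this derived pullback square. The composite $F^{\otimes_A m}\to A \to B$ is tautologically null, while $F^{\otimes_A m}\to A \to C$ factors through the canonical map $F^{\otimes_A m}\to F^{\otimes_C m}\to C$, which is null for $m$ large by hypothesis. Hence $F^{\otimes_A m}\to A$ factors through $\fib(A\to B\times C)\simeq \Omega D$. Converting this factorization into a bona fide null-homotopy $F^{\otimes_A N}\to A$ for some finite $N$ is where the real work lies; I expect this to require exploiting the boundedness of all the complexes involved (available from noetherianness and the properness of $g$, $g^\prime$, $i$) together with additional structure of the Milnor square beyond mere cartesianness — for instance its being simultaneously a derived pushout in the presence of local regularity, or more delicate amplitude/connectivity estimates to bootstrap null-homotopies across iterated tensor powers.
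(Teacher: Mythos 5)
Your devissage (reduce to $Y$ affine, use Lemma~\ref{lem:DescendableSchemesBasics} and Lemma~\ref{lem:DescendableBasics} to split off open covers, fppf maps, and nilimmersions, induct on inductive level via Lemma~\ref{ProperGood}) exactly matches the paper's reduction, and up to that point the argument is sound. You also correctly set up the abstract blowup situation and identify the fibre $F = \fib(\calO_Y \to Rf_*\calO_X)$ as the object to control. The gap is in the final algebraic transfer, where you stall: you observe that $F^{\otimes m} \to \calO_Y$ factors through $\Omega D$ and say, correctly, that this is not yet a null-homotopy, and then speculate about local regularity or amplitude estimates.

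The paper does not route through $\Omega D$ at all, and it needs no regularity. The key observation, and what is missing from your argument, is this. Write $Y = \Spec(A)$ affine with $I \subset A$ defining $Z$. Since $f$ is an isomorphism over $U = Y \setminus Z$ and $f$ is proper, the fibre $F$ has \emph{bounded coherent cohomology, supported on $V(I)$}: it vanishes over $U$ by flat base change. By noetherianity, one can therefore \emph{replace $I$ by a power} so that $F$ admits the structure of an $\calO_Y/I$-complex. Now argue in two steps. First, by the inductive hypothesis applied to $E \to Z$ (which is of lower inductive level), $\calO_Z \to Rf_*\calO_E$ is descendable, so the composite $F^{\otimes k} \to \calO_Y \to \calO_Z$ is null-homotopic for some $k$; since $I = \fib(\calO_Y \to \calO_Z)$, this gives a factorization $F^{\otimes k} \to I \to \calO_Y$, i.e.\ through $\fib(A \to C)$ in your notation, not through $\Omega D$. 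Second, consider the composite
\[
F^{\otimes k+1} \simeq F^{\otimes k}\otimes F \xrightarrow{\;a\;} I\otimes F \xrightarrow{\;b\;} F \xrightarrow{\;c\;} \calO_Y,
\]
where $b$ is the multiplication map. Because $F$ is an $\calO_Y/I$-complex, the map $b$ is null-homotopic: the $\calO_Y$-action on $F$ factors through $\calO_Y/I$, so $I\otimes^L F \to \calO_Y/I\otimes^L F \to F$ vanishes since $I\to\calO_Y/I$ is zero. Hence $F^{\otimes k+1}\to\calO_Y$ is null-homotopic and $\calO_Y\to Rf_*\calO_X$ is descendable of index $\leq k+1$. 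This module-structure trick is what actually closes the proof; the Milnor square is true but plays no role beyond encoding the fact that $F$ is supported on $Z$, and your suggested use of ``local regularity'' or refined amplitude estimates is not needed.
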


The proof below is adapted from an argument due to  Akhil Mathew for the case of finite morphisms; our previous argument relied on almost mathematics and was restricted to perfect schemes. 

\begin{proof}
	We freely use Lemma \ref{lem:DescendableBasics} (2) to replace $f$ by further covers when necessary. Using Lemma \ref{lem:DescendableSchemesBasics}, it is enough to handle the case where $f$ is proper surjective. Assume that $f$ is of inductive level $\leq n$ for some $n \geq 0$ (by Proposition \ref{ProperGood}). We prove the claim by induction on $n$. If $n = 0$, then, after refinements, $f$ factors as a composition of an fppf map and a nilpotent closed immersion, so the claim follows from Lemma \ref{lem:DescendableSchemesBasics} (2), (3), (4) and Lemma \ref{lem:DescendableBasics} (1). In general, after refinements, we have a factorization $X \to X' \to Y' \to Y$ where $X \to X'$ is proper and fppf,  $Y' \to Y$ is a nilpotent closed immersion, and $X' \to Y'$ is an isomorphism outside a closed subset $Z' \subset Y'$ such that $X' \times_{Y'} Z' \to Z'$ is of inductive level $\leq n-1$.  By the argument used in the $n=0$ case, it is enough to prove the statement for $X' \to Y'$. We now rename $X = X'$, $Y = Y'$, and $Z = Z'$ for convenience. By Lemma \ref{lem:DescendableSchemesBasics} (1), (3) and (4), the property of descendability can be checked locally on $Y$, so we may assume $Y = \Spec(A)$ is affine. Write $U = Y - Z$, and let $I \subset A$ be an ideal defining the closed subset $Z \subset Y$. Then the fibre $F$ of $\calO_Y \to R f_* \calO_X$ is a $\calO_Y/I^n$-complex for $n \gg 0$; here we use that $F$ has coherent cohomology groups, is bounded, and is trivial over $\Spec(A) - V(I)$ by flat base change for coherent cohomology. After replacing $I$ with this power, we assume $F$ admits the structure of an $\calO_Y/I$-complex.  By induction, we know that $\calO_Z \to R f_* \calO_{f^{-1}Z}$ is descendable. So there is some $k \gg 0$ such that the composite $F^{\otimes k} \to \calO_Y \to \calO_Z$ is null-homotopic, and hence that $F^{\otimes k} \to \calO_Y$ factors as $F^{\otimes k} \to I \to \calO_Y$. Then the canonical map $F^{\otimes k+1} \to \calO_Y$ factors as 
	\[F^{\otimes k+1} \simeq F^{\otimes k} \otimes F \stackrel{a}{\to} I \otimes F \stackrel{b}{\to} F \stackrel{c}{\to} \calO_Y\]
where $a$ is induced from the previous factorization, $b$ is the multiplication map, and $c$ is the canonical map. The map $b$ is null-homotopic as $F$ admits an $\calO_Y/I$-structure, so the above composition is also null-homotopic.
\end{proof}

\begin{proof}[Proof of Theorem \ref{thm:hCechDescentDerivedQCoh}]
	We first record some generalities. Let $\calF := D_{qc}(-)$, viewed as a presheaf of spaces on derived schemes. In this proof, we will say that a map $f:X \to Y$ of (always qcqs) derived schemes is good if $f$ is of universal $\calF$-descent, i.e., that $\calF(Y) \simeq \lim \calF(X^{\bullet/Y})$, and the same is true after arbitrary base change on $Y$. Then, by Lemma \ref{DevissageLemma}, we have: (a) the class of good maps is stable under composition and base change, and (b) if a composite $X \to Y \to Z$ is good, then so is $Y \to Z$. Moreover, if $f:X \to Y$ is an affine map with $\calO_Y \to Rf_* \calO_X$ is descendable, then $f$ is good by Theorem \ref{thm:DescendableDescent}.

Now let $f:X \to S$ be an $h$-cover of noetherian schemes. We must check that $f$ is good. This can be checked locally on $S$, so we may assume that $S$ is affine. Choose a Zariski cover $j:U \to X$ such that $U$ is affine, and set $g:U \to S$. Then $g$ is affine and an $h$-cover of noetherian schemes, so $\calO_S \to Rg_* \calO_U$ is descendable, and thus $g$ is good by the last sentence of the previous paragraph. It then follows that $f$ is good, proving the claim.
\end{proof}

	Using Proposition \ref{prop:ProperSurjectiveDescendable}, one can classify descendable maps in cases of interest in algebraic geometry:

	\begin{theorem}
		\label{thm:hCoverClassical}
		Let $A \to B$ be a finitely presented map of noetherian discrete rings. Then $\Spec(B) \to \Spec(A)$ is an $h$-cover if and only if $A \to B$ is descendable.
	\end{theorem}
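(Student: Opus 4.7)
The forward implication is essentially already done in the paper. If $f:\Spec(B) \to \Spec(A)$ is an $h$-cover, then since $f$ is affine, Proposition~\ref{prop:ProperSurjectiveDescendable} gives that the map $\calO_{\Spec A} \to Rf_\ast \calO_{\Spec B} \simeq B$ is descendable in $D_{qc}(\Spec A) = D(A)$. Unwinding, this is exactly the statement that $A \to B$ is descendable.

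For the reverse implication, assume $A \to B$ is descendable, with $A, B$ noetherian and $A \to B$ finitely presented. By \cite[Theorem 2.8]{Rydh}, in this setting being an $h$-cover is equivalent to being universally submersive, and since descendability is stable under base change (as tensoring is lax-monoidal cocontinuous), it suffices to check submersiveness for $f:\Spec B \to \Spec A$ itself. Surjectivity is immediate: given $\mathfrak{p} \in \Spec A$, the base change $k(\mathfrak{p}) \to k(\mathfrak{p}) \otimes_A B$ is descendable, forcing the target to be nonzero. For the quotient-topology condition, Chevalley's theorem plus the noetherian characterization of closed sets (constructible $+$ specialization-closed) reduces matters to the following specialization-lifting claim: for any $\mathfrak{p} \leadsto \mathfrak{p}'$ in $\Spec A$, there exist $\mathfrak{q} \leadsto \mathfrak{q}'$ in $\Spec B$ with $\mathfrak{q} \cap A = \mathfrak{p}$ and $\mathfrak{q}' \cap A = \mathfrak{p}'$.

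To prove this, I would pass to a valuative situation. Choose a DVR $V$ with a map $A \to V$ sending the generic point to $\mathfrak{p}$ and the closed point to $\mathfrak{p}'$ (available by noetherian generalities). After base change, $V \to C := B \otimes_A V$ is descendable and finitely presented, and the claim reduces to showing that the closure $\overline{\Spec C[1/\pi]}$ of the generic fibre in $\Spec C$ meets the special fibre, where $\pi$ is a uniformizer of $V$.

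The main obstacle is this last step. I would argue by contradiction. If the closure misses the special fibre, the complement is an open subset lying entirely in the special fibre, giving a clopen decomposition $\Spec C = \overline{\Spec C[1/\pi]} \sqcup Z$. Idempotent lifting yields $C = C_1 \times C_2$ with $\Spec C_2 = Z$, and noetherianity of $C_2$ forces $\pi^n = 0$ in $C_2$ for some $n$, so $V \to C_2$ factors through $V/\pi^n$. Now the direct computation of the fibre of $V \to V/\pi^n$ (which is $V$, with structure map multiplication by $\pi^n$, whose tensor powers $\pi^{nm}:V \to V$ are never null in a non-trivial DVR) shows $V \to V/\pi^n$ is not descendable. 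The heart of the proof is to propagate this obstruction from the $C_2$-factor back to $V \to C$: one must show that the descendability index of $V \to C = C_1 \times C_2$ controls that of the non-descendable factor $V \to V/\pi^n \to C_2$, using the functoriality of the descendability index under symmetric-monoidal cocontinuous functors applied to the projection $C \to C_2$ and a careful tracking of the fibre of $V \to C$ in light of the product decomposition. The resulting contradiction completes the argument.
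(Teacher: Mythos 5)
Your forward direction is the same as the paper's, and your overall strategy for the converse---reduce to the valuative criterion, then to a DVR $V$ and a clopen decomposition $C = C_1 \times C_2$ of $C = B \otimes_A V$ with $\pi$ invertible on $C_1$ and nilpotent on $C_2$---is a genuinely different route from the paper's, which instead shows that $\Spec B \to \Spec A$ is a topological quotient by descending quasi-compact opens via the identification of opens with certain compact localizations in $\CAlg(D(A))$. Your route is more elementary in spirit and is a reasonable one to try. However, there is a real gap in the final step, and it is not merely a matter of "careful tracking."

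The claim you are trying to establish---that the descendability index of $V \to C = C_1 \times C_2$ controls that of $V \to C_2$ via the projection $C \to C_2$---is false. Neither part of Lemma~\ref{lem:DescendableBasics} helps: part (2) says the composite being descendable implies the \emph{first} leg is, so from $V \to C \to C_2$ you can only go the wrong way, and the functoriality of the index under lax-monoidal cocontinuous functors preserves descendability, not its failure. A concrete counterexample to your claimed implication: take $C_1 = V$ and $C_2 = V/\pi$. The map $V \to V \times V/\pi$ is split by the first projection, hence descendable of index $\leq 1$, yet $V \to V/\pi$ is not descendable (its fibre is $V$ mapping by $\pi$, and $\pi^m$ is never zero in a domain). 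This is precisely the setting of the paper's remark that $\Z_p \to \F_p$ is not descendable despite $\Z_p \simeq R\lim \F_p^{\bullet}$. Of course this particular $C_1$ is not a $K$-algebra, so it does not contradict the statement you ultimately need---but it shows that the mechanism you propose does not exist, and the correct argument must instead exploit the hypothesis that $\pi$ is invertible on $C_1$.

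Here is how to close the gap. Since $C_1$ is $\pi$-local and $C_2$ is $\pi^n$-torsion, $C_1 \otimes^L_V C_2 = 0$; consequently the derived \v{C}ech nerve splits, $C^{\otimes^L_V (m+1)} \simeq C_1^{\otimes^L_V (m+1)} \times C_2^{\otimes^L_V (m+1)}$, and hence $R\lim C^{\bullet} \simeq R\lim C_1^{\bullet} \oplus R\lim C_2^{\bullet}$ in $D(V)$. Now base change along $V \to K$ kills $C_2$ and gives $C_1$, so by stability of descendability under base change, $K \to C_1$ is descendable; since $K \otimes^L_V K \simeq K$, the \v{C}ech nerve of $V \to C_1$ agrees with that of $K \to C_1$, and Theorem~\ref{thm:DescendableDescent} gives $R\lim C_1^{\bullet} \simeq K$. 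If $V \to C$ were descendable, Theorem~\ref{thm:DescendableDescent} would give $V \simeq K \oplus R\lim C_2^{\bullet}$, making $K$ a nonzero direct summand of $V$ in $D(V)$. But $\Hom_{D(V)}(K, V) = \Hom_V(K, V) = 0$ by the Krull intersection theorem, a contradiction. Alternatively and more in the spirit of Lemma~\ref{lem:DescendableBasics}, factor $V \to K \times V/\pi^n \to C_1 \times C_2$ and reduce by Lemma~\ref{lem:DescendableBasics}(2) to showing $V \to K \times V/\pi^n$ is not descendable, which follows from the same totalization computation $R\lim(K \times V/\pi^n)^{\otimes^L_V \bullet} \simeq K \times V^{\wedge}_{\pi} \neq V$. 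In both versions the obstruction lives in the $C_1$-factor, not in $C_2$ as your sketch suggests.

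A secondary point worth flagging: your reduction "it suffices to check submersiveness for $f$ itself" elides that universal submersiveness requires checking all base changes, including non-noetherian ones, whereas your Chevalley-plus-DVR argument relies on noetherianity of the base. This can be repaired (Chevalley and constructibility are fine for any finitely presented map of qcqs schemes, and the $v$-cover condition for noetherian $A$ does reduce to DVRs, with universality then coming for free from the valuation-ring lifting formulation), but it should be spelled out.
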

	\begin{proof}
		The ``only if'' direction follows from Proposition \ref{prop:ProperSurjectiveDescendable}. Conversely, assume that $A \to B$ is a descendable map of connective $E_\infty$-rings. We will check that $\Spec(B) \to \Spec(A)$ is a topological quotient; this suffices to prove the claim by the well-known (see \cite[\S 3]{VoevodskyHTop}) characterization of $h$-covers of noetherian schemes as universal topological quotients (and the stability of descendability under base change). To prove this, note that, by \cite[Theorem 1.10]{BhattHL}, the poset of quasi-compact open subsets of $\Spec(A)$ can be identified with the $\infty$-category $\calC_A$ of eventually connective compact localizations of $A$ in $\CAlg(D(A))$, i.e., $\calC_A \subset \CAlg(D(A))$ is the full subcategory of those $C$ which are bounded above as $A$-complexes, compact as commutative $A$-algebras, and satisfy $C \otimes_A C \simeq C$ via the multiplication map. If we write $B^\bullet$ for the Cech nerve of $A \to B$, then descendability gives an identification $D(A) \simeq \lim D(B^\bullet)$. This equivalence is symmetric monoidal, and thus induces $\CAlg(D(A)) \simeq \lim \CAlg(D(B^\bullet))$. It is easy to see\footnote{The only non-trivial bit is to check descent of compactness, i.e., show: given $C \in \CAlg(D(A))$ satisfying $C \otimes_A C \simeq C$, if $C \otimes_A B$ is compact in $\CAlg(D(B))$, then $C$ is compact in $\CAlg(D(A))$. For this, one first notes that the functor $\Map_A(C,-)$ on $\CAlg(D(A))$ takes on discrete values since $C \otimes_A C \simeq C$: in fact, the values are always contractible or empty. The same also applies to $C \otimes_A B^i \in \CAlg(D(B^i))$ for all $i$. The descent of compactness now follows as totalization of cosimplicial $n$-truncated spaces commute with filtered colimits for any finite $n$.} that this identification induces an equivalence $\calC_A \simeq \lim \calC_{B^\bullet}$. Now say $V \subset \Spec(A)$ is a subset whose inverse image $U \subset \Spec(B)$ is a quasi-compact open; we must check that $V$ is a quasi-compact open. By construction, we see that $p_1^{-1}(U) = p_2^{-1}(U)$ as quasi-compact open subsets of $\Spec(B \otimes_A B)$, where $p_1,p_2:\Spec(B \otimes_A B) \to \Spec(B)$ are the two projection maps. But then $U$ defines an object of $\lim \calC_{B^\bullet}$, which thus comes from one in $\calC_A$, i.e., $U$ is the inverse image of some quasi-compact open $W \subset \Spec(A)$. Since $\Spec(B) \to \Spec(A)$ is surjective, it follows that $W = V$, so $V$ is a quasi-compact open subset.
	\end{proof}

\subsection{$h$-descent for complexes}
\label{ss:hDescentComplexes}

We now discuss quasi-coherent complexes in the $h$-topology on perfect schemes; here the latter is defined as the topology generated by perfections of fppf covers and finitely presented proper surjections. The key descent result is:

\begin{theorem}
	\label{thm:hCoversDescendable}
Let $f:X \to Y$ be an $h$-cover in $\Perf$. Then $\calO_Y \to R f_* \calO_X$ is descendable.
\end{theorem}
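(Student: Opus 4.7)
The plan is to adapt the proof of Proposition \ref{prop:ProperSurjectiveDescendable} to the perfect setting, exploiting two key features of $\Perf$: finitely presented nilimmersions become isomorphisms (Lemma \ref{UnivHomeomIsomPerf}), and base change for quasi-coherent complexes holds without any flatness hypothesis (Lemma \ref{BaseChange}). Using Lemma \ref{lem:DescendableSchemesBasics} and Lemma \ref{lem:DescendableBasics} together with Theorem \ref{StructureUnivSubtr}, I would first reduce to the case $Y = \Spec(A)$ affine and $f$ proper perfectly finitely presented and surjective. Fixing a finitely presented model $f_0: X_0 \to Y$ via Proposition \ref{FPModelsExist}, Lemma \ref{ProperGood} gives that $f_0$ has inductive level $\leq n$ for some $n$, so we induct on $n$. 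In the base case $n=0$, after refinement $f_0$ factors as a composition of an fppf map and a finitely presented nilimmersion; the latter becomes an isomorphism in $\Perf$ by Lemma \ref{UnivHomeomIsomPerf}, and Lemma \ref{lem:DescendableSchemesBasics} (2) finishes the argument.

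For the inductive step, standard reductions (as in Proposition \ref{prop:ProperSurjectiveDescendable}) bring us to the situation where $f: X \to Y$ is an isomorphism outside a perfectly fp closed subscheme $Z \subset Y$, and $g: E = X \times_Y Z \to Z$ is of inductive level $\leq n-1$, so $\calO_Z \to Rg_* \calO_E$ is descendable by the inductive hypothesis. Let $F$ be the fibre of $\calO_Y \to Rf_* \calO_X$. Applying Lemma \ref{BaseChange}, $F \otimes_{\calO_Y} \calO_Z$ is the fibre of $\calO_Z \to Rg_* \calO_E$; combined with $\calO_Z \otimes^L_{\calO_Y} \calO_Z \simeq \calO_Z$ from Lemma \ref{NoTorPerf}, the inductive descendability gives that $F^{\otimes k} \to \calO_Y \to \calO_Z$ is nullhomotopic for some $k$. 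Consequently $F^{\otimes k} \to \calO_Y$ factors through $J \hookrightarrow \calO_Y$, where $J$ is the radical ideal defining $Z$.

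To close the argument, one needs $F$ to be annihilated by a finite power $I^N$, where $I$ is a finitely generated subideal of $J$ with $\sqrt{I} = J$. This comes from the model: $F_0 := \mathrm{fib}(\calO_Y \to Rf_{0,*} \calO_{X_0})$ is a bounded complex whose cohomology sheaves are quasi-coherent and finitely presented on $Y$ (from properness and finite presentation of $f_0$) and supported on $V(I)$ (by Lemma \ref{BaseChange} applied over $Y \setminus Z$), hence annihilated by $I^N$ for some $N$. This annihilation transfers to $F$ via the Frobenius-semilinear colimit computing the perfection. Iterating the preceding factorization $N$-fold yields $F^{\otimes kN} \to I^N \hookrightarrow \calO_Y$, and tensoring once more with $F$ and composing with the null multiplication $I^N \otimes F \to F$ makes $F^{\otimes (kN+1)} \to \calO_Y$ nullhomotopic, giving descendability. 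The main obstacle is reconciling the factorization through the radical ideal $J$ (coming naturally from descendability on the perfect reduced $Z$) with the annihilation by the finitely generated $I^N$ (coming from finite presentation); this is handled by running the argument with $A/I$ in place of $A/J$, using that descendability of $A/I \to Rg'_* \calO_{E'}$ for the scheme-theoretic $E' = X \times_Y \Spec(A/I)$ follows from the perfect case via the nilimmersion $\Spec(A/J) \to \Spec(A/I)$, Lemma \ref{lem:DescendableSchemesBasics} (3), and Lemma \ref{lem:DescendableBasics}.
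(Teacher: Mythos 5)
Your proposal takes a genuinely different route from the paper: you try to re-run the noetherian induction of Proposition \ref{prop:ProperSurjectiveDescendable} directly inside $\Perf$, whereas the paper treats that proposition as a black box on the finite-type model and handles the passage to the perfection in one stroke. The paper's argument is: write the perfection $X$ as $\lim W_n$ where the $W_n$ are base changes of a fixed fppf or proper surjective model along Frobenius twists; each $\calO_Y \to R g_{n,*}\calO_{W_n}$ is descendable of the \emph{same} index $m$ (since the index is stable under base change); conclude by Lemma \ref{lem:DescendableLimits} (filtered colimits of maps of uniformly bounded descendability index are descendable); in the proper case the $W_n$ must be taken as derived fibre products, and Proposition \ref{prop:FrobSCR} is used to see that $\lim W_n$ is still the classical scheme $X$. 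The uniform-boundedness-along-Frobenius observation, together with Lemma \ref{lem:DescendableLimits}, is the crux, and it is entirely absent from your proposal.

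Your version has several concrete gaps. First, in the base case $n=0$, after perfection the fppf factor $g_0$ becomes faithfully flat but no longer of finite presentation, so Lemma \ref{lem:DescendableSchemesBasics} (2) does not apply to the perfected map; the correct repair is exactly the paper's colimit argument. Second, your iteration ``$F^{\otimes kN} \to I^N$'' does not make progress as stated: the factorization coming from the inductive step naturally lands in the perfect radical $J$, and in a perfect ring $J = J^2$, so iterating only yields $F^{\otimes kN} \to J^N = J$, never a genuine power of a finitely generated ideal. Third, the proposed repair of ``running the argument with $A/I$ in place of $A/J$'' fails at the key step: $J/I$ is locally nilpotent but \emph{not} nilpotent in $A/I$ (e.g.\ for $I = (g)$ one has $g^{1/p^n} \in J$ with $(g^{1/p^n})^{p^n} = g \in I$, but no single power of $J$ lands in $I$ since $J^m = J \not\subset I$), so $\Spec(A/J) \to \Spec(A/I)$ is not a nilimmersion in the sense required by Lemma \ref{lem:DescendableSchemesBasics} (3), and in fact $A/I \to A/J$ is generally \emph{not} descendable -- compare the counterexample following Lemma \ref{lem:DescendableBasics}. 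Finally, the transfer of the annihilator $I^N$ from the model $F_0$ to the perfected $F$ is not established: the terms in the Frobenius colimit carry twisted $A$-module structures, and base change along the non-flat map from a noetherian subring is itself delicate. In short: the perfect world is actually \emph{harder}, not easier, for this kind of ideal-theoretic bookkeeping, which is precisely why the paper defers to the noetherian Proposition \ref{prop:ProperSurjectiveDescendable} and handles perfection separately via Lemma \ref{lem:DescendableLimits}.
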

\begin{proof}
By approximation, devissage, and (derived) base change, it is enough to prove the following: if $f_0:X_0 \to Y_0$ is either an fppf or a proper surjective map of noetherian schemes, then the map $f:X \to Y$ on perfections induces a descendable map $\calO_Y \to R f_* \calO_X$.
	
	For the fppf case: we know that $\calO_{Y_0} \to R f_{0,\ast} \calO_{Y_0}$ is descendable of some index $\leq m$ by Lemma \ref{lem:DescendableSchemesBasics} (2). Write $X_n \to X_0$ and $Y_n \to Y_0$ for the $n$-fold Frobenius maps, and let $g_n:W_n := X_n \times_{Y_n} Y \to Y$ be the base change. Then we know that $X \simeq \lim W_n$. Now each $\calO_Y \to R g_{n,\ast} \calO_{W_n}$ is descendable of index  $\leq m$ by base change. Since $m$ is independent of $n$, the claim follows from Lemma \ref{lem:DescendableLimits}.
	
	For the proper case, one argues exactly as in the fppf case using Proposition \ref{prop:ProperSurjectiveDescendable} instead of Lemma \ref{lem:DescendableSchemesBasics} (2); the only difference is that the fibre product defining the scheme $W_n$ appearing above must be replaced with the derived fibre product (to get base change for the index of descendability); by Proposition \ref{prop:FrobSCR}, the limit $\lim W_n$ is still simply $X$, so the previous argument goes through. 
\end{proof}

We can now prove most of Theorem \ref{thm:hDescentDerived}:

\begin{proof}[Proof of Theorem  \ref{thm:hDescentDerived} (1), (2)]
	Part (1) is proven exactly as Theorem \ref{thm:hCechDescentDerivedQCoh} using Theorem \ref{thm:hCoversDescendable} instead of Proposition \ref{prop:ProperSurjectiveDescendable}.

	For (2), first note the functor $X \mapsto \Perf(X)$ is an $h$-sheaf of spaces by (1). Indeed, if $f:X \to S$ is an $h$-cover in $\Perf$ with Cech nerve $X^{\bullet/S} \to S$, then (1) gives $D_{qc}(S) \simeq \lim D_{qc}(X^{\bullet/S})$. Here the limit on the right is also a limit of symmetric monoidal $\infty$-categories, so passage to the dualizable objects shows $\Perf(S) \simeq \lim \Perf(X^{\bullet/S})$, i.e., $\Perf(-)$ is an $h$-sheaf of spaces. Now, for integers $a \leq b$ and a scheme $X$, let $\Perf^{[a,b]}(X) \subset \Perf(X)$ be the full $\infty$-category spanned by perfect complexes with $\Tor$-amplitude contained in $[a,b]$, i.e., those perfect complexes $K$ that can, locally on $X$, be represented by a complex of finite projective modules located in degrees between $a$ and $b$. Then one can check:
	\begin{enumerate}
		\item $\Perf^{[a,b]}(-)$ gives a presheaf of $(b-a)$-truncated spaces on $\Perf$: for any $X \in \Perf$ and any two $K,L \in \Perf^{[a,b]}(X)$, 
	\[ \Map(K,L) := \tau^{\leq 0} R\Hom(K,L) \simeq \tau^{\leq 0} R\Gamma(X,K^\vee \otimes L),\]
	and the latter space is $(b-a)$-truncated since $K^\vee \otimes L \in D^{\geq -b+a}_{qc}(X)$.
		\item $\Perf(-) = \colim \Perf^{[-n,n]}(-)$ as presheaves on $\Perf$.
		\item Membership in $\Perf^{[a,b]}(X) \subset \Perf(X)$ can be detected $v$-locally. In fact, one has a stronger statement: if $K \in \Perf(A)$ for a ring $A$, then $K \in \Perf^{[a,b]}(A)$ provided $K \otimes_A k \in D^{[a,b]}(k)$ for every residue field $k$ of $A$. To see this, one can reduce to $A$ being noetherian (by (4) below) and local, and then use minimal free resolutions.
		\item Both $\Perf(-)$ and $\Perf^{[a,b]}(-)$ commute with filtered colimits of rings.
	\end{enumerate}
	Since $\Perf(-)$ is an $h$-sheaf on $\Perf$, it follows from (a) and (c) that $\Perf^{[a,b]}(-)$ gives an $h$-sheaf of truncated spaces on $\Perf$ as well. By (d) and Lemma \ref{ApproxUnivSubtr}, it follows that $\Perf^{[a,b]}(-)$ is a $v$-sheaf of truncated spaces; here we use that totalizations of truncated cosimplicial spaces commutes with filtered colimits. Also, by truncatedness, this $v$-sheaf is automatically hypercomplete. In other words, if $X^\bullet \to S$ is a $v$-hypercover in $\Perf$, then $\Perf^{[a,b]}(S) \simeq \lim \Perf^{[a,b]}(X^\bullet)$. By working locally on $S$ and calculating $\Map(\calO_S,\calO_S)$ using this equivalence, it follows that $\Perf(S) \to \lim \Perf(X^\bullet)$ is fully faithful as well. Finally, if $K^\bullet \in \lim \Perf(X^\bullet)$, then there exist integers $a \leq b$ such that $K^0 \in \Perf^{[a,b]}(X^0) \subset \Perf(X^0)$. But then $K^i \in \Perf^{[a,b]}(X^i)$ for each $i$ since $K^i$ is a pullback of $K^0$ along some simplicial structure map $X^i \to X^0$. Thus, $K \in \lim \Perf^{[a,b]}(X^\bullet) \simeq \Perf^{[a,b]}(S) \subset \Perf(S)$; this proves (2) for $\Perf(X)$. It is then easy to deduce the same result for $\Perf(W_n(X))$, $\Perf(W(X))$ and $\Perf(W(X) \on X)$.
\end{proof}

We end by noting a corollary (or, really, an equivalent form) of this $h$-descent result, extending Lemma \ref{lem:VectcdhStack} to the derived category:

\begin{corollary}
	Let $f:X \to Y$ be a proper surjective finitely presented map in $\Sch_{/\F_p}$ which is an isomorphism over a quasi-compact open $U \subset Y$. If $Z \subset Y$ is the complement of $U$ and $E := f^{-1}(Z) \subset X$, then pullback induces an equivalence of $\infty$-categories
	\[ D_{qc}(Y_\perf) \simeq D_{qc}(X_\perf) \times_{D_{qc}(E_\perf)} D_{qc}(Z_\perf),\]
	and thus a similar statement for the corresponding $\infty$-category of perfect complexes.
\end{corollary}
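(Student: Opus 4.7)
My plan is to reduce the assertion to the statement that the square
\[\xymatrix{
E_\perf\ar[r]\ar[d]&X_\perf\ar[d]\\
Z_\perf\ar[r]&Y_\perf
}\]
exhibits $Y_\perf$ as the pushout $X_\perf \sqcup_{E_\perf} Z_\perf$ in the $\infty$-topos of $h$-sheaves of spaces on $\Perf$, and then combine this with the $h$-descent result for $D_{qc}$ in Theorem~\ref{thm:hDescentDerived}(1). Given these two inputs, since $D_{qc}(-)$, viewed as a contravariant $\infty$-category-valued functor on $h$-sheaves, turns colimits of $h$-sheaves into limits of $\infty$-categories, the pushout above is sent to the claimed pullback.

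To verify the pushout statement, I would check that for every $h$-sheaf $F$ of spaces on $\Perf$, the square
\[\xymatrix{
F(Y_\perf)\ar[r]\ar[d]&F(X_\perf)\ar[d]\\
F(Z_\perf)\ar[r]&F(E_\perf)
}\]
is a homotopy pullback. By Proposition~\ref{SchVsPerf}, the presheaf $F' := F \circ (-)_\perf$ is an $h$-sheaf of spaces on $\Sch^\fp_{/\F_p}$. Because $X \to Y$ is a proper surjective finitely presented map which is an isomorphism over the qcqs open $U$ (with closed complement $Z$, taken with a finitely presented scheme structure since $U \hookrightarrow Y$ is quasi-compact, and preimage $E$), this pullback assertion is precisely condition (ii) of Theorem~\ref{CritHSheaf} applied to $F'$.

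To apply the pushout claim in the $\infty$-categorical setting, I need that $D_{qc}(-)$ is an $h$-sheaf of $\infty$-categories on $\Perf$, not merely of spaces. This stronger statement is what the proof of Theorem~\ref{thm:hDescentDerived}(1) in fact produces: the argument proceeds through Theorem~\ref{thm:DescendableDescent}, which establishes $\Mod_{\calC}(A) \simeq \lim \Mod_{\calC}(B^\bullet)$ as $\infty$-categories for descendable maps $A \to B$. Consequently, $D_{qc}$ extends uniquely to a limit-preserving contravariant functor from $h$-sheaves on $\Perf$ to $\infty$-categories, and it carries our pushout to the pullback
\[ D_{qc}(Y_\perf) \simeq D_{qc}(X_\perf) \times_{D_{qc}(E_\perf)} D_{qc}(Z_\perf). \]
The analogous statement for perfect complexes is then deduced by passage to dualizable objects, which commutes with the formation of fibre products of presentable symmetric monoidal stable $\infty$-categories.

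The one piece of bookkeeping to be careful about is the upgrade of the $h$-sheaf property from spaces to $\infty$-categories, but this is essentially formal given that descendability already delivers descent at the level of module $\infty$-categories via Theorem~\ref{thm:DescendableDescent}. No substantive new input is needed beyond the results established earlier in the paper.
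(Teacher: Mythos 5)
Your proof is correct and follows essentially the same route as the paper: both arguments combine Theorem~\ref{thm:hDescentDerived}(1) with the forward direction of Theorem~\ref{CritHSheaf}, the paper applying the criterion directly to $F(X) = D_{qc}(X_\perf)$ while you first deduce the pushout $Y_\perf \simeq X_\perf \sqcup_{E_\perf} Z_\perf$ in $h$-sheaves and then invoke the sheaf property of $D_{qc}(-)$. The detour through the pushout is a clean repackaging that makes the upgrade from presheaves of spaces (to which Theorem~\ref{CritHSheaf} nominally applies) to presheaves of $\infty$-categories more explicit, but the key inputs and the substance of the argument are identical to the paper's.
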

\begin{proof}
	This follows from Theorem \ref{thm:hDescentDerived} (1) and Theorem \ref{CritHSheaf} applied to $F(X) = D_{qc}(X_\perf)$; the statement about perfect complexes follows immediately by passage to dualizable objects.
\end{proof}

\begin{proof}[Proof of Theorem  \ref{thm:hDescentDerived} (3)] Next, we establish part (3) of Theorem \ref{thm:hDescentDerived}, i.e.~$X\mapsto D^b_{qc}(X)$ is functorial in $X\in \Perf^\fp$, and gives an $h$-sheaf of spaces. Note that the functoriality is a bit surprising from the perspective of ``classical'' algebraic geometry: bounded complexes often become unbounded after pullback along non-flat maps. This phenomenon does not occur in the perfect setting thanks to the following result.

\begin{proposition}\label{prop:FiniteTorDim} Let $R\to S$ be a perfectly finitely presented map of perfect $\F_p$-algebras. Then $S$ is of finite Tor-dimension over $R$.
\end{proposition}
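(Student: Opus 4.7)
The strategy is to factor $R \to S$ through an auxiliary perfect $R$-algebra $A$ that is flat over $R$ and such that $S$ is the quotient of $A$ by the radical of a finitely generated ideal; after this, the vanishing mechanism already present in Lemma~\ref{NoTorPerf} supplies the desired bound. By Proposition~\ref{FPModelsExist} choose a model $S_0 = R[x_1,\dots,x_n]/(f_1,\dots,f_m)$ with $S \simeq (S_0)_{\perf}$, and let $A := R[x_1^{1/p^\infty},\dots,x_n^{1/p^\infty}]$ be the perfection of $R[\underline{x}]$. Then $A$ is a perfect $R$-algebra, and it is free (in particular flat) as an $R$-module, with basis the monomials $\prod_i x_i^{a_i/p^N}$ with $N \ge 0$ and $0 \le a_i < p^N$. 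A direct comparison of universal properties on perfect $R$-algebras $T$ — both sides representing $T \mapsto \{(t_i) \in T^n : f_j(t) = 0 \text{ for all } j\}$ — identifies $S$ with $A/I$, where $I := \sqrt{(f_1,\dots,f_m)A}$ is the radical of a finitely generated ideal in the perfect ring $A$.

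The main technical step is then to bound the $\Tor$-dimension of $S = A/I$ over $A$. Setting $A_0 := A$ and $A_k := A_{k-1}/\sqrt{f_k A_{k-1}}$ for $k = 1,\dots,m$, each $A_k$ is again a perfect ring, and $A_m = S$. The proof of Lemma~\ref{NoTorPerf} shows that in a perfect ring, the radical of a principal ideal $(f)$ is the filtered colimit of copies of the ring along multiplication by $f^{1/p^n - 1/p^{n+1}}$, and hence is flat; consequently the short exact sequence $0 \to \sqrt{f_k A_{k-1}} \to A_{k-1} \to A_k \to 0$ is a length-one flat resolution, giving $\mathrm{Tor\text{-}dim}_{A_{k-1}}(A_k) \le 1$. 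The standard transitivity estimate
\[ \mathrm{Tor\text{-}dim}_{A}(A_k) \;\le\; \mathrm{Tor\text{-}dim}_{A}(A_{k-1}) + \mathrm{Tor\text{-}dim}_{A_{k-1}}(A_k) \]
then yields $\mathrm{Tor\text{-}dim}_A(S) \le m$ by induction on $k$.

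Finally, flatness of $A$ over $R$ collapses the change-of-rings spectral sequence into a base-change isomorphism $\Tor_i^R(S,M) \simeq \Tor_i^A(S, A \otimes_R M)$ for every $R$-module $M$, so the bound over $A$ descends to $\mathrm{Tor\text{-}dim}_R(S) \le m$, finishing the proof. I do not anticipate any serious obstacle: every substantive input has already been established in Section~\ref{sec:perfect}, and what remains is bookkeeping around the factorization $R \to A \to S$ and the identification of ideals under perfection. The only point that requires minor care is the stability of the inductive setup, namely that each intermediate quotient $A_{k-1}$ is genuinely perfect so that the mechanism of Lemma~\ref{NoTorPerf} can be reapplied — but this is automatic since quotients of perfect rings by radical ideals are reduced, and in characteristic $p$ a quotient of a perfect ring by a Frobenius-stable ideal inherits perfectness.
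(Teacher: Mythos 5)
Your proof is correct and follows the same route as the paper's: replace $R$ by the perfection $A$ of $R[x_1,\ldots,x_n]$, which is $R$-flat, identify $S$ with an iterated quotient of $A$ by radicals of principal ideals, and invoke the flat colimit description of $f^{1/p^\infty}A = \sqrt{fA}$ from the proof of Lemma~\ref{NoTorPerf} together with transitivity of Tor-dimension to get the bound $m$. The only slip is cosmetic: the monomial $R$-basis of $A$ should allow all exponents in $\Z[\tfrac{1}{p}]_{\geq 0}$, not just those $<1$, but the freeness (hence flatness) of $A$ over $R$ is unaffected.
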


\begin{proof} We will show that if $S$ is the perfection of $R[X_1,\ldots,X_n] / (f_1,\ldots,f_m)$, then the Tor-dimension of $S$ over $R$ is bounded by $m$. To see this, we may replace $R$ by the perfection of $R[X_1,\ldots,X_n]$ to assume $n=0$. Moreover, by induction, we may assume that $m=1$. Then $S= R  / f^{1/p^\infty} R$, so it is enough to see that $f^{1/p^\infty} R$ is flat. But the proof of Lemma \ref{NoTorPerf} shows that
\[
f^{1/p^\infty} R = \colim_{f^{1/p^n - 1/p^{n+1}}} R\ ,
\]
which is a filtered colimit of flat $R$-modules, and thus flat.
\end{proof}

This directly implies that $X\mapsto D^b_{qc}(X)$ is functorial in $X\in \Perf^\fp$. To see that it is an $h$-sheaf, it is enough to observe that we already have $h$-descent for $D_{qc}(X)$, and that the following lemma holds true.

\begin{lemma}
		\label{lem:hCoverBounded}
		Let $f:X \to Y$ be an $h$-cover in $\Perf$. Given $K \in D_{qc}(Y)$, if $f^* K \in D_{qc}(X)$ is bounded, so is $K$.
	\end{lemma}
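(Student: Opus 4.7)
The strategy combines descendability of $\calO_Y \to Rf_*\calO_X$ (Theorem~\ref{thm:hCoversDescendable}) with the projection formula and the perfect base change result (Lemma~\ref{BaseChange}) to express $K$ as built from finitely many bounded complexes via finite colimits and retracts. First I reduce to the case where $f$ is proper and perfectly finitely presented. Boundedness in $D_{qc}$ is local on $Y$, so we may assume $Y$ is affine. By Theorem~\ref{StructureUnivSubtr}, $f$ admits a refinement $g:X' \to Y$ factoring as a finite quasi-compact open cover $X' \to Y'$ followed by a proper perfectly finitely presented surjection $Y' \to Y$. Pullback along perfectly finitely presented maps preserves boundedness by Proposition~\ref{prop:FiniteTorDim}, so $g^*K$ is bounded; locality of boundedness then yields that $K|_{Y'}$ is bounded. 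Thus we reduce to the case where $f$ itself is a proper, perfectly finitely presented surjection.

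In this case, Theorem~\ref{thm:hCoversDescendable} gives that $\calO_Y \to B := Rf_*\calO_X$ is descendable of some finite index $m$: the composite $F^{\otimes m} \to \calO_Y$ is null-homotopic, where $F := \fib(\calO_Y \to B)$. Consequently $\calO_Y$ is a retract of $\cofib(F^{\otimes m} \to \calO_Y)$, and iterated octahedra applied to the composition $F^{\otimes m} \to F^{\otimes m-1} \to \cdots \to F \to \calO_Y$ endow this cofibre with a filtration of length $m$ whose gradeds are $F^{\otimes i} \otimes B$ for $i = 0, \ldots, m-1$. Each such graded is, in turn, built via finite colimits from $B, B^{\otimes 2}, \ldots, B^{\otimes i+1}$ (all tensor products being over $\calO_Y$), because $F$ sits in the fibre sequence $F \to \calO_Y \to B$. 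Tensoring the entire analysis with $K$, we conclude that $K$ is a retract of an object built by at most $m$ stages of finite colimits from the collection $\{B^{\otimes j} \otimes K\}_{j=1}^{m}$.

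It therefore suffices to show that each $B^{\otimes j} \otimes K$ is bounded. Iterating the projection formula, and applying Lemma~\ref{BaseChange} to the Cartesian squares defining the $j$-fold fibre product $f_j:X^{j/Y} \to Y$ in $\Perf$, we obtain an identification $B^{\otimes j} \otimes K \simeq R f_{j,*}(f_j^*K)$. The morphism $f_j$ is again proper and perfectly finitely presented, so by noetherian approximation via a finite-type model, $Rf_{j,*}$ has bounded cohomological amplitude. Moreover $f_j^*K$ is bounded, since it is obtained from $f^*K$ by further pullback along the perfectly finitely presented projection $X^{j/Y} \to X$, again invoking Proposition~\ref{prop:FiniteTorDim}. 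Hence each $Rf_{j,*}(f_j^*K)$ is bounded, and assembling by finitely many finite colimits plus a retract shows that $K$ is bounded.

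The main obstacle is the identification $B^{\otimes j} \otimes K \simeq Rf_{j,*}(f_j^*K)$: this requires combining the projection formula with the fact that derived base change along the $h$-cover $f$ introduces no higher $\Tor$-corrections in the perfect setting (Lemma~\ref{BaseChange}), which is what lets the iterated self-tensor of $Rf_*\calO_X$ be computed as a single pushforward from an ordinary fibre product. Once this is granted, everything else is formal: descendability packages the cohomological control into a finite-length decomposition, and boundedness is stable under the operations involved.
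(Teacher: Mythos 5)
Your proof is correct, but it takes a genuinely different route from the paper's. The paper argues by induction on the inductive level of the proper map: after reduction to the proper case, it decomposes $f$ via an abstract blowup square $(X,E,Z,Y)$, tensors the pullback square
\[
\xymatrix{\calO_Y \ar[r]\ar[d] & Rf_\ast\calO_X \ar[d]\\ \calO_Z\ar[r] & Rf_\ast\calO_E}
\]
with $K$, and concludes boundedness of $K$ from boundedness of $Rf_\ast(f^\ast K)$, $Rf_{Z,\ast}f_Z^\ast(K|_Z)$ (by finite cohomological dimension of qcqs pushforward), and $K|_Z$ (by induction on inductive level). Your argument instead invokes Theorem~\ref{thm:hCoversDescendable} directly: once $\calO_Y \to Rf_\ast\calO_X$ is descendable of index $m$, the null-homotopy of $F^{\otimes m}\to \calO_Y$ realizes $K$ as a retract of an $m$-step extension by the objects $B^{\otimes j}\otimes K \simeq Rf_{j,\ast}(f_j^\ast K)$, and each of these is bounded for the same reasons as in the paper ($f_j^\ast K$ is bounded via Proposition~\ref{prop:FiniteTorDim}, pushforward has finite cohomological dimension, and Lemma~\ref{BaseChange} collapses derived to ordinary fibre products). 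Both arguments ultimately rest on the same machinery ($h$-descent for $D_{qc}$, $\Tor$-vanishing for perfect rings, and descendability of $h$-covers, which underlies part (1) of the theorem used by the paper's pullback square), but the paper's route avoids explicitly invoking the descendability index in favor of the finer inductive structure of proper surjections, whereas yours trades that geometric induction for a single homological packaging: the filtration of $\cofib(F^{\otimes m}\to\calO_Y)$ by the $F^{\otimes i}\otimes B$. Your version is arguably shorter once descendability is in hand, at the cost of making the appeal to that machinery more overt; the paper's version is more in the spirit of the surrounding section's use of abstract blowup squares. One small point worth making explicit in your write-up: the map $X'\to X$ along which you pull back $f^\ast K$ to get $g^\ast K$ is perfectly finitely presented because both sides are perfectly finitely presented over $Y$ (a $2$-out-of-$3$ argument), which is what lets Proposition~\ref{prop:FiniteTorDim} apply.
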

	\begin{proof}
This is clear if $f$ is the perfection of an fppf map. Thus, by devissage, it is enough to prove this for $f$ being the perfection of a proper surjective map of inductive level $\leq n$. We work by induction on $n$. If $n = 0$, then, after refinements, $f$ is the perfection of an fppf map, so the claim is clear. For general $n$, we thus reduce to the case where $f$ is perfectly proper surjective, an isomorphism outside a closed subset $Z \subset Y$ with preimage $E \subset X$, and the restriction $f_Z:E \to Z$ of $f$ is of inductive level $\leq n-1$. Then we have a pullback diagram
		\[ \xymatrix{ \calO_Y \ar[d] \ar[r] & R f_* \calO_X \ar[d] \\
		\calO_Z \ar[r] & R f_* \calO_E.}\]
Tensoring this diagram with $K$, and using the projection formula, gives a pullback diagram
			\[ \xymatrix{ K \ar[d] \ar[r] & R f_* (f^* K) \ar[d] \\
				K|_Z \ar[r] & R f_* (f^*K|_E) \simeq R f_{Z,*} f_Z^*(K|_Z).}\]
				Now the terms on the right are bounded since $f^* K$ is bounded (as derived pushforward along qcqs maps has finite cohomological dimension). By induction applied to $f_Z$, the complex $K|_Z$ is also bounded. The preceding square then shows that $K$ is bounded.
	\end{proof}
\end{proof}

\subsection{$h$-hyperdescent for complexes}
\label{ss:hHyperDescentBounded}

In this section, we establish hyperdescent for quasi-coherent complexes, i.e., prove Theorem \ref{thm:hDescentDerived} (4). So fix a ring $k$ that is the perfection of a regular $\F_p$-algebra of finite Krull dimension.  We start by observing a general result on finite global dimension in this setup.

	\begin{proposition}
		\label{prop:BoundedTorDim}
	Let $R$ be a perfectly finitely presented $k$-algebra. Then $R$ has finite global dimension, i.e., there exists some positive integer $N$ such that each $R$-module has projective dimension $\leq N$.
	\end{proposition}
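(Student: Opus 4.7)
The plan is to induct on the Krull dimension of $R$, which is finite since, by Proposition~\ref{FPModelsExist}, we may write $R = (R_0)_\perf$ for a finitely presented $k$-algebra $R_0$ (assumed reduced without loss of generality), and perfection is a universal homeomorphism on spectra. For the base case $\dim R = 0$, the Artinian noetherian model $R_0$ decomposes as a finite product of Artinian local $k$-algebras, and its perfection $R$ is a finite product of perfect local rings with nilpotent maximal ideals; since Frobenius is an automorphism of $R$, these nilpotent elements must vanish, so $R$ is a finite product of perfect fields and has global dimension $0$.

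For the inductive step with $\dim R = d > 0$, I would use prime avoidance in $R_0$ to pick an element $f \in R_0$ that is a non-zero-divisor and lies in no minimal prime; since $R_0 \hookrightarrow R$ is injective (being a filtered colimit along the injective Frobenius), $f$ remains a non-zero-divisor in $R$. Set $\bar R := R/(f^{1/p^\infty}) = (R_0/fR_0)_\perf$. Then $\bar R$ is perfectly finitely presented over $k$ of Krull dimension $< d$, so by the inductive hypothesis has finite global dimension.

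The essential technical input is that the perfect principal ideal $(f^{1/p^\infty}) \subset R$ has projective dimension at most $1$: as observed in the proof of Lemma~\ref{NoTorPerf}, it may be identified with $\colim_n R$ with transition maps given by multiplication by $f^{1/p^n - 1/p^{n+1}}$, giving the two-term free resolution
\[ 0 \to \bigoplus_{n \geq 0} R \xrightarrow{e_n \mapsto e_n - f^{1/p^n - 1/p^{n+1}} e_{n+1}} \bigoplus_{n \geq 0} R \to (f^{1/p^\infty}) \to 0. \]
Consequently $\pd_R(\bar R) \leq 2$, and the classical change-of-rings inequality $\pd_R(M) \leq \pd_{\bar R}(M) + \pd_R(\bar R)$ for $\bar R$-modules $M$ handles all $R$-modules annihilated by some power of $(f^{1/p^\infty})$.

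The main obstacle is to bound the projective dimension of a general $R$-module $M$, which one approaches via dévissage along the short exact sequence $0 \to M[f^{1/p^\infty}] \to M \to M/M[f^{1/p^\infty}] \to 0$ isolating the $f^{1/p^\infty}$-torsion (an $\bar R$-module, handled as above) from an $f^{1/p^\infty}$-torsionfree quotient. For the latter, on which each $f^{1/p^n}$ acts injectively, one exploits the perfect principal character of $(f^{1/p^\infty})$: the two-term resolution above combined with the injectivity of multiplication by $f^{1/p^n}$ yields, via a total-complex or spectral-sequence argument, a finite-length resolution reducing the torsionfree part to modules of bounded projective dimension. Making this final step precise in the non-noetherian setting is the crux of the proof.
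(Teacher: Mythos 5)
Your strategy---inducting on Krull dimension via a regular element $f$ and the perfect principal ideal $(f^{1/p^\infty})$---is genuinely different from the paper's, and it stalls exactly where you say it does. The d\'evissage cleanly disposes of the $f^{1/p^\infty}$-torsion submodule (the $\pd \leq 1$ of $(f^{1/p^\infty})$ and the change-of-rings inequality are both fine), but it leaves an arbitrary $f$-torsionfree module $M' = M/M[f^{1/p^\infty}]$ over the \emph{same} ring $R$, so the induction has nothing to grab onto. The classical trick for $f$-torsionfree modules (compare $M'$ with $M'/fM'$ and conclude $\pd_R(M') = \pd_{R/f}(M'/fM')$) is a Nakayama argument that needs $M'$ finitely generated over a local noetherian ring; here $M'$ is completely arbitrary and $R$ is non-noetherian, so I do not see a total-complex or spectral-sequence argument that closes this. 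The torsionfree case is morally as hard as the original problem, which is why I would call this a real gap rather than a routine omission.

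The paper's proof avoids module d\'evissage altogether and is worth internalizing. First it reduces from $R$ to a perfect polynomial ring $P$ surjecting onto $R$: by Lemma~\ref{lem:PerfectColimits} one has $R \otimes_P^L R \simeq R$, hence $M \simeq M \otimes_P^L R$ for any $R$-module $M$, so it suffices to bound $\pd_P$. Then, writing $P = \colim_n P_n$ as a Frobenius colimit of copies $P_n \cong P_0$ of a regular noetherian ring of dimension $d$---with each $P_n \to P$ flat by Kunz---every $P$-module $M$ is exhibited as $\colim_n F_n$ with $F_n := M \otimes_{P_n} P$ of $\pd_P \leq d$, and the telescope sequence $0 \to \bigoplus_n F_n \to \bigoplus_n F_n \to M \to 0$ gives $\pd_P(M) \leq d+1$. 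The two ingredients you would want to import into your argument---the self-intersection formula $R \otimes_P^L R \simeq R$ and flatness of Frobenius on regular rings---are precisely what replace your missing torsionfree step.
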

	\begin{proof}
		Choose a perfect polynomial ring $P$ over $k$ equipped with a surjection $P \to R$. Then $R \otimes_P^L R \simeq R$ by Lemma \ref{lem:PerfectColimits}. Thus, for any $R$-module $M$, we have 
	\[ M \simeq M \otimes^L_R (R \otimes^L_P R) \simeq M \otimes^L_P R.\]
	Thus, it suffices to prove the result for $P$, which is the perfection of a regular $\mathbb F_p$-algebra $P_0$ of finite Krull dimension. Let $d = \dim(P)$. Write $P_0 \to P_n$ for the $n$-fold Frobenius map, so there are natural $P_0$-algebra maps $P_n \to P_{n+1}$ given by Frobenius, and $P \simeq \colim P_n$. Now, for any $P$-module $M$, we can regard $M$ as a $P_n$-module via restriction along $P_n \to P$, and write $F_n := M \otimes_{P_n} P$ for its base change back to $P$. Then we have natural maps $F_n \to M$ compatibly in $n$, and this leads to
	\[ M \simeq \colim F_n.\]
	Now each $F_n$ has projective dimension $\leq d$ since $P_n$ is a regular ring of dimension $d$, and $P_n \to P$ is flat. Writing an $\N$-indexed colimit as the cone of a map between direct sums then shows that $M$ has projective dimension $\leq d+1$, so setting $N = d+1$ solves the problem.
	\end{proof}

\begin{remark}
	Proposition \ref{prop:BoundedTorDim} implies that for any $X \in \Perf^\fp_{/k}$, each $K \in D^b_{qc}(X)$ has projective amplitude contained in $[a,b]$ for suitable integers $a \leq b$, i.e.,  $K$ can be represented, on affines $\Spec(A) \subset X$,  by a complex $E^\bullet$ of projective $A$-modules with $E^i = 0$ for $i \not\in [a,b]$ (see \cite[Tag 0A5M]{StacksProject}).
\end{remark}

\begin{remark}
	\label{rmk:BoundedTorDim}
	Proposition \ref{prop:BoundedTorDim} implies that for any perfectly finitely presented $k$-algebra $R$, the $\Tor$-dimension of any $R$-module is bounded above by some fixed integer $N$. We will show\footnote{An earlier version of this preprint asserted that one may choose $N = d$. This is false: if $R = k[x,y]/(xy)_{\perf}$, then $d = 1$, but $\Tor_2^R(R/(x), R/(y)) \neq 0$. We thank Gabber for pointing out the mistake, the previous example, and the correct bound. Gabber has also pointed out that the global dimension $N$ in Proposition~\ref{prop:BoundedTorDim} is $\leq 2d+1$, where equality is obtained in some examples; we do not prove that here.} that, in fact, one can choose $N = 2d$, where $d := \dim(R)$. We work by induction on $d$. We can assume that $R$ is the perfection of a noetherian complete local ring. If $d = 0$, there is nothing to prove as $R$ is a product of fields. If $d > 0$, choose a Noether normalization of $R$, i.e., a finite injective map of noetherian rings $P_0 \to R_0$ with $P_0$ regular, and $R = R_{0,\perf}$; set $P = P_{0,\perf}$. Then it is easy to see that the $\Tor$-dimension of any $P$-module is bounded above by $d = \dim(P_0) = \dim(R)$. Moreover, a standard argument\footnote{Indeed, after possibly adjusting our initial choice of $P_0$ and $R_0$, the map $P_0 \to R_0$ is generically \'etale, so there exist some non-zero $f \in P_0$ such that $P_0 \to R_0$ is finite \'etale after inverting $f$. One then checks that such an $f$ does the job.} implies that there exists some non-zero $f \in P$ such that $P \to R$ is almost finite \'etale with respect to the ideal $I = (f^{\frac{1}{p^\infty}}) \subset P$; note that the ideal $I$ is flat and satisfies $I^2 = I$, so almost mathematics (\cite{GabberRamero}) with respect to $I$ makes sense. In particular, the multiplication map $R \otimes^L_P R \to R$ is an almost direct summand as a $R \otimes^L_P R$-complex by almost \'etaleness. (In fact, $R \otimes_P^L R \simeq R \otimes_P R$ by Lemma \ref{lem:PerfectColimits}.) Given an $R$-module $M$, we may view $M$ as an $R \otimes_P^L R$-module via the multiplication map, and $M \otimes_P^L R$ as an $R \otimes_P^L R$-complex in the natural way; applying $\big(M \otimes_P^L R\big) \otimes_{R \otimes_P^L R}^L (-)$ to the preceding retraction shows that $M$ is an almost direct summand of $M \otimes_P^L R$ as an $R \otimes_P^L R$-complex. Using the second factor inclusion $R \hookrightarrow R \otimes_P^L R$, this implies that any $R$-module $M$ is an almost direct summand of a complex of the form $N \otimes_P^L R$, where $N$ is some $P$-module, and the $R$-module structure comes from the second factor. In particular, since the $\Tor$-dimension of $N$ is bounded above by $d$, the same is true for $M$ in the almost world. To pass back to the non-almost world, set $J = IR \subset R$, so both $I$ and $J$ are flat ideals in the corresponding rings; here we use the proof of Lemma \ref{NoTorPerf} to get the flatness of $J$. The preceding almost vanishing (and the flatness of $J$) implies that $I \otimes_P^L M \simeq J \otimes_R^L M \simeq J \otimes_R M$ has $\Tor$-dimension $\leq d$; here we use that $I \otimes_P^L (-)$ kills almost zero objects in $D(P)$ since $I \otimes_P^L P/I \simeq I \otimes_P P/I \simeq I/I^2 = 0$. We now have canonical short exact sequences
\[ 0 \to \Tor_1^R(R/J,M) \to J \otimes_R M \to JM \to 0\]
and 
\[ 0 \to JM \to M \to M/JM \to 0.\]
By induction, both $M/JM$ and $\Tor_1^R(R/J,M)$ have $\Tor$-dimension $\leq 2d-2$ over $R/J$. Now $R/J$ has $\Tor$-dimension $\leq 1$ over $R$ as $J$ is $R$-flat; the same then holds for any flat $R/J$-module by Lazard's theorem. By taking flat resolutions, it follows that both $M/JM$ and $\Tor_1^R(R/J,M)$ have $\Tor$-dimension $\leq 2d-1$ over $R$. The first exact sequence then shows that $JM$ has $\Tor$-dimension $\leq 2d$ over $R$; the second one then implies that $M$ has $\Tor$-dimension $\leq 2d$ as well. 
\end{remark}

\begin{remark}
The homological properties of perfections of noetherian $\F_p$-algebras established above have been investigated previously in the commutative algebra literature to some extent. For example, \cite[Theorem 3.1]{AberbachHochster} establishes that reduced quotients of perfectly finitely presented rings have $\Tor$-dimension $\leq d$, which can be deduced from Lemma~\ref{NoTorPerf}. Likewise, \cite[Theorem 1.2]{MohsenHomological} proves that such quotients which are domains have a finite resolution by countably generated projectives, which follows immediately from the proof of Proposition \ref{prop:BoundedTorDim}.
\end{remark}

	Using Proposition \ref{prop:BoundedTorDim}, one can reprove Kunz's theorem characterizing regularity.

	\begin{corollary}[Kunz]
		\label{cor:Kunz}
		Let $R$ be a noetherian $\F_p$-algebra. Then $R$ is regular if and only if the absolute Frobenius $R \to R$ is flat.
	\end{corollary}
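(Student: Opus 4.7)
The plan is to handle the two directions of Kunz's theorem separately, with the non-trivial direction building on the colimit argument used inside the proof of Proposition~\ref{prop:BoundedTorDim}.

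For the easy direction ($R$ regular $\Rightarrow$ $\Frob_R$ flat), we reduce by localization and faithfully flat descent along completion to the case where $R$ is a complete regular local $\F_p$-algebra; Cohen structure then gives $R \simeq k[[x_1,\ldots,x_n]]$ for some field $k$, and $R$ is free of finite rank over its subring $R^p$, which implies flatness of $\Frob_R$.

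For the hard direction ($\Frob_R$ flat $\Rightarrow$ $R$ regular), by Serre's characterization it suffices to show each local ring of $R$ at a maximal ideal has finite global dimension; standard reductions (localization, followed by passage to the completion, both preserving flatness of Frobenius) reduce to the case where $R$ is a complete noetherian local $\F_p$-algebra. Using Cohen structure, fix a surjection $P \twoheadrightarrow R$ with $P$ a complete regular local $\F_p$-algebra of finite Krull dimension $d$. Since $P$ is regular, the easy direction applied to $P$ gives $\Frob_P$ flat, so the colimit argument inside the proof of Proposition~\ref{prop:BoundedTorDim}---writing $P_\perf = \colim_n P_n$ along Frobenius and observing that every $P_\perf$-module is a filtered colimit of $P_n$-base-changes each of projective dimension $\leq d$---shows that $P_\perf$ has finite global dimension $\leq d+1$. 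Now Lemma~\ref{NoTorPerf} gives $R_\perf \otimes^L_{P_\perf} R_\perf \simeq R_\perf$, so for any $R_\perf$-module $M$ one has $M \simeq M \otimes^L_{P_\perf} R_\perf$; applying $- \otimes_{P_\perf} R_\perf$ to a length-$\leq d+1$ $P_\perf$-projective resolution of $M$ yields an $R_\perf$-projective resolution of the same length, so $R_\perf$ has finite global dimension. Finally, $R \to R_\perf$ is flat (being a filtered colimit of copies of the flat map $\Frob_R$) and surjective on spectra (since $R_\perf$ is integral over $R$), hence faithfully flat; the identity $\Tor^R_i(M,N) \otimes_R R_\perf \simeq \Tor^{R_\perf}_i(M \otimes_R R_\perf, N \otimes_R R_\perf)$ then shows that vanishing of high $\Tor$ over $R_\perf$ descends to $R$, so $R$ has finite global dimension and is regular by Serre.

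The main obstacle is recognizing that the proof of Proposition~\ref{prop:BoundedTorDim} uses only regularity, finite Krull dimension, and flatness of Frobenius on the base (and not, say, finite-type-ness over $\F_p$), so it applies to $P_\perf$ for a complete regular local ring $P$; once this is granted, the argument is largely bookkeeping.
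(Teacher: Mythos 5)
Your proposal is correct and follows essentially the same route as the paper's proof: reduce to the complete noetherian local case, show that $R_\perf$ has finite global dimension, and then descend along the faithfully flat map $R \to R_\perf$ to conclude via Serre's criterion. Where the paper simply invokes Remark~\ref{rmk:BoundedTorDimFFinite} (which says the conclusion of Proposition~\ref{prop:BoundedTorDim} applies when the base is the perfection of a quotient of a regular $\F_p$-algebra of finite Krull dimension, as is the case for any complete noetherian local ring by Cohen), you have unpacked exactly the argument behind that remark — the $\colim_n P_n$ filtration for the regular cover $P$ together with the $\Tor$-vanishing of Lemma~\ref{NoTorPerf} — so there is no substantive difference between the two.
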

	\begin{proof}
		The forward direction is well-known, and we have nothing new to offer. Instead, we explain the converse: if Frobenius is flat, then $R$ is regular. This can be checked after completion at points, so we may assume $R$ is a complete noetherian local $\F_p$-algebra; here we use that the hypothesis on flatness of Frobenius passes to completions of $R$. By Remark \ref{rmk:BoundedTorDimFFinite}, every $R_\perf$-module has finite $\Tor$-dimension.  The faithful flatness of $R \to R_\perf$ then implies that every $R$-module has finite $\Tor$-dimension, so $R$ is regular.
	\end{proof}

To prove that quasi-coherent complexes give hypercomplete $h$-sheaves, our basic tool will be the following:

\begin{lemma}
	\label{lem:CommuteTotTensor}
	Let $R$ be the perfection of a finitely presented $k$-algebra. Let $M \in D(R)$, and let $N^\bullet$ be a cosimplicial $R$-module such that $R\lim N^\bullet$ is bounded. Then the natural map gives
\begin{equation}
\label{eq:CommuteTotTensor1}
M \otimes_R^L R\lim N^\bullet \simeq R\lim(M \otimes_R^L N^\bullet).
\end{equation}
\end{lemma}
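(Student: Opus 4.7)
The plan is to prove this by dévissage in $M$, using the finite global dimension of $R$ provided by Proposition~\ref{prop:BoundedTorDim}; call this bound $g$. I will write $\eta_M : M \otimes_R^L R\lim N^\bullet \to R\lim(M \otimes_R^L N^\bullet)$ for the natural comparison map and verify it is an equivalence by successive reduction. Since both sides are exact functors of $M$, the class of $M$ for which $\eta_M$ is an equivalence is closed under fibre sequences. For flat $M$, $\eta_M$ is trivially an equivalence, as $M \otimes_R -$ is exact and commutes with all limits. By finite global dimension, every $R$-module admits a projective (hence flat) resolution of length $\leq g$; since finite iterated cones commute with $R\lim$, this already gives $\eta_M$ for any discrete $M$. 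The truncation cofibre sequences $\tau^{\leq n-1} M \to \tau^{\leq n} M \to H^n(M)[-n]$ then extend the result to every bounded $M \in D^b(R)$.

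The main obstacle will be extending this to unbounded $M \in D(R)$: while $\otimes_R^L$ commutes with filtered colimits, $R\lim$ does not in general. The key input is the hypothesis that $T := R\lim N^\bullet$ is bounded, say $T \in D^{[0,d]}(R)$; this is meaningful because each $N^i$ is discrete, so $T$ is computed by the (normalized) Moore cochain complex of $N^\bullet$ and sits in $D^{\geq 0}(R)$. Combined with the finite global dimension of $R$, I expect an analysis of the cosimplicial spectral sequence computing $R\lim(M \otimes_R^L N^\bullet)$, viewed as the totalization of the associated double complex, to give the following locality property: for each fixed integer $i$, both $H^i(M \otimes_R^L T)$ and $H^i(R\lim(M \otimes_R^L N^\bullet))$ depend only on the cohomology of $M$ in a bounded range of degrees around $i$, of width depending only on $d$ and $g$. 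This is where the bulk of the work lies.

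Granting this locality, the proof concludes readily: for any $M$ and any $i$, I would choose a bounded interval $[a_i, b_i]$ such that the truncation $M \to \tau^{[a_i, b_i]} M$ induces isomorphisms on $H^i$ of both sides of $\eta$. Then $H^i(\eta_M) = H^i(\eta_{\tau^{[a_i, b_i]} M})$, which is an isomorphism by the bounded case already handled. As $i$ is arbitrary, $\eta_M$ is an equivalence, completing the proof.
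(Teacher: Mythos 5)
Your reduction for flat, discrete, and bounded $M$ is fine and matches the paper's use of Proposition~\ref{prop:BoundedTorDim}. The gap is in the passage to unbounded $M$, and it is not merely an unfinished step: the ``locality property'' you conjecture for the right-hand side does not follow from the cosimplicial spectral sequence in the direction you would need it. For $R\lim(M\otimes_R^L N^\bullet)$, the spectral sequence has $E_1^{s,t}=H^t(M\otimes_R^L N^s)$ with $s\geq 0$, so $H^i$ of the totalization receives contributions from all pairs $(s,t)$ with $s+t=i$, $s\geq 0$. Since each $N^s$ is a module and $R$ has finite global dimension $g$, the term $H^{i-s}(M\otimes_R^L N^s)$ depends on $H^j(M)$ for $j\in[i-s, i-s+g]$, and as $s\to\infty$ this range of $j$ runs off to $-\infty$. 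Thus, a priori, $H^i$ of the right-hand side can see $H^j(M)$ for $j$ arbitrarily negative; the boundedness of $T=R\lim N^\bullet$ alone gives no control here, because it does not bound the cosimplicial object $N^\bullet$ itself, only the result of its totalization. The locality you want is, post hoc, a consequence of the lemma (since the RHS equals $M\otimes^L_R T$ and $T$ has finite Tor-amplitude), so trying to prove it first is essentially circular. By contrast, the ``upper'' half of your locality claim---that $R\lim(-\otimes^L_R N^\bullet)$ carries $D^{>n}(R)$ into $D^{>n-g}(R)$---is genuine: it follows because each $M\otimes^L_R N^s$ lies in $D^{>n-g}$ and a cosimplicial totalization never lowers cohomological degree. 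That is exactly the paper's treatment of the maps $b_i$.

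What is missing is the paper's Postnikov-tower reduction, which replaces the unavailable lower-direction locality by a limit-commutation statement. Concretely: writing $M\simeq R\lim_n\,\tau_{\geq -n}M$, one shows for any \emph{bounded} $K\in D^b(R)$ that $M\otimes_R^L K\simeq R\lim_n(\tau_{\geq -n}M\otimes_R^L K)$. This reduces, by Proposition~\ref{prop:BoundedTorDim}, to $K$ a single projective module, where it is clear since tensoring with a projective is exact and commutes with the Postnikov limit. Applying this with $K=T$ (permissible because $T$ is bounded) and, termwise, with $K=N^s$ (permissible because each $N^s$ is a module), and then commuting the two $R\lim$'s, reduces the lemma to the case of $M$ bounded below. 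In that case only the ``upper'' truncation argument you already control is needed, and the proof closes. So I would encourage you to replace the asserted locality property with this Postnikov reduction; the rest of your outline then goes through.
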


The proof below applies to any ring of finite global dimension.

\begin{proof}
	Let $M = R\lim \tau_{\leq i} M$ be the Postnikov tower for $M$. For any $K \in D^b(R)$, we first observe that the canonical map
	\[ M \otimes_R^L K \simeq \big(R\lim \tau_{\leq i} M\big) \otimes_R^L K \to R\lim(\tau_{\leq i} M \otimes_R^L K) \]
	is an equivalence. Indeed, by Proposition \ref{prop:BoundedTorDim}, this reduces to the case where $K$ is given by a single projective $R$-module, which is clear (by calculating homology of either side). Applying this observation to $K = R\lim N^\bullet$ (which is permissible, since $R\lim N^\bullet$ is bounded by assumption), we obtain
\begin{equation}
	\label{eq:CommuteTotTensor2}
M \otimes_R^L R\lim N^\bullet \simeq R\lim \big(\tau_{\leq i} M \otimes_R^L R\lim N^\bullet).
\end{equation}
	Similarly, applying this observation to $K = N^\bullet$ shows that 
	\begin{equation}\label{eq:CommuteTotTensor3}
M \otimes_R^L N^\bullet \simeq R\lim\big(\tau_{\leq i} M \otimes_R^L N^\bullet)
\end{equation}
as cosimplicial $A$-complexes. Plugging equations \eqref{eq:CommuteTotTensor2} and \eqref{eq:CommuteTotTensor3} into the two sides of equation \eqref{eq:CommuteTotTensor1},  and commuting limits, we reduce to the case where $M = \tau_{\leq i} M$ is bounded below.  Assume first that $M$ is, in fact, bounded. Then, using Proposition \ref{prop:BoundedTorDim}, we reduce to the case where $M$ is a projective $R$-module, which is straightforward (via Dold-Kan). To pass to the general case, it is enough to show that for fixed $k$, the maps
\[ a_i:\tau^{\leq i} M \otimes_R^L R\lim N^\bullet \to M \otimes_R^L R\lim N^\bullet \]
and
\[ b_i:R\lim( \tau^{\leq i} M \otimes_R^L N^\bullet) \to R\lim(M \otimes_R^L N^\bullet)\]
(induced by the canonical map $\tau^{\leq i} M \to M$) induce isomorphisms on $H^k$ for $i$ sufficiently large. For $a_i$, this follows by observing that the functor $- \otimes_R^L R\lim N^\bullet$ has bounded homological dimension (i.e., carries $D^{> i}(R)$ to $D^{>i - c}(R)$ for some fixed $c$) by Proposition \ref{prop:BoundedTorDim} since $R\lim N^\bullet$ is bounded below (in fact, it is bounded). For $b_i$, it suffices to observe that the functor $R\lim(- \otimes_R^L N^\bullet)$ has bounded homological dimension by Proposition \ref{prop:BoundedTorDim} since each $N^i$ is an $R$-module, and hence bounded below as a complex.
\end{proof}

Using the previous lemma, we immediately see that quasi-coherent complexes are hypercomplete:  

\begin{lemma}
	\label{lem:QCohComplexHypercomplete}
	For any $Y \in \Perf^\fp_{/k}$ and $K \in D_{qc}(Y)$, the functor $(f:X \to Y) \mapsto R\Gamma(X, f^* K)$ gives a hypercomplete $h$-sheaf of spectra on $\Perf^\fp_{/Y}$.
\end{lemma}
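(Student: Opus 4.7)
The plan is to split the assertion into the $h$-sheaf property and hypercompleteness, using Lemma~\ref{lem:CommuteTotTensor} as the key device to reduce the latter to hyperdescent for the structure sheaf. The $h$-sheaf property is immediate from Theorem~\ref{thm:hDescentDerived}(1): given an $h$-cover $X \to Y$ with \v{C}ech nerve $X^\bullet$, the equivalence $D_{qc}(Y) \simeq \lim D_{qc}(X^\bullet)$ of stable $\infty$-categories preserves mapping spectra, so applying $R\mathrm{Hom}(\calO_Y,-)$ and its pullbacks to $K$ gives $R\Gamma(Y,K) \simeq R\lim R\Gamma(X^\bullet, K|_{X^\bullet})$.

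For hypercompleteness, I would first use Zariski descent for $R\Gamma(-,f^*K)$ to reduce to the case $Y = \Spec(R)$ affine; after refinement, any $h$-hypercover $X^\bullet \to Y$ in $\Perf^\fp_{/Y}$ may be assumed to consist of affines $X^i = \Spec(A^i)$. Writing $K = M \in D(R) = D_{qc}(Y)$, the task becomes to prove
\[
M \simeq R\lim\big(M \otimes_R^L A^\bullet\big).
\]
The strategy is to invoke Lemma~\ref{lem:CommuteTotTensor}, which exchanges $R\lim$ with $M \otimes_R^L (-)$ provided $R\lim A^\bullet$ is bounded; this is exactly where the finite global dimension supplied by Proposition~\ref{prop:BoundedTorDim} becomes crucial, and allows $M$ to be arbitrary (in particular unbounded). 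The missing ingredient is then hyperdescent for the structure sheaf, i.e.\ $R \simeq R\lim A^\bullet$, which I would extract from Theorem~\ref{thm:hDescentDerived}(2): since $X \mapsto \Perf(X)$ is a hypercomplete $v$-sheaf of $\infty$-groupoids, for each integer $n$ the mapping spaces $\Map_{\Perf(-)}(\calO,\calO[n]) \simeq \Omega^\infty R\Gamma(-,\calO)[n]$ satisfy hyperdescent, and looping/delooping across all $n$ yields $R\Gamma(Y,\calO_Y) \simeq R\lim R\Gamma(X^\bullet,\calO_{X^\bullet})$ as spectra—equivalently $R \simeq R\lim A^\bullet$, with the right-hand side in fact discrete.

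Plugging this into Lemma~\ref{lem:CommuteTotTensor} then produces
\[
R\lim\big(M \otimes_R^L A^\bullet\big) \simeq M \otimes_R^L R\lim A^\bullet \simeq M \otimes_R^L R \simeq M,
\]
as desired. The step I expect to be the main obstacle is the clean transfer from hypercomplete descent for $\Perf(-)$ (a sheaf of $\infty$-groupoids) to hypercomplete descent for $R\Gamma(-,\calO)$ as a sheaf of spectra, since the latter involves negative cohomological degrees invisible in the core of $\Perf(X)$; this is handled by the shift-and-loop trick with $\calO[n]$ for $n \in \Z$ and the stability of $\Perf(X)$, but deserves care. Once that bridge is in place, the remainder of the argument is a formal interplay between the bounded-global-dimension input of Proposition~\ref{prop:BoundedTorDim} and the sheaf properties already established.
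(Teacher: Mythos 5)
Your proposal follows the same overall structure as the paper's proof: reduce to an affine $Y = \Spec(A)$ and an affine hypercover $X^\bullet$, use the projection formula to rewrite the goal as $M \simeq R\lim (M \otimes^L_A A^\bullet)$ where $A^\bullet = R\Gamma(X^\bullet,\calO)$, and then invoke Lemma~\ref{lem:CommuteTotTensor}. The one genuine point of departure is how you supply the hypothesis of Lemma~\ref{lem:CommuteTotTensor} that $R\lim A^\bullet$ is bounded. The paper simply notes that $R\lim A^\bullet \simeq A$ because the structure sheaf is a truncated (indeed, on affines, $0$-truncated) sheaf of spectra, hence automatically hypercomplete — this is the same principle underlying Lemma~\ref{lem:HypercompletionTruncated}. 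You instead derive $A \simeq R\lim A^\bullet$ from Theorem~\ref{thm:hDescentDerived}(2) applied to $\Perf(-)$, by computing mapping spaces $\Map_{\Perf(-)}(\calO,\calO[n])$ and running a shift-and-loop argument over all $n$. This does work, and there is no circularity since Theorem~\ref{thm:hDescentDerived}(2) is established before the lemma, but it is more roundabout: you are passing through a much richer object (the whole $\infty$-category of perfect complexes) to extract a statement about just the unit.

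The caveat you flag at the end is real but misdiagnosed. The difficulty is not that negative cohomological degrees of $R\Gamma(X,\calO)$ are ``invisible in the core'' — $R\Gamma(X,\calO)$ is coconnective, so there are no negative degrees to worry about. The actual subtlety is that Theorem~\ref{thm:hDescentDerived}(2) is stated as a sheaf of spaces, and the groupoid core $\Perf(X)^\simeq$ alone does not encode the mapping spaces $\Map_{\Perf(X)}(\calO,\calO[n])$ you need. You must read the theorem (or rather its proof, which exhibits $\Perf(Y) \simeq \lim \Perf(X^\bullet)$ as a limit of symmetric monoidal stable $\infty$-categories) at the level of $\infty$-categories, after which mapping spaces do satisfy descent in a limit diagram and your shift-and-loop argument goes through. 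With that clarification your route is sound; the paper's route, which appeals directly to truncatedness of $\calO$ rather than to descent for $\Perf(-)$, avoids this issue and is the more economical of the two.

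One minor remark: splitting the argument into ``Cech $h$-sheaf property via Theorem~\ref{thm:hDescentDerived}(1)'' plus ``hypercompleteness'' is harmless but redundant, since establishing descent along all hypercovers (which your second part does, and which is what the paper does in one step) already includes descent along Cech nerves.
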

\begin{proof}
Let $f:X^\bullet \to Y$ be an $h$-hypercover. We must check that pullback induces an equivalence
	\[ R\Gamma(Y,K) \simeq R\lim R\Gamma(X^\bullet, f^* K).\]
	By Zariski hyperdescent, we may assume $Y = \Spec(A)$ and $X^\bullet$ are all affine, and that $K$ corresponds to some $M \in D(A)$. By the projection formula, we are reduced to showing that
	\[ M \simeq R\lim \big(R\Gamma(X^\bullet, \calO_{X^\bullet}) \otimes^L_A M)\]
	via the canonical map. This follows from Lemma \ref{lem:CommuteTotTensor} applied to $N^\bullet = R\Gamma(X^\bullet,\calO_{X^\bullet})$; this lemma applies because $R\lim N^\bullet \simeq A$ (as the structure sheaf is truncated, and hence hypercomplete) is bounded.
\end{proof}

In order to get effectivity of hyperdescent for quasi-coherent complexes, it is convenient to use the following criterion for hypercompleteness (in the special case $n=-1$):

\begin{lemma}
\label{lem:HypercompletionTruncated}
Let $\calX$ be an $\infty$-topos. Let $\calF \to \calG$ be a map in $\calX$ which is relatively $n$-truncated for some integer $n$. If $\calG$ is hypercomplete, so is $\calF$.
\end{lemma}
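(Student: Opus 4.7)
The plan is to verify the defining property of hypercompleteness directly: $\calF$ is hypercomplete iff the functor $\Map(-,\calF)$ inverts every $\infty$-connective morphism in $\calX$. So I would fix an $\infty$-connective $u:U\to V$ in $\calX$ and check that $\Map(V,\calF)\to \Map(U,\calF)$ is an equivalence. Post-composing with $f:\calF\to\calG$ produces a commutative square whose right-hand side $\Map(V,\calG)\to\Map(U,\calG)$ is an equivalence by the hypercompleteness of $\calG$. Fixing a point $g\in\Map(V,\calG)$, it therefore suffices to show that the induced map on fibres over $g$ (resp.\ $g\circ u$) is an equivalence.

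Next I would identify these fibres via base change. Set $\calF_V := V\times_\calG \calF$, which lives naturally in the slice $\infty$-topos $\calX_{/V}$. The fibre of $\Map(V,\calF)\to\Map(V,\calG)$ over $g$ is the space of sections $\Map_{\calX_{/V}}(V,\calF_V)$, and similarly the fibre over $g\circ u$ becomes $\Map_{\calX_{/V}}(U,\calF_V)$, where $U$ is regarded as an object of $\calX_{/V}$ via $u$. So the problem reduces to showing that pullback along $u$ induces an equivalence on these mapping spaces in $\calX_{/V}$.

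The key point — which I view as the heart of the argument rather than a genuine obstacle — is that the relative $n$-truncatedness of $f$ means exactly that $\calF_V\to V$ is $n$-truncated, i.e.\ $\calF_V$ is an $n$-truncated object of $\calX_{/V}$. The standard fact I would invoke is that every $n$-truncated object of an $\infty$-topos is automatically hypercomplete (a direct consequence of Whitehead's theorem in $\infty$-topoi, cf.\ \cite[\S 6.5]{LurieHTT}); in particular, $\calF_V$ is hypercomplete in $\calX_{/V}$. Since $\infty$-connective morphisms are stable under base change and under passage to slice $\infty$-topoi, the morphism $u:U\to V$ is $\infty$-connective when viewed in $\calX_{/V}$ (with target the identity object). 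Applying hypercompleteness of $\calF_V$ to this morphism yields the desired equivalence on section spaces, and hence proves the lemma.
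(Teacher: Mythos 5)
Your argument is correct and matches the paper's approach in essence: both reduce to Lurie's result that every $n$-truncated object of an $\infty$-topos is hypercomplete, via a slicing trick together with the fact that the forgetful functor from a slice $\infty$-topos preserves and reflects $\infty$-connectivity of morphisms. The paper slices over $\calG$ and states this reduction tersely, whereas you slice over the varying test object $V$ and spell out explicitly the fibre comparison (over the equivalence $\Map(V,\calG) \to \Map(U,\calG)$) that the paper leaves implicit; this is a matter of presentation, not of substance.
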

\begin{proof}
	When $\calG = \ast$, this follows from \cite[Lemma 6.5.2.9]{LurieHTT}. In general, one reduces to this case by working in the slice $\infty$-topos $\calX_{/\calG}$ (since a map in $\calX_{/\calG}$ is $\infty$-connective if and only if its image under the forgetful functor $\calX_{/\calG} \to \calX$ is $\infty$-connective by \cite[Proposition 6.5.1.18]{LurieHTT}).
\end{proof}

Hyperdescent for quasi-coherent complexes follows relatively formally from everything so far:

\begin{proof}[Proof of Theorem \ref{thm:hDescentDerived} (4)]
	Let $\calX$ be the $\infty$-topos defined by the $h$-topology on $\Perf^\fp_{/k}$. Write $\calO$ for the structure sheaf, viewed as a sheaf of $E_\infty$-rings on $\calX$, i.e., the $h$-sheaf of $E_\infty$-rings on $\Perf^\fp_{/k}$ defined by $X \mapsto R\Gamma(X,\calO_X)$. Note that any $X \in \Perf^\fp_{/k}$ defines a hypercomplete object of $\calX$ (since it is $0$-truncated); likewise, $\calO$ is a hypercomplete $h$-sheaf of spectra since it is valued in coconnective spectra. Now for any $X \in \Perf^\fp_{/k}$, let $\calG(X)$ be the $\infty$-category of hypercomplete $h$-sheaves of $\calO$-module spectra on $\Perf^\fp_{/X}$, i.e., the $\infty$-category of sheaves $\calO$-module spectra on the hypercompletion of $\calX_{/X}$. By \cite[Remark 2.1.11]{LurieDAGVIII} applied to the hypercompletion of $\calX$, one deduces that $\calG$ is itself a hypercomplete $h$-sheaf. Moreover,  Lemma \ref{lem:QCohComplexHypercomplete} gives a map $D_{qc}(X) \to \calG(X)$ which is fully faithful: the right adjoint is given by the global sections functor when $X$ is affine, so full faithfulness again follows from Lemma \ref{lem:QCohComplexHypercomplete}.  Now, by Theorem \ref{thm:hDescentDerived} (1), the functor $D_{qc}(-)$ is also an $h$-sheaf. Thus, by varying $X$, we obtain a map $D_{qc}(-) \to \calG(-)$ in $\calX$ which is relatively $(-1)$-truncated\footnote{This means that the fibres are either contractible or empty.} (by full faithfulness), and whose target is hypercomplete. By Lemma \ref{lem:HypercompletionTruncated}, it follows that $D_{qc}(-)$ is a hypercomplete $h$-sheaf.
\end{proof}

\begin{remark}
	The proof of effectivity of hyperdescent given above is a bit abstract. Concretely, one may argue as follows: if $X^\bullet \to Y$ is a $h$-hypercover in $\Perf^\fp_{/k}$, and $K^\bullet \in \lim D_{qc}(X^\bullet)$ is a hyperdescent datum, then $K^\bullet$ defines a unique hypercomplete $h$-sheaf $L$ of $\calO$-module spectra on $Y$ by the formula
	\[ (g:U \to Y) \mapsto L(U) := R\Gamma(U \times_{Y} X^\bullet, g^* K^\bullet).\]
	The formation of $L$ commutes with base change, by construction. It remains to check that $L$ is quasi-coherent. By general nonsense and Lemma~\ref{lem:QCohComplexHypercomplete}, $L|_{X^i} \simeq K^i$, and thus this restriction is quasi-coherent for all $i$. In particular, for any $g:U \to Y$ that factors through $X^0$, we know that $L|_U$ is quasi-coherent. But then $M := L|_{X^0}$ is an object of $D_{qc}(X^0)$ equipped with canonical Cech descent data for the map $X^0 \to Y$ (since $L$ comes from $Y$). Since the latter is an $h$-cover, we finish by invoking Theorem \ref{thm:hDescentDerived} (1).
\end{remark}

	\subsection{Descent for Witt vector cohomology}
	\label{ss:WittVectorCoh}
Fix a noetherian $\F_p$-scheme $S$ of finite Krull dimension. The next result of Berthelot-Bloch-Esnault (see \cite[Theorem 2.4 \& Proposition 3.2]{BBE}) identifies a descent property for the cohomology of this sheaf on $\Sch^\fp_{/S}$ after inverting $p$:

\begin{theorem}
	The functor $X \mapsto R\Gamma(X,W\calO_X)[\frac{1}{p}]$ is an $h$-sheaf of spectra on $\Sch^{\fp}_{/S}$.
\end{theorem}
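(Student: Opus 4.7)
The plan is to invoke Corollary~\ref{HCohom} applied to the presheaf of spectra $F(X) := R\Gamma(X, W\calO_X)[\frac{1}{p}]$ on $\Sch^{\fp}_{/S}$. This reduces the task to two verifications: (i) that $F$ is an fppf sheaf, which is classical (a finite free $W(A)$-module is faithfully flat descent data on $W(-)$, and inverting $p$ preserves exactness), and (ii) the abstract blow-up square distinguished triangle for any proper surjection $f \colon X \to Y$ between affines in $\Sch^{\fp}_{/S}$ that is an isomorphism outside a finitely presented closed $Z \subset Y$ with preimage $E \subset X$.

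The key reduction is to transport everything to the perfect world. First, I would exploit the identity $FV = VF = p$, valid on $W(A)$ for any $\F_p$-algebra $A$, to see that the Witt vector Frobenius is invertible on $W(A)[\frac{1}{p}]$ with inverse $V/p$. Since $A_\perf = \colim_\Frob A$ and $W$ commutes with filtered colimits, this invertibility yields a natural isomorphism
\[
W(A)[\tfrac{1}{p}] \xrightarrow{\sim} W(A_\perf)[\tfrac{1}{p}] = \colim_F W(A)[\tfrac{1}{p}].
\]
Combining this with the homeomorphism $|X_\perf| \to |X|$ and with the fact that cohomology on qcqs schemes commutes with inverting $p$, I obtain a natural equivalence
\[
F(X) \;\simeq\; F_\perf(X) := R\Gamma(X_\perf, W\calO_{X_\perf})[\tfrac{1}{p}]
\]
functorial in $X \in \Sch^{\fp}_{/S}$.

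It now suffices to verify that $F_\perf$ satisfies (i) and (ii). Properness, surjectivity, and the isomorphism-outside-$Z$ condition are all preserved by perfection (Lemma~\ref{SchVsPerfProp}), so the blow-up datum $(Y, X, Z, E)$ perfects to an analogous datum in $\Perf$. Theorem~\ref{thm:hDescentDerived}(2), applied to the $\infty$-category $\Perf(W(-))$ and its endomorphisms of the unit, or alternatively Theorem~\ref{thm:VectWhStack}(i) applied to $W(\calO)$ as a line bundle over itself, gives the desired distinguished triangle for $F_\perf$; fppf descent for $F_\perf$ is also a special case of these results. Applying Corollary~\ref{HCohom} to $F$, now known to satisfy (i) and (ii), completes the argument and shows $R\Gamma_h(X, F) \simeq R\Gamma_\fppf(X, F)$.

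The main subtle point, and where Proposition~\ref{prop:FrobSCR} would implicitly enter, is ensuring that the cosimplicial totalizations computing $F_\perf$ along a Cech nerve of an $h$-cover of perfect schemes can be treated classically (i.e., without correction by higher Tor): the discreteness of perfect simplicial commutative $\F_p$-algebras guarantees that derived intersections of perfect schemes appearing in Cech-nerves coincide with their classical counterparts (cf.~Lemma~\ref{NoTorPerf} and Lemma~\ref{lem:PerfectColimits}), so that the $v$-descent packaged in Theorem~\ref{thm:VectWhStack} applies verbatim to the geometry after perfection. This last point is the most delicate ingredient, but it has already been addressed in the earlier sections and requires no further argument here.
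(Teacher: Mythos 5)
The paper does not actually prove this statement — it is cited from \cite{BBE} — so your proof is necessarily independent of the paper's. The closest the paper comes is proving the strictly stronger Proposition~\ref{prop:BBEWittDescent}, which only inverts $F$ rather than $p$; the statement here follows from that proposition by a further localization. The paper's route to Proposition~\ref{prop:BBEWittDescent} is quite different from yours: it stays in $\Sch^\fp_{/S}$, re-runs the structure of the proof of Theorem~\ref{CritHSheaf} by hand, handles the a priori \emph{derived} self-fibre products in the Cech nerve via Lemma~\ref{lem:WittSCR} and Lemma~\ref{lem:FibreSeqFibreProducts}, and needs Serre vanishing and a pushout construction ($Y' = X \sqcup_E Z$ after a suitable thickening) to even get the blowup hypothesis off the ground. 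Your route instead perfects everything first, where derived fibre products are automatically classical (Lemma~\ref{NoTorPerf}, Proposition~\ref{prop:FrobSCR}) and Gabber's blowup descent is already available; the elegant pivot is the observation that $FV = VF = p$ makes $F$ invertible after inverting $p$, so $W(A)[\frac{1}{p}] \simeq W(A_\perf)[\frac{1}{p}]$ and hence $F$ is a perfection-invariant functor. This is genuinely simpler, and it also extends to give a cleaner proof of Proposition~\ref{prop:BBEWittDescent} itself, since $W(A)[\frac{1}{F}] = W(A_\perf)$ requires no further localization.

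A few citations, though, do not quite say what you need. Theorem~\ref{thm:VectWhStack}(i) is a vanishing statement ($M = R\Gamma_v(X,\calE)$); it does not \emph{directly} produce the blowup pullback square in (ii). The result you actually want is Lemma~\ref{lem:VectcdhStack}(i) with $\calE = \calO_{Y_\perf}$ (i.e., \cite[Lemma 3.9]{BhattSchwedeTakagi}), followed by the standard dévissage from $\calO$ to $W_n(\calO)$ (using the Frobenius-untwisted filtration on perfections), then a derived-limit argument to get from $W_n(\calO)$ to $W(\calO)$, then inverting $p$; each step is routine but worth recording, as the last two require a $\lim^1$ argument and a commutation of $R\Gamma$ with filtered colimits respectively (the latter holding because $S$ is noetherian of finite Krull dimension, which should be noted). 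The alternative route via Theorem~\ref{thm:hDescentDerived}(2) does give $v$-(hyper)descent for $R\Gamma(-, W\calO)$ on $\Perf$ by taking mapping spectra of the unit, but then one still needs the ``only if'' direction of a $\Perf$-version of Theorem~\ref{CritHSheaf} to extract the pullback square from Cech descent — not a hard argument, but also not one stated in the paper in exactly that form. Finally, Corollary~\ref{HCohom} as stated is for sheaves of abelian groups; for your presheaf of spectra, you should apply the spectral incarnation of Theorem~\ref{CritHSheaf} directly. None of these are fatal gaps, but as written the proof leans on citations that do not literally say what the argument requires.
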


We can improve this to a descent property where only $F$ is inverted:

\begin{proposition}
	\label{prop:BBEWittDescent}
	The functor $X \mapsto R\Gamma(X, W\calO_X)[\frac{1}{F}]$ is an $h$-sheaf of spectra on $\Sch^{\fp}_{/S}$.
\end{proposition}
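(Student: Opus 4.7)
The plan is to verify the hypotheses of Corollary \ref{HCohom} for the presheaf of spectra $\calP(X) := R\Gamma(X, W\calO_X)[\tfrac{1}{F}]$, namely (i) fppf descent and (ii) the distinguished triangle for abstract blowup squares. Condition (i) is straightforward: each $W_n\calO$ is representable and hence an fppf sheaf, so $R\Gamma(-,W\calO) = R\lim_n R\Gamma(-, W_n\calO)$ is an fppf sheaf of spectra; on the noetherian finite-Krull-dimensional schemes under consideration, the Cech totalization for an fppf cover has uniformly bounded amplitude, so the filtered colimit $\colim_F$ commutes with it and preserves the sheaf property.

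For (ii), fix an abstract blowup square $(Y,X,Z,E)$ and, for an abelian sheaf $\calG$ on $\F_p$-schemes, set
\[ C(\calG) := \cofib\Bigl(R\Gamma(Y,\calG) \to R\Gamma(X,\calG) \times_{R\Gamma(E,\calG)} R\Gamma(Z,\calG)\Bigr). \]
The desired descent for $\calG$ is equivalent to $C(\calG) = 0$. The already-established theorem of Berthelot--Bloch--Esnault gives $C(W\calO)[\tfrac{1}{p}] = 0$, i.e.\ $C(W\calO)$ is $p$-power torsion in the derived sense; the goal is to upgrade this to $C(W\calO)[\tfrac{1}{F}] = 0$.

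The bridge is the functorial short exact sequence of abelian sheaves on $\F_p$-schemes
\[ 0 \to W\calO \xrightarrow{V} W\calO \to \calO \to 0, \]
where Verschiebung is injective on Witt coordinates with cokernel the zeroth Witt component. Applying this to each vertex of the blowup square and taking cofibers yields a distinguished triangle $C(W\calO) \xrightarrow{V} C(W\calO) \to C(\calO)$. Now $C(\calO)[\tfrac{1}{F}] = 0$: since $X_\perf \to X$ is a universal homeomorphism we have $R\Gamma(X,\calO_X)[\tfrac{1}{F}] \simeq R\Gamma(X_\perf, \calO_{X_\perf})$, and $h$-descent for $\calO$ on $\Perf$ (Theorem~\ref{thm:hDescentDerived}(1)) transports, via the perfection of our $h$-cover, to blowup square descent for $X \mapsto R\Gamma(X_\perf,\calO_{X_\perf})$.

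Inverting $F$ in our triangle, the last term dies, so $V$ is an equivalence on $C(W\calO)[\tfrac{1}{F}]$; combined with the Witt identity $VF=p$ and the tautological invertibility of $F$ after $F$-localization, multiplication by $p$ is also an equivalence on $C(W\calO)[\tfrac{1}{F}]$. On the other hand, $C(W\calO)[\tfrac{1}{F}]$ remains $p$-power torsion, inherited from $C(W\calO)$. Any object that is simultaneously $p$-power torsion and $p$-divisible in the sense of $p$ acting invertibly must vanish: tensoring with $\Q_p$ it equals both itself and zero. Hence $C(W\calO)[\tfrac{1}{F}] = 0$, completing the verification. The step I expect to require the most care is the bookkeeping in (i) around interchanging $\colim_F$ with the totalizations of cosimplicial complexes; the conceptual core of (ii)---converting BBE's $p$-inversion descent into $F$-inversion descent via the $V$-sequence together with $h$-descent for $\calO_\perf$---is short.
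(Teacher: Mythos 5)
Your proof is correct, and it takes a genuinely different route from the paper's---one that, interestingly, resolves a difficulty the authors explicitly flag: they write ``it is not clear to us how to check criterion (ii) in Theorem~\ref{CritHSheaf} directly,'' and so instead re-run the proof of Theorem~\ref{CritHSheaf} by induction on $\dim Y$, first replacing $Y$ by the pushout $X \sqcup_E Z$ (after a thickening of $E$, so that the $\calO$-sequence in criterion (ii) becomes exact for elementary Serre-vanishing reasons), and then controlling the $n$-fold fibre products $X^{n/Y}$ via derived algebraic geometry, with the key technical input being Lemma~\ref{lem:WittSCR}: $W(\calO)[\frac{1}{F}]$ is insensitive to derived structure, so derived and ordinary self fibre products may be interchanged for this functor. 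Your argument sidesteps all of this by checking criterion (ii) of Corollary~\ref{HCohom} directly. The $V$-triangle $C(W\calO) \xrightarrow{V} C(W\calO) \to C(\calO)$ (valid since $FV = VF = p$ on $\F_p$-algebras, so $V$ is $F$-equivariant and survives $F$-inversion), combined with $C(\calO)[\frac{1}{F}]=0$ --- which is exactly Gabber's theorem, Lemma~\ref{lem:VectcdhStack}(i) with $\calE=\calO$; citing that directly is a lighter and earlier reference than Theorem~\ref{thm:hDescentDerived}(1), though either works --- forces $V$ to be an equivalence on $C(W\calO)[\frac{1}{F}]$, hence $p = VF$ to be invertible there, while BBE's $p$-inverted descent (read through Theorem~\ref{CritHSheaf} in the converse direction) makes every homotopy group of $C(W\calO)[\frac{1}{F}]$ $p$-power torsion, so the object vanishes. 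Since all three inputs (BBE, perfected $\calO$-descent, the $V$-sequence) are uniform over an arbitrary proper surjective finitely presented $f:X\to Y$ that is an isomorphism off a closed $Z$ --- including the self-products $X^{n/Y}\to Y$ that arise inside the proof of Theorem~\ref{CritHSheaf} --- criterion (ii) is indeed verified for all relevant maps, and Corollary~\ref{HCohom} finishes. What the paper's longer route buys is a self-contained argument independent of the perfected $\calO$-descent results of \S\ref{VDescent}; what yours buys is a short, induction-free argument with no derived fibre products and no pushout bookkeeping, answering the authors' stated difficulty.
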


It is convenient to use some derived algebraic geometry to establish this result. For this, we use the following notation.

\begin{notation}
All occurrences of derived geometry in this section are meant with respect to simplicial commutative rings. For any such derived scheme $X$, write $X^{cl}$ for its underlying classical scheme. Fibre products of ordinary schemes are computed in the ordinary sense unless the adjective `derived' is present. By evaluating the Witt vector construction termwise on a simplicial commutative ring, we obtain a sheaf $W(\calO_X)$ of simplicial commutative rings on any derived $\F_p$-scheme $X$. The Frobenius on $X$ induces an endomorphism $F:W(\calO_X) \to W(\calO_X)$, and we will study the sheaf $W(\calO_X)[\frac{1}{F}]$. There is also a $V$ operator $W(\calO_X) \to W(\calO_X)$ such that $FV = VF = p$, but we do not use $V$.

\end{notation}

The key observation is that the presheaf in Proposition~\ref{prop:BBEWittDescent} is insensitive to the derived structure.

\begin{lemma}
\label{lem:WittSCR}
Let $X$ be a derived $\F_p$-scheme. Then pullback along $X^{cl} \hookrightarrow X$ induces an isomorphism
\[ W(\calO_X)[\frac{1}{F}] \simeq W(\calO_{X^{cl}})[\frac{1}{F}].\]
\end{lemma}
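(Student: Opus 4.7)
The statement is local on $X$, so I would first reduce to the affine case: assume $X = \Spec(A)$ for a simplicial commutative $\F_p$-algebra $A$, and show that the canonical map $W(A)[\frac{1}{F}] \to W(\pi_0 A)[\frac{1}{F}]$ is an equivalence of simplicial commutative rings. Equivalently, writing $K := \mathrm{fib}(W(A) \to W(\pi_0 A))$, we must show $K[\frac{1}{F}] \simeq 0$.

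The key structural input is the $V$-adic filtration on $W(A)$, which is complete and separated by the definition $W(A) \simeq R\lim_n W_n(A)$ where $W_n(A) := W(A)/V^n W(A)$. The associated graded $V^i W(A)/V^{i+1} W(A) \simeq A$ as simplicial abelian groups (via $a \mapsto V^i[a]$), and in characteristic $p$ one has $F V = V F$, so the Witt-vector Frobenius $F$ preserves the filtration. On each graded piece, $F$ acts as the Frobenius of $A$ (via the formula $F(V^i[a]) = V^i[a^p]$). By Proposition~\ref{prop:FrobSCR}, this Frobenius kills $\pi_j(A)$ for every $j > 0$.

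The proof then splits into a finite-level statement and a passage to the limit. At finite level: for each $n$, iterated use of the short exact sequence $0 \to A \to W_n(A) \to W_{n-1}(A) \to 0$ of simplicial abelian groups together with the associated long exact sequence on homotopy shows by induction on $n$ that $F^n$ annihilates $\pi_j(W_n(A))$ for $j>0$. Consequently, the fibre $K_n := \mathrm{fib}(W_n(A) \to W_n(\pi_0 A))$ is concentrated in positive degrees with $F^n$ acting as $0$, so the filtered colimit $K_n[\frac{1}{F}] = \colim(K_n \xrightarrow{F} K_n \xrightarrow{F} \cdots)$ vanishes.

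The main obstacle is passing from the finite-level vanishing $K_n[\frac{1}{F}] \simeq 0$ to the limit $K \simeq R\lim K_n$, since $R\lim$ does not commute with the filtered colimit $[\frac{1}{F}]$ in general and the bounds $F^n$ grow with $n$. The resolution exploits the refined structure that $F$ shifts the $V$-filtration strictly downward on $K$ (since $F$ kills each graded $\pi_j(A)$ for $j > 0$), hence $F^n(K) \subseteq V^n K$ inside the $V$-adically complete object $K$. This ``pro-nilpotence'' of $F$ with respect to the $V$-filtration, combined with completeness, forces $K[\frac{1}{F}] \simeq 0$; working this out carefully in the derived setting (matching the filtration compatibilities through the Milnor exact sequence $0 \to R^1\!\lim \pi_{j+1}(K_n) \to \pi_j(K) \to \lim \pi_j(K_n) \to 0$) is the technical heart of the argument.
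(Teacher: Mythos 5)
Your approach via the $V$-adic filtration is genuinely different from the paper's, and the finite-level step is fine: iterating the short exact sequence $0 \to A \xrightarrow{V^{n-1}} W_n(A) \to W_{n-1}(A) \to 0$ of simplicial abelian groups does show that $F^n$ annihilates $\pi_j(W_n(A))$ for $j > 0$. The gap is in the passage to the limit, and the proposed ``pro-nilpotence plus completeness'' resolution is false as stated. Take $M = \Z_p$ with the $p$-adic filtration $\Fil^n M = p^n M$ and with both $F$ and $V$ given by multiplication by $p$. Then $F$ acts as zero on each graded piece $\Fil^n M/\Fil^{n+1} M \cong \F_p$, so $F^n(M) \subseteq \Fil^n M$, and $M$ is complete and separated; yet $M[\frac{1}{F}] = \Q_p \neq 0$. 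So the refined structure you invoke --- Frobenius shifts the $V$-filtration strictly down on a complete object --- does not force $K[\frac{1}{F}]$ to vanish, and the ``technical heart'' you defer is not a technicality: the argument as outlined cannot close.

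The paper avoids the limit interchange entirely by obtaining a one-step vanishing rather than an $n$-step one. As a pointed simplicial set, $W(A)$ is literally $\prod_{m \in \N} A$, and for $\F_p$-algebras the Witt vector Frobenius $F$ is coordinate-wise Frobenius. By the Eckmann--Hilton observation (the two group structures on $\pi_i$ of a simplicial abelian group coincide for $i > 0$), one gets an isomorphism of abelian groups $\pi_i(W(A),0) \simeq \prod_{m} \pi_i(A,0)$ for $i>0$, compatible with $F$ acting as Frobenius of $A$ in each factor. Since that Frobenius kills $\pi_i(A)$ for $i>0$ by the proof of Proposition~\ref{prop:FrobSCR}, a \emph{single} application of $F$ kills $\pi_i(W(A))$, so $W(A)[\frac{1}{F}]$ is discrete with $\pi_0 = W(\pi_0 A)[\frac{1}{F}]$, and there is no $\lim/\colim$ interchange to confront. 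Your induction misses this because it uses only the long exact sequence in simplicial abelian groups: the $V$-filtration on $W(A) = A^{\N}$ \emph{is} $F$-equivariantly split as a filtration of pointed simplicial sets (it is the coordinate filtration), but this splitting is not one of simplicial abelian groups, so the extension argument only records $F^n$. Repairing your route would essentially require rediscovering the coordinate description and Eckmann--Hilton.
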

\begin{proof}
We may assume $X = \Spec(A)$ is affine. We must show that $W(A)[\frac{1}{F}] \simeq W(\pi_0(A))[\frac{1}{F}]$. The definition of $W(-)$ identifies $W(A)$ with the simplicial set $\prod_{i \in \N} A$ in a manner that is compatible with $F$, functorial in the map $A \to \pi_0(A)$, and sends $0 \in W(A)$ to $0 = (0,0,...) \in \prod_{i \in \N} A$. Applying $\pi_0$ then immediately gives the $\pi_0$-version of the desired statement. It remains to show that $W(A)[\frac{1}{F}]$ is discrete. The preceding description of $W(A)$ also gives an isomorphism $\pi_i(W(A),0) \simeq \prod_{i \in \N} \pi_i(A,0)$ of groups compatible with $F$; here we use the Eckman-Hilton observation that, given a simplicial abelian group $K$, the two induced group structures on $\pi_i(K,0)$ for $i > 0$ (one coming from the usual group structure on higher homotopy groups which only depends on the simplicial set underlying $K$, the other coming from the group structure on $K$) are identical.  As $F$ kills $\pi_i(A,0)$ for $i > 0$ by (the proof of) Proposition~\ref{prop:FrobSCR}, it follows that $F$ must kill $\pi_i(W(A),0)$ for $i > 0$, and thus $W(A)[\frac{1}{F}]$ is discrete, as wanted.
\end{proof}

\begin{remark}
As a consequence of Lemma~\ref{lem:WittSCR} and the formula $VF = p$, one has: if $A$ is a simplicial commutative $\F_p$-algebra, then the cone of the canonical map $W(A) \to W(\pi_0(A))$ is killed by $p$. 
\end{remark}

The next lemma is the crucial ingredient in the proof of Proposition~\ref{prop:BBEWittDescent}, and is the only spot in the proof where derived schemes arise (via derived fibre products of non-flat maps).

\begin{lemma}
\label{lem:FibreSeqFibreProducts}
Consider a cartesian square 
\[ \xymatrix{ E \ar[r] \ar[d] & X \ar[d]^-f \\
			Z \ar[r]^-i & Y }\]
of qcqs $\F_p$-schemes with $Y$ affine, and $Z \hookrightarrow Y$ a constructible closed immersion. Assume that pullback induces a fibre sequence
\begin{equation}
\label{eq:FibreSeqBlowup}
R\Gamma(Y,\calO_Y) \to R\Gamma(X,\calO_X) \oplus R\Gamma(Z,\calO_Z) \to R\Gamma(E,\calO_E).
\end{equation}
For any integer $n \geq 1$, let $X^{n/Y}$ be the $n$-fold self fibre product of $X$ over $Y$, and set $E^{n/Y} = X^{n/Y} \times_Y Z$, so $E^{n/Y}$ is the $n$-fold self fibre product of $E$ over $Y$ (and thus also over $Z$, since $Z \to Y$ is a monomorphism). Then pullback induces a fibre sequence
\[ R\Gamma(Y, W(\calO_Y)[\frac{1}{F}]) \to R\Gamma(X^{n/Y}, W(\calO_{X^{n/Y}})[\frac{1}{F}]) \oplus R\Gamma(Z, W(\calO_Z)[\frac{1}{F}]) \to R\Gamma(E^{n/Y}, W(\calO_{E^{n/Y}})[\frac{1}{F}]).\]
\end{lemma}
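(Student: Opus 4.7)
The plan is to argue by induction on $n$, with the case $n=1$ as the base.

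For the base case $n=1$, the plan is to promote the hypothesis on $\calO$ to an analogous statement for each $W_m\calO$ by induction on $m$, using the short exact sequence of sheaves $0 \to \calO \to W_{m+1}\calO \to W_m\calO \to 0$ (coming from Verschiebung) together with the five-lemma applied to the long exact cohomology sequences. Then $R\lim_m$ followed by inverting $F$ (a filtered colimit, which preserves fibre sequences in spectra) yields the $W[\tfrac{1}{F}]$-fibre sequence as desired.

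For the inductive step $n-1 \to n$, the idea is to combine the inductive hypothesis with a ``step'' pullback square
\[ \xymatrix@R=1.1em{ R\Gamma(X^{n-1/Y}, W[\tfrac{1}{F}]) \ar[r] \ar[d] & R\Gamma(X^{n/Y}, W[\tfrac{1}{F}]) \ar[d] \\ R\Gamma(E^{n-1/Y}, W[\tfrac{1}{F}]) \ar[r] & R\Gamma(E^{n/Y}, W[\tfrac{1}{F}]) } \]
via the pasting lemma for pullback squares, which immediately produces the desired Mayer--Vietoris at level $n$. To construct this step square, I would pass to derived schemes: setting $\tilde X_k := X^{\otimes_Y^L k}$ and $\tilde E_k := \tilde X_k \otimes^L_Y Z$, note these have classical truncations $X^{k/Y}$ and $E^{k/Y}$ respectively. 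Writing $A, B, C, D$ for the derived global sections of $\calO$ on $Y, X, Z, E$, derived tensor of the hypothesis fibre sequence $A \to B \oplus C \to D$ with $R := B^{\otimes_A^L (n-1)}$ over $A$ is again a fibre sequence (as $- \otimes_A^L R$ is a colimit), and by a Künneth identification over the affine base $Y$ this reads
\[ R\Gamma(\tilde X_{n-1}, \calO) \to R\Gamma(\tilde X_n, \calO) \oplus R\Gamma(\tilde E_{n-1}, \calO) \to R\Gamma(E \otimes^L_Y \tilde X_{n-1}, \calO). \]

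The plan is then to run the base-case $W_m$-induction on this derived $\calO$-fibre sequence to lift it to a $W[\tfrac{1}{F}]$-fibre sequence, and finally to invoke Lemma~\ref{lem:WittSCR} at the sheaf level to replace derived schemes by their classical truncations. A direct calculation shows $\tilde X_k^{cl} = X^{k/Y}$ and that both $\tilde E_n^{cl}$ and $(E \otimes^L_Y \tilde X_{n-1})^{cl}$ equal $E^{n/Y}$, which exactly produces the four corners of the step square. The main obstacle will be technical rather than conceptual: one must verify that the base-case argument goes through on possibly non-affine derived schemes, which boils down to the Verschiebung short exact sequence holding termwise on the sheaf of simplicial commutative rings $\calO_{\tilde X_k}$, together with the Künneth identification $R\Gamma(\tilde X_k, \calO) \simeq B^{\otimes_A^L k}$ (obtained from derived flat base change on affine opens and Cech descent over the affine base $Y$).
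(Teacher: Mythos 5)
Your proposal is correct and follows essentially the same route as the paper: for $n=1$, propagate the $\calO$-level fibre sequence up the Verschiebung filtration to all $W_m$ and pass to $W[\frac{1}{F}]$; for the inductive step, tensor the hypothesis fibre sequence over $A = R\Gamma(Y,\calO_Y)$ with $B^{\otimes_A^L(n-1)}$ to produce a ``step'' pullback square on derived fibre products, run the same $W_m$-argument, invoke Lemma~\ref{lem:WittSCR} to pass to classical truncations, and paste with the inductive hypothesis (the paper phrases this last pasting as ``comparing and simplifying'' rather than in pullback-square language, but it is the identical manipulation). One small slip in the final paragraph: the four corners of your derived fibre sequence are $\tilde X_{n-1}$, $\tilde X_n$, $\tilde E_{n-1}$, and $E\otimes^L_Y \tilde X_{n-1}$, so the identification you need is $\tilde E_{n-1}^{cl} = E^{n-1/Y}$ (not $\tilde E_n^{cl} = E^{n/Y}$, which, while true, is not one of the four corners) — this is a typo and does not affect the logic.
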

\begin{proof}
As the Witt vector functor $W(-)$ is a derived inverse limit of the truncated Witt vector functors $W_m(-)$, and because each $W_m(-)$ is an $m$-fold iterated extension of copies of $\G_a$ (with Frobenius twists), the case $n=1$ is immediate from the fibre sequence \eqref{eq:FibreSeqBlowup}, giving the desired fibre sequence
\begin{equation}
\label{eq:FibreSeqn1}
R\Gamma(Y, W(\calO_Y)[\frac{1}{F}]) \to R\Gamma(X, W(\calO_X)[\frac{1}{F}]) \oplus R\Gamma(Z, W(\calO_Z)[\frac{1}{F}]) \to R\Gamma(E, W(\calO_E)[\frac{1}{F}]).
\end{equation}
Note that we did not need to invert $F$ to get the above sequence; this will not be true for higher $n$.

Assume now that $n=2$. Since $Y$ is affine, applying $- \otimes_{R\Gamma(Y,\calO_Y)} R\Gamma(X,\calO_X)$ to the fibre sequence \eqref{eq:FibreSeqBlowup}, and using that coherent cohomology commutes with base change in the derived setting, we get a fibre sequence
\[ R\Gamma(X,\calO_X) \to R\Gamma(X \times_Y^L X, \calO_{X \times_Y^L X}) \oplus R\Gamma(Z \times_Y^L X, \calO_{Z \times_Y^L X}) \to R\Gamma(E \times_Y^L X, \calO_{E \times_Y^L X}).\]
Arguing as above for $n=1$, this gives a fibre sequence
\begin{align*}
R\Gamma(X,W(\calO_X)[\frac{1}{F}]) \to R\Gamma(X \times_Y^L X, W(\calO_{X \times_Y^L X})[\frac{1}{F}]) \oplus & R\Gamma(Z \times_Y^L X, W(\calO_{Z \times_Y^L X})[\frac{1}{F}]) \to \\
 & R\Gamma(E \times_Y^L X, W(\calO_{E \times_Y^L X})[\frac{1}{F}]).
 \end{align*}
By Lemma~\ref{lem:WittSCR}, we can ignore the derived structure, so this sequence is identified with
\[ R\Gamma(X,W(\calO_X)[\frac{1}{F}]) \to R\Gamma(X^{2/Y}, W(\calO_{X^{2/Y}})[\frac{1}{F}]) \oplus R\Gamma(E, W(\calO_E)[\frac{1}{F}]) \to R\Gamma(E^{2/Y}, W(\calO_{E^{2/Y}})[\frac{1}{F}]).\]
Comparing this last sequence with \eqref{eq:FibreSeqn1} and simplifying, we get
\[ R\Gamma(Y, W(\calO_Y)[\frac{1}{F}]) \to R\Gamma(X^{2/Y}, W(\calO_{X^{2/Y}})[\frac{1}{F}]) \oplus R\Gamma(Z, W(\calO_Z)[\frac{1}{F}]) \to R\Gamma(E^{2/Y}, W(\calO_{E^{2/Y}})[\frac{1}{F}]),\]
as wanted for $n=2$. For $n \geq 3$, one argues similarly by induction.
\end{proof}

We now prove the $h$-descent result, essentially using the criterion in Theorem~\ref{CritHSheaf}. As it is not clear to us how to check criterion (ii) in Theorem~\ref{CritHSheaf} directly, we instead follow the {\em proof} of Theorem~\ref{CritHSheaf} below.

\begin{proof}[Proof of Proposition~\ref{prop:BBEWittDescent}]
For simplicity, write $\calF(X) = R\Gamma(X, W(\calO_X)[\frac{1}{F}])$. If $f:X \to Y$ is a finite universal homeomorphism, then $\calF(X) \simeq \calF(Y)$ since a power of Frobenius on either $X$ or $Y$ factors over $f$. In particular, $\calF(-)$ carries nilimmersions to isomorphisms. This fact will be used without further comment.

We begin by checking that $\calF(-)$ is an fppf sheaf. Since the functor $R\Gamma(X,W\calO_X)$ takes on coconnective values, it is enough to check this on affines and before inverting $F$, so we want: if $A \to B$ is an fppf map of $\F_p$-algebras with Cech nerve $B^\bullet$, then $W(A) \simeq \lim W(B^\bullet)$. Exchanging limits, we reduce to the analogous statement for $W_n(-)$, which follows from standard exact sequence expressing $W_n(-)$ as an iterated extension of (Frobenius-twisted) copies of $W_1(-)$.

To check $h$-descent, we must check the following: if $f:X \to Y$ is a proper surjective map of finitely presented $S$-schemes with Cech nerve $X^{\bullet/Y}$ (in the classical sense), then $\calF(Y) \simeq \lim \calF(X^{\bullet/Y})$. We prove this by induction on $\dim(Y)$. If $\dim(Y) = 0$, then we are reduced to the case of fppf descent as in Lemma~\ref{ProperGood}. In general, using Lemma~\ref{ProperGood} and Lemma~\ref{DevissageLemma} (and the fact that $\calF(-)$ is an fppf sheaf that converts nilimmersions into isomorphisms), we may assume that $f$ is an isomorphism outside some $Z \subset Y$ with $\dim(Z) < \dim(Y)$; in fact, by the proof of Lemma~\ref{ProperGood}, we can even arrange for $X$ to be the blowup of $Y$ along $Z$. Let $E = X \times_Y Z$ be the exceptional divisor, so $H^i(X, \calO_X(-nE)) = 0$ for all $i > 0$ and $n \geq n_0$ by Serre vanishing (as $\calO_X(-E)$ is relatively ample for $X \to Y$ by the construction of blowups). Replacing $E$ with a suitable thickening (and similarly for $Z$, to ensure $Z$ still receives a map from $E$), we may assume that $H^i(X, \calO_X(-E)) = 0$ for $i > 0$. Now set $Y' = X \sqcup_E Z$ be the pushout of $E \hookrightarrow X$ along $E \to Z$. This gives a square
\[ \xymatrix{ E \ar[r] \ar[d] & X \ar[d] \\
			Z \ar[r] & Y' }\]
			which is a Cartesian square up to universal homeomorphisms, and the induced map $Y' \to Y$ is a finite universal homeomorphism. In particular, $\calF(Y) \simeq \calF(Y')$ and $\calF(X^{\bullet/Y}) \simeq \calF(X^{\bullet/Y'})$, so we may replace $Y$ with $Y'$. (Note that in the process of making these replacements, we have destroyed the property of $f$ being a blowup or that $E$ is the scheme-theoretic preimage of $Z$, but have preserved the crucial consequence that $H^i(X, \calO_X(-E)) = 0$ for $i > 0$.) 
			
Now, by induction on dimension, we have $\calF(Z) \simeq \lim \calF(E^{\bullet/Z})$. As in the proof of Theorem~\ref{CritHSheaf}, we are reduced to showing that for each $n > 0$, applying $\calF(-)$ to the square
\[ \xymatrix{ E^{n/Y} \ar[r] \ar[d] & X^{n/Y} \ar[d] \\
			Z \ar[r] & Y}\]
			induces a fibre sequence
			\[ \calF(Y) \to \calF(X^{n/Y}) \oplus \calF(Z) \to \calF(E^{n/Y})\]
			of spectra. Using Lemma~\ref{lem:FibreSeqFibreProducts}, it is enough to show that
\[ R\Gamma(Y,\calO_Y) \to R\Gamma(X, \calO_X) \oplus R\Gamma(Z,\calO_Z) \to R\Gamma(E,\calO_E)\]
is a fibre sequence. This follows from the conjunction of the following facts: the first term has no $H^i$ for $i > 0$ as $Y$ is affine, the second map induces an isomorphism on applying $H^i$ for $i > 0$ as $H^i(X, \calO_X(-E)) = 0$ for $i > 0$ (and because $Z$ are affine), and applying $H^0$ gives a sequence
\[ H^0(Y,\calO_Y) \to H^0(X,\calO_X) \oplus H^0(Z,\calO_Z) \to H^0(E,\calO_E)\]
which is left-exact as $Y$ is the pushout $X \sqcup_E Z$, and right exact because $H^0(X,\calO_X) \to H^0(E,\calO_E)$ is surjective as $H^1(X, \calO_X(-E)) = 0$.
\end{proof}

\section{Appendix: Determinants}\label{Appendix}

Let $R$ be a commutative ring. Our goal in this section is to recall the definition of the natural map
\[
\det: K(R)\to \gPic^\Z(R)
\]
from the $K$-theory spectrum of $R$ to the Picard groupoid of graded line bundles $\gPic^\Z(R)$. Intuitively, this is just the map sending a finite projective $R$-module $M$ to $(\det M, \rk M)$.

Recall that a symmetric monoidal category is a category $C$ equipped with a functor $\otimes: C\times C\to C$, a unit object $1\in C$, as well as a unitality constraint $\eta_X: 1\otimes X\cong X$, a commutativity constraint $c_{X,Y}: X\otimes Y\cong Y\otimes X$ and an associativity constraint $a_{X,Y,Z}: X\otimes (Y\otimes Z)\cong (X\otimes Y)\otimes Z$ functorially in $X,Y,Z\in C$, satisfying certain compatibility conditions. Moreover, there is a notion of symmetric monoidal functor between symmetric monoidal categories, which is a functor which commutes with $\otimes$ in the appropriate sense. As usual, there is also a notion of a natural transformation between symmetric monoidal functors. The definitions were originally given by MacLane, \cite{MacLane}.

\begin{definition} A symmetric monoidal category $C$ is called \emph{strict} if $c_{X,X}: X\otimes X\cong X\otimes X$ is the identity for all $X\in C$.
\end{definition}

In general, the axioms of a symmetric monoidal category say that $c_{X,Y}\circ c_{Y,X} = \id$ for all $X,Y\in C$, so in particular $c_{X,X}$ is always an involution.

Note that whenever $C$ is a symmetric monoidal category, the subcategory $C^\simeq\subset C$ consisting of all objects, but only isomorphisms as morphisms, is another symmetric monoidal category (as all extra data concerns isomorphisms). Let us refer to symmetric monoidal categories all of whose morphisms are isomorphisms as symmetric monoidal groupoids.

Below, we will recall that any symmetric monoidal category $C$ admits a $K$-theory spectrum $K(C)$, and any symmetric monoidal functor $F: C\to D$ induces a map of spectra $K(F): K(C)\to K(D)$. Applying this to the inclusion $C^\simeq\subset C$ will give an equivalence $K(C^\simeq)\simeq K(C)$; actually, an equality $K(C^\simeq) = K(C)$. For this reason, we largely restrict to symmetric monoidal groupoids in the following.

The following examples are our main interest.

\begin{example} Fix a commutative ring $R$.
\begin{enumerate}
\item[{\rm (i)}] The groupoid $\gVect(R)$ of finite projective $R$-modules is a symmetric monoidal category with respect to $\oplus$, unit $0$, and the standard unitality, commutativity and associativity constraints. For example, $c_{X,Y}: X\oplus Y\cong Y\oplus X$ sends $(x,y)$ to $(y,x)$. Note that $c_{X,X}$ is \emph{not} the identity map for $X\neq 0$.

We remark that the groupoid of finite projective $R$-modules is also symmetric monoidal with respect to the tensor product $\otimes$; we will not use this symmetric monoidal structure.
\item[{\rm (ii)}] The groupoid $\gPic(R)$ of line bundles over $R$ (i.e., finite projective $R$-modules of rank $1$) is a symmetric monoidal category with respect to $\otimes$, unit $1$, and the standard unitality, commutativity and associativity constraints. For example, $c_{X,Y}: X\otimes Y\cong Y\otimes X$ sends $x\otimes y$ to $y\otimes x$. Note that $c_{X,X}$ \emph{is} the identity map for all $X\in \gPic(R)$.
\item[{\rm (iii)}] The groupoid $\gPic^\Z(R)$ of $\Z$-graded line bundles. Here, an object is given by a pair $(L,f)$ where $L\in \gPic(R)$, and $f:\Spec(R) \to \Z$ is a locally constant function; the set $\Isom( (L,f), (M,g))$ is empty if $f \neq g$, and given by $\Isom(L,M)$ otherwise. The unit is given by $(1,0)$. This groupoid is endowed with a symmetric monoidal structure $\otimes$ where $(L,f) \otimes (M,g) := (L \otimes M, f + g)$, and the commutativity constraint 
\[ (L \otimes M, f+g) =: (L,f) \otimes (M,g) \simeq (M,g) \otimes (L,f) := (M \otimes L, g+f)\]
determined by the rule 
\[ \ell \otimes m \mapsto  (-1)^{f \cdot g} m \otimes \ell,\]
and the obvious associativity constraint. Note that $c_{(L,f),(L,f)}: (L,f)\otimes (L,f)\cong (L,f)\otimes (L,f)$ is given by multiplication with $(-1)^f$, and is thus \emph{not} in general the identity.
\end{enumerate}
\end{example}

The following proposition is central.

\begin{proposition}\label{DetSymmMon} There is a natural symmetric monoidal functor
\[
\det: \gVect(R)\to \gPic^\Z(R)
\]
sending $M\in \gVect(R)$ to $(\det M,\rk M)\in \gPic^\Z(R) = \gPic(R) \times H^0(\Spec(R),\Z)$.
\end{proposition}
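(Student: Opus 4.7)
The plan is to build the functor in the obvious way on objects and morphisms, and then verify the coherence axioms of a symmetric monoidal functor. The only nontrivial axiom is compatibility with the symmetry constraints, which is precisely why one has to pass from $\gPic(R)$ to $\gPic^\Z(R)$.

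First, define $\det$ on objects by $M\mapsto (\det M,\rk M)$, where $\det M = \wedge^{\rk M} M$ is understood locally on $\Spec(R)$ (using that $\rk M$ is locally constant). An isomorphism $\phi:M\to N$ in $\gVect(R)$ forces $\rk M=\rk N$ and induces an isomorphism $\wedge^{\rk M}\phi:\det M\to \det N$ in $\gPic(R)$, giving an isomorphism $(\det M,\rk M)\to (\det N,\rk N)$ in $\gPic^\Z(R)$. This defines $\det$ as a functor of groupoids.

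Next, the monoidal structure. For each pair $M,N\in \gVect(R)$ the wedge product gives a canonical (functorial in isomorphisms of $M$ and $N$) isomorphism
\[
\mu_{M,N}:\det M \otimes \det N \xrightarrow{\ \sim\ } \det(M\oplus N),\qquad (m_1\wedge\cdots\wedge m_a)\otimes (n_1\wedge\cdots\wedge n_b)\mapsto m_1\wedge\cdots\wedge m_a\wedge n_1\wedge\cdots\wedge n_b,
\]
together with the evident $\rk$ identity $\rk M+\rk N=\rk(M\oplus N)$; together these give an isomorphism $\det(M)\otimes\det(N)\simeq \det(M\oplus N)$ in $\gPic^\Z(R)$. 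The unit isomorphism is the identification $(R,0)\simeq (\det 0,\rk 0)$. The associativity and unit hexagons are then straightforward to check, because both sides are described by the same iterated wedge product, and reduce Zariski (or faithfully flat) locally to explicit statements about ordered bases.

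The key point is the symmetry axiom. We must verify that for all $M,N$ the diagram
\[
\begin{array}{ccc}
\det M \otimes \det N & \xrightarrow{\ \mu_{M,N}\ } & \det(M\oplus N) \\
\downarrow c & & \downarrow \det(c_{M,N}) \\
\det N \otimes \det M & \xrightarrow{\ \mu_{N,M}\ } & \det(N\oplus M)
\end{array}
\]
commutes, where the left vertical arrow is the commutativity constraint of $\gPic^\Z(R)$, namely the swap scaled by $(-1)^{\rk M\cdot \rk N}$, and the right vertical arrow is induced functorially by the swap map $c_{M,N}:M\oplus N\to N\oplus M$, $(m,n)\mapsto (n,m)$. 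By faithfully flat descent we may assume $M$ and $N$ are free with ordered bases $e_1,\ldots,e_m$ and $f_1,\ldots,f_n$. Chasing $(e_1\wedge\cdots\wedge e_m)\otimes (f_1\wedge\cdots\wedge f_n)$ around the diagram, the top-right path yields $f_1\wedge\cdots\wedge f_n\wedge e_1\wedge\cdots\wedge e_m\in \det(N\oplus M)$, while the left-bottom path yields $(-1)^{mn}$ times the same element, since reordering $e_1,\ldots,e_m,f_1,\ldots,f_n$ into $f_1,\ldots,f_n,e_1,\ldots,e_m$ requires $mn$ adjacent transpositions. Thus the two sides agree, which is precisely the Koszul sign appearing in the symmetry constraint of $\gPic^\Z(R)$.

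The main obstacle is this last verification, and it explains the necessity of the $\Z$-grading in the target: if we tried to land in $\gPic(R)$ instead, the sign $(-1)^{mn}$ would obstruct the symmetric monoidal structure whenever $mn$ is odd. All other coherence conditions (associativity pentagons, unit triangles, hexagon diagrams for compatibility of associator and symmetry) reduce, by the same bases argument, to standard combinatorial identities for wedge products of ordered bases.
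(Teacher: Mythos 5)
Your proof is correct and takes essentially the same approach as the paper: the paper's proof simply records the key observation that $\det(c_{M,N})$ introduces the sign $(-1)^{\rk M\cdot\rk N}$ relative to the naive swap, and you have verified exactly this (by reducing to free modules and counting transpositions), along with spelling out the otherwise routine coherence checks that the paper declares automatic.
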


Note that this functor does \emph{not} factor through $\gPic(R)$.

\begin{proof} All verifications are automatic, and the only critical observation is the following. If $M, N\in \gVect(R)$, then the commutativity constraint $c_{M,N}: M\oplus N\cong N\oplus M$ swapping $N$ and $M$ induces multiplication by $(-1)^{\rk M \rk N}$ on
\[
\det(M)\otimes \det(N)\cong \det(M\oplus N)\buildrel{\det c_{M,N}}\over\cong \det(N\oplus M)\cong \det(N)\otimes \det(M)\ ,
\]
if one identifies $\det(M)\otimes \det(N)$ and $\det(N)\otimes \det(M)$ using the usual commutativity constraint of $\gPic(R)$.
\end{proof}

Now we recall the construction of the $K$-theory spectrum. Let $C$ be a symmetric monoidal category, with $C^\simeq\subset C$ the underlying symmetric monoidal groupoid. By definition, $K(C) = K(C^\simeq)$, so assume that $C$ is a groupoid to start with. Recall that a groupoid is equivalent to a space whose only nonzero homotopy groups are $\pi_0$ and $\pi_1$. Concretely, this can be realized by the nerve construction, which associates to any category $C$ the simplicial set $N(C)$ whose $n$-simplices are chains of $n-1$ morphisms,
\[
X_0\buildrel f_0\over\to X_1\buildrel f_1\over\to\ldots\buildrel {f_{n-1}}\over\to X_n\ .
\]
In particular, the $0$-simplices $N(C)_0$ are the objects of $C$, and the $1$-simplices $N(C)_1$ are the morphisms of $C$. Higher simplices encode the composition law, and degenerate simplices encode identity morphisms. If $C$ is a groupoid, then $N(C)$ is a Kan complex, whose geometric realization $|N(C)|$ is a (compactly generated) topological space whose only nonzero homotopy groups are $\pi_0$ and $\pi_1$. In fact, $|N(C)|$ is homotopy equivalent to the disjoint union
\[
\bigsqcup_{X\in C/\simeq} B\Aut(X)
\]
of the classifying spaces of the automorphism group $\Aut(X)$, over all isomorphism classes of objects $X\in C$. In the following, we refer to Kan complexes as `spaces'. They are naturally organized into the $\infty$-category $\calS$ of spaces. Here and in the following, we make use of the theory of $\infty$-categories, cf. \cite{LurieHTT}, which were previously defined under the name of weak Kan complexes (by Boardman-Vogt) and quasicategories (by Joyal). Using this language, we can easily state precise definitions and theorems without having to go into the pain of detailed constructions. Giving detailed proofs is more elaborate, and we only try to convey the meaning of the statements below.

As $C$ is symmetric monoidal, the space $N(C)$ is equipped with an addition law $N(\otimes): N(C)\times N(C)=N(C\times C)\to N(C)$. Moreover, this addition law is commutative and associative up to coherent isomorphisms, as expressed by the commutativity and associativity constraints. However, the addition law is not strictly commutative and associative. Such spaces equipped with a coherently commutative and associative (but not necessarily invertible) addition law are known as (special) $\Gamma$-spaces, as defined by Segal, \cite{SegalGammaSpaces}, or as $\E_\infty$-monoids, as defined by May, \cite{MayEInfty}. Although our definition is essentially that of a special $\Gamma$-space, we prefer to call them $\E_\infty$-monoids.\footnote{Lurie in \cite[Remark 2.4.2.2]{LurieHA} calls these commutative monoids. However, with \cite[Notation 5.1.1.6]{LurieHA}, this becomes the same thing as an $\E_\infty$-monoid.}

\begin{definition} Let $\Fin_\ast$ be the category of finite pointed sets.\footnote{This is Segal's category $\Gamma$ (or its opposite, depending on references).} For $n\geq 0$, let $[n] = \{0,1,\ldots,n\}\in \Fin_\ast$ be the object pointed at $0$. An $\E_\infty$-monoid is a functor
\[
X: N(\Fin_\ast)\to \calS
\]
of $\infty$-categories to the $\infty$-category $\calS$ of Kan complexes, such that for all $[n]\in \Fin_\ast$, the natural map
\[
X([n])\to \prod_{i=1}^n X([1])
\]
is a weak equivalence, where the map is induced by the $n$ maps $[n]\to [1]$ contracting everything to $0$ except $i$.
\end{definition}

The $\E_\infty$-monoids are naturally organized into an $\infty$-category (a full subcategory of the $\infty$-category of functors from $N(\Fin_\ast)$ to $\calS$), which we call the $\infty$-category of $\E_\infty$-monoids $\Mon_{\E_\infty}$.

For an $\E_\infty$-monoid $X$, the $0$-th space $X([0])$ is weakly contractible, as it is weakly equivalent to an empty product. We refer to $X([1])$ as the underlying space of the $\E_\infty$-monoid $X$, and will sometimes confuse $X$ with $X([1])$. It is pointed in the sense that it comes equipped with a map $X([0])\to X([1])$ from a weakly contractible space; the space $X([0])$ is the 'unit' of $X([1])$. Moreover, using the map $[2]\to [1]$ sending only $0$ to $0$, we get a natural map
\[
X([1])\times X([1])\simeq X([2])\to X([1])
\]
which gives an addition law on $X([1])$. The higher data precisely ensure that the unit is unital, and that the addition law is commutative and associative 'up to coherent homotopy'. In particular, for an $\E_\infty$-monoid $X$, the set of connected components $\pi_0 X := \pi_0 X([1])$ forms a commutative monoid.

\begin{construction}\label{SymmMonToEInfty} Let $C$ be a symmetric monoidal groupoid. We define an $\E_\infty$-monoid $N(C)$ in the following way. For each finite pointed set $(S,s)\in \Fin_\ast$, let
\[
N(C)(S) = N\left(\{(X_T)_{T\subset S\setminus \{s\}}, X_{T\sqcup T^\prime}\cong X_T\otimes X_{T^\prime}\}\right)
\]
be the nerve of the groupoid of objects $X_T\in C$ for all $T\subset S\setminus \{s\}$ which are equipped with compatible isomorphisms $X_{T\sqcup T^\prime}\cong X_T\otimes X_{T^\prime}$ for any disjoint subsets $T, T^\prime\subset S\setminus \{s\}$. For any morphism $f: (S,s)\to (S^\prime,s^\prime)$, let the map
\[
N(C)(S)\to N(C)(S^\prime)
\]
be given by
\[
(X_T)_{T\subset S\setminus \{s\}}\mapsto (X_{f^{-1}(T^\prime)})_{T^\prime\subset S^\prime\setminus \{s^\prime\}}\ .
\]
\end{construction}

\begin{remark} Here, we have defined an actual functor from $\Fin_\ast$ to the \emph{category} of Kan complexes. The naive definition of $N(C)(S)$ would be $N(C)(S) = N(C)^{S\setminus \{s\}}$, with the map
\[
N(C)^{S\setminus \{s\}}\to N(C)^{S^\prime\setminus \{s^\prime\}}
\]
for a map $f: (S,s)\to (S^\prime,s^\prime)$ of pointed sets being given by
\[
(X_t)_{t\in S\setminus \{s\}}\mapsto (\otimes_{t\in f^{-1}(t^\prime)} X_t)_{t^\prime\in S^\prime\setminus \{s^\prime\}}\ .
\]
This definition is not compatible with composition on the nose, but can be made into a functor \emph{of $\infty$-categories} $N(C): \Fin_\ast\to \calS$. However, the verification of this fact is most easily done by constructing a weakly equivalent strict functor, as above.
\end{remark}

We leave it to the reader to spell out the definition of compatibility of the isomorphisms $X_{T\sqcup T^\prime}\cong X_T\otimes X_{T^\prime}$; verifying that $N(C)([n])\to N(C)^n$ is an equivalence uses the precise axioms of a symmetric monoidal category.

\begin{definition} An $\E_\infty$-monoid $X$ is called grouplike if the commutative monoid $\pi_0(X)$ is a group. Let $\Mon_{\E_\infty}^\gp\subset \Mon_{\E_\infty}$ denote the full subcategory of grouplike $\E_\infty$-monoids.
\end{definition}

Grouplike $\E_\infty$-monoids are equivalent to connective spectra.

\begin{definition} The $\infty$-category of connective spectra $\Sp^{\geq 0}$ is given by the limit of
\[
\calS_\ast\buildrel\Omega\over \leftarrow \calS_\ast^{\geq 1}\buildrel\Omega\over\leftarrow \calS_\ast^{\geq 2}\buildrel\Omega\over\leftarrow\ldots
\]
in the $\infty$-category of $\infty$-categories, where $\calS_\ast$ is the $\infty$-category of pointed spaces, $\calS_\ast^{\geq i}\subset \calS_\ast$ is the full subcategory of $i$-connected spaces\footnote{i.e., of pointed spaces $X\in \calS_\ast$ such that $\pi_j X=0$ for $j<i$.}, and $\Omega: \calS_\ast\to \calS_\ast$, $X\mapsto \ast\times_X \ast$ is the loop space functor.
\end{definition}

The following theorem is due to Segal, \cite[Proposition 3.4]{SegalGammaSpaces}, cf. \cite[Theorem 5.2.6.10, Remark 5.2.6.26]{LurieHA}. The functor $\Mon_{\E_\infty}^\gp\to \Sp^{\geq 0}$ is usually called ``the infinite loop space machine".

\begin{theorem}\label{EMonVsSpectra} There is a natural equivalence of $\infty$-categories
\[
\Mon_{\E_\infty}^\gp\simeq \Sp^{\geq 0}\ .
\]
\end{theorem}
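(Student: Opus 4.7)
The plan is to construct the equivalence via a bar/classifying space construction on one side and the $\Omega^\infty$ functor on the other, following Segal's recognition principle. First, given an $\E_\infty$-monoid $X : N(\Fin_\ast) \to \calS$, I would construct a pointed space $BX$ by restricting $X$ along the functor $\Delta^{\op} \to \Fin_\ast$ sending $[n]$ to the pointed set $\{0, 1, \ldots, n\}$ (with basepoint $0$) with the standard Segal structure maps, yielding a simplicial space $n \mapsto X([n]) \simeq X([1])^n$, and then taking its geometric realization (colimit) in $\calS$. Naturality gives a canonical map $X([1]) \to \Omega BX$.

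The key technical step, which will also be the main obstacle, is to prove that this map $X([1]) \to \Omega BX$ is a weak equivalence precisely when $X$ is grouplike (this is the group completion/recognition theorem of Segal, \cite[Proposition 1.5]{SegalGammaSpaces}). The cleanest path is to check that the simplicial space $n \mapsto X([n])$ is a Reedy-cofibrant simplicial object satisfying the Segal condition, then use that for such objects the natural map $X([1]) \to \Omega|X([\bullet])|$ is an equivalence under a $\pi_0$-invertibility hypothesis; in the $\infty$-categorical formulation of \cite[\S 5.2.6]{LurieHA}, this is the statement that the forgetful functor from grouplike $\E_\infty$-monoids to pointed spaces factors through $\Omega : \calS_*^{\geq 1} \to \calS_*$ via a fully faithful embedding. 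One verifies this by a direct spectral-sequence / bar-resolution analysis, using the grouplike condition to promote the fibre-sequence $X([1]) \to X([n]) \to X([n-1])$ up to homotopy (arising from the active map $[n] \to [n-1]$) into actual homotopy fibre sequences after realization.

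Once this is established, one obtains a functor $B : \Mon_{\E_\infty}^{\gp} \to \calS_*^{\geq 1}$. Since $BX$ is a connected pointed space, it is automatically a grouplike $\E_\infty$-monoid (via the Eckmann-Hilton-type argument, or more precisely by observing that the simplicial decomposition equips $BX$ with its own $\E_\infty$-structure), so we may iterate to obtain a sequence $X, BX, B^2 X, \ldots$ together with equivalences $B^n X \simeq \Omega B^{n+1} X$ for all $n \geq 0$; this sequence is by definition an object of $\lim_n (\calS_*^{\geq n}, \Omega) = \Sp^{\geq 0}$, giving the functor $\Mon_{\E_\infty}^{\gp} \to \Sp^{\geq 0}$.

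In the reverse direction, given a connective spectrum $E = (E_0, E_1, \ldots)$ with $E_n \simeq \Omega E_{n+1}$, the space $E_0 = \Omega^\infty E$ inherits an $\E_\infty$-monoid structure from the spectrum structure (concretely, from the double loop space structure on $E_0 \simeq \Omega^2 E_2$, upgraded to an $\E_\infty$-structure by iteration), and it is grouplike since $\pi_0(E_0) = \pi_0(E)$ is an abelian group. To conclude, I would check that the two constructions are mutually inverse: the composite $E \mapsto \Omega^\infty E \mapsto B^\bullet \Omega^\infty E$ recovers $E$ because each structure map $\Omega^\infty E \to \Omega B\Omega^\infty E$ is an equivalence by the key step above applied to the grouplike $\E_\infty$-monoid $\Omega^\infty E$, while the other composite is handled similarly by comparing the bar construction on $\Omega^\infty E$ with $E_1$. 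The main verification reduces to the group-completion equivalence of the second paragraph; everything else is formal manipulation of the definitions, combined with the fact that equivalences in $\Sp^{\geq 0}$ are detected on $\Omega^\infty$ (i.e., on $E_0$) together with the spectrum structure.
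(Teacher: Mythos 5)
Your $B$-direction --- restricting $X$ along $\Delta^\op \to \Fin_\ast$, realizing, and invoking Segal's criterion that $X \to \Omega BX$ is an equivalence exactly when $X$ is grouplike --- is the same as the paper's, which isolates that criterion as Proposition~\ref{CharGrouplike} and cites Segal for it. For the reverse functor $\Sp^{\geq 0} \to \Mon_{\E_\infty}^\gp$, however, you take a genuinely different route. You propose to manufacture the $\E_\infty$-structure on $E_0$ from the iterated loop structure $E_0 \simeq \Omega^n E_n$ ``by iteration''; this is the classical May machine, and making the $E_n$-structures cohere as $n$ varies (Dunn additivity, comparing the colimit of the little $n$-cubes operads with $\E_\infty$, and so on) is exactly where the real work lies. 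The paper instead uses the semiadditivity of $\Sp$: since finite coproducts and products coincide in spectra, the fold map $E \sqcup E \to E$ transports along the canonical equivalence $E \sqcup E \simeq E \times E$ to an addition $E \times E \to E$, and iterating shows the forgetful functor $\Mon_{\E_\infty}(\Sp^{\geq 0}) \to \Sp^{\geq 0}$ is an equivalence; composing with the product-preserving functor $\Sp^{\geq 0} \to \calS$ then gives the desired $\E_\infty$-monoid in spaces. That argument is essentially a one-liner once one knows $\Sp$ has biproducts, and avoids all operad comparisons. Note also that the paper does not actually prove the theorem --- it cites Segal and Lurie and only sketches the two functors --- so your closing paragraph on mutual inverses, which is correspondingly vague, would need to be discharged either to those references or to a careful Reedy/realization argument (the comparison $B\Omega^\infty E \simeq E_1$ in particular is not automatic from the group-completion criterion alone) to be rigorous.
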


May-Thomason, \cite{MayThomason}, showed that there is essentially only one such equivalence (although there are many constructions).\footnote{For an $\infty$-categorical discussion of these matters, see recent work of Gepner-Groth-Nikolaus, \cite{GepnerGrothNikolaus}.} Let us briefly sketch the definition of the functors in either direction. Let $X\in \Sp^{\geq 0}$ be a connective spectrum. Then the natural map
\[
X\sqcup X\to X\times X
\]
from the coproduct to the product, taken in the $\infty$-category of connective spectra (equivalently, of spectra), is an equivalence, cf. \cite[Lemma 1.1.2.10]{LurieHA}. Thus, the obvious map $X\sqcup X\to X$ extends to a map $X\times X\to X$, which is the 'addition law' of the spectrum. Repeating the same arguments for arbitrary finite (co)products shows that the forgetful functor
\[
\Mon_{\E_\infty}(\Sp^{\geq 0})\to \Sp^{\geq 0}
\]
from the $\infty$-category of $\E_\infty$-monoids in $\Sp^{\geq 0}$ to $\Sp^{\geq 0}$, is an equivalence, i.e. any (connective) spectrum comes equipped with a canonical $\E_\infty$-monoid structure. Composing with the forgetful map
\[
\Mon_{\E_\infty}(\Sp^{\geq 0})\to \Mon_{\E_\infty}(\calS) = \Mon_{\E_\infty}
\]
gives a functor $\Sp^{\geq 0}\to \Mon_{\E_\infty}$, preserving all $\pi_i$. As all $\pi_i$, including $\pi_0$, of a spectrum are groups, it follows that the functor takes values in $\Mon_{\E_\infty}^\gp$.

Conversely, start with an $\E_\infty$-monoid $X\in \Mon_{\E_\infty}$ (not necessarily grouplike). Then $X$ admits a classifying space $BX$. Indeed, there is a natural functor $\Delta^\op\to \Fin_\ast$ (cf. \cite[p. 295]{SegalGammaSpaces}) taking the $m$-simplex $\Delta^m$ to $[m]$. This allows one to view $X$ as a simplicial space $X^\prime: N(\Delta^\op)\to N(\Fin_\ast)\to \calS$, which can be turned into a space $BX = \colim_{\Delta^\op} X^\prime\in \calS$. In fact, one checks directly that $BX$ still carries a canonical $\E_\infty$-monoid structure, cf. \cite[Definition 1.3]{SegalGammaSpaces}. This gives a functor
\[
B: \Mon_{\E_\infty}\to \Mon_{\E_\infty}\ .
\]
There is a natural map $X\to \Omega BX$ of $\E_\infty$-monoids.\footnote{The $\infty$-category $\Mon_{\E_\infty}$ admits all small limits, and $\Mon_{\E_\infty}\to \calS_\ast$, $X\mapsto X([1])$, preserves all small limits. In particular, this applies to the loop space $X\mapsto \Omega X = \ast\times_X \ast$, giving a functor $\Omega: \Mon_{\E_\infty}\to \Mon_{\E_\infty}$.} The following is \cite[Proposition 1.4]{SegalGammaSpaces}.

\begin{proposition}\label{CharGrouplike} If $X\in \Mon_{\E_\infty}$ is $k$-connected, then $BX$ is $(k+1)$-connected. Moreover, the natural map
\[
X\to \Omega BX
\]
is an equivalence if and only if $X$ is grouplike.
\end{proposition}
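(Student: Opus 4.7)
The plan splits naturally into the connectivity statement and the equivalence statement.

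For the connectivity statement, I would work directly with the simplicial space $X^\bullet : N(\Delta^\op) \to \calS$ obtained by restricting $X: N(\Fin_\ast)\to \calS$ along the standard functor $\Delta^\op \to \Fin_\ast$, so $X_n \simeq X([1])^n$ for $n \geq 1$ and $X_0 \simeq X([0])$ is contractible, and $BX = |X^\bullet|$. Assume $X$ (i.e.\ $X([1])$) is $k$-connected. Then each $X_n$ is $k$-connected, and in particular connected, so the simplicial space $X^\bullet$ is Reedy cofibrant up to equivalence and its realization can be built by attaching cells via the skeletal filtration. Using that $X_0 \simeq \ast$, the $1$-skeleton of $|X^\bullet|$ is equivalent to the unreduced suspension $\Sigma X$, which is $(k+1)$-connected, and each subsequent skeleton is obtained by attaching cells of the form (sphere) $\wedge$ $X_n^{\mathrm{nondeg}}$, each of which is at least $(k+1)$-connected. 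A standard induction (or the spectral sequence $E^1_{p,q} = \pi_q(X_p) \Rightarrow \pi_{p+q}(|X^\bullet|)$) then gives $\pi_i(BX) = 0$ for $i \leq k+1$.

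For the equivalence $X \to \Omega BX$, the ``only if'' direction is immediate: $\Omega Y$ is grouplike for any pointed space $Y$ because $\pi_0(\Omega Y)$ is the fundamental group of the connected component of the basepoint, so if $X \simeq \Omega BX$ in $\Mon_{\E_\infty}$ then $\pi_0(X)$ is a group.

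The serious direction is showing that if $X$ is grouplike, then $X \to \Omega BX$ is an equivalence. Here the plan is to use a two-sided bar construction. Let $EX^\bullet$ be the simplicial space with $(EX^\bullet)_n = X^{n+1}$, with the last face/degeneracy acting as a multiplication/unit, so that $EX^\bullet$ admits an extra degeneracy and $|EX^\bullet|$ is contractible. There is a natural levelwise projection $EX^\bullet \to X^\bullet$ (``forget the last coordinate'') whose strict fiber over the basepoint is the constant simplicial space on $X$. The goal is to show that $|EX^\bullet| \to |X^\bullet| = BX$ is a quasi-fibration with fiber $X$, for then the resulting fibre sequence $X \to \ast \to BX$ identifies $X$ with $\Omega BX$.

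The main obstacle---and the only nonformal step---is verifying the quasi-fibration property. I would invoke the Bousfield--Friedlander/Segal realization theorem: the geometric realization of a map of simplicial spaces is a fibration provided the source and target are Reedy fibrant and the map satisfies a $\pi_\ast$-Kan condition in each simplicial degree. Concretely, this reduces to checking that for each $n$ the natural square
\begin{equation*}
\xymatrix{X^{n+1} \ar[r] \ar[d] & X^n \ar[d] \\ X^n \ar[r] & X^{n-1}}
\end{equation*}
is homotopy cartesian, which in turn reduces to the single statement that the shearing map $(p_1, m) : X \times X \to X \times X$ is an equivalence. This shearing map is an equivalence precisely when $\pi_0(X)$ is a group, i.e.\ precisely under the grouplike hypothesis: indeed, $(p_1,m)$ is always a map of $\E_\infty$-monoids over $X$ (via $p_1$), its fiber over $x \in X$ is $X$ translated by $x$, and one shows on $\pi_0$ that it is bijective iff $\pi_0(X)$ is a group, while on higher $\pi_i$ translation is always an equivalence. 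Granted this, the realization fibration theorem produces the desired fibre sequence, concluding the proof.
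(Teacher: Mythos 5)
The paper does not give its own proof of this proposition: it simply states that this is \cite[Proposition 1.4]{SegalGammaSpaces} and cites Segal's ``Categories and cohomology theories''. So the comparison here is with Segal's argument, and your proposal is essentially a modern rephrasing of it. Your overall strategy is correct, and the structure of the argument (connectivity via the skeletal filtration, equivalence via a contractible resolution mapping to $BX$ and a realization-fibration theorem reduced to the shearing map) is the standard one. The reduction of the cartesianness of the levelwise square to the shearing map $(p_1,m):X\times X\to X\times X$, and the verification that this shearing map is an equivalence exactly when $\pi_0(X)$ is a group, is precisely the crux, and you identify it as such.

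Two places where you should be a little more careful if you want this to be airtight. First, in the connectivity argument, the skeletal filtration only behaves as you describe after Reedy cofibrant replacement; you gesture at this, but the correct claim is that the cofiber $\mathrm{sk}_n/\mathrm{sk}_{n-1} \simeq S^n \wedge (X_n/L_nX)$ is $(n+k)$-connected because both $X_n$ and the latching object $L_nX$ are $k$-connected, so that only the $n=1$ stage ($\Sigma X_1$, which is $(k+1)$-connected) is relevant for the range in question. (The ``spectral sequence $E^1_{p,q}=\pi_q(X_p)$'' is not a genuine spectral sequence of abelian groups in low degrees; better to argue directly from the filtration or pass to homology and then to the Hurewicz theorem, using that $BX$ is simply connected since $X_0\simeq\ast$.) Second, the invocation of the Bousfield--Friedlander realization theorem requires the $\pi_\ast$-Kan condition for both $EX^\bullet$ and $X^\bullet$, which you do not check. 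This is a real hypothesis, not a formality; fortunately, it is a standard fact that bar constructions of (simplicial) monoids satisfy $\pi_\ast$-Kan, but one should at least record that this is being used. You should also note in the ``only if'' direction that the monoid structure on $\pi_0(X)\cong\pi_1(BX)$ coming from the $\E_\infty$-structure agrees (by Eckmann--Hilton) with the $\pi_1$ group structure, so $\pi_0(X)$ really is a group. With these caveats supplied, the proof is correct and matches the cited source.
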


\begin{remark} As $\pi_0(\Omega BX) = \pi_1(X)$ is a group, $\Omega BX$ is always a grouplike $\E_\infty$-monoid. Also, $BX$ is always $1$-connected, and thus grouplike (as $\pi_0 BX=0$ is a group).
\end{remark}

Thus, if $X\in \Mon_{\E_\infty}^\gp$ is a grouplike $\E_\infty$-monoid, the sequence of spaces $X,BX,B^2X,\ldots$ forms a connective spectrum, giving a functor
\[
\Mon_{\E_\infty}^\gp\to \Sp^{\geq 0}\ .
\]

In particular, the construction shows that as with classical monoids, one can form the group completion.

\begin{corollary} The full subcategory $\Mon_{\E_\infty}^\gp\subset \Mon_{\E_\infty}$ is a reflective subcategory, cf. \cite[Remark 5.2.7.9]{LurieHTT}, i.e. it admits a left adjoint, called the group completion,
\[
\Mon_{\E_\infty}\to \Mon_{\E_\infty}^\gp: X\mapsto X^\gp=\Omega BX\ .
\]
\end{corollary}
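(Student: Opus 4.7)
The plan is to deduce this from Proposition~\ref{CharGrouplike} combined with a standard criterion for reflective subcategories.

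First, I would verify that $\Omega BX$ lies in $\Mon_{\E_\infty}^\gp$: indeed, $\pi_0(\Omega BX) = \pi_1(BX)$, which is always a group (as the fundamental group of a pointed space). Hence the assignment $L := \Omega B$ factors through a functor $\Mon_{\E_\infty} \to \Mon_{\E_\infty}^\gp$, and the natural transformation $\eta \colon \mathrm{id} \to L$ from Proposition~\ref{CharGrouplike} serves as a candidate unit. By \cite[Proposition~5.2.7.4]{LurieHTT}, to show that $(L,\eta)$ exhibits $L$ as a left adjoint to the inclusion, it suffices to check that both natural transformations $L\eta$ and $\eta L$ from $L$ to $LL$ (evaluated at any object) are equivalences. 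The second is immediate: $\eta_{LX} = \eta_{\Omega BX}$ is an equivalence by Proposition~\ref{CharGrouplike} applied to the grouplike object $\Omega BX$.

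For the first, we must show $L\eta_X = \Omega B(\eta_X) \colon \Omega BX \to \Omega B\Omega BX$ is an equivalence. Since $\Omega$ preserves equivalences of pointed spaces, it is enough to prove that $B(\eta_X) \colon BX \to B\Omega BX$ is an equivalence. This is where I would invoke the classical delooping adjunction $B \dashv \Omega$ between $\Mon_{\E_\infty}$ and the $\infty$-category $\calS_*^{\geq 1}$ of pointed connected spaces (see, e.g., \cite[Theorem~5.2.6.15]{LurieHA}, which is the $\infty$-categorical incarnation of Segal's recognition principle). By that result, the counit $\epsilon_Z \colon B\Omega Z \to Z$ is an equivalence for every $Z \in \calS_*^{\geq 1}$; applied to $Z = BX$ (which lies in $\calS_*^{\geq 1}$ by Proposition~\ref{CharGrouplike}), this yields $\epsilon_{BX} \colon B\Omega BX \xrightarrow{\sim} BX$. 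The triangle identity $\epsilon_{BX} \circ B(\eta_X) = \mathrm{id}_{BX}$ then forces $B(\eta_X)$ to be an equivalence as well.

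The main obstacle is invoking the recognition principle (equivalently, the $B \dashv \Omega$ adjunction), which is not included in the excerpt; Proposition~\ref{CharGrouplike} alone is insufficient because it controls only the behavior of $\eta$ on grouplike objects, not the behavior of $\Omega B$ applied to the non-grouplike input $\eta_X$. Once this classical adjunction is granted, the argument is a purely formal manipulation of units, counits, and the triangle identities. As an alternative packaging, one could bypass Lurie's criterion entirely: using the $B \dashv \Omega$ adjunction, one directly identifies both $\Map_{\Mon_{\E_\infty}}(X,Y)$ and $\Map_{\Mon_{\E_\infty}}(\Omega BX, Y)$ with $\Map_{\calS_*^{\geq 1}}(BX, BY)$ when $Y \simeq \Omega BY$ is grouplike, with the comparison map induced by $B(\eta_X)$, which is an equivalence as above.
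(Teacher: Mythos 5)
The paper itself offers no proof: the Appendix explicitly disclaims detailed proofs, and the Corollary is presented as if it drops out of Proposition~\ref{CharGrouplike} together with the Remark that $\Omega B$ lands in grouplike objects. Your diagnosis is correct that this is not a purely formal deduction. Proposition~\ref{CharGrouplike} controls only $\eta_{LX}$, and verifying Lurie's criterion \cite[Proposition 5.2.7.4]{LurieHTT} requires the second condition $L\eta_X = \Omega B(\eta_X)$ being an equivalence, which is genuinely extra input (essentially the recognition principle); so you are filling an actual gap here, and the overall architecture of your argument is right.

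There is, however, an imprecision in how you state the auxiliary adjunction that deserves flagging. For a pointed connected space $Z$, the loop space $\Omega Z$ carries only an $\E_1$-structure (an $A_\infty$-structure), not an $\E_\infty$-structure, so there is no functor $\Omega\colon \calS_*^{\geq 1} \to \Mon_{\E_\infty}$ and hence no adjunction ``$B\dashv \Omega$ between $\Mon_{\E_\infty}$ and $\calS_*^{\geq 1}$'' as literally stated; the $\E_\infty$-recognition principle identifies grouplike $\E_\infty$-spaces with connective spectra, not with $\Omega$ of pointed spaces. The fix is to observe that both $B$ and the unit $\eta_X\colon X \to \Omega BX$ are constructed from the underlying $\E_1$-structure alone, and that $B(\eta_X)$ is in particular a map of $1$-connected spaces. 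One may therefore invoke the $\E_1$-level delooping (the classical May/Boardman--Vogt/Segal statement that $B\Omega Z \simeq Z$ for pointed connected $Z$, as an adjunction between $\E_1$-monoids and $\calS_*^{\geq 1}$) and the $\E_1$-triangle identity to conclude $B(\eta_X)$ is an equivalence of spaces; since equivalences of $\E_\infty$-monoids are detected on underlying spaces, applying $\Omega$ then gives that $\Omega B(\eta_X)$ is an equivalence in $\Mon_{\E_\infty}$, completing the verification. Your citation of \cite[Theorem 5.2.6.15]{LurieHA} also does not appear to be the intended statement; the place where Lurie makes the group-completion left adjoint explicit is \cite[Remark 5.2.6.26]{LurieHA}, which the paper itself cites for the surrounding material, and which one could also quote directly to shortcut the whole argument.
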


Recall that $K_0(C)$ is the group completion of the commutative monoid of objects of $C$ \emph{up to isomorphism}. Using the above machinery, we can erase the words ``up to isomorphism", and arrive at the definition of higher algebraic $K$-theory:

\begin{definition} Let $C$ be a symmetric monoidal category. The $K$-theory spectrum $K(C)\in \Sp^{\geq 0}$ is defined to be the image under $\Mon_{\E_\infty}^\gp\simeq \Sp^{\geq 0}$ of the group completion $N(C^\simeq)^\gp$ of the $\E_\infty$-monoid $N(C^\simeq)$ from Construction \ref{SymmMonToEInfty}.
\end{definition}

Clearly, this construction is functorial in $C$. There is a certain situation in which the group completion is unnecessary.

\begin{definition} A symmetric monoidal category $C$ is called a Picard groupoid if $C$ is a groupoid, and any object $X\in C$ admits an inverse $X^{-1}\in C$ such that $X\otimes X^{-1}\cong 1$.
\end{definition}

The groupoids $\gPic(R)$ and $\gPic^\Z(R)$ are examples of Picard groupoids (explaining the name), as one can form the inverse of a line bundle.

\begin{proposition} A symmetric monoidal groupoid $C$ is a Picard groupoid if and only if the $\E_\infty$-monoid $N(C)$ is grouplike.
\end{proposition}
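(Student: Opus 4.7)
The plan is to unwind both sides to a statement about the commutative monoid $\pi_0(N(C))$, which is the key invariant mediating the comparison. Specifically, for a symmetric monoidal groupoid $C$, the underlying space $N(C)([1])$ is (by the nerve construction) the disjoint union $\bigsqcup_{[X] \in C/{\simeq}} B\Aut(X)$, so $\pi_0 N(C)([1])$ is canonically identified with the set of isomorphism classes of objects of $C$. Moreover, by the definition of the $\E_\infty$-structure in Construction \ref{SymmMonToEInfty}, the induced monoid structure on $\pi_0 N(C)([1])$ is exactly the commutative monoid structure coming from $\otimes$ on isomorphism classes of $C$.

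Given this identification, both sides of the claimed equivalence translate into the same statement. First I would argue the ``only if'' direction: if $C$ is a Picard groupoid, then for any $X \in C$ there exists $X^{-1}$ with $X \otimes X^{-1} \cong 1$, so the class $[X]$ has the inverse $[X^{-1}]$ in the monoid $\pi_0 N(C)([1])$; hence this monoid is a group and $N(C)$ is grouplike by definition. Conversely, for the ``if'' direction, if $N(C)$ is grouplike then $\pi_0 N(C)([1])$ is a group, so for each object $X$ there exists some object $Y$ with $[X \otimes Y] = [X] + [Y] = [1]$ in $\pi_0$, which means by definition that there is an isomorphism $X \otimes Y \cong 1$ in $C$; thus $Y$ serves as an inverse to $X$, showing $C$ is a Picard groupoid.

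There is no real obstacle here: the entire content of the proposition is the compatibility between the monoid structure on $\pi_0$ of the $\E_\infty$-monoid $N(C)$ and the monoid structure on isomorphism classes of $C$, and this is immediate from how $N(C)$ is built in Construction \ref{SymmMonToEInfty} (the map $N(C)([2]) \to N(C)([1])$ induced by $[2] \to [1]$ is, on objects, exactly the tensor product functor $C \times C \to C$). The only step requiring a modicum of care is verifying that the induced operation on $\pi_0$ agrees with $\otimes$-on-isomorphism-classes, but this follows directly from tracing through the functoriality in $\Fin_\ast$ of the construction.
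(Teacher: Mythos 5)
Your proof is correct and takes essentially the same route as the paper: both identify $\pi_0 N(C)$ with the monoid $C/{\simeq}$ of isomorphism classes under $\otimes$, and then observe that $C$ being a Picard groupoid is, by definition, the same as $C/{\simeq}$ being a group. You simply spell out the two directions explicitly where the paper states the identification once and concludes.
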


\begin{proof} One has an identification of $\pi_0 N(C) = C/\simeq$ with the monoid (under $\otimes$) of objects of $C$ up to isomorphism. But, by definition, $C$ is a Picard groupoid if and only if $C/\simeq$ is a group.
\end{proof}

In particular, for a Picard groupoid, the space underlying $K(C)$ is just the nerve $N(C)$, which is $1$-truncated. One can show that this induces an equivalence between Picard groupoids and $1$-truncated connective spectra, cf. \cite[\S 3]{Patel}. The idea is that both can be identified with grouplike $\E_\infty$-monoids in groupoids.

Because of this equivalence, we will often confuse a Picard groupoid with its $K$-theory spectrum, and in particular we continue to write $\gPic^\Z(R)$ for the corresponding ($1$-truncated) connective spectrum.

\begin{definition} Let $R$ be a commutative ring. The $K$-theory spectrum of $R$ is $K(R) = K(\gVect(R))$.
\end{definition}

\begin{corollary} There is a natural functorial map
\[
\det: K(R)\to \gPic^\Z(R)
\]
of connective spectra.
\end{corollary}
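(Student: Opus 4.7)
The plan is to apply the $K$-theory functor $C \mapsto K(C)$ to the symmetric monoidal functor $\det: \gVect(R) \to \gPic^{\Z}(R)$ provided by Proposition~\ref{DetSymmMon}, and then identify the target with $\gPic^{\Z}(R)$ itself by exploiting the fact that $\gPic^{\Z}(R)$ is already a Picard groupoid.

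Concretely, Construction~\ref{SymmMonToEInfty} turns a symmetric monoidal groupoid into an $\E_\infty$-monoid functorially; composing with the group completion $X \mapsto X^{\gp}$ and the equivalence $\Mon_{\E_\infty}^{\gp} \simeq \Sp^{\geq 0}$ of Theorem~\ref{EMonVsSpectra} gives a functor $K(-)$ from symmetric monoidal categories to connective spectra. Applied to $\det$, this produces a morphism of connective spectra
\[
K(\det): K(R) = K(\gVect(R)) \longrightarrow K(\gPic^{\Z}(R)).
\]

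To finish, note that $\gPic^{\Z}(R)$ is a Picard groupoid: every object $(L,f)$ has an inverse $(L^{-1},-f)$. Hence the $\E_\infty$-monoid $N(\gPic^{\Z}(R))$ is already grouplike, so its group completion is itself, and the associated connective spectrum is (under the equivalence between Picard groupoids and $1$-truncated connective spectra recalled above) naturally identified with $\gPic^{\Z}(R)$ itself. Composing gives the desired $\det: K(R) \to \gPic^{\Z}(R)$, and naturality in $R$ is automatic since every step --- $\gVect(-)$, $\gPic^{\Z}(-)$, the determinant functor, nerve, group completion, and the identification $\Mon_{\E_\infty}^{\gp} \simeq \Sp^{\geq 0}$ --- is functorial.

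There is no real obstacle: the corollary is a formal consequence of the machinery set up in the appendix, and the only point requiring a moment's thought is checking that $\gPic^{\Z}(R)$ really is a Picard groupoid (immediate from its definition) so that no further group completion is needed on the target.
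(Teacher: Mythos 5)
Your proposal is correct and takes essentially the same route as the paper, which simply applies the $K$-theory functor to the symmetric monoidal functor $\det$ from Proposition~\ref{DetSymmMon}. You have merely spelled out the implicit identification $K(\gPic^\Z(R)) \simeq \gPic^\Z(R)$, which holds because $\gPic^\Z(R)$ is a Picard groupoid and so needs no group completion --- a point the paper leaves to the reader.
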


\begin{proof} Apply the $K$-theory functor to
\[
\det: \gVect(R)\to \gPic^\Z(R)\ .
\]
\end{proof}

As $\gPic^\Z(R)$ is $1$-truncated, the map $K(R)\to \gPic^\Z(R)$ factors canonically over the $1$-truncation $\tau_{\leq 1}K(R)$. In fact, $\gPic^\Z(R)$ is, at least after Zariski sheafification, precisely the $1$-truncation of $K(R)$.

\begin{proposition}\label{prop:Pic1TruncationofK} The natural map
\[
\det: \tau_{\leq 1} K(R)\to \gPic^\Z(R)
\]
of presheaves of groupoids on the category of affine schemes becomes an isomorphism after Zariski sheafification.
\end{proposition}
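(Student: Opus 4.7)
First I would reduce the claim to a stalk-wise statement. Both functors $R \mapsto \tau_{\leq 1} K(R)$ and $R \mapsto \gPic^\Z(R)$ commute with filtered colimits of rings (as $K$-theory, $\Pic$, and $H^0(-,\Z)$ all do), so their Zariski stalks at a point $\frap \in \Spec R$ are computed by their values at the local ring $R_\frap$. Since a map of presheaves of $1$-truncated spaces on affine schemes induces an equivalence of Zariski sheafifications if and only if it does so on all such stalks, it suffices to prove: for every local ring $R$, the map $\det: \tau_{\leq 1} K(R) \to \gPic^\Z(R)$ is an equivalence of groupoids.

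Since both sides are $1$-truncated, this amounts to showing that $\det$ induces bijections on $\pi_0$ and on $\pi_1$ at any chosen basepoint. On $\pi_0$, the map is $K_0(R) \to \Pic(R) \times H^0(\Spec R,\Z)$, $[M] \mapsto ([\det M], \rk M)$. For local $R$, every finite projective $R$-module is free, so $K_0(R) \simeq \Z$ via rank; likewise $\Pic(R) = 0$ and $H^0(\Spec R,\Z) = \Z$, and the map becomes the identity on $\Z$. On $\pi_1$ (with basepoint $0 \in K(R)$, respectively $(R,0) \in \gPic^\Z(R)$), one obtains a map $K_1(R) \to R^\times$. Unwinding Construction~\ref{SymmMonToEInfty} applied to the functor of Proposition~\ref{DetSymmMon}, one verifies that this is the classical determinant, induced by the literal determinants $\GL_n(R) \to R^\times$, which on group completion (and after Whitehead's lemma identifying the subgroup $E(R)$ of elementary matrices with $[\GL(R),\GL(R)]$) give the map $K_1(R) = \GL(R)/E(R) \to R^\times$. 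For $R$ local, the classical identity $SK_1(R) = \SL(R)/E(R) = 0$ — elementary row and column operations reduce any element of $\SL_n(R)$ to the identity, using that some entry in each row is a unit for $n \geq 2$ — forces the determinant $K_1(R) \to R^\times$ to be an isomorphism.

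The principal obstacle is identifying the $\pi_1$ map with the classical determinant, which requires tracing through the symmetric monoidal functor $\det: \gVect(R) \to \gPic^\Z(R)$ and its effect after group completion to a connective spectrum. Once this identification is made explicit, the remaining input is the standard structure theory of $K_0$ and $K_1$ for local rings.
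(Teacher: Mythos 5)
Your proposal is correct and takes essentially the same route as the paper: reduce to stalks (equivalently, local rings) using that both sides commute with filtered colimits, then check $\pi_0$ and $\pi_1$ separately, invoking the standard structure theory of $K_0$ and $K_1$ for local rings. The paper's proof is compressed to a single line ``$K_1(R) = R^\times = \pi_1 \gPic^\Z(R)$, and $K_0(R) = \Z = \pi_0 \gPic^\Z(R)$''; your version makes explicit what this identity is actually asserting about the map — namely that the induced map on $\pi_1$ is the classical determinant $\GL(R)/E(R) \to R^\times$, and that this is an isomorphism for local $R$ because $SK_1(R) = 0$. The one point you flag as requiring care — that the $\pi_1$ map coming out of Construction~\ref{SymmMonToEInfty} really is the classical determinant after group completion, which implicitly uses the group-completion theorem to identify $\pi_1$ of the grouplike $\E_\infty$-monoid $N(\gVect(R))^\gp$ with $\GL(R)^{ab}$ — is indeed the only nontrivial input, and it is also what the paper tacitly relies on. So this is a filled-in version of the same argument, not a different one.
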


\begin{proof} We need to prove that the maps
\[
K_1(R) = \pi_1 K(R)\to \pi_1 \gPic^\Z(R)
\]
and
\[
K_0(R) = \pi_0 K(R)\to \pi_0 \gPic^\Z(R)
\]
are isomorphisms after Zariski sheafification; equivalently, for local rings $R$. But if $R$ is local, $K_1(R) = R^\times = \pi_1 \gPic^\Z(R)$, and $K_0(R) = \Z = \pi_0 \gPic^\Z(R)$.
\end{proof}

\bibliography{witt-affine-grassmannian}
\end{document}